\newtheorem{theorem}{\indent Теорема}[chapter]
\newtheorem{lemma}{\indent Лема}[chapter]
\newtheorem{corollary}{\indent Наслідок}[chapter]
\newtheorem{condition}{\indent Умова}[chapter]
\theoremstyle{definition}
\newtheorem{remark}{\indent Зауваження}[chapter]
\renewcommand{\chaptermark}[1]{\markboth{#1}{#1}}
\renewcommand{\phi}{\alpha}
\renewcommand{\rho}{\varrho}
\begin{document}

\thispagestyle{empty}


\begin{center}

\bf

\LARGE

В. М. ЛОСЬ\\ В. А. МИХАЙЛЕЦЬ \\ О. О. МУРАЧ

\vspace{1cm}

\huge

ПАРАБОЛІЧНІ

\medskip

ГРАНИЧНІ ЗАДАЧІ

\medskip

ТА УЗАГАЛЬНЕНІ

\medskip

ПРОСТОРИ СОБОЛЄВА

\vspace{1cm}

\LARGE

V. M. LOS\\ V. A. MIKHAILETS \\ A. A. MURACH

\vspace{1cm}

\huge

PARABOLIC

\medskip

BOUNDARY-VALUE

\medskip

PROBLEMS

\medskip

AND GENERALIZED

\medskip

SOBOLEV SPACES

\end{center}

\normalsize

\rm

\newpage

\thispagestyle{empty}

\noindent 2020 MSC: 35K35, 46E35

\smallskip

У монографії викладено засади нової теорії параболічних початково-крайових задач у шкалах узагальнених анізотропних просторів Соболєва. Ці шкали калібровані істотно більш тонко за допомогою функціонального показника регулярності, ніж класи просторів, які раніше використовували в теорії параболічних диференціальних рівнянь. Працю вирізняє систематичне застосування методу квадратичної інтерполяції з функціональним параметром абстрактних і соболєвських гільбертових просторів.

Для науковців, викладачів університетів, аспірантів і студентів старших курсів, які спеціалізуються в галузі диференціальних рівнянь і математичного аналізу.

\medskip

\noindent Рецензенти:

\noindent доктор фізико-математичних наук, \\
професор \hfill\fbox{\emph{С.~Д.~Івасишен}}

\noindent член-кореспондент НАН України,\\
доктор фізико-математичних наук  \hfill\emph{А.~Н.~Кочубей}

\bigskip

The research monograph expounds the foundation of a new theory of parabolic initial-boundary-value problems in scales of generalized anisotropic Sobolev spaces. These scales are calibrated essentially more finely with the help of a function parameter of regularity than the classes of spaces used earlier in the theory of parabolic problems. The monograph is featured by the systematic application of the method of quadratic interpolation (with function parameter) between abstract and Sobolev inner product spaces.

The monograph is intended for the researches, professors, PhD students, and senior students specializing in differential equations and mathematical analysis.

\medskip

\noindent Reviewers:

\medskip

\noindent Professor \hfill\fbox{\emph{S.~D.~Ivasyshen}}

\medskip

\noindent Professor  \hfill\emph{A.~N.~Kochubei}

\vspace{3mm}

\begin{flushright}
© В. М. Лось, В. А. Михайлець, О. О. Мурач, 2021
\end{flushright}

\newpage

 \chapter*{\textbf{Вступ}}
\addcontentsline{toc}{section}{\textbf{Вступ}}

\chaptermark{Вступ}

\noindent Запропонована монографія присвячена новітній теорії розв'язності параболічних початково-крайових задач у функціональних просторах, що утворюють шкали, калібровані більш тонко, ніж шкали анізотропних просторів Соболєва або Гельдера, які зазвичай використовуються у дослідженні параболічних задач. Це досягається за допомогою функціонального параметра, що задає додаткову (позитивну або негативну) регулярність функцій, яку не можна розрізнити за допомогою числових параметрів. Простори, використані у монографії, належать до класу функціональних просторів, уведених Л.~Хермандером~\cite{Hormander63}, і являють собою широке і змістовне узагальнення анізотропних просторів Соболєва.

Класичну теорію розв'язності параболічних задач у просторах Соболєва і Гельдера було створено у працях М.~С.~Аграновича і М.~І.~Вішика \cite{AgranovichVishik64}, С.~Д.~Ейдельмана \cite{Eidelman64}, O.~О.~Ладиженської, В.~О.~Солоннікова і Н.~М.~Уральцевої \cite{LadyzhenskajaSolonnikovUraltzeva67}, A.~Фрідмана \cite{Fridman68}, Ж.-Л.~Ліонса і Е.~Мадженеса \cite{LionsMagenes72ii}, С.~Д.~Івасишена \cite{Ivasyshen90}, М.~В.~Житарашу \cite{EidelmanZhitarashu98} та інших математиків у 60--80-х роках ХХ ст. Її ядро утворюють теореми про ізоморфізми, які стверджують, що параболічні задачі є коректними у сенсі Адамара на відповідних парах анізотропних просторів Соболєва або Гельдера, тобто обмежені оператори, породжені задачами, є ізоморфізмами на цих парах. Вказані теореми відіграють ключову роль у дослідженні регулярності розв'язків параболічних задач, їх функцій Гріна, задач керування для параболічних систем та інших важливих питань. З~точки зору застосувань окремий інтерес викликають теореми про ізоморфізми у випадку гільбертових просторів, структура яких подібна до геометрії скінченновимірних евклідових просторів. У цьому випадку використовуються анізотропні простори Соболєва, побудовані на основі просторів квадратично інтегровних функції.

Звісно, чим тонше калібрована шкала функціональних просторів, тим точніші результати можна отримати за її допомогою. Калібрування числовими параметрами, які служать показниками регулярності для просторів Соболєва і Гельдера, виявилося досить грубим для низки важливих задач математичного аналізу \cite{Lizorkin86, Stepanets87, Stepanets02, Stepanets05, Triebel01, Triebel10}, теорії диференціальних рівнянь з частинними похідними \cite{Hormander63, Hormander86, MikhailetsMurach10, MikhailetsMurach14, NicolaRodino10, Paneah00}, теорії стохастичних процесів \cite{Jacob010205} та інших. Це з'ясувалося ще 60 років тому принаймні стосовно багатовимірних диференціальних рівнянь і спонукало Л.~Хермандера \cite{Hormander63} увести та дослідити нормовані функціональні простори
$$
\mathcal{B}_{p,\mu}:=\bigl\{w\in\mathcal{S}'(\mathbb{R}^{k})
:\mu\widehat{w}\in L_p(\mathbb{R}^{k})\bigr\},
$$
параметризовані за допомогою достатньо загальної функції $\mu:\nobreak\mathbb{R}^{k}\to(0,\infty)$. Тут числовий параметр $p$ задовольняє умову $1\leq p\leq\infty$, а  $\widehat{w}$ позначає перетворення Фур'є повільно зростаючого розподілу $w$ з простору Шварца $\mathcal{S}'(\mathbb{R}^{k})$. Функція $\mu$ частотного аргументу є показником регулярності для простору $\mathcal{B}_{p,\mu}$; вона характеризує регулярність розподілу $w$ у термінах поведінки на нескінченності його перетворення Фур'є.

У праці Л.~Хермандера \cite{Hormander63}, яка стала важливим етапом розвитку сучасної теорії диференціальних рівнянь з частинними похідним, простори $\mathcal{B}_{p,\mu}$ були систематично застосовані до дослідження умов існування та регулярності розв'язків багатовимірних диференціальних рівнянь. Найбільш повні результати отримано для гіпоеліптичних рівнянь, до яких належать і параболічні рівняння. У~випадку $p=2$ простори Хермандера стають гільбертовими і дають широке узагальнення $L_{2}$-просторів Соболєва. Так, якщо $\mu(\xi)=(1+|\xi|^{2})^{s/2}$ для довільного вектора $\xi\in\mathbb{R}^{k}$ і деякого деякого дійсного числа $s$, то $\mathcal{B}_{2,\mu}$~--- гільбертів простір Соболєва порядку~$s$. Цим результатам присвячено і другий том \cite{Hormander86} відомої чотиритомної монографії Л.~Хермандера «The Analysis of Linear Partial Differential Operators», яку називають біблією теорії лінійних багатовимірних диференціальних рівнянь.

Праця Л.~Хермандера \cite{Hormander63} привернула значну увагу до узагальнених соболєвських просторів і стимулювала їх різні дослідження та застосування, переважно у математичному аналізі. Втім, до недавнього часу у теорії багатовимірних крайових задач ці простори практично не використовували. Це було зумовлено відсутністю коректного означення вказаних просторів на гладких многовидах (у тому сенсі, що такі простори не повинні залежати від вибору локальних карт, які покривають многовид). Окрім того, бракувало зручного аналітичного інструментарію для роботи з цими просторами.

Недавно ситуація кардинально змінилася. У 2005--2010 роках зусиллями В.~А.~Михайлеця та О.~О.~Мурача було створено теорію еліптичних крайових задач в узагальнених соболєвських просторах вигляду  $H^{s;\varphi}:=\mathcal{B}_{2,\mu}$, де
$$
\mu(\xi)\equiv(1+|\xi|^{2})^{s/2}\,\varphi((1+|\xi|^{2})^{1/2}),
$$
число $s$ дійсне, а функція $\varphi:[1,\infty)\to(0,\infty)$ повільно змінюється на нескінченності за Караматою. Стандартними прикладами такої функції $\varphi$ є логарифмічна функція, її довільні ітерації та їх довільні степені. Ці простори ізотропні (тобто  інваріантні відносно рухів у евклідовому просторі), оскільки показник регулярності залежить лише від $|\xi|$. Для них функціональний параметр $\varphi$ задає регулярність, підпорядковану основній (степеневій) регулярності, заданої числовим параметром~$s$. Головним аналітичним методом у цій теорії є квадратична інтерполяція з функціональним параметром гільбертових просторів і операторів, що діють на них. За його допомогою можна коректно означити вказані простори на гладких многовидах і суттєво полегшити їх застосування до еліптичних диференціальних операторів та еліптичних крайових задач. Цю теорію викладено у працях \cite{MikhailetsMurach10, MikhailetsMurach14}.

Зовсім недавно автори запропонованої монографії застосували цей метод у теорії параболічних початково-крайових задач. За його допомогою встановлено коректну розв'язність параболічних задач в анізотропних узагальнених просторах Соболєва $H^{s,s/(2b);\varphi}:=\mathcal{B}_{2,\mu}$, де
$$
\mu(\xi',\xi_{k})=\bigl(1+|\xi'|^2+|\xi_{k}|^{1/b}\bigr)^{s/2}
\varphi\bigl((1+|\xi'|^2+|\xi_{k}|^{1/b})^{1/2}\bigr)
$$
для довільних аргументів $\xi'\in\mathbb{R}^{k-1}$ і $\xi_{k}\in\mathbb{R}$. Тут число $s$ додатне, число $b$ натуральне, а функція $\varphi$ є такою, як зазначено вище. Анізотропія цих просторів задається за допомогою параметра $2b$, що характеризує параболічність диференціального рівнян\-ня, для якого ставиться початково-крайова задача. Наприклад, $2b=2$ для рівняння теплопровідності (а для еліптичних рівнянь формально покладають $2b=1$ і отримують ізотропні простори, наведені вище). Якщо $\varphi(\cdot)\equiv1$, то $H^{s,s/(2b);\varphi}$ стає анізотропним соболєвським простором порядку $s$ за просторовими змінними і порядку $s/(2b)$ за часовою змінною.

У цій монографії викладено теорію розв'язності параболічних початково-крайових задач у вказаних узагальнених просторах Соболєва. Теорію було створено авторами монографії в останнє десятиліття. Праця складається з трьох розділів. У розділі~1 розглянуто узагальнені соболєвські простори, потрібні для дослідження параболічних задач. Вивчено властивості цих просторів, насамперед інтерполяційні. З огляду на це обговорено метод квадратичної інтерполяції пар гільбертових просторів, який відіграє ключову роль у монографії. Саму теорію викладено у розділах 2 і 3. Розділ 2 присвячено параболічним задачам із однорідними початковими умовами (такі задачі називаємо напіводнорідними), а розділ~3~--- задачам із неоднорідними початковими умовами. Такий поділ матеріалу зумовлений тим, що досліджен\-ня напіводнорідних параболічних задач та їхні властивості мають свою специфіку, на відміну від неоднорідних задач. Основними результатами цих розділів є теореми про ізоморфізми, породжені параболічними задачами на парах відповідних узагальнених просторів Соболєва, та теореми про локальну регулярність розв'язків параболічних задач (впритул до межі циліндричної області задання параболічного рівняння). У межах кожного з цих розділів розглядаються спочатку простіші за будовою параболічні задачі (основні задачі для рівняння теплопровідності, одновимірні за просторовою змінною задачі), дослідження яких здається простішим. Наприклад, для вивчення одновимірних задач не потрібні простори на многовидах. Остаточні результати отримано для $2b$-параболічних за Петровським багатовимірних диференціальних рівнянь і загальних крайових умов довільного порядку.

\newpage

\chaptermark{}

\chapter{\textbf{Узагальнені простори Соболєва та їх інтерполяція}}\label{ch2}

\chaptermark{\emph Розд. \ref{ch2}. Узагальнені простори Соболєва та їх інтерполяція}

\noindent У цьому розділі розглянуто узагальнені простори Соболєва $H^{s,s\gamma;\varphi}$, для яких числові параметри $s\in\mathbb{R}$ і $s\gamma$, де $\gamma>0$, задають основну регулярність відповідно за просторовими і часовою змінними, а функціональний параметр $\varphi$ визначає додаткову регулярність за цими змінними. У випадку $\gamma=1/(2b)$ ці простори будуть потрібні для дослідження $2b$-параболічних диференціальних рівнянь і відповідних їм початково-крайових задач. Головна властивість цих просторів полягає у тому, що вони отримуються квадратичною інтерполяцією з деяким функціональним параметром анізотропних просторів Соболєва (їх маємо у випадку одиничної функції $\varphi$). Тому розділ~1 починається з обговорення квадратичної інтерполяції (з функціональним параметром) загальних гільбертових просторів та її властивостей, потрібних надалі. Вона є основним методом дослідження у монографії. Окрема увага приділяється  версіям просторів $H^{s,s\gamma;\varphi}$, які використовуються у теорії параболічних задач; а саме: просторам, заданим на циліндричних областях, їх бічних поверхнях, та версіям цих просторів для функцій, які анулюються, якщо часова змінна $t<0$ (останні позначаємо через $H^{s,s\gamma;\varphi}_{+}$). Окрім інтерполяційних властивостей цих просторів, обговорюються теореми про вкладення і сліди. З огляду на подальші застосування важливу роль відіграватимуть теорема про сліди, породжені оператором даних Коші, та теорема про вкладення цих просторів у анізотропні простори неперервно диференційовних функцій. Цим двом теоремам присвячено підрозділи 1.5 і 1.6.


\markright{\emph \ref{sec2.1}. Інтерполяція з функціональним параметром}

\section[Інтерполяція з функціональним параметром]{Інтерполяція з функціональним \\параметром}\label{sec2.1}

\markright{\emph \ref{sec2.1}. Інтерполяція з функціональним параметром}

У цьому підрозділі розглянемо метод квадратичної інтерполяції з функціональним параметром пар гільбертових просторів, який
введено Ч.~Фойашем і Ж.-Л.~Ліонсом \cite[с.~278]{FoiasLions61}. Для наших цілей достатньо обмежитися випадком сепарабельних комплексних гільбертових просторів.

Нехай $X:=[X_{0},X_{1}]$ є впорядкованою парою
сепарабельних комплексних гільбертових просторів таких, що $X_{1}$ ~--- щільний лінійний многовид у просторі $X_{0}$ і вкладення $X_{1}\hookrightarrow X_{0}$ неперервне. Таку пару називають регулярною. Для неї існує самоспряжений додатно визначений оператор $J$ у просторі $X_{0}$ з областю визначення $X_{1}$ такий, що $\|Jv\|_{X_0}=\|v\|_{X_1}$ для кожного $v\in X_{1}$. Оператор $J$  визначається парою $X$ однозначно і називається породжуючим оператором для~$X$ (див., наприклад, \cite[розд.~4, теорема~1.12]{KreinPetuninSemenov82}). Він задає ізометричний ізоморфізм $J:X_{1}\leftrightarrow X_{0}$.

Позначимо через $\mathcal{B}$ множину всіх вимірних
за Борелем функцій $\psi:(0,\infty)\rightarrow(0,\infty)$ таких, що $\psi$ обмежена на кожному відрізку $[a,b]$, де $0<a<b<\infty$, і функція $1/\psi$ обмежена на кожному промені $[a,\infty)$, де $a>0$.

Для заданої функції $\psi\in\mathcal{B}$ розглянемо (взагалі необмежений) оператор $\psi(J)$, означений у гільбертовому просторі $X_{0}$ як борелева функція $\psi$ від $J$. Цей оператор будується за допомогою спектральної теореми, застосованої до самоспряженого оператора $J$ (див., наприклад, \cite[розд.~XIII, \S~6]{BerezanskyUsSheftel90}). Позначимо через $[X_{0},X_{1}]_{\psi}$, або скорочено через
$X_{\psi}$, область визначення оператора $\psi(J)$, наділену скалярним добутком
\begin{equation*}
(v_{1},v_{2})_{X_{\psi}}:=(\psi(J)v_{1},\psi(J)v_{2})_{X_{0}}.
\end{equation*}
Лінійний простір $X_{\psi}$ є гільбертовим і сепарабельним відносно
цього скалярного добутку. Останній породжує норму
\begin{equation*}
\|v\|_{X_{\psi}}:=\|\psi(J)v\|_{X_{0}}.
\end{equation*}

Функцію $\psi\in\mathcal{B}$ називаємо \textit{інтерполяційним параметром}, якщо
для всіх регулярних пар $X=[X_{0},X_{1}]$ та $Y=[Y_{0},Y_{1}]$ гільбертових
просторів і для довільного лінійного відображення $T$, заданого на $X_{0}$,
виконується така властивість: якщо звуження відображення $T$ на $X_{j}$ є обмеженим
оператором $T:X_{j}\rightarrow Y_{j}$ для кожного $j\in\{0,1\}$, то і звуження
відображення $T$ на $X_{\psi}$ є також обмеженим оператором
$T:X_{\psi}\rightarrow Y_{\psi}$.

Якщо $\psi$ є інтерполяційним параметром, то будемо казати, що гільбертів простір $X_{\psi}$ отримано внаслідок квадратичної інтерполяції з функціональним параметром $\psi$ пари $X=\nobreak[X_{0},X_{1}]$ (або, інакше кажучи, між просторами $X_{0}$ і $X_{1}$). Крім того, будемо казати, що обмежений оператор $T:X_{\psi}\rightarrow Y_{\psi}$ є результатом інтерполяції операторів $T:X_{j}\rightarrow Y_{j}$, де $j\in\{0,1\}$. У цьому випадку виконуються неперервні та щільні вкладення
$X_{1}\hookrightarrow X_{\psi}\hookrightarrow X_{0}$.

Клас усіх інтерполяційних параметрів (у сенсі наведеного щойно означення) допускає конструктивний опис.
А саме, функція $\psi\in\nobreak\mathcal{B}$ є інтерполяційним параметром тоді і тільки тоді,
коли вона псевдоугнута в околі нескінченності. Остання властивість означає існування угнутої додатної
функції $\psi_{1}(r)$ аргументу $r\gg1$ такої, що обидві функції
$\psi/\psi_{1}$ і $\psi_{1}/\psi$ обмежені в деякому околі нескінченності. Цей критерій випливає з опису Ж.~Петре \cite{Peetre66, Peetre68} класу всіх інтерполяційних функцій
для вагових просторів типу $L_{p}(\mathbb{R}^{n})$ (див. \cite[п.~1.1.9]{MikhailetsMurach10, MikhailetsMurach14}).

У праці систематично використовується такий наслідок з цього критерію, наведений, наприклад в \cite[теорема 1.11]{MikhailetsMurach10, MikhailetsMurach14}.

\begin{theorem}\label{9prop6.1}
Припустимо, що функція $\psi\in\mathcal{B}$ правильно змінна на нескінченності порядку $\theta$, де  $0<\theta<1$, тобто
$$
\lim_{r\rightarrow\infty}\;\frac{\psi(\lambda r)}{\psi(r)}=
\lambda^{\theta}\quad\mbox{для кожного}\quad\lambda>0.
$$
Тоді $\psi$ є інтерполяційним параметром.
\end{theorem}

Зауважимо, що поняття правильно змінної функції увів Й.~Карамата~\cite{Karamata30a}. Як звичайно, припускається, що така функція є вимірною за Борелем в околі нескінченності. Правильно змінна функція (на нескінченності) порядку $\theta=0$ називається повільно змінною (за Караматою). Звісно, функція $\psi$ правильно змінна порядку $\theta\in\mathbb{R}$ тоді і тільки тоді, коли $\psi(r)\equiv r^{\theta}\psi_{0}(r)$ для деякої повільно змінної функції
$\psi_{0}$.

У важливому випадку степеневих функцій теорема~\ref{9prop6.1} приводить до класичної інтерполяційної теореми Ж.-Л.~Ліонса і С.~Г.~Крейна (див. їх монографії \cite[розд.4, п.~1.10]{KreinPetuninSemenov82} і
\cite[розд.~1, пп. 2 і~5]{LionsMagenes72i}). Згідно з нею функція $\psi(r)\equiv r^{\theta}$ є інтерполяційним параметром, якщо $0<\theta<1$. У цьому випадку показник $\theta$ розглядається як числовий параметр інтерполяції.

Сформулюємо три властивості інтерполяції, які будемо систематично використовувати у доведеннях. Перша з них дає змогу звести інтерполяцію деяких підпросторів або фактор-просторів до інтерполяції вихідних гільбертових просторів (див. \cite[теорема~1.6]{MikhailetsMurach10, MikhailetsMurach14} або \cite[п.~1.17.1, теорема~1]{Triebel80}). Як звичайно, підпростори припускаються замкненими. Проєктори на них вважаємо, взагалі кажучи, не ортогональними.

\begin{theorem}\label{9prop6.2}
Нехай $X=[X_{0},X_{1}]$ є регулярною парою гільбертових просторів, а $Y_{0}$ є підпростором простору $X_{0}$. Тоді $Y_{1}:=X_{1}\cap Y_{0}$ є підпростором простору $X_{1}$. Припустимо, що існує лінійне відображення $P:X_{0}\rightarrow X_{0}$, яке для кожного $j\in\{0,\,1\}$ є проєктором простору $X_{j}$ на його підпростір $Y_{j}$. Тоді пари $[Y_{0},Y_{1}]$ і $[X_{0}/Y_{0},X_{1}/Y_{1}]$ регулярні та для довільного інтерполяційного параметра $\psi\in\mathcal{B}$ виконуються рівності
\begin{align}\label{9f6.1}
[Y_{0},Y_{1}]_{\psi}&=X_{\psi}\cap Y_{0},\\
[X_{0}/Y_{0},X_{1}/Y_{1}]_{\psi}&=X_{\psi}/(X_{\psi}\cap Y_{0}) \label{9f6.2}
\end{align}
з еквівалентністю норм. Тут $X_{\psi}\cap Y_{0}$ є підпростором простору~$X_{\psi}$.
\end{theorem}

Друга властивість дає змогу звести інтерполяцію прямих сум гільбертових просторів до інтерполяції їхніх доданків (див. \cite[теорема~1.8]{MikhailetsMurach10, MikhailetsMurach14}).

\begin{theorem}\label{9prop6.3}
Нехай $[X_{0}^{(j)},X_{1}^{(j)}]$, де $j=1,\ldots,q$, є скінченним набором регулярних пар гільбертових просторів. Тоді
$$
\biggl[\,\bigoplus_{j=1}^{q}X_{0}^{(j)},\,
\bigoplus_{j=1}^{q}X_{1}^{(j)}\biggr]_{\psi}=\,
\bigoplus_{j=1}^{q}\bigl[X_{0}^{(j)},\,X_{1}^{(j)}\bigr]_{\psi}
$$
з рівністю норм. Тут функція $\psi\in\mathcal{B}$ є довільним інтерполяційним параметром.
\end{theorem}

Третя властивість показує, що повторне застосування інтерполяції з функціональним параметром
дає знову інтерполяцію з деяким функціональним параметром
(див. \cite[теорема~1.3]{MikhailetsMurach10, MikhailetsMurach14}).

\begin{theorem}\label{9prop6.4}
Нехай $\chi,\eta,\psi\in\mathcal{B}$ і функція $\chi/\eta$
обмежена в околі нескінченності. Означимо функцію $\omega$ за формулою
$\omega(r):=\chi(r)\psi(\eta(r)/\chi(r))$ для довільного де $r>0$. Тоді $\omega\in\mathcal{B}$ і $[X_{\chi},X_{\eta}]_{\psi}=X_{\omega}$ з рівністю норм для кожної регулярної пари $X$ гільбертових просторів.
Крім того, якщо $\chi$, $\eta$ і $\psi$ є інтерполяційними параметрами, то $\omega$ також є інтерполяційним параметром.
\end{theorem}

\markright{\emph \ref{sec2.3}. Узагальнені простори Соболєва}

\section[Узагальнені простори Соболєва]{Узагальнені простори Соболєва}\label{sec2.3}

\markright{\emph \ref{sec2.3}. Узагальнені простори Соболєва}

У 1963 році Л.~Хермандер \cite[п.~2.2]{Hormander63} запропонував широке і змістовне узагальнення просторів Соболєва на $\mathbb{R}^{k}$
у категорії гільбертових просторів; тут ціле
$k\geq1$. Л.~Хермандер використав для відповідних просторів позначення $\mathcal{B}_{2,\mu}$; ми застосовуємо для них символіку $H^{\mu}(\mathbb{R}^{k})$, успадковану від досить поширеного позначення гільбертових просторів Соболєва (див., наприклад, \cite{LionsMagenes72i}). Показником регулярності функцій або розподілів, що утворюють простір $H^{\mu}(\mathbb{R}^{k})$, є довільна вимірна за Борелем функція
$\mu:\mathbb{R}^{k}\rightarrow(0,\infty)$, яка задовольняє таку умову:
\begin{equation}\label{Hermander-cond}
\frac{\mu(\xi)}{\mu(\eta)}\leq
c\,(1+|\xi-\eta|)^{l}\quad\mbox{для довільних}\quad \xi,\eta\in\mathbb{R}^{k},
\end{equation}
де $c$ і $l$~--- деякі додатні числа, які не залежать від дійсних векторів $\xi$ і $\eta$.

За означенням комплексний лінійний простір $H^{\mu}(\mathbb{R}^{k})$ складається з усіх повільно зростаючих розподілів $w\in\mathcal{S}'(\mathbb{R}^{k})$ таких, що перетворення
Фур'є $\widehat{w}:=\mathcal{F}w$ розподілу $w$ є локально інтегровною за Лебегом функцією, яка задовольняє умову
\begin{equation*}
\int\limits_{\mathbb{R}^{k}}\mu^{2}(\xi)\,|\widehat{w}(\xi)|^{2}\,d\xi
<\infty.
\end{equation*}
Тут $\mathcal{S}'(\mathbb{R}^{k})$ позначає лінійний топологічний простір Л.~Шварца повільно зростаючих розподілів (узагальнених функцій) на $\mathbb{R}^{k}$. Він є дуальним до лінійного топологічного простору  $\mathcal{S}(\mathbb{R}^{n})$ усіх швидко спадних нескінченно диференційовних функцій на $\mathbb{R}^{n}$. Надалі, якщо не зазначено інше, усі функції та розподіли вважаємо комплекснозначними, а  функціональні простори~--- комплексними.

У просторі $H^{\mu}(\mathbb{R}^{k})$ означено скалярний добуток
за формулою
\begin{equation*}
(w_1,w_2)_{H^{\mu}(\mathbb{R}^{k})}=
\int\limits_{\mathbb{R}^{k}}\mu^{2}(\xi)\,\widehat{w_1}(\xi)\,
\overline{\widehat{w_2}(\xi)}\,d\xi,
\end{equation*}
де $w_1,w_2\in H^{\mu}(\mathbb{R}^{k})$. Останній породжує
норму
$$
\|w\|_{H^{\mu}(\mathbb{R}^{k})}:=(w,w)^{1/2}_
{H^{\mu}(\mathbb{R}^{k})}.
$$
Згідно з \cite[п.~2.2]{Hormander63} простір $H^{\mu}(\mathbb{R}^{k})$
є гільбертовим і сепарабельним відносно введеного в ньому скалярного добутку. Крім того, цей простір неперервно вкладений у
$\mathcal{S}'(\mathbb{R}^{k})$, а множина $C^{\infty}_{0}(\mathbb{R}^{k})$ фінітних нескінченно диференційовних функцій на $\mathbb{R}^{k}$ є щільною в ньому. Функціональний параметр $\mu$ називаємо показником регулярності для простору $H^{\mu}(\mathbb{R}^{k})$ та його версій для різних підмножин евклідового простору $\mathbb{R}^{k}$.

Версія простору $H^{\mu}(\mathbb{R}^{k})$ для довільної непорожньої відкритої множини
$V\subset\mathbb{R}^{k}$ вводиться у стандартний спосіб, а саме:
\begin{gather}\notag
H^{\mu}(V):=\bigl\{w\!\upharpoonright\!V:\,
w\in H^{\mu}(\mathbb{R}^{k})\bigr\},\\
\|u\|_{H^{\mu}(V)}:= \inf\bigl\{\|w\|_{H^{\mu}(\mathbb{R}^{k})}:\,w\in
H^{\mu}(\mathbb{R}^{k}),\;u=w\!\upharpoonright\!V\bigr\}, \label{8f40}
\end{gather}
де $u\in H^{\mu}(V)$. Тут, як звичайно, $w\!\upharpoonright\!V$ означає звуження розподілу $w\in H^{\mu}(\mathbb{R}^{k})$ на відкриту множину~$V$. Іншими словами, $H^{\mu}(V)$ є фактор-простором простору $H^{\mu}(\mathbb{R}^{k})$ за його підпростором
\begin{equation}\label{8f41}
H^{\mu}_{Q}(\mathbb{R}^{k}):=\bigl\{w\in
H^{\mu}(\mathbb{R}^{k}):\, \mathrm{supp}\,w\subseteq Q\bigr\}, \quad\mbox{де}\;\;Q:=\mathbb{R}^{k}\backslash V.
\end{equation}
Тому простір $H^{\mu}(V)$ є гільбертовим і сепарабельним. (Нагадаємо, що за означенням носій $\mathrm{supp}\,w$ розподілу $w$ є доповненням найширшої відкритої множини, на якій $w=0$. Якщо функція $w$ неперервна, то її носій збігається із замиканням множини усіх точок $x$ таких, що $w(x)\neq0$. Звісно, при цьому доповнення і замикання беруться в області визначення розподілу чи функції.)

Норма
\eqref{8f40} породжена скалярним добутком
$$
(u_{1},u_{2})_{H^{\mu}(V)}:= (w_{1}-\Upsilon
w_{1},w_{2}-\Upsilon w_{2})_{H^{\mu}(\mathbb{R}^{k})},
$$
де $w_{j}\in H^{\mu}(\mathbb{R}^{k})$, $w_{j}=u_{j}$ у $V$
для кожного номера $j\in\{1,\,2\}$. Тут $\Upsilon$ позначає ортогональний проєктор простору $H^{\mu}(\mathbb{R}^{k})$ на його підпростір \eqref{8f41}. Простори $H^{\mu}(V)$ і $H^{\mu}_{Q}(\mathbb{R}^{k})$ ввели та дослідили Л.~Р.~Волєвіч і Б.~П.~Панеях \cite[розд.~3]{VolevichPaneah65}.

З означення $H^{\mu}(V)$ та властивостей $H^{\mu}(\mathbb{R}^{k})$ випливає, що простір $H^{\mu}(V)$ неперервно вкладений у лінійний топологічний простір $\mathcal{D}'(V)$ усіх розподілів на $V$, а множина
$$
\bigl\{w\!\upharpoonright\!\overline{V}:w\in C^{\infty}_{0}(\mathbb{R}^{k})\bigr\}
$$
щільна в $H^{\mu}(V)$.

З точки зору застосувань узагальнених просторів Соболєва $H^{\mu}$ до  параболічних задач доцільно вибрати клас показників $\mu$ такий, що:
\begin{itemize}
\item[а)] за допомогою просторів $H^{\mu}$ більш тонко характеризується регулярність приналежних ним розподілів, ніж це можливо у класах соболєвських просторів;
\item[б)] простори $H^{\mu}$ допускають коректне означення на бічній поверхні циліндра, у якому задано параболічну задачу;
\item[в)] ці простори отримуються (квадратичною) інтерполяцією з функціональним параметром пар деяких гільбертових анізотропних просторів Соболєва.
\end{itemize}
\noindent Для просторів $H^{\mu}$, які задовольняють ці вимоги, можна очікувати на побудову змістовної теорії розв'язності параболічних задач на основі методу квадратичної інтерполяції гільбертових просторів.


Припустимо, що ціле число $k\geq2$ та дійсне число $\gamma>0$.
(Хоча введені нижче анізотропні простори потрібні
лише у випадку $\gamma=1/(2b)$, де парне число $2b$ є параболічною вагою задачі, їх природно розглянути для довільного $\gamma>0$.)
Використовуємо простори Хермандера $H^{\mu}(\mathbb{R}^{k})$
(та їх версії) з показником регулярності
\begin{equation}\label{8f4}
\mu(\xi',\xi_{k}):=\bigl(1+|\xi'|^2+|\xi_{k}|^{2\gamma}\bigr)^{s/2}
\varphi\bigl((1+|\xi'|^2+|\xi_{k}|^{2\gamma})^{1/2}\bigr),
\end{equation}
де $\xi'\in\mathbb{R}^{k-1}$ і $\xi_{k}\in\mathbb{R}$ є аргументами функції $\mu$. Тут числовий параметр $s$ дійсний, а функціональний параметр $\varphi$ пробігає клас~$\mathcal{M}$, означений нижче. (У кінці цього підрозділу буде показано, що функція \eqref{8f4} задовольняє умову~\eqref{Hermander-cond}). Частотні змінні $\xi'=(\xi_{1},\ldots,\xi_{k-1})$ і $\xi_{k}$ у формулі \eqref{8f4} є дуальними до дійсних змінних $x'=(x_{1},\ldots,x_{k-1})$ і $x_{k}$ відносно перетворення Фур'є. При цьому $x_{1},\ldots,x_{k-1}$ інтерпретуємо як рівноправні просторові змінні, а $x_{k}$~--- як часову змінну~$t$.

За означенням клас $\mathcal{M}$ складається з усіх вимірних за Борелем функцій $\varphi:[1,\infty)\rightarrow(0,\infty)$, які задовольняють такі дві умови:
\begin{itemize}
\item [а)] обидві функції $\varphi$ та $1/\varphi$ обмежені на кожному відрізку $[1,c]$, де
             $1<c<\infty$;
\item [б)] функція $\varphi$ повільно змінюється (за Караматою) на нескінченності, тобто
             $$\lim_{r\rightarrow\infty}\frac{\varphi(\lambda r)}{\varphi(r)}=1\quad\mbox{для кожного}\quad
             \lambda>0.$$
\end{itemize}

Важливим прикладом функції класу $\mathcal{M}$ є додатна неперервна функція $\varphi(r)$ аргументу $r>0$ така, що
\begin{equation*}
\varphi(r):=(\ln r)^{\theta_{1}}\,(\ln\ln r)^{\theta_{2}} \ldots
(\,\underbrace{\ln\ldots\ln}_{k\;\mbox{\small{разів}}}r\,)^{\theta_{k}}
\quad\mbox{при}\quad r\gg1,
\end{equation*}
де параметри $k\in\mathbb{N}$ та
$\theta_{1},\theta_{2},\ldots,\theta_{k}\in\mathbb{R}$ вибрано довільно. Вказані функцію утворюють мультилогарифмічну шкалу.

З теореми про інтегральне зображення повільно змінних функцій
(див., наприклад, \cite[c.~10]{Seneta85}) випливає такий простий інтегральний опис класу $\mathcal{M}$:

Функція $\varphi:\nobreak[1,\infty)\rightarrow(0,\infty)$ належить до класу $\mathcal{M}$ тоді і тільки тоді, коли
$$
\varphi(r)=\exp\Biggl(\delta(r)+
\int\limits_{1}^{\:r}\frac{\varepsilon(\tau)}{\tau}\;d\tau\Biggr)
\quad\mbox{при}\quad r\geq1
$$
для деякої неперервної дійсної функції $\varepsilon(\tau)$ аргументу $\tau\geq1$, яка прямує до нуля при $\tau\rightarrow\infty$, і деякої вимірної за Борелем обмеженої дійсної функції $\delta(r)$ аргументу $r\geq1$, яка має скінченну границю при $r\rightarrow\infty$.

Нехай $s\in\mathbb{R}$ і $\varphi\in\mathcal{M}$. Покладемо $H^{s,s\gamma;\varphi}(\mathbb{R}^{k}):=H^{\mu}(\mathbb{R}^{k})$, де функціональний параметр $\mu$ означено формулою~\eqref{8f4}. У~класичному випадку, коли $\varphi(\cdot)\equiv1$, простір $H^{s,s\gamma;\varphi}(\mathbb{R}^{k})$ стає  гільбертовим анізотропним простором Соболєва $H^{s,s\gamma}(\mathbb{R}^{k})$ порядку $(s,s\gamma)$, де $s$~--- порядок за просторовими змінними, а $s\gamma$~--- за часовою змінною. У загальному випадку, коли функція $\varphi\in\mathcal{M}$ довільна, виконуються щільні неперервні вкладення
\begin{equation}\label{8f5}
\begin{gathered}
H^{s_{1},s_{1}\gamma}(\mathbb{R}^{k})\hookrightarrow
H^{s,s\gamma;\varphi}(\mathbb{R}^{k})\hookrightarrow
H^{s_{0},s_{0}\gamma}(\mathbb{R}^{k}) \\
\mbox{при}\quad s_{0}<s<s_{1}.
\end{gathered}
\end{equation}
Дійсно, нехай $s_{0}<s<s_{1}$; оскільки $\varphi\in\mathcal{M}$, то існують додатні числа $c_0$ і $c_1$ такі, що
$$
c_0\,r^{s_0-s}\leq\varphi(r)\leq c_1\,r^{s_1-s}\quad\mbox{для
довільного}\quad r\geq1
$$
(див., наприклад, \cite[п.1.5, властивість $1^\circ$]{Seneta85}). Тоді
\begin{align*}
c_{0}&\bigl(1+|\xi'|^2+|\xi_{k}|^{2\gamma}\bigr)^{s_{0}/2}\leq\\
\leq&\bigl(1+|\xi'|^2+|\xi_{k}|^{2\gamma}\bigr)^{s/2}
\varphi\bigl((1+|\xi'|^2+|\xi_{k}|)^{1/2}\bigr)\leq\\
\leq& c_{1}\bigl(1+|\xi'|^2+|\xi_{k}|^{2\gamma}\bigr)^{s_{1}/2}
\end{align*}
для довільних $\xi'\in\mathbb{R}^{k-1}$ і $\xi_{k}\in\mathbb{R}$.
Звідси зразу випливають неперервні вкладення~\eqref{8f5}.
Вони щільні, оскільки множина $C^{\infty}_{0}(\mathbb{R}^{k})$
щільна в усіх просторах, наявних у формулі~\eqref{8f5}.

Розглянемо клас гільбертових просторів
\begin{equation}\label{8f6}
\bigl\{H^{s,s\gamma;\varphi}(\mathbb{R}^{k}):\,
s\in\mathbb{R},\,\varphi\in\mathcal{M}\,\bigr\}.
\end{equation}
Вкладення \eqref{8f5} показують, що в класі \eqref{8f6} функціональний параметр $\varphi$ визначає додаткову гладкість відносно основної анізотропної
$(s,s\gamma)$-гладкості. Якщо $\varphi(r)\rightarrow\infty$ (або
$\varphi(r)\rightarrow0$) при $r\rightarrow\infty$, то $\varphi$ визначає
позитивну (або негативну) додаткову гладкість. Інакше кажучи, $\varphi$
уточнює основну гладкість $(s,s\gamma)$. Тут $\gamma>0$ виконує роль параметра анізотропії просторів, що утворюють цей клас.

У випадку $\gamma=1/(2b)$ говоримо, що  $H^{s,s\gamma;\varphi}(\mathbb{R}^{k})$
є $2b$-анізо\-тропним узагальненим простором Соболєва на $\mathbb{R}^{k}$.

Введемо версії цього простору для множин, на яких будемо розглядати параболічні рівняння і пов'язані з ними крайові та початкові умови.
Нехай довільно задано ціле число $n\geq2$, дійсне число $\tau>0$
і обмежену область $G\subset\mathbb{R}^{n}$ з
нескінченно гладкою межею $\Gamma:=\partial G$.
Покладемо $\Omega:=G\times(0,\tau)$ і $S:=\Gamma\times(0,\tau)$. Отже,
$\Omega$~--- відкритий циліндр у $\mathbb{R}^{n+1}$, а $S$~--- його бічна поверхня. Їх замиканнями є множини $\overline{\Omega}:=\overline{G}\times[0,\tau]$ і $\overline{S}:=\Gamma\times[0,\tau]$ відповідно. Параболічні рівняння задаємо у скінченному циліндрі $\Omega$, крайові умови~--- на його бічній поверхні $S$, а початкові умови~--- на його нижній основі, яку ототожнюємо з областю $G$ простору~$\mathbb{R}^{n}$. Якщо $n=1$, то $G$ є інтервалом $(0,l)$ осі, де дійсне число $l>0$, а  $\Omega:=(0,l)\times(0,\tau)$ є відкритим прямокутником на площині~$\mathbb{R}^{2}$ (цей випадок досліджуємо окремо у пп. \ref{sec3.2.1} і \ref{sec4.2.1}).

Розв'язки і праві частини параболічних рівнянь розглядаємо в гільбертових функціональних просторах
$H^{s,s\gamma;\varphi}(\Omega):=H^{\mu}(\Omega)$, де показник $\mu$
означено формулою~\eqref{8f4}, в якій $k:=n+1$. Праві частини крайових
умов належать до аналогічних просторів, заданих на
бічній поверхні $S=\Gamma\times(0,\tau)$ циліндра $\Omega$. Останні потрібні у випадку $s>0$. Означимо їх за допомогою спеціальних локальних карт на~$S$.

Нехай $s>0$ і $\varphi\in\mathcal{M}$.
Попередньо розглянемо гільбертові
простори $H^{s,s\gamma;\varphi}(\Pi):=H^{\mu}(\Pi)$, задані на відкритій смузі $\Pi:=\nobreak\mathbb{R}^{n-1}\times(0,\tau)$; тут показник $\mu$ означено формулою \eqref{8f4}, в якій $k:=n$.

Довільно виберемо скінченний атлас із
$C^{\infty}$-структури на замкненому многовиді $\Gamma$, породженої евклідовим простором $\mathbb{R}^{n}$. Нехай цей атлас утворено локальними картами $\theta_{j}:\mathbb{R}^{n-1}\leftrightarrow\Gamma_{j}$, де $j=1,\ldots,\lambda$. Тут кожне
$\theta_{j}$ є $C^{\infty}$-дифеоморфізмом усього евклідового простору
$\mathbb{R}^{n-1}$ на деяку відкриту підмножину $\Gamma_{j}$ многовиду $\Gamma$. При цьому $\Gamma:=\Gamma_{1}\cup\cdots\cup\Gamma_{\lambda}$, тобто відкриті множини $\Gamma_{1},\ldots,\Gamma_{\lambda}$ утворюють покриття цього многовиду. Крім того, довільно виберемо функції
$\chi_{j}\in C^{\infty}(\Gamma)$, де $j=1,\ldots,\lambda$, такі, що
$\mathrm{supp}\,\chi_{j}\subset\Gamma_{j}$ і $\chi_{1}+\cdots+\chi_{\lambda}=1$ на $\Gamma$.
Ці функції утворюють $C^{\infty}$-розбиття одиниці на $\Gamma$, підпорядковане вказаному покриттю.

Вибраний атлас породжує набір спеціальних локальних карт:
\begin{equation}\label{8f-local}
\theta_{j}^*:\Pi=\mathbb{R}^{n-1}\times(0,\tau)\leftrightarrow
\Gamma_{j}\times(0,\tau),\quad j=1,\ldots,\lambda,
\end{equation}
на $S=\Gamma\times(0,\tau)$, означених формулою $\theta_{j}^{*}(x,t):=(\theta_{j}(x),t)$ для довільних
$x\in\mathbb{R}^{n-1}$ і $t\in(0,\tau)$.
Розглянемо функції
$\chi_{j}^{*}(x,t):=\chi_{j}(x)$ аргументів $x\in\Gamma$ і  $t\in(0,\tau)$, де $j=1,\ldots,\lambda$. Ці функції утворюють $C^{\infty}$-розбиття одиниці на $S$, підпорядковане покриттю $\{\Gamma_{j}\times(0,\tau):j=1,\ldots,\lambda\}$ многовиду $S$.

За означенням лінійний простір $H^{s,s\gamma;\varphi}(S)$ складається з усіх функцій $v\in L_2(S)$ таких, що для кожного
номера $j\in\{1,\ldots,\lambda\}$ функція
$$
v_{j}(x,t):=\chi_{j}(\theta_{j}(x))\,v(\theta_{j}(x),t)
$$
аргументів $x\in\mathbb{R}^{n-1}$ і $t\in(0,\tau)$ належить до $H^{s,s\gamma;\varphi}(\Pi)$. Тут, як звичайно, $L_2(S)$ -- комплексний гільбертів простір усіх функцій, квадратично інтегровних відносно міри Лебега на многовиді~$S$. У просторі $H^{s,s\gamma;\varphi}(S)$ означено скалярний добуток функцій $v$ і $v'$ за формулою
\begin{equation}\label{9f3.8a}
(v,v')_{H^{s,s\gamma;\varphi}(S)}:=
\sum_{j=1}^{\lambda}\,
(v_{j},v_{j}')_{H^{s,s\gamma;\varphi}(\Pi)}.
\end{equation}
Він породжує норму
$$
\|v\|_{H^{s,s\gamma;\varphi}(S)}:=(v,v)^{1/2}_{H^{s,s\gamma;\varphi}(S)}.
$$

Оскільки $v_{j}=(\chi_{j}^{*}v)\circ\theta_{j}^{*}$, то простір $H^{s,s\gamma;\varphi}(S)$ і топологію у ньому означено за допомогою спеціальних локальних карт~\eqref{8f-local}. Звісно, тут символ $\circ$ позначає композицію функцій чи відображень.

\begin{theorem}\label{9lem3.1a}
Нехай $s>0$, $\gamma>0$ і $\varphi\in\mathcal{M}$. Простір $H^{s,s\gamma;\varphi}(S)$ має такі властивості:
\begin{itemize}
\item[$\mathrm{(i)}$] Він є повним (тобто гільбертовим) і сепарабельним.
\item[$\mathrm{(ii)}$] Він не залежить з точністю до еквівалентності норм від вказаного вибору атласу і розбиття одиниці на $\Gamma$.
\item[$\mathrm{(iii)}$] У ньому щільною є множина $C^{\infty}(\overline{S})$.
\end{itemize}
\end{theorem}

Доведення дамо наприкінці підрозділу~\ref{sec2.6}.

У випадку, коли $\gamma=1$, простори $H^{s,s\gamma;\varphi}(\cdot)$ стають ізотропними і позначаються через $H^{s;\varphi}(\cdot)$. Їх розглянуто і застосовано до еліптичних задач у монографіях \cite{MikhailetsMurach10, MikhailetsMurach14}. Деякі такі простори потрібні нам для дослідження параболічних задач. У просторі $H^{s;\varphi}(G)$ розглядаються початкові дані (залежні лише від просторових змінних). Якщо $n=2$, то крайові дані залежать лише від часової змінної і, отже, є елементами простору $H^{s;\varphi}$ на інтервалі дійсної осі. Крім того, для дослідження умов узгодження, яким задовольняють праві частини параболічних задач, знадобиться простір $H^{s;\varphi}(\Gamma)$. Для зручності дамо означення потрібних нам ізотропних просторів.

Як і раніше, $s\in\mathbb{R}$ і $\varphi\in\mathcal{M}$, а $V$~--- непорожня відкрита підмножина простору $\mathbb{R}^{k}$, де ціле $k\geq1$. Гільбертів простір $H^{s;\varphi}(V)$~--- це простір $H^{\mu}(V)$, де функціональний параметр $\mu$ означено за формулою
\begin{equation*}
\mu(\xi)=\bigl(1+|\xi|^2\bigr)^{s/2}\varphi\bigl((1+|\xi|^2)^{1/2}\bigr)
\end{equation*}
для довільного $\xi\in\mathbb{R}^{k}$. Нас цікавлять випадки коли $V=\mathbb{R}^{k}$ або $V=G$, а також випадок, коли $V$~--- інтервал дійсної осі (якщо $k=1$), наприклад, $(0,\tau)$. В останньому випадку простір позначаємо через $H^{s;\varphi}(0,\tau)$, уникаючи подвійних дужок.

Простір $H^{s;\varphi}(\Gamma)$ означається за допомогою вказаних вище набору локальних карт $\{\theta_{j}\}$ і розбиття одиниці $\{\chi_{j}\}$ на $\Gamma$. А саме, він складається з усіх
розподілів $\omega$ на многовиді $\Gamma$ таких, що $\omega_{j}:=(\chi_{j}\omega)\circ\theta_{j}$ належить до $H^{s;\varphi}(\mathbb{R}^{n-1})$ для кожного номера $j\in\{1,\ldots,\lambda\}$. Тут $(\chi_{j}\omega)\circ\theta_{j}$ позначає зображення розподілу $\chi_{j}\omega$ у локальній карті~$\theta_{j}$. У~просторі $H^{s;\varphi}(\Gamma)$ скалярний добуток розподілів $\omega$ і $\omega'$ означено формулою
\begin{equation*}
(\omega,\omega')_{H^{s;\varphi}(\Gamma)}:=
\sum_{j=1}^{\lambda}\,
(\omega_{j},\omega'_{j})_{H^{s;\varphi}(\mathbb{R}^{n-1})}.
\end{equation*}
Останній породжує норму
$$
\|\omega\|_{H^{s;\varphi}(\Gamma)}:=(\omega,\omega)^{1/2}_{H^{s;\varphi}(\Gamma)}.
$$
Простір $H^{s;\varphi}(\Gamma)$ гільбертів і сепарабельний. Він не залежить з точністю до еквівалентності норм від вказаного вибору локальних карт і розбиття одиниці на $\Gamma$ (див. \cite[теорема~2.3]{MikhailetsMurach10, MikhailetsMurach14}).

Якщо $\varphi(r)\equiv1$, то $H^{s,s\gamma;\varphi}(\cdot)$ та $H^{s;\varphi}(\cdot)$ стають просторами Соболєва, анізотропним $H^{s,s\gamma}(\cdot)$ та ізотропним $H^{s}(\cdot)$ відповідно.
Із вкладень \eqref{8f5} випливає, що
\begin{equation}\label{8f5a}
H^{s_{1},s_{1}\gamma}(\cdot)\hookrightarrow
H^{s,s\gamma;\varphi}(\cdot)\hookrightarrow
H^{s_{0},s_{0}\gamma}(\cdot)\quad\mbox{при}\quad s_{0}<s<s_{1}.
\end{equation}
Зокрема, при $\gamma=1$ маємо
\begin{equation}\label{8f5b}
H^{s_{1}}(\cdot)\hookrightarrow
H^{s;\varphi}(\cdot)\hookrightarrow
H^{s_{0}}(\cdot)\quad\mbox{при}\quad s_{0}<s<s_{1}
\end{equation}
(див. також \cite[теореми 2.3(iii), 3.3(iii)]{MikhailetsMurach10, MikhailetsMurach14}). Ці вкладення неперервні та щільні. Звісно, якщо $s=0$, то простір $H^{s}(\cdot)=H^{s,s\gamma}(\cdot)$ збігається з точністю до еквівалентності норм з гільбертовим простором $L_2(\cdot)$ всіх квадратично інтегрованих функцій, заданих на відповідній вимірній множині.

У випадку $\varphi(r)\equiv1$, будемо зазвичай прибирати індекс $\varphi$ у позначеннях функціональних просторів, уведених нижче  на основі просторів $H^{s,s\gamma;\varphi}(\cdot)$ і $H^{s;\varphi}(\cdot)$.

Як і було обіцяно, покажемо на завершення цього підрозділу, що функція
\eqref{8f4} задовольняє умову~\eqref{Hermander-cond}. Задля більшої лаконічності подальших формул уведемо функцію
\begin{equation}\label{r-gamma-new}
r_{\gamma}(\xi',\xi_{k}):=\bigl(1+|\xi'|^2+|\xi_{k}|^{2\gamma}\bigr)^{1/2}
\end{equation}
аргументів $\xi'\in\mathbb{R}^{k-1}$ і $\xi_{k}\in\mathbb{R}$ та подамо функцію \eqref{8f4} у вигляді $\mu(\xi)=r_{\gamma}^{s}(\xi)\varphi(r_{\gamma}(\xi))$, де  $\xi=(\xi',\xi_{k})$. Оскільки $\varphi\in\mathcal{M}$, то існує число $c_{\varphi}\geq1$ таке, що
\begin{equation}\label{9f-app2}
\frac{\varphi(\lambda r)}{\varphi(r)}\leq c_{\varphi}\lambda
\quad\mbox{і}\quad
\frac{\varphi(r)}{\varphi(\lambda r)}\leq c_{\varphi}\lambda
\end{equation}
для довільних $r\geq1$ і $\lambda\geq1$ (див., наприклад, \cite[п.~2.4.1, формула (2.91)]{MikhailetsMurach10, MikhailetsMurach14}). Крім того, згідно з \cite[розд.~I, \S~2, п.~2]{VolevichPaneah65} існують числа $c_{\gamma}\geq1$ і $l_{\gamma}>0$ такі, що
\begin{equation}\label{9f-app3}
\frac{r_{\gamma}(\xi)}{r_{\gamma}(\eta)}\leq
c_{\gamma}(1+|\xi-\eta|)^{l_{\gamma}}
\end{equation}
для довільних аргументів $\xi,\eta\in\mathbb{R}^{k}$.

Якщо у формулі \eqref{9f-app3} чисельник більший за знаменник або дорівнює йому, то
\begin{equation*}
\begin{split}
\frac{\mu(\xi)}{\mu(\eta)}&=
\frac{r_{\gamma}^{s}(\xi)\,\varphi(r_{\gamma}(\xi))}
{r_{\gamma}^{s}(\eta)\,\varphi(r_{\gamma}(\eta))}\leq
c_{\varphi}\,
\frac{r_{\gamma}^{s+1}(\xi)}{r_{\gamma}^{s+1}(\eta)}\leq\\
&\leq c_{\varphi} c_{\gamma}^{\max\{s+1,0\}}
(1+|\xi-\eta|)^{l_{\gamma}\max\{s+1,0\}}
\end{split}
\end{equation*}
на підставі першої нерівності у формулі \eqref{9f-app2}. Інакше

\begin{equation*}
\begin{split}
\frac{\mu(\xi)}{\mu(\eta)}&\leq c_{\varphi}\,
\frac{r_{\gamma}^{s-1}(\xi)}{r_{\gamma}^{s-1}(\eta)}=
c_{\varphi}\,
\frac{r_{\gamma}^{1-s}(\eta)}{r_{\gamma}^{1-s}(\xi)}\leq\\
&\leq c_{\varphi} c_{\gamma}^{\max\{1-s,0\}}
(1+|\xi-\eta|)^{l_{\gamma}\max\{1-s,0\}}
\end{split}
\end{equation*}
на підставі другої нерівності у формулі \eqref{9f-app2}. Отже, у будь-якому випадку правильна нерівність \eqref{Hermander-cond}, якщо узяти в ній
\begin{equation*}
c:=c_{\varphi} c_{\gamma}^{\max\{s+1,1-s,0\}}\quad\mbox{і}\quad
l:=l_{\gamma}\max\{s+1,1-s,0\}.
\end{equation*}

Стосовно умови \eqref{Hermander-cond} на показник регулярності $\mu$ слід зазначити, що Л.~Хермандер \cite[означення~2.1.1]{Hormander63} використовує більш сильну умову
\begin{equation}\label{Hermander-cond-initial}
\frac{\mu(\xi)}{\mu(\eta)}\leq
(1+c|\xi-\eta|)^{l}\quad\mbox{для довільних}\quad \xi,\eta\in\mathbb{R}^{k}
\end{equation}
(з неї випливає неперервність $\mu$). Втім, дві множини функцій $\mu$, які задовольняють відповідно першу або другу умову, задають один і той самий клас просторів $H^{\mu}(\mathbb{R}^{k})$ (з точністю до еквівалентності норм). Це зазначено в  \cite[зауваження наприкінці п.~2.1]{Hormander63} стосовно неперервних функцій. Як вказують Л.~Р.~Волєвіч і Б.~П.~Панеях \cite[розд~I, \S~1, п.~1]{VolevichPaneah65}, умова неперервності функції $\mu$ не є істотною, якщо $\mu$ задовольняє \eqref{Hermander-cond}. А саме: замінивши її на більш слабку умову вимірності за Борелем, отримаємо той самий клас просторів $H^{\mu}(\mathbb{R}^{k})$.

Зокрема, для борелевої функції $\mu$ вигляду \eqref{8f4} існує нескінченно диференційовна функція $\mu_{1}:\mathbb{R}^{k}\to(0,\infty)$, підпорядкована умові \eqref{Hermander-cond} і така, що $H^{\mu}(\mathbb{R}^{k})=H^{\mu_{1}}(\mathbb{R}^{k})$ з еквівалентністю  норм у просторах. Справді, оскільки $\varphi\in\mathcal{M}$, то знайдеться функція $\varphi_{1}\in\mathcal{M}\cap C^{\infty}([1,\infty))$ така, що обидві функції $\varphi/\varphi_{1}$ і  $\varphi_{1}/\varphi$  обмежені на $[1,\infty)$ (це випливає з \cite[п.~1.4, властивість~$1^{\circ}$]{Seneta85}). Залишається покласти
$$
\mu_{1}(\xi',\xi_k):=r_{\gamma}^{s}\bigl(\xi',(1+\xi_{k}^{2})^{1/2}\bigr)\,
\cdot\varphi_{1}\bigl(r_{\gamma}(\xi',(1+\xi_{k}^{2}))^{1/2}\bigr)
$$
для довільних $\xi'\in\mathbb{R}^{k-1}$ і $\xi_{k}\in\mathbb{R}$.


\section[Окремі функціональні простори]{Окремі функціональні простори}\label{sec2.5}

\markright{\emph \ref{sec2.5}. Окремі функціональні простори}

Для дослідження параболічних задач з однорідними початковими умовами (тобто нульовими даними Коші) потрібні окремі простори, утворені функціями, які дорівнюють нулю, якщо часова змінна $t<0$. Введемо такі простори на основі класу гільбертових просторів \eqref{8f6}, заданих на $\mathbb{R}^{k}$, де ціле $k\geq2$. Як і раніше, останню координату $x_{k}$ вектора $x\in\mathbb{R}^{k}$ інтерпретуємо як часову змінну $t$.

Нехай $s\in\mathbb{R}$, $\gamma>0$ і $\varphi\in\mathcal{M}$. Припустимо, що $V$~--- непорожня відкрита підмножина простору $\mathbb{R}^{k}$, де ціле $k\geq2$. (Для нас головним є випадок, коли $V$~--- циліндр $\Omega$, де $k=n+1$.) Покладемо
\begin{equation}\label{9f3.3}
\begin{split}
H^{s,s\gamma;\varphi}_{+}(V):=\bigl\{w\!\upharpoonright\!V:\,
&w\in H^{s,s\gamma;\varphi}(\mathbb{R}^{k}),\;\,  \\
&\mathrm{supp}\,
w\subseteq\mathbb{R}^{k-1}\times[0,\infty)\bigl\}.
\end{split}
\end{equation}
Означимо норму в лінійному просторі \eqref{9f3.3} за формулою
\begin{equation}\label{9f3.4}
\begin{split}
\|u\|_{H^{s,s\gamma;\varphi}_{+}(V)}:=
\inf\bigl\{\,&\|w\|_{H^{s,s\gamma;\varphi}(\mathbb{R}^{k})}:
w\in H^{s,s\gamma;\varphi}(\mathbb{R}^{k}),\;\,\\
&\mathrm{supp}\,w\subseteq\mathbb{R}^{k-1}\times[0,\infty),\;\,
u=w\!\upharpoonright\!V\bigl\},
\end{split}
\end{equation}
де $u\in H^{s,s\gamma;\varphi}_{+}(V)$.

Зокрема, якщо $V=\mathbb{R}^{k}$, то $H^{s,s\gamma;\varphi}_{+}(\mathbb{R}^{k})$ складається з усіх розподілів $w\in H^{s,s\gamma;\varphi}(\mathbb{R}^{k})$ таких, що $\mathrm{supp}\,w\subseteq\mathbb{R}^{k-1}\times[0,\infty)$, і є (замкненим) підпростором гільбертового простору $H^{s,s\gamma;\varphi}(\mathbb{R}^{k})$. Множина
\begin{align*}
&C^{\infty}_{0}(\mathbb{R}^{k-1}\times(0,\infty)):=\\
&:=\bigl\{w\in C^{\infty}_{0}(\mathbb{R}^{k}):\,
\mathrm{supp}\,w\subseteq\mathbb{R}^{k-1}\times(0,\infty)\bigr\}
\end{align*}
є щільною у просторі $H^{s,s\gamma;\varphi}_{+}(\mathbb{R}^{k})$, як показано в \cite[Лема~3.3]{VolevichPaneah65}.

Простір $H^{s,s\gamma;\varphi}_{+}(V)$ гільбертів і сепарабельний, оскільки за означенням він є фактор-простором сепарабельного гільбертового простору $H^{s,s\gamma;\varphi}_{+}(\mathbb{R}^{k})$ за його підпростором
\begin{equation}\label{9f3.5}
H^{s,s\gamma;\varphi}_{Q}(\mathbb{R}^{k}):=
\bigl\{w\in H^{s,s\gamma;\varphi}(\mathbb{R}^{k}):
\mathrm{supp}\,w\subseteq Q\bigr\},
\end{equation}
де $Q:=(\mathbb{R}^{k-1}\times[0,\infty))\setminus V$.
Норма \eqref{9f3.4} породжена скалярним добутком
$$
(u_{1},u_{2})_{H^{s,s\gamma;\varphi}_{+}(V)}:=
(w_{1}-\Upsilon w_{1},w_{2}-\Upsilon w_{2})_{H^{s,s\gamma;\varphi}(\mathbb{R}^{k})},
$$
де $u_{1},u_{2}\in H^{s,s\gamma;\varphi}_{+}(V)$. Тут $w_{j}\in H^{s,s\gamma;\varphi}_{+}(\mathbb{R}^{k})$, $w_{j}=u_{j}$ на $V$ для кожного $j\in\{1,2\}$, а $\Upsilon$ є ортогональним проєктором простору $H^{s,s\gamma;\varphi}_{+}(\mathbb{R}^{k})$ на його підпростір \eqref{9f3.5}.

Виконуються щільні неперервні вкладення
\begin{equation}\label{9f3.6}
\begin{gathered}
H^{s_{1},s_{1}\gamma}_{+}(V)\hookrightarrow
H^{s,s\gamma;\varphi}_{+}(V)\hookrightarrow
H^{s_{0},s_{0}\gamma}_{+}(V) \\
\mbox{при}\quad s_{0}<s<s_{1}.
\end{gathered}
\end{equation}
Вони є наслідком неперервних вкладень \eqref{8f5} та щільності множини
\begin{equation*}
\bigl\{w\!\upharpoonright\!\overline{V}:w\in C^{\infty}_{0}(\mathbb{R}^{k-1}\times(0,\infty))\bigr\}
\end{equation*}
у просторах, наявних у \eqref{9f3.6}.

Для крайових даних потрібен аналог простору $H^{s,s\gamma;\varphi}_{+}(\Omega)$ на бічній поверхні $S$ циліндра $\Omega$. Цей простір означається подібно до $H^{s,s\gamma;\varphi}(S)$ за допомогою спеціальних локальних карт \eqref{8f-local} на $S$ і відповідного розбиття одиниці. Вони породжені атласом і розбиттям одиниці на $\Gamma$, вказаними у підрозділі~\ref{sec2.3}. Припустимо, що $s>0$. Розглянемо гільбертів простір $H^{s,s\gamma;\varphi}_{+}(\Pi)$, де $\Pi:=\mathbb{R}^{n-1}\times(0,\tau)$. Покладемо
\begin{equation}\label{9f3.7}
\begin{aligned}
H^{s,s\gamma;\varphi}_{+}(S):=
\bigl\{&v\in L_2(S):\,
(\chi_{j}^{*}v)\circ\theta_{j}^{*}\in H^{s,s\gamma;\varphi}_{+}(\Pi)\\
\;\,&\mbox{для всіх}\;\,j\in\{1,\ldots,\lambda\}\bigr\}.
\end{aligned}
\end{equation}
Означимо у лінійному просторі \eqref{9f3.7} скалярний добуток розподілів $v_{1}$ і $v_{2}$ за формулою
\begin{equation}\label{9f3.8}
(v_{1},v_{2})_{H^{s,s\gamma;\varphi}_{+}(S)}:=\sum_{j=1}^{\lambda}\,
((\chi_{j}^{*}v_{1})\circ\theta_{j}^{*},
(\chi_{j}^{*}v_{2})\circ
\theta_{j}^{*})_{H^{s,s\gamma;\varphi}_{+}(\Pi)}.
\end{equation}
Останній породжує норму
\begin{equation*}
\|v\|_{H^{s,s\gamma;\varphi}_{+}(S)}:=
(v,v)_{H^{s,s\gamma;\varphi}_{+}(S)}^{1/2}.
\end{equation*}

\begin{theorem}\label{9lem3.1}
Нехай $s>0$, $\gamma>0$, і $\varphi\in\mathcal{M}$. Простір $H^{s,s\gamma;\varphi}_{+}(S)$ має такі властивості:
\begin{itemize}
\item[$\mathrm{(i)}$] Він є повним (тобто гільбертовим) і сепарабельним.
\item[$\mathrm{(ii)}$] Він не залежить з точністю до еквівалентності норм від вказаного вибору атласу і розбиття одиниці на $\Gamma$.
\item[$\mathrm{(iii)}$] У ньому є щільною множина
\begin{equation}\label{th1.6-dense-set}
\bigl\{h\in C^{\infty}(\overline{S}):\mathrm{supp}\,
h\subseteq\Gamma\times(0,\tau]\bigr\}.
\end{equation}
\end{itemize}
\end{theorem}

Щодо висновку (iii) цієї теореми нагадаємо, що $\overline{S}=\Gamma\times[0,\tau]$. Її доведення дамо наприкінці підрозділу~\ref{sec2.6}.

Зауважимо, що неперервні і щільні вкладення \eqref{9f3.6} виконуються також у випадку, коли $V=S$ і $s_{0}>0$. Це випливає з \eqref{9f3.6} (для $V=\Pi$) і висновку (iii) теореми~\ref{9lem3.1}.

В ізотропному випадку, коли $\gamma=1$, простір $H^{s,s\gamma;\varphi}_{+}(V)$ позначаємо через $H^{s;\varphi}_{+}(V)$.
Останній нам знадобиться у ситуації, коли $k=1$, для дослідження параболічних задач у прямокутнику $\Omega=(0,l)\times(0,\tau)$. А саме, потрібні такі сепарабельні гільбертові простори:
\begin{gather*}
H^{s,\varphi}_{+}(\mathbb{R}):=\bigl\{h\in H^{s,\varphi}(\mathbb{R}):\,
\mathrm{supp}\,h\subseteq[0,\infty)\bigr\},\\
H^{s,\varphi}_{+}(0,\tau):=\bigl\{h\!\upharpoonright\!(0,\tau):\,h\in
H^{s,\varphi}_{+}(\mathbb{R})\bigr\}.
\end{gather*}
Перший з них є (замкненим) підпростором простору
$H^{s,\varphi}(\mathbb{R})$, а другий наділено нормою
\begin{equation*}
\|v\|_{H^{s,\varphi}_{+}(0,\tau)}:=
\inf\bigl\{\|h\|_{H^{s,\varphi}(\mathbb{R})}:\,h\in
H^{s,\varphi}_{+}(\mathbb{R}),\;\;v=h\!\upharpoonright\!(0,\tau)\bigr\},
\end{equation*}
де $v\in H^{s,\varphi}_{+}(0,\tau)$.

\markright{\emph \ref{sec2.6}. Інтерполяція просторів Соболєва}

\section[Інтерполяція просторів Соболєва]{Інтерполяція просторів Соболєва}\label{sec2.6}

\markright{\emph \ref{sec2.6}. Інтерполяція просторів Соболєва}

Узагальнені соболєвські простори $H^{s,s\gamma;\varphi}(\cdot)$ і $H^{s,s\gamma;\varphi}_{+}(\cdot)$, розглянуті вище, мають важливу інтерполяційну властивість, яка відіграє ключову роль у їх застосуваннях до параболічних задач. Вона полягає у тому, що ці простори є результатом квадратичної інтерполяції з функціональним параметром пар просторів Соболєва, які фігурують у вкладеннях \eqref{8f5a} і \eqref{9f3.6}. Розглянемо спочатку ізотропний випадок, коли $\gamma=1$, досліджений у \cite{MikhailetsMurach10, MikhailetsMurach14}.

Нехай
\begin{equation}\label{9f7.1}
s,s_{0},s_{1},\gamma\in\mathbb{R},\quad s_{0}<s<s_{1},\quad\gamma>0
\quad\mbox{і}\quad\varphi\in\mathcal{M}.
\end{equation}
Означимо функцію
\begin{equation}\label{9f7.2}
\psi(r):=
\begin{cases}
\;r^{(s-s_{0})/(s_{1}-s_{0})}\,\varphi(r^{1/(s_{1}-s_{0})})&\text{для}\quad r\geq1, \\
\;\varphi(1) & \text{для}\quad0<r<1.
\end{cases}
\end{equation}
Вона є інтерполяційним параметром за теоремою~\ref{9prop6.1}, оскільки
є правильно змінною на нескінченності порядку
$$
\theta:=\frac{s-s_{0}}{s_{1}-s_{0}}\in(0,1).
$$
Використовуємо цю функцію як функціональний параметр квадратичної інтерполяції пар вказаних соболєвських просторів.

\begin{theorem}\label{8prop4}
За припущення \eqref{9f7.1} виконується рівність просторів
\begin{equation}\label{8f49}
H^{s;\varphi}(W)=
\bigl[H^{s_{0}}(W),H^{s_{1}}(W)\bigr]_{\psi}
\end{equation}
з еквівалентістю норм у них; тут $W=G$ або $W=\Gamma$. Якщо $W=\mathbb{R}^{k}$, де ціле $k\geq1$, то рівність \eqref{8f49} виконується разом із рівністю норм у просторах.
\end{theorem}

Цей результат доведено в \cite[теореми 1.14, 2.2 і 3.2]{MikhailetsMurach10, MikhailetsMurach14} для випадків $W=\mathbb{R}^{k}$, $W=\Gamma$ і $W=G$ відповідно.

Встановимо версії теореми~\ref{8prop4} для анізотропних просторів.
Почнемо з базових просторів на $\mathbb{R}^{k}$, де ціле $k\geq2$.

\begin{theorem}\label{9lem7.1}
За припущення \eqref{9f7.1} виконується рівність просторів
\begin{equation}\label{9f7.3}
H^{s,s\gamma;\varphi}(\mathbb{R}^{k})=
\bigl[H^{s_{0},s_{0}\gamma}(\mathbb{R}^{k}),
H^{s_{1},s_{1}\gamma}(\mathbb{R}^{k})\bigr]_{\psi}
\end{equation}
разом із рівністю норм у них.
\end{theorem}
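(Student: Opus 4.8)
The plan is to repeat, in the anisotropic setting, the Fourier-image argument that underlies Theorem~\ref{8prop4} for $W=\mathbb{R}^{k}$, the case $\gamma=1$ of which is carried out in \cite{MikhailetsMurach10, MikhailetsMurach14}. Set $X_{0}:=H^{s_{0},s_{0}\gamma}(\mathbb{R}^{k})$ and $X_{1}:=H^{s_{1},s_{1}\gamma}(\mathbb{R}^{k})$. Since $r_{\gamma}(\xi)\geq1$ for all $\xi$ and $s_{0}<s_{1}$, we have $r_{\gamma}^{s_{0}}(\xi)\leq r_{\gamma}^{s_{1}}(\xi)$ pointwise, so $\|w\|_{X_{0}}\leq\|w\|_{X_{1}}$; hence $X_{1}\hookrightarrow X_{0}$ continuously, and this embedding is dense because $C_{0}^{\infty}(\mathbb{R}^{k})$ is contained in $X_{1}$ and dense in $X_{0}$. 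Both spaces are separable Hilbert spaces, so $X:=[X_{0},X_{1}]$ is an admissible couple in the sense of Section~\ref{sec2.1}.

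Next I would produce the generating operator explicitly. Let $J$ be the operator in $X_{0}$ acting on the Fourier side as multiplication by the symbol $r_{\gamma}^{s_{1}-s_{0}}$, i.e. $(Jw)^{\wedge}(\xi):=r_{\gamma}^{s_{1}-s_{0}}(\xi)\,\widehat{w}(\xi)$. Because $r_{\gamma}^{s_{1}-s_{0}}\geq1$, this $J$ is a positive-definite self-adjoint operator in $X_{0}$ (being unitarily equivalent, via the Fourier transform, to multiplication by $r_{\gamma}^{s_{1}-s_{0}}$ in $L_{2}(\mathbb{R}^{k},r_{\gamma}^{2s_{0}}\,d\xi)$); its domain is $\{w:r_{\gamma}^{s_{1}}\widehat{w}\in L_{2}(\mathbb{R}^{k})\}=X_{1}$; and $\|Jw\|_{X_{0}}=\|w\|_{X_{1}}$ for every $w\in X_{1}$. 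Thus $J$ is a generating operator for $X$, with $J:X_{1}\leftrightarrow X_{0}$.

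By the Borel functional calculus the same unitary equivalence shows that $\psi(J)$ is the Fourier multiplier with symbol $\psi(r_{\gamma}^{s_{1}-s_{0}})$. From the definition~\eqref{9f7.2} of $\psi$ and the inequality $r_{\gamma}(\xi)\geq1$ one computes $\psi\bigl(r_{\gamma}^{s_{1}-s_{0}}(\xi)\bigr)=r_{\gamma}^{s-s_{0}}(\xi)\,\varphi(r_{\gamma}(\xi))$, and therefore
\begin{equation*}
\|w\|_{X_{\psi}}^{2}=\|\psi(J)w\|_{X_{0}}^{2}
=\int_{\mathbb{R}^{k}}r_{\gamma}^{2s_{0}}(\xi)\,\psi^{2}\bigl(r_{\gamma}^{s_{1}-s_{0}}(\xi)\bigr)\,|\widehat{w}(\xi)|^{2}\,d\xi
=\int_{\mathbb{R}^{k}}r_{\gamma}^{2s}(\xi)\,\varphi^{2}(r_{\gamma}(\xi))\,|\widehat{w}(\xi)|^{2}\,d\xi .
\end{equation*}
By~\eqref{8f4} and~\eqref{r-gamma-new} the right-hand side is precisely $\|w\|_{H^{s,s\gamma;\varphi}(\mathbb{R}^{k})}^{2}$, so $X_{\psi}=H^{s,s\gamma;\varphi}(\mathbb{R}^{k})$ with equal norms. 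Finally, $\psi$ defined by~\eqref{9f7.2} lies in $\mathcal{B}$ and is regularly varying with index $\theta=(s-s_{0})/(s_{1}-s_{0})\in(0,1)$; hence, by Theorem~\ref{9prop6.1}, $\psi$ is an interpolation parameter, and consequently $[X_{0},X_{1}]_{\psi}=X_{\psi}=H^{s,s\gamma;\varphi}(\mathbb{R}^{k})$ with equality of norms, which is~\eqref{9f7.3}.

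The whole proof is thus a bookkeeping computation on the Fourier side. The only steps that need genuine attention are the verification that the explicitly defined $J$ really is a generating operator for the couple (self-adjoint and positive-definite in $X_{0}$, with domain $X_{1}$ and $\|J\cdot\|_{X_{0}}=\|\cdot\|_{X_{1}}$) and the identification of $\psi(J)$ as the stated Fourier multiplier; both follow immediately from the unitary diagonalization of $J$ by the Fourier transform. No new anisotropic obstacle appears, since $r_{\gamma}(\xi)\geq1$ everywhere makes the symbol identity $\psi(r_{\gamma}^{s_{1}-s_{0}})=r_{\gamma}^{s-s_{0}}\varphi(r_{\gamma})$ exact rather than merely asymptotic, which is what yields equality (not just equivalence) of norms.
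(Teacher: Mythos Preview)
Your proof is correct and follows essentially the same route as the paper: define the generating operator $J$ as the Fourier multiplier with symbol $r_{\gamma}^{s_{1}-s_{0}}$, use the unitary equivalence with a weighted $L_{2}$-space to identify $\psi(J)$ as the multiplier with symbol $r_{\gamma}^{s-s_{0}}\varphi(r_{\gamma})$, and read off the norm identity. The only cosmetic difference is that the paper verifies the norm identity first on $C^{\infty}_{0}(\mathbb{R}^{k})$ and then passes to the completion by density, whereas you argue directly that the domain of $\psi(J)$ coincides with $H^{s,s\gamma;\varphi}(\mathbb{R}^{k})$; both are standard and equivalent here.
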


\begin{proof}[\indent Доведення]
Пара гільбертових просторів
\begin{equation*}
X:=\bigl[H^{s_{0},s_{0}\gamma}(\mathbb{R}^{k}),
H^{s_{1},s_{1}\gamma}(\mathbb{R}^{k})\bigr]
\end{equation*}
є регулярною, що випливає із щільних неперервних вкладень \eqref{8f5}. Для неї породжуючим є оператор
$$
J:w\mapsto\mathcal{F}^{-1}[r_{\gamma}^{s_{1}-s_{0}}\,\mathcal{F}w\,]
\quad\mbox{з}\quad w\in H^{s_{1},s_{1}\gamma}(\mathbb{R}^{k}).
$$
Тут $\mathcal{F}$ і $\mathcal{F}^{-1}$ позначають відповідно пряме і обернене перетворення Фур'є, а функцію $r_{\gamma}$ означено формулою
\eqref{r-gamma-new}. Оператор $J$ за допомогою перетворення Фур'є зводиться до оператора множення на функцію $r_{\gamma}^{s_{1}-s_{0}}$, а перетворення Фур'є встановлює ізометричний ізоморфізм
$$
\mathcal{F}:H^{s_{0},s_{0}\gamma}(\mathbb{R}^{k})\leftrightarrow
L_{2}\bigl(\mathbb{R}^{k},r_{\gamma}^{2s_{0}}(\xi',\xi_k)d\xi' d\xi_k\bigr).
$$
Тут, як звичайно, другий гільбертів простір складається з усіх функцій аргументів $\xi'\in\mathbb{R}^{k-1}$ і $\xi_k\in\mathbb{R}$, квадратично інтегровних на $\mathbb{R}^{k}$ відносно міри $r_{\gamma}^{2s_{0}}(\xi',\xi_k)d\xi'd\xi_k$. Отже, перетворення Фур'є зводить оператор $\psi(J)$ до оператора множення на функцію
$$
\psi(r_{\gamma}^{s_{1}-s_{0}}(\xi',\xi_k))\equiv
r_{\gamma}^{s-s_{0}}(\xi',\xi_k)\,\varphi(r_{\gamma}(\xi',\xi_k)).
$$
Тому для кожної функції $w\in C^{\infty}_{0}(\mathbb{R}^{k})$ виконуються такі рівності:
\begin{align*}
\|w\|_{X_{\psi}}^{2}&=
\|\psi(J)w\|_{H^{s_{0},s_{0}\gamma}(\mathbb{R}^{k})}^{2}=\\
&=\int\limits_{\mathbb{R}^{k}}
|\psi(r_{\gamma}^{s_{1}-s_{0}}(\xi',\xi_k))\,
(\mathcal{F}w)(\xi',\xi_k)|^{2}\,r_{\gamma}^{2s_{0}}(\xi',\xi_k)d\xi'
d\xi_k=\\
&=\int\limits_{\mathbb{R}^{k}}
r_{\gamma}^{2s}(\xi',\xi_k)\,\varphi^{2}(r_{\gamma}(\xi',\xi_k))\,
|(\mathcal{F}w)(\xi',\xi_k)|^{2}\,d\xi' d\xi_k=\\
&=\|w\|_{H^{s,s\gamma;\varphi}(\mathbb{R}^{k})}.
\end{align*}
Звідси випливає потрібна рівність просторів \eqref{9f7.3}, оскільки множина $C^{\infty}_{0}(\mathbb{R}^{k})$ є щільною в них. (Ця множина щільна в другому просторі $X_{\psi}$, бо є щільною в просторі $H^{s_{1},s_{1}\gamma}(\mathbb{R}^{k})$, а він неперервно
та щільно вкладається в $X_{\psi}$.)
\end{proof}

\begin{theorem}\label{9lem7.2}
Додатково до \eqref{9f7.1} припустимо, що $s_{0}\geq0$.
Тоді виконується рівність просторів
\begin{equation}\label{9f7.4}
H^{s,s\gamma;\varphi}_{+}(\mathbb{R}^{k})=
\bigl[H^{s_{0},s_{0}\gamma}_{+}(\mathbb{R}^{k}),
H^{s_{1},s_{1}\gamma}_{+}(\mathbb{R}^{k})\bigr]_{\psi}
\end{equation}
з еквівалентністю норм у них.
\end{theorem}

\begin{proof}[\indent Доведення]
Виведемо формулу \eqref{9f7.4} з теореми~\ref{9lem7.1} за допомогою теореми~\ref{9prop6.2}. Для цього треба мати лінійне відображення $P$, задане на $L_{2}(\mathbb{R}^{k})$ і таке, що $P$ є проєктором простору $H^{s_{j},s_{j}\gamma}(\mathbb{R}^{k})$ на підпростір $H^{s_{j},s_{j}\gamma}_{+}(\mathbb{R}^{k})$ для кожного $j\in\{0,1\}$. Побудуємо це відображення.

Згідно з \cite[Лема 2.9.3]{Triebel80}, існує лінійний обмежений оператор
\begin{equation}\label{9f7.5}
T:L_{2}(-\infty,0)\rightarrow L_{2}(\mathbb{R})
\end{equation}
такий, що $Th=h$ на $(-\infty,0)$ для кожної функції $h\in L_{2}(-\infty,0)$ і його звуження є обмеженим оператором на парі соболєвських просторів
\begin{equation}\label{9f7.6}
T:H^{s_{j}\gamma}(-\infty,0)\rightarrow H^{s_{j}\gamma}(\mathbb{R})\quad
\mbox{для кожного}\quad j\in\{0,1\}.
\end{equation}
Скориставшись тензорним добутком обмежених операторів у гільбертових просторах, отримаємо лінійний обмежений оператор
\begin{equation}\label{9f7.7}
I\otimes T:L_{2}(\mathbb{R}^{k-1}\times(-\infty,0))\to
L_{2}(\mathbb{R}^{k})
\end{equation}
такий, що $(I\otimes T)v=v$ на
$\mathbb{R}^{k-1}\times(-\infty,0)$ для кожної функції $v\in L_{2}(\mathbb{R}^{k-1}\times(-\infty,0))$. Тут $I$
є тотожним оператором у $L_{2}(\mathbb{R}^{k-1})$.

Правильні такі рівності просторів з еквівалентністю норм у них:
\begin{equation}\label{9f7.8}
\begin{split}
&H^{s_{j},s_{j}\gamma}(\mathbb{R}^{k-1}\times(-\infty,0))=\\
&=H^{s_{j}}(\mathbb{R}^{k-1})\otimes L_{2}(-\infty,0)\cap
L_{2}(\mathbb{R}^{k-1})\otimes H^{s_{j}\gamma}(-\infty,0)
\end{split}
\end{equation}
та
\begin{equation}\label{9f7.9}
H^{s_{j},s_{j}\gamma}(\mathbb{R}^{k})=
H^{s_{j}}(\mathbb{R}^{k-1})\otimes L_{2}(\mathbb{R})\cap
L_{2}(\mathbb{R}^{k-1})\otimes H^{s_{j}\gamma}(\mathbb{R})
\end{equation}
(див., наприклад, \cite[\S~8, п.~1]{AgranovichVishik64}). (Як звичайно, $E\otimes F$ позначає тензорний добуток довільних гільбертових просторів $E$ і $F$. Крім того, їх перетин  $E\cap F$ розглядається як гільбертів простір, наділений скалярним добутком $(v_{1},v_{2})_{E\cap F}:=
(v_{1},v_{2})_{E}+(v_{1},v_{2})_{F}$ векторів $v_{1},v_{2}\in E\cap F$.)

З формул \eqref{9f7.5}, \eqref{9f7.6}, \eqref{9f7.8} та \eqref{9f7.9}  випливає, що звужен\-ня оператора \eqref{9f7.7} є обмеженим оператором на парі просторів
\begin{equation}\label{9f7.10}
I\otimes T:H^{s_{j},s_{j}\gamma}(\mathbb{R}^{k-1}\times(-\infty,0))\to
H^{s_{j},s_{j}\gamma}(\mathbb{R}^{k}).
\end{equation}

Розглянемо лінійне відображення
\begin{equation*}
P:w\mapsto w-(I\otimes T)\bigl(w\!\upharpoonright\!(\mathbb{R}^{k-1}\times(-\infty,0))\bigr),\quad
\mbox{де}\quad w\in L_{2}(\mathbb{R}^{k}).
\end{equation*}
Легко бачити, що $\mathrm{supp}\,Pw\subseteq\mathbb{R}^{k-1}\times[0,\infty)$ і що включення $\mathrm{supp}\,w\subseteq\mathbb{R}^{k-1}\times[0,\infty)$ тягне за собою рівність $Pw=w$ на $\mathbb{R}^{k}$.
Скориставшись цими властивостями $P$ і обмеженістю оператора \eqref{9f7.10}, робимо висновок, що $P$ є потрібним відображенням.

Отже, на підставі теореми~\ref{9prop6.2} (формула \eqref{9f6.1}) і теореми~\ref{9lem7.1}, маємо такі рівності просторів (з еквівалентністю норм):
\begin{equation*}
\begin{split}
&\bigl[H^{s_{0},s_{0}\gamma}_{+}(\mathbb{R}^{k}),
H^{s_{1},s_{1}\gamma}_{+}(\mathbb{R}^{k})\bigr]_{\psi}=\\
&=\bigl[H^{s_{0},s_{0}\gamma}(\mathbb{R}^{k}),
H^{s_{1},s_{1}\gamma}(\mathbb{R}^{k})\bigr]_{\psi}\cap
H^{s_{0},s_{0}\gamma}_{+}(\mathbb{R}^{k})=\\
&=H^{s,s\gamma;\varphi}(\mathbb{R}^{k})\cap
H^{s_{0},s_{0}\gamma}_{+}(\mathbb{R}^{k})=
H^{s,s\gamma;\varphi}_{+}(\mathbb{R}^{k}).
\end{split}
\end{equation*}
Зауважимо, що перша пара є регулярною завдяки теоремі~\ref{9prop6.2}. Потрібну формулу \eqref{9f7.4} доведено.
\end{proof}

Згодом ми доведемо версії теорем \ref{9lem7.1} і \ref{9lem7.2} для просторів, заданих на циліндрі $\Omega$ і його бічній поверхні $S$. Для цього буде корисною лема про явний опис просторів
$H^{s,s\gamma}_{+}(\Omega)$ і $H^{s,s\gamma}_{+}(S)$ у термінах просторів $H^{s,s\gamma}(\Omega)$ і $H^{s,s\gamma}(S)$. Вона стосується анізотропних соболєвських просторів. Зауважимо, що з їх означення випливають неперервні вкладення
\begin{equation}\label{9f5.3}
H^{s,s\gamma}_{+}(\Omega)\hookrightarrow H^{s,s\gamma}(\Omega)
\quad\mbox{і}\quad
H^{s,s\gamma}_{+}(S)\hookrightarrow H^{s,s\gamma}(S).
\end{equation}

\begin{lemma}\label{9lem5.1}
Нехай $s>0$, $\gamma>0$ і $s\gamma-1/2\notin\mathbb{Z}$. Тоді простір
$H^{s,s\gamma}_{+}(\Omega)$ складається з усіх функцій $u\in H^{s,s\gamma}(\Omega)$ таких, що
\begin{equation}\label{9f5.4}
\begin{gathered}
\partial^k_t u(x,t)\big|_{t=0}=0\quad\mbox{для майже всіх}\;\,x\in G\\
\mbox{при кожному}\;\,k\in\mathbb{Z}\;\,
\mbox{такому, що}\;\,0\leq k<s\gamma-1/2.
\end{gathered}
\end{equation}
Крім того, норми у просторах $H^{s,s\gamma}_{+}(\Omega)$ і $H^{s,s\gamma}(\Omega)$ є еквівалентними. Висновок цієї леми залишається правильним, якщо замінити у ній $\Omega$ на $S$ та $G$ на $\Gamma$.
\end{lemma}

Лема~\ref{9lem5.1} містить у собі результат М.~С.~Аграновіча і М.~І.~Вішика \cite[твердження~8.1]{AgranovichVishik64}, у якому знайдено необхідні і достатні умови, за яких продовження нулем функції
$u\in H^{s,s\gamma}(G\times(0,\theta))$ належить до простору $H^{s,s\gamma}(G\times(-\infty,\theta))$, де $0<\theta\leq\infty$. У цьому результаті припускається, що $s\in\mathbb{Z}$ і $\gamma=1/(2b)$, де ціле $b\geq1$. Знайдені умови рівносильні умові \eqref{9f5.4} і з них випливає еквівалентність норм функції $u$ та її продовження нулем.
М.~С.~Аграновіч і М.~І.~Вішик також розглянули випадок, коли функції задані на $\Gamma\times(0,\theta)$.

\begin{proof}[\indent Доведення леми $\ref{9lem5.1}$.]
Умова \eqref{9f5.4} коректна завдяки теоремі про сліди для анізотропних просторів Соболєва (див., наприклад, \cite[частина~II, теорема~4]{Slobodetskii58}). Позначимо через $\Upsilon^{s,s\gamma}(\Omega)$ лінійний многовид, утворений усіма функціями $u\in H^{s,s\gamma}(\Omega)$, які задовольняють цю умову. Згідно з вказаною теоремою про сліди можемо і будемо розглядати $\Upsilon^{s,s\gamma}(\Omega)$ як (замкнений) підпростір простору $H^{s,s\gamma}(\Omega)$. Із формули \eqref{9f5.3} випливає неперервне вкладення $H^{s,s\gamma}_{+}(\Omega)\hookrightarrow \Upsilon^{s,s\gamma}(\Omega)$. Тому з огляду на теорему Банаха про обернений оператор залишається довести обернене включен\-ня $\Upsilon^{s,s\gamma}(\Omega)\subset H^{s,s\gamma}_{+}(\Omega)$.

Нехай $u\in\Upsilon^{s,s\gamma}(\Omega)$. Потрібно довести, що $u=w$ на $\Omega$ для деякої функції $w\in H^{s,s\gamma}_{+}(\mathbb{R}^{n+1})$. Для цього використаємо три вказані нижче оператори продовження $O$, $T_{\tau}$, і $T_{G}$, які діють у деяких ізотропних просторах Соболєва.

Для функції $v\in L_{2}(0,\infty)$ означимо функцію $Ov\in L_{2}(\mathbb{R})$ за формулами $(Ov)(t):=v(t)$, якщо $t>0$, і $(Ov)(t):=0$, якщо $t\leq0$. Маємо обмежений лінійний оператор
\begin{equation}\label{9f5.5}
O:L_{2}(0,\infty)\to L_{2}(\mathbb{R}).
\end{equation}
Позначимо через $\Upsilon^{s\gamma}(0,\infty)$ лінійний многовид, утворений усіма функціями $v\in H^{s\gamma}(0,\infty)$ такими, що $v^{(k)}(0)=0$, коли ціле $k$ задовольняє умову $0\leq k<s\gamma-1/2$. За теоремою про сліди многовид $\Upsilon^{s\gamma}(0,\infty)$ є підпростором простору $H^{s\gamma}(0,\infty)$. На підставі \cite[теореми 2.9.3(a) і 2.10.3(b)]{Triebel80} та умови
$s\gamma-1/2\notin\mathbb{Z}$ робимо висновок, що звуження оператора \eqref{9f5.5} є ізоморфізмом на парі просторів:
\begin{equation}\label{9f5.6}
O:\Upsilon^{s\gamma}(0,\infty)\leftrightarrow H^{s\gamma}_{+}(\mathbb{R}).
\end{equation}
Нагадаємо, що $H^{s\gamma}_{+}(\mathbb{R})$ складається з усіх функцій $v\in H^{s\gamma}(\mathbb{R})$ таких, що  $\mathrm{supp}\,v\subseteq[0,\infty)$, і розглядається як підпростір простору $H^{s\gamma}(\mathbb{R})$.

Розглянемо лінійний обмежений оператор
\begin{equation}\label{9f5.7}
T_{\tau}:L_{2}(0,\tau)\rightarrow L_{2}(0,\infty)
\end{equation}
такий, що $T_{\tau}v=v$ на $(0,\tau)$ для кожної функції $v\in L_{2}(0,\tau)$ і, крім того, його звуження на $H^{s\gamma}(0,\tau)$ діє неперервно на парі просторів
\begin{equation}\label{9f5.8}
T_{\tau}:H^{s\gamma}(0,\tau)\rightarrow H^{s\gamma}(0,\infty).
\end{equation}

Розглянемо також лінійний обмежений оператор
\begin{equation}\label{9f5.9}
T_{G}:L_{2}(G)\rightarrow L_{2}(\mathbb{R}^{n})
\end{equation}
такий, що $T_{G}h=h$ на $G$ для кожної функції $h\in L_{2}(G)$ і, крім того, його звуження на $H^{s}(G)$ діє неперервно на парі просторів
\begin{equation}\label{9f5.10}
T_{G}:H^{s}(G)\rightarrow H^{s}(\mathbb{R}^{n}).
\end{equation}
Вказані оператори існують \cite[теореми 4.2.2 і 4.2.3]{Triebel80}.

Відомо, що
\begin{gather}\label{9f5.11}
H^{s,s\gamma}(\Omega)=H^{s}(G)\otimes L_{2}(0,\tau)\cap
L_{2}(G)\otimes H^{s\gamma}(0,\tau),\\
H^{s,s\gamma}(\mathbb{R}^{n+1})=
H^{s}(\mathbb{R}^{n})\otimes L_{2}(\mathbb{R})\cap
L_{2}(\mathbb{R}^{n})\otimes H^{s\gamma}(\mathbb{R}) \label{9f5.12}
\end{gather}
з еквівалентністю норм (див., наприклад, \cite[\S~8, п.~1]{AgranovichVishik64}). З формул \eqref{9f5.7}, \eqref{9f5.8}, \eqref{9f5.11} та включення $u\in\Upsilon^{s,s\gamma}(\Omega)$
випливає, що
\begin{equation*}
(I\otimes T_{\tau})u\in H^{s}(G)\otimes L_{2}(0,\infty)\cap
L_{2}(G)\otimes \Upsilon^{s\gamma}(0,\infty).
\end{equation*}
Тут $I$ позначає тотожний оператор на просторі $L_{2}(G)$. Тоді
\begin{equation}\label{9f5.13}
\begin{split}
w&:=(T_{G}\otimes(OT_{\tau}))u=(T_{G}\otimes O)(I\otimes T_{\tau})u\in\\
&\in H^{s}(\mathbb{R}^{n})\otimes L_{2}(\mathbb{R})\cap L_{2}(\mathbb{R}^{n})\otimes H^{s\gamma}_{+}(\mathbb{R})=
H^{s,s\gamma}_{+}(\mathbb{R}^{n+1})
\end{split}
\end{equation}
завдяки формулам \eqref{9f5.5}, \eqref{9f5.6}, \eqref{9f5.9}, \eqref{9f5.10}, і \eqref{9f5.12}. Крім того, $w=u$ в $\Omega$. Таким чином, $u\in H^{s,s\gamma}_{+}(\Omega)$.

Ці міркування доводять також, що висновок леми~\ref{9lem5.1} залишається правильним, якщо замінити у ній $\Omega$ на $\Pi:=\mathbb{R}^{n-1}\times(0,\tau)$ і $G$ на $\mathbb{R}^{n-1}$. (Звісно, слід узяти $\mathbb{R}^{n}$ замість $\mathbb{R}^{n+1}$ і нема потреби в операторі продовження $T_{G}$.) Звідси та з означення просторів $H^{s,s\gamma}_{+}(S)$ і $H^{s,s\gamma}(S)$ випливає, що висновок леми~\ref{9lem5.1} є правильним, якщо замінити $\Omega$ на $S$ і $G$ на~$\Gamma$.
\end{proof}

\begin{theorem}\label{9lem7.3}
Додатково до умови \eqref{9f7.1} припустимо, що $s_{0}\geq0$.
Тоді правильні такі рівності просторів з еквівалентністю норм у них:
\begin{gather}\label{9f7.12}
H^{s,s\gamma;\varphi}_{+}(\Omega)=
\bigl[H^{s_{0},s_{0}\gamma}_{+}(\Omega),
H^{s_{1},s_{1}\gamma}_{+}(\Omega)\bigr]_{\psi},\\
H^{s,s\gamma;\varphi}_{+}(S)=
\bigl[H^{s_{0},s_{0}\gamma}_{+}(S),
H^{s_{1},s_{1}\gamma}_{+}(S)\bigr]_{\psi}. \label{9f7.13}
\end{gather}
\end{theorem}

\begin{proof}[\indent Доведення]
Припустимо, що
\begin{equation}\label{9f7.11}
s_{j}\gamma-1/2\notin\mathbb{Z}\quad\mbox{для кожного}\quad j\in\{0,1\}.
\end{equation}
Це припущення зумовлено використанням леми \ref{9lem5.1} у подальших міркуваннях. Потім ми його позбудемося. Спочатку обґрунтуємо формулу \eqref{9f7.12}. Нагадаємо, що за означенням виконуються такі рівності:
\begin{gather}\label{9f7.14}
H^{s,s\gamma;\varphi}_{+}(\Omega)=
H^{s,s\gamma;\varphi}_{+}(\mathbb{R}^{n+1})/
H^{s,s\gamma;\varphi}_{Q}(\mathbb{R}^{n+1}),\\
H^{s_{j},s_{j}\gamma}_{+}(\Omega)=
H^{s_{j},s_{j}\gamma}_{+}(\mathbb{R}^{n+1})/
H^{s_{j},s_{j}\gamma}_{Q}(\mathbb{R}^{n+1}), \label{9f7.15}
\end{gather}
де $j\in\{0,1\}$. Тут у знаменниках використано позначення~\eqref{9f3.5}, у якому покладаємо $Q:=(\mathbb{R}^{k-1}\times[0,\infty))\setminus\Omega$. Виведемо формулу \eqref{9f7.12} з теореми~\ref{9lem7.2} за допомогою інтерполяції фактор-просторів (на підставі теореми~\ref{9prop6.2}). Для цього потрібно мати лінійне відображення $P$, задане на $H^{s_{0},s_{0}\gamma}_{+}(\mathbb{R}^{n+1})$ і таке, що $P$ є проєктором простору $H^{s_{j},s_{j}\gamma}_{+}(\mathbb{R}^{n+1})$ на підпростір
$H^{s_{j},s_{j}\gamma}_{Q}(\mathbb{R}^{n+1})$ для кожного  $j\in\{0,1\}$. Побудуємо це відображення.

Скористаємося міркуваннями та позначеннями, наведеними в доведенні леми~\ref{9lem5.1}. Подане там обґрунтування формули \eqref{9f5.13} показує, що лінійне відображення $T_{+}:=T_{G}\otimes(OT_{\tau})$ є обмеженим оператором на парі просторів
\begin{equation}\label{9f7.16}
T_{+}:=T_{G}\otimes(OT_{\tau}):\Upsilon^{s_{j},s_{j}\gamma}(\Omega)\to
H^{s_{j},s_{j}\gamma}_{+}(\mathbb{R}^{n+1})
\end{equation}
для кожного $j\in\{0,1\}$. Нагадаємо, що $T_{+}u=u$ на $\Omega$ для кожного $u\in\Upsilon^{s_{j},s_{j}\gamma}(\Omega)$. Крім того,  $\Upsilon^{s_{j},s_{j}\gamma}(\Omega)=
H^{s_{j},s_{j}\gamma}_{+}(\Omega)$ з огляду на лему~\ref{9lem5.1} та припущення~\eqref{9f7.11}.

Розглянемо лінійне відображення $P:w\mapsto w-T_{+}(w\!\upharpoonright\!\Omega)$, де функція $w$ пробігає простір $H^{s_{0},s_{0}\gamma}_{+}(\mathbb{R}^{n+1})$. Помічаємо, що $Pw=0$ на $\Omega$, а з умови $w=0$ на $\Omega$ випливає рівність $Pw=w$ на $\mathbb{R}^{n+1}$. На підставі цих властивостей та обмеженості оператора~\eqref{9f7.16} робимо висновок, що $P$~--- потрібне відображення.

Тепер, послідовно скориставшись формулою \eqref{9f7.15}, теоремою~\ref{9prop6.2} (рівність \eqref{9f6.2}), теоремою~\ref{9lem7.2} і формулою \eqref{9f7.14}, отримаємо такі рівності:
\begin{equation*}
\begin{split}
&\bigl[H^{s_{0},s_{0}\gamma}_{+}(\Omega),
H^{s_{1},s_{1}\gamma}_{+}(\Omega)\bigr]_{\psi}=\\
&=\bigl[H^{s_{0},s_{0}\gamma}_{+}(\mathbb{R}^{n+1})/
H^{s_{0},s_{0}\gamma}_{Q}(\mathbb{R}^{n+1}),\\
&\quad\;\;\, H^{s_{1},s_{1}\gamma}_{+}(\mathbb{R}^{n+1})/
H^{s_{1},s_{1}\gamma}_{Q}(\mathbb{R}^{n+1})\bigr]_{\psi}=\\
&=X_{\psi}/(X_{\psi}\cap H^{s_{0},s_{0}\gamma}_{Q}(\mathbb{R}^{n+1}))=\\
&=H^{s,s\gamma;\varphi}_{+}(\mathbb{R}^{n+1})/
(H^{s,s\gamma;\varphi}_{+}(\mathbb{R}^{n+1})\cap
H^{s_{0},s_{0}\gamma}_{Q}(\mathbb{R}^{n+1}))=\\
&=H^{s,s\gamma;\varphi}_{+}(\mathbb{R}^{n+1})/
H^{s,s\gamma;\varphi}_{Q}(\mathbb{R}^{n+1})
=H^{s,s\gamma,\varphi}_{+}(\Omega).
\end{split}
\end{equation*}
Тут
\begin{equation*}
X_{\psi}:=\bigl[H^{s_{0},s_{0}\gamma}_{+}(\mathbb{R}^{n+1}),
H^{s_{1},s_{1}\gamma}_{+}(\mathbb{R}^{n+1})\bigr]_{\psi}=
H^{s,s\gamma;\varphi}_{+}(\mathbb{R}^{n+1}).
\end{equation*}
Ці рівності просторів виконуються разом з еквівалентністю норм. (Зауважимо також, що перша пара є регулярною завдяки теоремі~\ref{9prop6.2}.) Формулу \eqref{9f7.12} доведено.

Доведемо другу формулу~\eqref{9f7.13}. Пара просторів у правій її частині регулярна за теоремою~\ref{9lem3.1}, розглянутої у випадку, коли  $\varphi(r)\equiv1$ і $s\gamma-1/2\notin\mathbb{Z}$. (Наведене нижче доведення теореми~\ref{9lem3.1} у цьому випадку не використовує теорему~\ref{9lem7.3}.) Виведемо формулу \eqref{9f7.13} з її аналога
\begin{equation}\label{9f7.17}
H^{s,s\gamma;\varphi}_{+}(\Pi)=
\bigl[H^{s_{0},s_{0}\gamma}_{+}(\Pi),
H^{s_{1},s_{1}\gamma}_{+}(\Pi)\bigr]_{\psi}
\end{equation}
для смуги $\Pi:=\mathbb{R}^{n-1}\times(0,\tau)$. Його доведення таке саме, як і обґрунтування формули \eqref{9f7.12}; при цьому у міркуваннях слід   замінити $\Omega$, $\mathbb{R}^{n+1}$ і $T_{G}$ відповідно на $\Pi$,  $\mathbb{R}^{n}$ і тотожний оператор.

Виходячи з означення просторів на $S$, подане у формулах \eqref{9f3.7} і \eqref{9f3.8}, виведемо потрібне співвідношення \eqref{9f7.13} із рівності \eqref{9f7.17} за допомогою деяких операторів розпрямлення та склеювання многовиду $S$. Означимо лінійний оператор розпрямлення за формулою
\begin{equation}\label{L}
L:v\mapsto\bigl((\chi^*_{1}v)\circ\theta_{1}^*,\ldots,
(\chi^*_{\lambda}v)\circ\theta_{\lambda}^*\bigr),
\quad\mbox{де}\quad v\in L_2(S).
\end{equation}
Його звуження є ізометричними операторами на таких парах просторів:
\begin{gather}\label{9f7.18}
L:H^{s,s\gamma;\varphi}_{+}(S)\rightarrow
\bigl(H^{s,s\gamma;\varphi}_{+}(\Pi)\bigr)^{\lambda},\\
L:H^{s_j,s_j\gamma}_{+}(S)\rightarrow
\bigl(H^{s_j,s_j\gamma}_{+}(\Pi)\bigr)^{\lambda},
\label{9f7.19}
\end{gather}
де $j\in\{0,1\}$. Це~--- прямий наслідок означення цих просторів. Застосувавши інтерполяцію з параметром $\psi$ до операторів \eqref{9f7.19}, отримаємо обмежений оператор
\begin{equation}\label{9f7.20}
\begin{split}
L&:\bigl[H^{s_0,s_0\gamma}_{+}(S),H^{s_1,s_1\gamma}_{+}(S)\bigr]_{\psi}\to\\
&\to
\bigl[\bigl(H^{s_0,s_0\gamma}_{+}(\Pi)\bigr)^{\lambda},
\bigl(H^{s_1,s_1\gamma}_{+}(\Pi)\bigr)^{\lambda}\bigr]_{\psi}.
\end{split}
\end{equation}
Останній інтерполяційний простір дорівнює $(H^{s,s\gamma;\varphi}_{+}(\Pi))^{\lambda}$ з точністю до еквівалентності норм з огляду на теорему~\ref{9prop6.3} і формулу~\eqref{9f7.17}. Отже, обмежений оператор \eqref{9f7.20} діє на парі просторів
\begin{equation}\label{9f7.21}
L:\bigl[H^{s_0,s_0\gamma}_{+}(S),H^{s_1,s_1\gamma}_{+}(S)\bigr]_{\psi}\to
\bigl(H^{s,s\gamma;\varphi}_{+}(\Pi)\bigr)^{\lambda}.
\end{equation}

Означимо оператор склеювання за формулою
\begin{equation}\label{K}
\begin{gathered}
K:(h_{1},\ldots,h_{\lambda})\mapsto\sum_{k=1}^{\lambda}\,
O_{k}((\eta_{k}^{*}h_{k})\circ\theta_{k}^{*-1}),\\
\mbox{де}\;\;h_{1},\ldots,h_{\lambda}\in L_2(\Pi).
\end{gathered}
\end{equation}
Тут кожну функцію $\eta_{k}\in C_{0}^{\infty}(\mathbb{R}^{n-1})$ вибрано так, що $\eta_{k}=1$ на множині $\theta^{-1}_{k}(\mathrm{supp}\,\chi_{k})$ та $\eta^{*}_{k}(x,t):=\eta_{k}(x)$ для всіх $x\in\mathbb{R}^{n-1}$ і $t\in (0,\tau)$. Крім того, $O_{k}$ позначає оператор продовження функцій нулем з $\Gamma_k\times(0,\tau)$ на $S$. Таким чином, для будь-яких $y\in\Gamma$ і $t\in (0,\tau)$, виконується співвідношення
\begin{equation*}
\begin{split}
&\bigl(O_{k}((\eta_{k}^{*}h_{k})\circ\theta_{k}^{*-1})\bigr)(y,t)= \\&\phantom{-} \\
&=\left\{
  \begin{array}{ll}
    \eta_{k}(x)h_{k}(x,t),&\mbox{якщо}\; y=\theta_{k}(x)\in\Gamma_{k}\;\mbox{для деякого}\;x\in\mathbb{R}^{n-1}\\
    0&\mbox{у противному разі.}
  \end{array}
\right.
\end{split}
\end{equation*}

Відображення $K$ є лівим оберненим до $L$. Справді,
\begin{equation*}
\begin{split}
KLv&=\sum_{k=1}^{\lambda}\,O_{k}\bigl(\bigl(\eta^*_{k}\,
((\chi_{k}^*v)\circ\theta_{k}^*)\bigr)\circ\theta_{k}^{*-1}\bigr)=\\
&=\sum_{k=1}^{\lambda}\,O_{k}
\bigl((\chi_{k}^*v)\circ\theta_{k}^*\circ\theta_{k}^{*-1}\bigr)=
\sum_{k=1}^{\lambda}\,\chi_{k}^*v=v,
\end{split}
\end{equation*}
тобто
\begin{equation}\label{9f7.22}
KLv=v\quad\mbox{для кожного}\quad v\in L_2(S).
\end{equation}

Покажемо, що $K$ є обмеженим оператором на парі просторів
\begin{equation}\label{9f7.23}
K:\bigl(H^{s,s\gamma;\varphi}_{+}(\Pi)\bigr)^{\lambda}
\rightarrow H^{s,s\gamma;\varphi}_{+}(S).
\end{equation}
Для довільної вектор-функції
\begin{equation*}
h:=(h_{1},\ldots,h_{\lambda})\in
\bigl(H^{s,s\gamma;\varphi}_{+}(\Pi)\bigr)^{\lambda}
\end{equation*}
виконуються такі рівності:
\begin{equation}\label{9f7.24}
\begin{split}
&\bigl\|Kh\bigr\|^{2}_{H^{s,s\gamma;\varphi}_{+}(S)}=\\
&=\sum_{l=1}^{\lambda}\;\bigl\|(\chi_{l}^*\,Kh)
\circ\theta_{l}^*\bigr\|_{H^{s,s\gamma;\varphi}_{+}(\Pi)}^{2}=\\
&=\sum_{l=1}^{\lambda}\,\Bigl\|\Bigl(\chi_{l}^*\,
\sum_{k=1}^{\lambda}\,
O_{k}\bigl((\eta_{k}^{*}h_{k})\circ\theta_{k}^{*-1}\bigr)
\Bigr)
\circ\theta_{l}^*\,\Bigr\|_{H^{s,s\gamma;\varphi}_{+}(\Pi)}^{2}=\\
&=\sum_{l=1}^{\lambda}\;\Bigl\|\,
\sum_{k=1}^{\lambda}Q_{k,l}\,h_{k}\,
\Bigr\|_{H^{s,s\gamma;\varphi}_{+}(\Pi)}^{2}.
\end{split}
\end{equation}
Тут поклали
\begin{equation}\label{9f7.25}
(Q_{k,l}\,w)(x,t):=\eta_{k,l}(\beta_{k,l}(x))\,w(\beta_{k,l}(x),t)
\end{equation}
для всіх $w\in L_{2}(\Pi)$, $x\in\mathbb{R}^{n-1}$ і $t\in(0,\tau)$, де
\begin{equation*}
\eta_{k,l}:=(\chi_{l}\circ\theta_{k})\eta_{k}\in C_{0}^{\infty}(\mathbb{R}^{n-1}).
\end{equation*}
Крім того, $\beta_{k,l}:\mathbb{R}^{n-1}\leftrightarrow\mathbb{R}^{n-1}$ є деяким нескінченно гладким дифеоморфізмом таким, що $\beta_{k,l}=\theta_{k}^{-1}\circ\theta_{l}$ в околі $\mathrm{supp}\,\eta_{k,l}$. Як відомо \cite[теорема~B.1.8]{Hormander85}, оператор $\omega\mapsto(\eta_{k,l}\,\omega)\circ\beta_{k,l}$ обмежений на кожному соболєвському просторі $H^{\sigma}(\mathbb{R}^{n-1})$, де $\sigma\in\mathbb{R}$. Тому оператор $w\mapsto Q_{k,l}\,w$, означений згідно з формулою \eqref{9f7.25}, де $w\in L_{2}(\mathbb{R}^{n})$, $x\in\mathbb{R}^{n-1}$ і $t\in\mathbb{R}$, є обмеженим на кожному просторі
\begin{equation*}
H^{s_{j},s_{j}\gamma}(\mathbb{R}^{n})=H^{s_{j}}(\mathbb{R}^{n-1})\otimes L_{2}(\mathbb{R})\cap
L_{2}(\mathbb{R}^{n-1})\otimes H^{s_{j}\gamma}(\mathbb{R}),
\end{equation*}
де $j\in\{0,1\}$. Отже, звуження відображення $w\mapsto Q_{k,l}w$, де $w\in L_{2}(\Pi)$, на кожний простір $H^{s_{j},s_{j}\gamma}_{+}(\Pi)$, де $j\in\{0,1\}$, є обмеженим оператором на цьому просторі.
Звідси на підставі інтерполяційної  формули \eqref{9f7.17} робимо висновок, що це відображення є обмеженим оператором і на просторі $H^{s,s\gamma;\varphi}_{+}(\Pi)$. Тому згідно з формулою \eqref{9f7.24}, маємо співвідношення
\begin{equation*}
\begin{split}
\bigl\|Kh\bigr\|^{2}_{H^{s,s\gamma;\varphi}_{+}(S)}&=
\sum_{l=1}^{\lambda}\;\Bigl\|\,
\sum_{k=1}^{\lambda}Q_{k,l}\,h_{k}\,
\Bigr\|_{H^{s,s\gamma;\varphi}_{+}(\Pi)}^{2}\leq\\ &\leq
c\sum_{k=1}^{\lambda}\|h_{k}\|_{H^{s,s\gamma;\varphi}_{+}(\Pi)}^{2},
\end{split}
\end{equation*}
де $c$~--- деяке додатне число, яке не залежить від $h$. Таким чином, оператор $K$ обмежений на парі просторів \eqref{9f7.23}.

Ті самі міркування доводять, що $K$ є також обмеженим оператором на парі просторів
\begin{equation}\label{9f7.26}
K:\bigl(H^{s_{j},s_{j}\gamma}_{+}(\Pi)\bigr)^{\lambda}
\rightarrow H^{s_{j},s_{j}\gamma}_{+}(S)
\end{equation}
для кожного $j\in\{0,1\}$. Звідси за допомогою квадратичної інтерполяції з  параметром $\psi$ отримаємо з огляду на теорему~\ref{9prop6.3} і формулу~\eqref{9f7.17} обмежений оператор
\begin{equation}\label{9f7.27}
K:\bigl(H^{s,s\gamma;\varphi}_{+}(\Pi)\bigr)^{\lambda}\to
\bigl[H^{s_0,s_0\gamma}_{+}(S),H^{s_1,s_1\gamma}_{+}(S)\bigr]_{\psi}.
\end{equation}
Із формул \eqref{9f7.18}, \eqref{9f7.27} і \eqref{9f7.22} випливає, що тотожний оператор $KL$ здійснює неперервне вкладення простору $H^{s,s\gamma,\varphi}_{+}(S)$ в інтерполяційний простір
\begin{equation*}
\bigl[H^{s_0,s_0\gamma}_{+}(S),H^{s_1,s_1\gamma}_{+}(S)\bigr]_{\psi}.
\end{equation*}
Крім того, із формул \eqref{9f7.21} і \eqref{9f7.23} випливає, що тотожний оператор $KL$ здійснює обернене неперервне вкладення. Отже, потрібна рівність \eqref{9f7.13} виконується разом еквівалентністю норм у просторах.

Таким чином, теорему \ref{9lem7.3} доведено за додаткового припущення \eqref{9f7.11}. Розглянемо тепер загальну ситуацію, коли це припущення може не виконуватися. Виберемо число $\sigma>s_{1}$ таке, що $\sigma\gamma-1/2\notin\mathbb{Z}$. Нехай $W:=\Omega$ або $W:=S$. За доведеним,
\begin{equation*}
H^{s_j,s_j\gamma}_{+}(W)=
\bigl[H^{0,0}_{+}(W),H^{\sigma,\sigma\gamma}_{+}(W)\bigr]_{\psi_j}
\end{equation*}
для кожного $j\in\{0,1\}$. Тут інтерполяційний параметр $\psi_j$ означено формулами $\psi_j(r)=r^{s_j/\sigma}$, якщо $r\geq1$, і $\psi_j(r)=1$, якщо $0<r<1$, тобто формулою \eqref{9f7.2}, у якій беремо $s_j$, $0$ і $\sigma$ замість $s$, $s_0$ і $s_1$ відповідно та покладаємо $\varphi(\cdot)\equiv1$. Тоді згідно з теоремою \ref{9prop6.4} і вже доведеним маємо такі рівності:
\begin{align*}
&\bigl[H^{s_0,s_0\gamma}_{+}(W),H^{s_1,s_1\gamma}_{+}(W)\bigr]_{\psi}=\\
&=\bigl[[H^{0,0}_{+}(W),H^{\sigma,\sigma\gamma}_{+}(W)]_{\psi_0},
[H^{0,0}_{+}(W),H^{\sigma,\sigma\gamma}_{+}(W)]_{\psi_1}\bigr]_{\psi}=\\
&=\bigl[H^{0,0}_{+}(W),H^{\sigma,\sigma\gamma}_{+}(W)\bigr]_{\omega}=
H^{s,s\gamma;\varphi}_{+}(W).
\end{align*}
Тут
\begin{align*}
\omega(r)&=\psi_0(r)\cdot\psi\biggl(\frac{\psi_1(r)}{\psi_0(r)}\biggr)=\\
&=r^{s_0/\sigma}\psi(r^{(s_1-s_0)/\sigma})
=r^{s/\sigma}\varphi(r^{1/\sigma}),
\end{align*}
якщо $r\geq1$, і $\omega(r)=1$, якщо $0<r<1$. Це випливає з означення \eqref{9f7.2} функції $\psi$. Отже, теорему \ref{9lem7.3} доведено і у загальній ситуації.
\end{proof}

\begin{remark}\label{lem5.4}
Якщо $n=1$, то циліндр $\Omega=G\times(0,\tau)$ стає інтервалом $(0,\tau)$ часової осі. Тоді формула \eqref{9f7.12} набуває такого вигляду:
\begin{equation*}
H^{s;\varphi}_{+}(0,\tau)=
\bigl[H^{s_{0}}_{+}(0,\tau),H^{s_{1}}_{+}(0,\tau)\bigr]_{\psi}
\end{equation*}
з еквівалентністю норм у просторах. Тут, як і в теоремі \ref{9lem7.3}, додатково до умови \eqref{9f7.1} припускаємо, що $s_{0}\geq0$.
\end{remark}

Розглянемо версію теореми~\ref{9lem7.3} для більш широких просторів $H^{s,s\gamma;\varphi}(\Omega)$ і
$H^{s,s\gamma;\varphi}(S)$.

\begin{theorem}\label{9lem7.3a}
Додатково до умови \eqref{9f7.1} припустимо, що $s_{0}\geq0$.
Тоді правильні такі рівності просторів з еквівалентністю норм у них:
\begin{gather}\label{9f7.12a}
H^{s,s\gamma;\varphi}(\Omega)=
\bigl[H^{s_{0},s_{0}\gamma}(\Omega),
H^{s_{1},s_{1}\gamma}(\Omega)\bigr]_{\psi},\\
H^{s,s\gamma;\varphi}(S)=
\bigl[H^{s_{0},s_{0}\gamma}(S),
H^{s_{1},s_{1}\gamma}(S)\bigr]_{\psi}. \label{9f7.13a}
\end{gather}
\end{theorem}

\begin{proof}[\indent Доведення] Воно подібне до обґрунтування теореми~\ref{9lem7.3}. Наведемо коротко відповідні міркування.
Почнемо з доведення формули \eqref{9f7.12a}.

За означенням
\begin{gather}\label{9f7.14a}
H^{s,s\gamma;\varphi}(\Omega)=
H^{s,s\gamma;\varphi}(\mathbb{R}^{n+1})/
H^{s,s\gamma;\varphi}_{Q}(\mathbb{R}^{n+1}),\\
H^{s_{j},s_{j}\gamma}(\Omega)=
H^{s_{j},s_{j}\gamma}(\mathbb{R}^{n+1})/
H^{s_{j},s_{j}\gamma}_{Q}(\mathbb{R}^{n+1}), \label{9f7.15a}
\end{gather}
де $j\in\{0,1\}$. Тут знаменники означено формулою~\eqref{8f41} для $Q:=\mathbb{R}^{n+1}\setminus\Omega$ і відповідних показників $\mu$. Розглянемо лінійне відображення
\begin{equation*}
P:w\mapsto w-(T_{G}\otimes T_{(0,\tau)})
(w\!\upharpoonright\!\Omega),
\end{equation*}
задане на функціях $w\in L_{2}(\mathbb{R}^{n+1})$. Тут $T_{G}$~--- відображення \eqref{9f5.9}, яке продовжує функції аргументу $x\in G$ на увесь простір $\mathbb{R}^{n}$ і є обмеженим оператором на парі соболєвських просторів \eqref{9f5.10} для кожного $s\in[0,s_1]$. Крім того, $T_{(0,\tau)}$~--- це відображення \eqref{9f5.9} у випадку, коли $G$~--- інтервал $(0,\tau)$ часової осі. Отже, відображення $T_{(0,\tau)}$ продовжує функції аргументу $t\in(0,\tau)$ на вісь $\mathbb{R}$ і є обмеженим оператором на парі просторів $H^{\sigma}(0,\tau)$ і $H^{\sigma}(\mathbb{R})$, де $\sigma\in[0,s_1\gamma]$. Звісно, якщо $\mathrm{supp}\,w\subseteq Q$, то $Pw=w$. Тому з формул \eqref{9f5.11} і \eqref{9f5.12} випливає, що $P$ є проєктором простору $H^{s_{j},s_{j}\gamma}(\mathbb{R}^{n+1})$ на його підпростір $H^{s_{j},s_{j}\gamma}_{Q}(\mathbb{R}^{n+1})$ для кожного $j\in\{0,1\}$.

Отже, на підставі теорем \ref{9prop6.2} і \ref{9lem7.1} та формул \eqref{9f7.14a} і \eqref{9f7.15a} маємо такі рівності:
\begin{equation*}
\begin{split}
\bigl[H^{s_{0},s_{0}\gamma}(\Omega),
H^{s_{1},s_{1}\gamma}(\Omega)\bigr]_{\psi}
&=X_{\psi}/(X_{\psi}\cap H^{s_{0},s_{0}\gamma}_{Q}(\mathbb{R}^{n+1}))=\\
&=H^{s,s\gamma;\varphi}(\mathbb{R}^{n+1})/
H^{s,s\gamma;\varphi}_{Q}(\mathbb{R}^{n+1})=\\
&=H^{s,s\gamma,\varphi}(\Omega),
\end{split}
\end{equation*}
де
\begin{equation*}
X_{\psi}:=\bigl[H^{s_{0},s_{0}\gamma}(\mathbb{R}^{n+1}),
H^{s_{1},s_{1}\gamma}(\mathbb{R}^{n+1})\bigr]_{\psi}=
H^{s,s\gamma;\varphi}(\mathbb{R}^{n+1}).
\end{equation*}
Потрібну формулу \eqref{9f7.12a} доведено.

Формула \eqref{9f7.13a} виводиться з рівності
\begin{equation}\label{9f7.17a}
H^{s,s\gamma;\varphi}(\Pi)=
\bigl[H^{s_{0},s_{0}\gamma}(\Pi),
H^{s_{1},s_{1}\gamma}(\Pi)\bigr]_{\psi},
\end{equation}
де $\Pi:=\mathbb{R}^{n-1}\times(0,\tau)$, за допомогою операторів \eqref{L} і \eqref{K} відповідно розпрямлення і склеювання многовиду $S$. Рівність \eqref{9f7.17a} доводиться так само, як і формула \eqref{9f7.12a}; при цьому у міркуван\-нях треба узяти $\Pi$, $\mathbb{R}^{n}$ і тотожний оператор замість $\Omega$, $\mathbb{R}^{n+1}$ і $T_{G}$ відповідно. Замінивши простори $H^{\bullet}_{+}(\cdot)$ на їх аналоги $H^{\bullet}(\cdot)$ у частині доведення теореми \ref{9lem7.3}, яка стосується обґрунтування формули \eqref{9f7.13}, приходимо до висновку, що тотожний оператор $KL$ здійснює неперервне вкладення простору $H^{s,s\gamma,\varphi}(S)$ у простір
\begin{equation*}
\bigl[H^{s_0,s_0\gamma}(S),H^{s_1,s_1\gamma}(S)\bigr]_{\psi}
\end{equation*}
та реалізує обернене неперервне вкладення. Це дає потрібну формулу \eqref{9f7.13a}.
\end{proof}

Як і було обіцяно, завершимо цей підрозділ тим, що доведемо  теореми \ref{9lem3.1a} і~\ref{9lem3.1}.

\begin{proof}[\indent Доведення теореми $\ref{9lem3.1a}$]
У випадку, коли $\varphi(\cdot)\equiv1$, ця теорема відома (див., наприклад, \cite[розд.~I, \S~5]{Slobodetskii58}). Виведемо її для довільного $\varphi\in\mathcal{M}$ з цього випадку за допомогою теореми~\ref{9lem7.3a}. Висновок (i) теореми $\ref{9lem3.1a}$ є наслідком інтерполяційної формули \eqref{9f7.13a}.

Обґрунтуємо висновок (ii). Нехай $\mathcal{A}_{0}$ і $\mathcal{A}_{1}$~--- дві пари, кожна з яких складається з атласу на $\Gamma$ та відповідного розбиття одиниці, використаних в означенні простору $H^{s,s\gamma;\varphi}(S)$ і скалярного добутку в ньому. Використовуємо позначення $H^{s,s\gamma;\varphi}(S,\mathcal{A}_{k})$ і $H^{s_{j},s_{j}\gamma}(S,\mathcal{A}_{k})$ для  просторів $H^{s,s\gamma;\varphi}(S)$ і
$H^{s_{j},s_{j}\gamma}(S)$, які відповідають парі $\mathcal{A}_{k}$; тут $j,k\in\{0,1\}$. Оскільки простір $H^{s_{j},s_{j}\gamma}(S,\mathcal{A}_{k})$ не залежить від $k\in\{0,1\}$ з точністю до еквівалентності норм, то тотожне відображення здійснює ізоморфізм між просторами $H^{s_{j},s_{j}\gamma}(S,\mathcal{A}_{0})$ і $H^{s_{j},s_{j}\gamma}(S,\mathcal{A}_{1})$ для кожного $j\in\{0,1\}$. Тому згідно з \eqref{9f7.13a} тотожне відображення здійснює ізоморфізм між просторами $H^{s,s\gamma;\varphi}(S,\mathcal{A}_{0})$ і $H^{s,s\gamma;\varphi}(S,\mathcal{A}_{1})$, що і обґрунтовує висновок~(ii).

Висновок~(iii) є наслідком щільності множини $C^{\infty}(\overline{S})$ у
просторі $H^{s_{1},s_{1}\gamma}(S)$, який неперервно і щільно вкладений у простір $H^{s,s\gamma;\varphi}(S)$ з огляду на \eqref{9f7.13a}.
\end{proof}

\begin{proof}[\indent Доведення теореми $\ref{9lem3.1}$]
Якщо $\varphi(\cdot)\equiv1$ і $s\gamma-1/2\notin\mathbb{Z}$, то на підставі леми~\ref{9lem5.1} робимо висновок, що $H^{s,s\gamma}_{+}(S)$ є підпростором простору $H^{s,s\gamma}(S)$ з точністю до еквівалентності норм. Отже, у цьому випадку висновки (i) та (ii) теореми $\ref{9lem3.1}$ є наслідками теореми \ref{9lem3.1a}. У загальному випадку ці висновки випливають тепер з теореми~\ref{9lem7.3} за допомогою міркувань, цілком аналогічних до наведених у доведенні теореми \ref{9lem3.1a}.

Обґрунтуємо висновок (iii) за допомогою операторів $L$ і $K$, використаних у доведенні теореми~\ref{9lem7.3}. У просторі $H^{s,s\gamma}_{+}(\Pi)$ є щільною множина
\begin{equation*}
\Upsilon^{\infty}_{0}(\overline{\Pi}):=
\bigl\{w\!\upharpoonright\!\overline{\Pi}:
w\in C^{\infty}_{0}(\mathbb{R}^{n-1}\times(0,\infty))\bigr\},
\end{equation*}
як зазначено в підрозділі~\ref{sec2.3}. Для довільної функції $v\in H^{s,s\gamma}_{+}(S)$, апроксимуємо вектор $Lv\in(H^{s,s\gamma;\varphi}_{+}(\Pi)\bigr)^{\lambda}$ послідовністю векторів $h^{(j)}\in(\Upsilon^{\infty}_{0}(\overline{\Pi}))^{\lambda}$ у нормі в просторі $(H^{s,s\gamma;\varphi}_{+}(\Pi)\bigr)^{\lambda}$. Послідовність функцій $Kh^{(j)}$ лежить у просторі \eqref{9lem3.1} і апроксимує функцію $KLv=v$ у нормі в просторі $H^{s,s\gamma;\varphi}_{+}(S)$. Висновок (iii) обґрунтовано.
\end{proof}

\markright{\emph \ref{sec2.7}. Оператор даних Коші}

\section[Оператор даних Коші]{Оператор даних Коші}\label{sec2.7}

\markright{\emph \ref{sec2.7}. Оператор даних Коші}

Для дослідження просторів правих частин параболічних задач у циліндрі $\Omega=G\times(0,\tau)$ потрібна одна теорема про властивості оператора, який функції, заданій на бічній поверхні $S$ циліндра, ставить у відповідність її дані Коші на $\Gamma=\partial G$.

\begin{theorem}\label{8lem1}
Нехай задано натуральні числа $b$ і $r$. Розглянемо лінійне відображення
\begin{equation}\label{8f25}
R:g\mapsto\bigl(g\!\upharpoonright\!\Gamma,
\partial_{t}g\!\upharpoonright\!\Gamma,\dots,
\partial^{r-1}_{t}g\!\upharpoonright\!\Gamma\bigr),
\quad\mbox{де}\quad g\in C^{\infty}(\overline{S}).
\end{equation}
Воно продовжується єдиним чином (за неперервністю) до обмеженого лінійного  оператора
\begin{equation}\label{8f65}
R:H^{s,s/(2b);\varphi}(S)\rightarrow \bigoplus_{k=0}^{r-1}
H^{s-2bk-b;\varphi}(\Gamma)=:\mathbb{H}^{s;\varphi}(\Gamma)
\end{equation}
для довільних параметрів $s>2br-b$ і $\varphi\in\mathcal{M}$. Він має правий обернений оператор. Більше того, існує лінійний обмежений оператор $T:(L_2(\Gamma))^r\to L_2(S)$ такий, що
для довільних $s>2br-b$ і $\varphi\in\mathcal{M}$ його звуження є
обмеженим оператором на парі просторів
\begin{equation}\label{8f43}
T:\mathbb{H}^{s;\varphi}(\Gamma)\to H^{s,s/(2b);\varphi}(S)
\end{equation}
та задовольняє умову $RTv=v$ для кожної функції $v\in\mathbb{H}^{s;\varphi}(\Gamma)$.
\end{theorem}

Звісно, у формулі \eqref{8f25} трактуємо $g$ як функцію аргументів $x\in\Gamma$ і $t\in[0,\tau]$.

\begin{proof}[\indent Доведення]
Спочатку доведемо аналог цієї теореми для відповідних просторів, заданих на $\mathbb{R}^{n}$ і $\mathbb{R}^{n-1}$ замість $S$ і $\Gamma$.
Потім виведемо її з цього аналога за допомогою спеціальних локальних карт на~$S$.

Розглянемо лінійне відображення
\begin{equation}\label{8f60}
\begin{gathered}
R_{0}:w\mapsto\bigl(w\!\mid_{t=0},\,
\partial_{t}w\!\mid_{t=0},\dots,
\partial^{r-1}_{t}w\!\mid_{t=0}\bigr),\\
\quad\mbox{де}\;\,w\in \mathcal{S}(\mathbb{R}^{n}).
\end{gathered}
\end{equation}
При цьому інтерпретуємо $w$ як функцію $w(x,t)$ аргументів $x\in$ $\in\mathbb{R}^{n-1}$ і $t\in\mathbb{R}$; отож, $R_{0}w\in(\mathcal{S}(\mathbb{R}^{n-1}))^{r}$ у формулі \eqref{8f60}.

Нехай $\nobreak{s>2br-b}$ і $\varphi\in\mathcal{M}$. Доведемо, що відображення \eqref{8f60} продовжується єдиним чином до лінійного обмеженого оператора
\begin{align}\label{8f63}
R_{0}:H^{s,s/(2b);\varphi}(\mathbb{R}^{n})&\rightarrow \bigoplus_{k=0}^{r-1}
H^{s-2bk-b;\varphi}(\mathbb{R}^{n-1})=:\\
&=:\mathbb{H}^{s;\varphi}(\mathbb{R}^{n-1}). \notag
\end{align}
Це відомо у соболєвському випадку, коли  $\varphi(\cdot)\equiv1$ (див. \cite[розд.~II, теорема~7]{Slobodetskii58}).
Для довільного $\varphi\in\mathcal{M}$ виведемо потрібний результат за допомогою квадратичної інтерполяції відповідних просторів Соболєва.

Виберемо числа $s_0$ і $s_1$ такі, що $2br-b<s_0<s<s_1$, та розглянемо
лінійні обмежені оператори
\begin{equation}
R_{0}:H^{s_j,s_j/(2b)}(\mathbb{R}^{n})\to\mathbb{H}^{s_j}(\mathbb{R}^{n-1}),
\end{equation}
де $j\in\{0,1\}$. Нехай $\psi$~--- інтерполяційний параметр \eqref{9f7.2}. Застосувавши інтерполяцію з параметром $\psi$ до цих операторів, отримаємо на підставі теореми \ref{8prop4} обмежений оператор
\begin{equation}\label{8f27}
R_{0}:H^{s,s/(2b);\varphi}(\mathbb{R}^{n})\to
\bigl[\mathbb{H}^{s_0}(\mathbb{R}^{n-1}),
\mathbb{H}^{s_1}(\mathbb{R}^{n-1})\bigr]_{\psi}.
\end{equation}
Він є продовженням за неперервністю відображення \eqref{8f60}, оскільки множина $\mathcal{S}(\mathbb{R}^{n})$ щільна в просторі $H^{s,s/(2b);\varphi}(\mathbb{R}^{n})$. Тут
\begin{equation}\label{8f-intHH}
\begin{aligned}
&\bigl[\mathbb{H}^{s_0}(\mathbb{R}^{n-1}),
\mathbb{H}^{s_1}(\mathbb{R}^{n-1})\bigr]_{\psi}=\\
&=\bigoplus_{k=0}^{r-1}
\bigl[H^{s_0-2bk-b}(\mathbb{R}^{n-1}),
H^{s_1-2bk-b}(\mathbb{R}^{n-1})\bigr]_{\psi}=\\
&=\bigoplus_{k=0}^{r-1}H^{s-2bk-b;\varphi}(\mathbb{R}^{n-1})=
\mathbb{H}^{s;\varphi}(\mathbb{R}^{n-1})
\end{aligned}
\end{equation}
згідно з теоремами \ref{9prop6.3} і \ref{8prop4}.
Отже, \eqref{8f27} є потрібним обмеженим оператором \eqref{8f63}.

Побудуємо тепер лінійне відображення
\begin{equation}\label{8f58}
T_0:\bigl(L_{2}(\mathbb{R}^{n-1})\bigr)^r\to L_{2}(\mathbb{R}^{n})
\end{equation}
таке, що його звуження є обмеженим оператором на парі просторів
$\mathbb{H}^{s;\varphi}(\mathbb{R}^{n-1})$ і $H^{s,s/(2b);\varphi}(\mathbb{R}^{n})$ та цей оператор є правим оберненим до \eqref{8f63}. Подібно до \cite[доведення теореми~2.5.7]{Hormander63} означимо лінійне відображення
\begin{equation}\label{8f58-def}
T_0:v\mapsto \mathcal{F}_{\xi\mapsto x}^{-1}\biggl[
\beta\bigl(\langle\xi\rangle^{2b}t\bigr)\,\sum_{k=0}^{r-1}
\frac{1}{k!}\,\widehat{v_k}(\xi)\times t^k\biggr](x,t)
\end{equation}
на векторах
$$
v:=(v_0,\dots,v_{r-1})\in\bigl(\mathcal{S}'(\mathbb{R}^{n-1})\bigr)^r.
$$
При цьому розглядаємо $T_{0}v$ як розподіл на евклідовому просторі $\mathbb{R}^{n}$ точок
$(x,t)$, де $x=(x_{1},\ldots,x_{n-1})\in\mathbb{R}^{n-1}$ і $t\in\mathbb{R}$. У формулі \eqref{8f58-def} функцію $\beta\in C^{\infty}_{0}(\mathbb{R})$ вибрано так, що $\beta=1$ в околі нуля. Як звичайно, $\mathcal{F}_{\xi\mapsto x}^{-1}$ позначає обернене перетворення Фур'є за векторною зміною $\xi=(\xi_{1},\ldots,\xi_{n-1})\in\mathbb{R}^{n-1}$, а $\langle\xi\rangle:=(1+|\xi|^2)^{1/2}$. Змінна $\xi$ є дуальною до $x$ відносно прямого перетворення Фур'є $\widehat{w}(\xi)=(\mathcal{F}w)(\xi)$ функції $w(x)$.

Звісно, відображення \eqref{8f58-def} означене коректно і діє неперервно на парі лінійних топологічних просторів $(\mathcal{S}'(\mathbb{R}^{n-1})^r$ і $\mathcal{S}'(\mathbb{R}^{n})$.
Очевидно також, що його звуження є обмеженим оператором на парі гільбертових просторів $(L_{2}(\mathbb{R}^{n-1}))^r$ і $L_{2}(\mathbb{R}^{n})$. Покажемо, що
\begin{equation}\label{8f57}
R_{0}T_{0}v=v \quad\mbox{для довільного}\;\,
v\in\bigl(\mathcal{S}(\mathbb{R}^{n-1})\bigr)^r.
\end{equation}
Зауважимо, що ліва частина рівності \eqref{8f57} має сенс, оскільки
\begin{equation*}
v\in(\mathcal{S}(\mathbb{R}^{n-1})^r\;\Longrightarrow\;
T_{0}v\in\mathcal{S}(\mathbb{R}^{n}).
\end{equation*}
Для довільних номера $j\in\{0,\dots,r-1\}$, вектор-функції
\begin{equation}\label{proof-theorem1.12-v}
v=(v_0,\dots,v_{r-1})\in(S(\mathbb{R}^{n-1}))^r
\end{equation}
і вектора $\xi\in\mathbb{R}^{n-1}$ маємо такі рівності:
\begin{align*}
\mathcal{F}\bigl[\partial^j_tT_0v\!\mid_{t=0}\bigr](\xi)&=
\partial^j_{t}\mathcal{F}_{x\mapsto\xi}[T_0v](\xi,t)\big|_{t=0}=\\
&=\partial^j_t\biggl(
\beta\bigl(\langle\xi\rangle^{2b}t\bigr)\,\sum_{k=0}^{r-1}
\frac{1}{k!}\,\widehat{v_k}(\xi)\,t^k\biggr)\bigg|_{t=0}=\\
&=\biggl(\partial^j_t\sum_{k=0}^{r-1}
\frac{1}{k!}\,\widehat{v_k}(\xi)\,t^k\biggr)\!\bigg|_{t=0}=
\widehat{v_j}(\xi).
\end{align*}
У третій з них використано той факт, що $\beta=1$ в околі нуля.
Отже, перетворення Фур'є всіх компонент векторів $R_ {0}T_{0}v$ і $v$ збігаються, що еквівалентно \eqref{8f57}.

Доведемо тепер, що звуження відображення \eqref{8f58-def} є обмеженим оператором на парі просторів
\begin{equation*}
\mathbb{H}^{2bm}(\mathbb{R}^{n-1})=
\bigoplus_{k=0}^{r-1}H^{2bm-2bk-b}(\mathbb{R}^{n-1})\quad\mbox{і}\quad
H^{2bm,m}(\mathbb{R}^{n})
\end{equation*}
для кожного цілого числа $m\geq0$. Зауважимо, що тут показники $2bm-2bk-b$ можуть бути від'ємними. Скористаємося тим фактом, що норма в просторі
$H^{2bm,m}(\mathbb{R}^{n})$ еквівалентна нормі
\begin{equation*}
\|w\|_{2bm,m}:=\biggl(\|w\|^2+
\sum_{j=1}^{n-1}\|\partial_{x_j}^{2bm}w\|^2+
\|\partial_{t}^{m}w\|^2\biggr)^{1/2}
\end{equation*}
(див., наприклад, \cite[п.~9.1]{BesovIlinNikolskii75}). Тут і нижче у цьому доведенні \nobreak{$\|\cdot\|$} позначає норму у гільбертовому просторі $L_2(\mathbb{R}^{n})$. Як звичайно, $\partial_{x_j}$ і $\partial_{t}$ позначають оператори узагальнених частинних похідних відповідно за змінними $x_j$ та $t$. Для довільної вектор-функції \eqref{proof-theorem1.12-v} виконуються такі рівності:
\begin{align*}
\|T_0v\|^2_{2bm,m}&=\|T_0v\|^2+
\sum_{j=1}^{n-1}\|\partial_{x_j}^{2bm}\,T_0v\|^2+
\|\partial_{t}^{m}\,T_0v\|^2=\\
&=\|\widehat{T_0v}\|^2+
\sum_{j=1}^{n-1}\|\xi_j^{2bm}\,\widehat{T_0v}\|^2+
\|\partial_{t}^{m}\,\widehat{T_0v}\|^2\leq\\
&\leq\sum_{k=0}^{r-1}\frac{1}{k!}\,
\int\limits_{\mathbb{R}^{n}}
\bigl|\beta(\langle\xi\rangle^{2b}t)\,\widehat{v_k}(\xi)\,t^k\bigr|^2
d\xi dt+\\
&+\sum_{j=1}^{n-1}\sum_{k=0}^{r-1}\frac{1}{k!}\,
\int\limits_{\mathbb{R}^{n}}
\bigl|\xi_j^{2bm}\,\beta(\langle\xi\rangle^{2b}t)\,\widehat{v_k}(\xi)\,
t^k\bigr|^2d\xi dt+\\
&+\sum_{k=0}^{r-1}\frac{1}{k!}\,
\int\limits_{\mathbb{R}^{n}}
\bigl|\partial_{t}^{m}\bigl(\beta(\langle\xi\rangle^{2b}t)\,t^k\bigr)\,
\widehat{v_k}(\xi)\bigr|^2d\xi dt.
\end{align*}
Друга з них правильна на підставі рівності Парсеваля.

Оцінимо окремо кожний з трьох інтегралів. Почнемо з останнього.
Зробивши заміну змінної  $\tau=\langle\xi\rangle^{2b}t$ у внутрішньому інтегралі за $t$, отримаємо такі рівності:
\begin{align*}
&\int\limits_{\mathbb{R}^{n}}
\bigl|\partial_{t}^{m}\bigl(\beta(\langle\xi\rangle^{2b}t)\,t^k\bigr)\,
\widehat{v_k}(\xi)\bigr|^2d\xi dt=
\\&=\int\limits_{\mathbb{R}^{n-1}}
|\widehat{v_k}(\xi)|^2d\xi \int\limits_{\mathbb{R}}
|\partial_{t}^{m}(\beta(\langle\xi\rangle^{2b}t)t^k)|^2 dt=\\
&=\int\limits_{\mathbb{R}^{n-1}}
\langle\xi\rangle^{4bm-4bk-2b}\,|\widehat{v_k}(\xi)|^2 d\xi \int\limits_{\mathbb{R}}
|\partial_{\tau}^{m}(\beta(\tau)\tau^{k})|^2 d\tau.
\end{align*}
Тому
$$
\int\limits_{\mathbb{R}^{n}}
\bigl|\partial_{t}^{m}\bigl(\beta(\langle\xi\rangle^{2b}t)\,t^k\bigr)\,
\widehat{v_k}(\xi)\bigr|^2d\xi dt=
c_1\,\|v_k\|^2_{H^{2bm-2bk-b}(\mathbb{R}^{n-1})},
$$
де
$$
c_1:=\int\limits_{\mathbb{R}}
|\partial_{\tau}^{m}(\beta(\tau)\tau^{k})|^2 d\tau<\infty.
$$

Зробивши таку саму заміну змінної $t$ у другому інтегралі, отримаємо рівності
\begin{align*}
&\int\limits_{\mathbb{R}^{n}}
\bigl|\xi_j^{2bm}\,\beta(\langle\xi\rangle^{2b}t)\,\widehat{v_k}(\xi)\,
t^k\bigr|^2d\xi dt=
\\&=\int\limits_{\mathbb{R}^{n-1}}
|\xi_j|^{4bm}|\widehat{v_k}(\xi)|^2d\xi \int\limits_{\mathbb{R}}
|t^{k}\,\beta(\langle\xi\rangle^{2b}t)|^2dt=\\
&=\int\limits_{\mathbb{R}^{n-1}}
|\xi_j|^{4bm}\langle\xi\rangle^{-4bk-2b}\,|\widehat{v_k}(\xi)|^2d\xi \int\limits_{\mathbb{R}}|\tau^{k}\beta(\tau)|^2d\tau\leq\\
&\leq\int\limits_{\mathbb{R}^{n-1}}
\langle\xi\rangle^{4bm-4bk-2b}\,|\widehat{v_k}(\xi)|^2d\xi
\int\limits_{\mathbb{R}}|\tau^{k}\beta(\tau)|^2d\tau.
\end{align*}
Отже,
$$
\int\limits_{\mathbb{R}^{n}}
\bigl|\xi_j^{2bm}\,\beta(\langle\xi\rangle^{2b}t)\,\widehat{v_k}(\xi)\,
t^k\bigr|^2d\xi dt
\leq c_2\,\|v_k\|^2_{H^{2bm-2bk-b}(\mathbb{R}^{n-1})},
$$
де
$$
c_2:=\int\limits_{\mathbb{R}}|\tau^{k}\beta(\tau)|^2d\tau<\infty.
$$

Нарешті, замінивши вираз $\xi_j$ на число $1$ у попередніх міркуваннях,
отримаємо таку оцінку для першого інтеграла:
\begin{align*}
&\int\limits_{\mathbb{R}^{n}}
\bigl|\beta(\langle\xi\rangle^{2b}t)\,\widehat{v_k}(\xi)\,
t^k\bigr|^2d\xi dt\leq
\\&\leq c_2\,\|v_k\|^2_{H^{-2bk-b}(\mathbb{R}^{n-1})}
\leq c_2\,\|v_k\|^2_{H^{2bm-2bk-b}(\mathbb{R}^{n-1})}.
\end{align*}

Отже,
\begin{align*}
\|T_0v\|_{H^{2bm,m}(\mathbb{R}^{n})}^{2}\leq& \,c\,\sum_{k=0}^{r-1}
\|v_k\|^2_{H^{2bm-2bk-b}(\mathbb{R}^{n-1})}=\\
&=c\,\|v\|_{\mathbb{H}^{2bm}(\mathbb{R}^{n-1})}^{2}
\end{align*}
для довільного $v\in(\mathcal{S}(\mathbb{R}^{n-1}))^r$, де $c$~--- деяке додатне число, незалежне від $v$. Оскільки множина $\bigl(S(\mathbb{R}^{n-1})\bigr)^r$ є щільною у просторі $\mathbb{H}^{2bm}(\mathbb{R}^{n-1})$, то з отриманої оцінки випливає, що відображення \eqref{8f58-def} є лінійним обмеженим оператором
\begin{equation*}
T_0:\mathbb{H}^{2bm}(\mathbb{R}^{n-1})\to H^{2bm,m}(\mathbb{R}^{n}),
\quad\mbox{якщо}\quad 0\leq m\in\mathbb{Z}.
\end{equation*}

Виведемо звідси, що відображення \eqref{8f58-def} є обмеженим оператором на парі просторів
$\mathbb{H}^{s;\varphi}(\mathbb{R}^{n-1})$ і $H^{s,s/(2b);\varphi}(\mathbb{R}^{n})$. Нагадаємо, що $s>2br-b$ і $\varphi\in\mathcal{M}$. Покладемо $s_0:=0$, виберемо ціле число $s_1>s$ таке, що $m:=s_1/(2b)\in\mathbb{N}$, і розглянемо обмежені оператори
\begin{equation}\label{8f66}
T_{0}:\mathbb{H}^{s_j}(\mathbb{R}^{n-1})\to H^{s_j,s_j/(2b)}(\mathbb{R}^{n}),
\end{equation}
де $j\in\{0,1\}$. Нехай, як і раніше у цьому доведенні, функція $\psi$ є інтерполяційним параметром \eqref{9f7.2}. Застосувавши до цих операторів інтерполяцію з параметром $\psi$, отримаємо обмежений оператор
\begin{equation}\label{8f48}
T_0:\mathbb{H}^{s;\varphi}(\mathbb{R}^{n-1})\to
H^{s,s/(2b);\varphi}(\mathbb{R}^{n})
\end{equation}
на підставі теореми \ref{8prop4} і з огляду на формулу \eqref{8f-intHH}.

Рівність \eqref{8f57} продовжується за неперервністю на кожний елемент $v\in\mathbb{H}^{s;\varphi}(\mathbb{R}^{n-1})$. Отже, оператор \eqref{8f48} є правим оберненим до \eqref{8f63}. Таким чином, побудовано потрібне відображення \eqref{8f58}.

Введемо аналоги операторів \eqref{8f63} і \eqref{8f48} для смуги
$$
\Pi=\bigl\{(x,t):x\in\mathbb{R}^{n-1},0<t<\tau\bigr\}.
$$
Для довільного $u\in H^{s,s/(2b);\varphi}(\Pi)$ покладемо
$R_{1}u:=R_{0}w$, де функція $w\in H^{s,s/(2b);\varphi}(\mathbb{R}^{n})$ задовольняє умову $w\!\upharpoonright\!\Pi=u$. При цьому $R_{1}u$ не залежить від вказаної функції~$w$. Лінійне відображення
$u\mapsto R_{1}u$ є обмеженим оператором
\begin{equation}\label{8f67}
R_{1}:H^{s,s/(2b);\varphi}(\Pi)\to\mathbb{H}^{s;\varphi}(\mathbb{R}^{n-1}).
\end{equation}
Це випливає з обмеженості оператора \eqref{8f63} і означення норми у просторі $H^{s,s/(2b);\varphi}(\Pi)$.

Введемо правий обернений оператор до \eqref{8f67} за допомогою  відображення \eqref{8f58-def}. Покладемо $T_{1}v:=(T_0v)\!\upharpoonright\!\Pi$ для довільної вектор-функції $v\in(L_{2}(\mathbb{R}^{n-1}))^{r}$. Лінійне відображення $v\mapsto T_{1}v$ є обмеженим оператором
\begin{equation}\label{8f51}
T_1:\mathbb{H}^{s;\varphi}(\mathbb{R}^{n-1})\to H^{s,s/(2b);\varphi}(\Pi).
\end{equation}
Це випливає з обмеженості оператора \eqref{8f48}. Помічаємо, що
\begin{equation*}
R_1T_1v=R_1\bigl((T_0v)\!\upharpoonright\!\Pi\bigr)=R_0T_0v=v\\
\end{equation*}
для довільної функції $v\in\mathbb{H}^{s;\varphi}(\mathbb{R}^{n-1})$.  Отже, оператор \eqref{8f51} є правим оберненим до \eqref{8f67}.

Використовуючи оператори \eqref{8f67} і \eqref{8f51}, доведемо теорему~\ref{8lem1} за допомогою спеціальних локальних карт \eqref{8f-local} на поверхні $S$ і відповідного розбиття одиниці. Для довільної функції $g\in C^{\infty}(\overline{S})$ правильні такі співвідношення:
\begin{align*}
\|Rg\|_{\mathbb{H}^{s;\varphi}(\Gamma)}^{2}
&=\sum_{k=0}^{r-1}\|\partial^{k}_{t}g\!\upharpoonright\!\Gamma\|_
{H^{s-2bk-b;\varphi}(\Gamma)}^{2}=\\
&=\sum_{k=0}^{r-1}\sum_{j=1}^{\lambda}
\|(\chi_{j}(\partial^{k}_{t}g\!\upharpoonright\!\Gamma))
\circ\theta_{j}\|_{H^{s-2bk-b;\varphi}(\mathbb{R}^{n-1})}^{2}=\\
&=\sum_{j=1}^{\lambda}\sum_{k=0}^{r-1}
\|\partial^{k}_{t}((\chi_{j}\,g)\circ\theta^{\ast}_{j})
\!\upharpoonright\!\mathbb{R}^{n-1})\|_
{H^{s-2bk-b;\varphi}(\mathbb{R}^{n-1})}^{2}\leq\\
&\leq c^{2}\,\sum_{j=1}^{\lambda}
\|(\chi_{j}\,g)\circ\theta^{\ast}_{j}\|_{H^{s,s/(2b);\varphi}(\Pi)}^{2}=\\
&=c^{2}\,\|g\|_{H^{s,s/(2b);\varphi}(S)}^{2}.
\end{align*}
Тут $c$~--- норма обмеженого оператора \eqref{8f67}, а
$\{\theta_{j}\}$ і $\{\chi_{j}\}$~--- набір локальних карт на $\Gamma$ і відповідне розбиття одиниці, використані в означенні простору $H^{\sigma;\varphi}(\Gamma)$, де $\sigma\in\mathbb{R}$. Таким чином,
\begin{equation*}
\|Rg\|_{\mathbb{H}^{s;\varphi}(\Gamma)}\leq c\,\|g\|_{H^{s,s/(2b);\varphi}(S)}
\quad\mbox{для усіх}\quad g\in C^{\infty}(\overline{S}).
\end{equation*}
Отже, відображення \eqref{8f25} продовжується за неперервністю до лінійного обмеженого оператора \eqref{8f65}.

Побудуємо лінійне відображення $T:(L_2(\Gamma))^r\to L_2(S)$, звуження якого на простір $\mathbb{H}^{s;\varphi}(\Gamma)$ є правим оберненим оператором до \eqref{8f65}. Подібно до \eqref{L} означимо лінійне відображення розпрямлення многовиду $\Gamma$ за формулою
\begin{equation*}
L_0:\omega\mapsto\bigl((\chi_{1}\omega)\circ\theta_{1},\ldots,
(\chi_{\lambda}\omega)\circ\theta_{\lambda}\bigr),
\end{equation*}
де $\omega\in L_2(\Gamma)$. Звуження цього відображення на вказаний простір є ізометричним оператором
\begin{equation}\label{8f52}
L_0:H^{\sigma;\varphi}(\Gamma)\rightarrow
\bigl(H^{\sigma;\varphi}(\mathbb{R}^{n-1})\bigr)^{\lambda}
\end{equation}
для кожного $\sigma>0$. Подібно до \eqref{K} означимо лінійне відображення склеювання многовиду $\Gamma$ за формулою
\begin{equation*}
K_0:(\omega_{1},\ldots,\omega_{\lambda})\mapsto\sum_{k=1}^{\lambda}\,
O_{k}\bigl((\eta_{k}\omega_{k})\circ\theta_{k}^{-1}\bigr),
\end{equation*}
де $\omega_{1},\ldots,\omega_{\lambda}\in L_2(\mathbb{R}^{n-1})$. Тут кожну функцію $\eta_{k}\in
C_{0}^{\infty}(\mathbb{R}^{n-1})$ вибрано так, що $\eta_{k}=1$ на
множині $\theta^{-1}_{k}(\mathrm{supp}\,\chi_{k})$, а $O_{k}$ позначає оператор продовження нулем на $\Gamma$ функції, заданої на $\Gamma_k$. Оператор $K_0$ є обмеженим на парі просторів
\begin{equation*}
K_0:\bigl(H^{\sigma;\varphi}(\mathbb{R}^{n-1})\bigr)^{\lambda}\to
H^{\sigma;\varphi}(\Gamma)
\end{equation*}
для кожного $\sigma>0$. Крім того, він є лівим оберненим до \eqref{8f52}, оскільки $K_0L_0\omega=w$ для довільної функції $\omega\in L_2(\Gamma)$. Ця рівність доводиться так само, як і формула \eqref{9f7.22}.

Відображення $K_0$ і оператор \eqref{K} склеювання многовиду $S=\Gamma\times(0,\tau)$ пов'язані рівністю
\begin{equation*}
\bigl(K(u_1,\dots,u_\lambda)\bigr)(x,t)=
\bigl(K_0(u_1(\cdot,t),\ldots,u_\lambda(\cdot,t))\bigr)(x)
\end{equation*}
для довільних функцій $u_1,\dots,u_\lambda\in L_2(\Pi)$ та майже всіх $x\in\Gamma$ і $t\in(0,\tau)$. Лінійний оператор $K$ обмежений на парі просторів $(L_2(\Pi))^{\lambda}$ і $L_2(S)$. Крім того, він є обмеженим на парі просторів
\begin{equation}\label{8f53}
K:(H^{\sigma,\sigma/(2b);\varphi}(\Pi))^{\lambda}\to
H^{\sigma,\sigma/(2b);\varphi}(S)
\end{equation}
для кожного $\sigma>0$. Це обґрунтовується так само, як і обмеженість оператора \eqref{9f7.23}.

Для довільної вектор-функції
$$
v:=(v_0,v_1,\dots,v_{r-1})\in(L_{2}(\Gamma))^{r}
$$
покладемо
\begin{equation*}
Tv:=K\bigl(T_{1}(v_{0,1},\ldots,v_{r-1,1}),\ldots,
T_{1}(v_{0,\lambda},\ldots,v_{r-1,\lambda})\bigr),
\end{equation*}
де
$$
(v_{k,1},\ldots,v_{k,\lambda}):=
L_{0}v_{k}\in(L_{2}(\mathbb{R}^{n-1}))^{\lambda}
$$
для кожного цілого числа $k\in\{0,\ldots,r-1\}$. Лінійний оператор $v\mapsto Tv$ обмежений на парі просторів $(L_{2}(\Gamma))^{r}$ і $L_{2}(S)$, що  випливає з означень відображень $L_{0}$, $T$ і~$K$. Крім того, він обмежений на парі просторів \eqref{8f43}, що є наслідком обмеженості операторів \eqref{8f51}--\eqref{8f53}.

Щойно введений оператор  \eqref{8f43} є правим оберненим до \eqref{8f65}. Справді, для довільної  вектор-функції
$$
v=(v_0,v_1,\dots,v_{r-1})\in\mathbb{H}^{s;\varphi}(\Gamma)
$$
виконуються такі рівності:
\begin{align*}
(RTv)_k&=\bigl(RK\bigl(T_1(v_{0,1},\ldots,v_{r-1,1}),\ldots,
T_1(v_{0,\lambda},\ldots,v_{r-1,\lambda})\bigr)\bigr)_k=\\
&=K_0\bigl(\bigl(R_1T_1(v_{0,1},\ldots,v_{r-1,1})\bigr)_k,\ldots\\
&\quad\qquad\ldots,\bigl(R_1T_1(v_{0,\lambda},\ldots,
v_{r-1,\lambda})\bigr)_k\bigr)=\\
&=K_0(v_{k,1},\dots,v_{k,\lambda})=K_{0}L_{0}v_k=v_k.
\end{align*}
Тут індекс $k$ пробігає множину $\{0,\dots,r-1\}$ і позначає $k$-ту компоненту вектор-функції. Таким чином, $RTv=v$. Потрібний оператор $T$ побудовано.
\end{proof}

\markright{\emph \ref{sec2.8}. Теорема вкладення}

\section[Теорема вкладення]{Теорема вкладення}\label{sec2.8}

\markright{\emph \ref{sec2.8}. Теорема вкладення}

Для просторів Хермандера важливу роль відіграє теорема \cite[теорема~2.2.7]{Hormander63} про необхідну і достатню умову їх вкладення у простір $k\geq0$ разів неперервно диференційовних функцій. Під час дослідження умов регулярності розв'язків параболічних задач нам знадобиться версія цієї теореми для узагальнених просторів Соболєва $H^{s,s/(2b);\varphi}(\mathbb{R}^{n+1})$, де $b\in\mathbb{N}$.

Використовуємо такі короткі позначення операторів диференціювання за змінними $x=(x_1,\ldots,x_n)$ і~$t$:
$$
\partial^{\alpha}_{x}:=\frac{\partial^{|\alpha|}}
{\partial_{x_1}^{\alpha_1}\ldots\partial_{x_n}^{\alpha_n}}
\quad\mbox{і}\quad
\partial_t^{\beta}:=\frac{\partial^{\beta}}{\partial t^\beta}.
$$
Тут  $\alpha=(\alpha_1,...,\alpha_n)$~--- мультиіндекс, $\nobreak{|\alpha|:=\alpha_1+\cdots+\alpha_n}$, а числа
$\alpha_1,...,\alpha_n$ і $\beta$ цілі й невід'ємні.
Крім того, $\xi^{\alpha}:=\xi_{1}^{\alpha_{1}}\ldots\xi_{n}^{\alpha_{n}}$ для вектора $\xi:=(\xi_{1},\ldots,\xi_{n})\in\mathbb{C}^{n}$.

\begin{theorem}\label{9lem8.1}
Нехай $0\leq p\in\mathbb{Z}$, $b,n\in\mathbb{N}$, $s:=p+b+n/2$ та $\varphi\in\mathcal{M}$. Тоді правильні такі два твердження:
\begin{itemize}
\item[$\mathrm{(i)}$] Якщо $\varphi$ задовольняє умову
    \begin{equation}\label{9f4.7}
    \int\limits_{1}^{\,\infty}\;\frac{dr}{r\,\varphi^2(r)}<\infty,
    \end{equation}
    то будь-яка функція $w(x,t)$ з простору $H^{s,s/(2b);\varphi}(\mathbb{R}^{n+1})$ має таку властивість: вона і всі її узагальнені частинні похідні
    $\partial_{x}^{\alpha}\partial_{t}^{\beta}w$, де $|\alpha|+2b\beta\leq p$, неперервні на $\mathbb{R}^{n+1}$.
\item[$\mathrm{(ii)}$] Нехай $V$~--- непорожня відкрита підмножина простору $\mathbb{R}^{n+1}$ і нехай $k\in\mathbb{N}$ та $k\leq n$. Якщо для довільної функції $w\in H^{s,s/(2b);\varphi}(\mathbb{R}^{n+1})$ такої, що $\mathrm{supp}\,w\subset V$, її узагальнена частинна похідна $\partial^{p}w/\partial x_{k}^{p}$ неперервна на $C(\mathbb{R}^{n+1})$, то $\varphi$ задовольняє умову \eqref{9f4.7}.
\end{itemize}
\end{theorem}

\begin{proof}[\indent Доведення] Обґрунтуємо твердження~(i). Припустимо, що $w\in H^{s,s/(2b);\varphi}(\mathbb{R}^{n+1})$ і $0\leq|\alpha|+2b\beta\leq p$. Для більшої лаконічності формул використовуємо позначення $w_{\alpha,\beta}(x,t):=\partial_{x}^{\alpha}\partial_{t}^{\beta}w(x,t)$, $\gamma:=1/(2b)$ і $r_{\gamma}(\xi,\eta):=\bigl(1+|\xi|^2+|\eta|^{2\gamma}\bigr)^{1/2}$, де $\xi\in\mathbb{R}^{n}$ і $\eta\in\mathbb{R}$. З~умови
\begin{equation}\label{9f8.9}
\int\limits_{\mathbb{R}^{n}}\int\limits_{\mathbb{R}}
\frac{|\xi^{\alpha}|^{2}\,|\eta|^{2\beta}\,d\xi\,d\eta}
{r_{\gamma}^{2s}(\xi,\eta)\,\varphi^{2}(r_{\gamma}(\xi,\eta))}<\infty
\end{equation}
випливає включення $w_{\alpha,\beta}\in C(\mathbb{R}^{n+1})$. Справді, за нерівністю Шварца маємо оцінку
\begin{align*}
&\int\limits_{\mathbb{R}^{n}}\int\limits_{\mathbb{R}}
|\widehat{w_{\alpha,\beta}}(\xi,\eta)|\,d\xi\,d\eta=
\int\limits_{\mathbb{R}^{n}}\int\limits_{\mathbb{R}}
|\xi^{\alpha}\eta^{\beta}|\cdot|\widehat{w}(\xi,\eta)|\,d\xi\,d\eta
\leq\\
&\leq\|w\|_{H^{s,s/(2b);\varphi}(\mathbb{R}^{n+1})}
\int\limits_{\mathbb{R}^{n}}\int\limits_{\mathbb{R}}
\frac{|\xi^{\alpha}|^{2}\,|\eta|^{2\beta}\,d\xi\,d\eta}
{r_{\gamma}^{2s}(\xi,\eta)\,\varphi^{2}(r_{\gamma}(\xi,\eta))}.
\end{align*}
Отже, якщо $\varphi$ задовольняє умову \eqref{9f8.9}, то перетворення Фур'є $\widehat{w_{\alpha,\beta}}$ функції $w_{\alpha,\beta}$  сумовне на $\mathbb{R}^{n+1}$, а тому сама функція $w_{\alpha,\beta}$ є неперервною на  $\mathbb{R}^{n+1}$.

Залишається показати, що $\eqref{9f4.7}\Rightarrow\eqref{9f8.9}$. У кратному інтегралі, присутньому в формулі \eqref{9f8.9}, зробимо заміну змінної $\eta=\eta_1^{2b}$, перейдемо до сферичних координат $(\varrho,\theta_{1},\ldots,\theta_{n})$, де $\varrho=(|\xi|^2+\eta_1^2)^{1/2}$, і зробимо заміну змінної $r=(1+\varrho^2)^{1/2}$. У~результаті отримаємо такі рівності:
\begin{equation*}
\begin{split}
&\int\limits_{\mathbb{R}^{n}}\int\limits_{\mathbb{R}}
\frac{|\xi^{\alpha}|^{2}\,|\eta|^{2\beta}d\xi\,d\eta}
{r_{\gamma}^{2s}(\xi,\eta)\,\varphi^{2}(r_{\gamma}(\xi,\eta))}=\\
&=2^{n+1}\int\limits_{0}^{\infty}\dots
\int\limits_{0}^{\infty}\int\limits_{0}^{\infty}
\frac{|\xi^{\alpha}|^{2}\,\eta^{2\beta}d\xi\,d\eta}
{r_{\gamma}^{2s}(\xi,\eta)\,\varphi^{2}(r_{\gamma}(\xi,\eta))}=\\
&=2^{n+1}\int\limits_{0}^{\infty}\dots
\int\limits_{0}^{\infty}\int\limits_{0}^{\infty}
\frac{2b\,|\xi^{\alpha}|^{2}\,\eta_1^{4b\beta+2b-1}\,d\xi\,d\eta_1}
{(1+|\xi|^2+\eta_1^2)^{s}
\varphi^2\bigl(\sqrt{1+|\xi|^2+\eta_1^2}\,\bigr)}=\\
&=c_{\alpha,\beta}\int\limits_{0}^{\infty}
\frac{\varrho^{2|\alpha|+4b\beta+2b-1}\,\varrho^{n}\,d\varrho}
{(1+\varrho^2)^{s}\varphi^2(\sqrt{1+\varrho^2}\,)}=\\
&=c_{\alpha,\beta}\int\limits_{1}^{\infty}
\frac{(r^2-1)^{|\alpha|+2b\beta+b-1/2+n/2}\,r\,dr}{r^{2s}\,\varphi^2(r)\,
(r^2-1)^{1/2}}=\\
&=c_{\alpha,\beta}\int\limits_{1}^{\infty}\;
\frac{(r^2-1)^{s-1-\delta(\alpha,\beta)}}{r^{2s-1}\,\varphi^2(r)}\,dr.
\end{split}
\end{equation*}
Тут $c_{\alpha,\beta}$~--- деяке додатне число, а
\begin{equation*}
\delta(\alpha,\beta):=p-|\alpha|-2b\beta\in[0,p].
\end{equation*}
Таким чином,
\begin{equation}\label{9f8.10}
\int\limits_{\mathbb{R}^{n}}\int\limits_{\mathbb{R}}
\frac{|\xi^{\alpha}|^{2}\,|\eta|^{2\beta}\,d\xi\,d\eta}
{r_{\gamma}^{2s}(\xi,\eta)\,\varphi^{2}(r_{\gamma}(\xi,\eta))}=
c_{\alpha,\beta}\int\limits_{1}^{\infty}\;
\frac{(r^2-1)^{s-1-\delta(\alpha,\beta)}}{r^{2s-1}\,\varphi^2(r)}\,dr.
\end{equation}
Помітимо, що
\begin{align}\label{9f8.11}
\eqref{9f4.7}\;&\Longleftrightarrow\;\int\limits_{1}^{\infty}\;
\frac{(r^2-1)^{s-1}}{r^{2s-1}\,\varphi^2(r)}\,dr<\infty\;\Longrightarrow\\
&\Longrightarrow\;\int\limits_{1}^{\infty}\;
\frac{(r^2-1)^{s-1-\delta(\alpha,\beta)}}{r^{2s-1}\,\varphi^2(r)}\,dr
<\infty.\notag
\end{align}
Остання імплікація істинна, оскільки $\delta(\alpha,\beta)\geq0$ та
$$
s-1-\delta(\alpha,\beta)\geq p+b+n/2-1-p\geq0.
$$
Тому з умови \eqref{9f4.7} випливає \eqref{9f8.9} з огляду на \eqref{9f8.10}. Тверджен\-ня~(i) обґрунтоване.

Обґрунтуємо твердження (ii). Нехай виконується зроблене у ньому припущення. Оскільки простір $H^{s,s/(2b);\varphi}(\mathbb{R}^{n+1})$ інваріантний відносно зсувів у $\mathbb{R}^{n+1}$, то можна вважати, не обмежуючи загальності, що $0\in V$. Виберемо функцію $\chi\in C^{\infty}(\mathbb{R}^{n+1})$ таку, що $\mathrm{supp}\,\chi\subset V$ і $\chi(x,t)=1$ в околі точки $(0,0)$. За припущенням лінійний оператор
$w\mapsto(\partial^{p}(\chi w)/\partial x_{k}^{p})\!\upharpoonright\!\overline{V}$ діє з усього простору $H^{s,s/(2b);\varphi}(\mathbb{R}^{n+1})$ у простір $C(\overline{V})$. Цей оператор замкнений, оскільки він є звуженням неперервного оператора на парі лінійних топологічних просторів $\mathcal{S}'(\mathbb{R}^{n+1})$ і $\mathcal{D}'(V)$. Тому за теоремою про замкнений графік перший оператор є обмеженим; нехай $c$~--- його норма. Звідси для довільної функції $w\in\mathcal{S}(\mathbb{R}^{n+1})$ випливає оцінка
\begin{align*}
&\biggl|(2\pi)^{-n-1}\int\limits_{\mathbb{R}^{n}}\int\limits_{\mathbb{R}}
\xi_{k}^{p}\,\widehat{w}(\xi,\eta)\,d\xi\,d\eta\biggr|=
\biggl|\frac{\partial^{p}w}{\partial x_{k}^{p}}(0,0)\biggr|\leq\\
&\leq\biggl\|\frac{\partial^{p}(\chi w)}{\partial x_{k}^{p}}\biggr\|
_{C(\overline{V})}\leq c\cdot\|w\|_{H^{s,s/(2b);\varphi}(\mathbb{R}^{n+1})}.
\end{align*}
Отже,
\begin{equation}\label{bound-C(V)-new}
\biggl|\,\int\limits_{\mathbb{R}^{n}}\int\limits_{\mathbb{R}}
\xi_{k}^{p}\,\widehat{w}(\xi,\eta)\,d\xi\,d\eta\biggr|\leq
c_{1}\|\mu\widehat{w}\|,
\end{equation}
де $c_{1}:=(2\pi)^{n+1}c$ і $\mu(\xi,\eta):=r_{\gamma}^{s}(\xi,\eta)\,\varphi(r_{\gamma}(\xi,\eta))$ для довільних $\xi\in\mathbb{R}^{n}$ і $\eta\in\mathbb{R}$. Тут і далі у доведенні $\|\cdot\|$ і $(\cdot,\cdot)$ позначають норму і скалярний добуток у гільбертовому просторі ${L_{2}(\mathbb{R}^{n+1})}$. У ньому є щільною множина $\{\mu\widehat{w}:w\in\mathcal{S}(\mathbb{R}^{n+1})\}$. Це випливає із щільності множини $\mathcal{S}(\mathbb{R}^{n+1})$ у просторі $H^{\mu}(\mathbb{R}^{n+1})=H^{s,s/(2b);\varphi}(\mathbb{R}^{n+1})$. Для зручності, покладемо $\lambda(\xi,\eta):=\xi_{k}^{p}$ для всіх $\xi=(\xi_{1},\ldots,\xi_{n})\in\mathbb{R}^{n}$ і $\eta\in\mathbb{R}$. На підставі \eqref{bound-C(V)-new} маємо оцінку
\begin{equation*}
\|\lambda/\mu\|=
\sup_{w\in\mathcal{S}(\mathbb{R}^{n+1})}
\frac{|(\mu\widehat{w},\lambda/\mu)|}{\|\mu\widehat{w}\|}\leq c_{1},
\end{equation*}
тобто
\begin{equation*}
\int\limits_{\mathbb{R}^{n}}\int\limits_{\mathbb{R}}
\frac{|\xi_{k}^{p}|^{2}\,d\xi\,d\eta}
{r_{\gamma}^{2s}(\xi,\eta)\,\varphi^{2}(r_{\gamma}(\xi,\eta))}\leq c_{1}^{2}<\infty.
\end{equation*}
З цієї властивості та рівності \eqref{9f8.10} випливає, що
\begin{equation}\label{9f8.12}
\int\limits_{1}^{\infty}\;\frac{(r^2-1)^{s-1}}
{r^{2s-1}\,\varphi^2(r)}\,dr<\infty.
\end{equation}
Тут \eqref{9f8.10} застосовано у випадку, коли $\alpha_{k}=p$ і $\alpha_{q}=0$, якщо $q\neq k$, та $\beta=0$ (тоді $\delta(\alpha,\beta)=0$). Тепер \eqref{9f4.7} випливає з \eqref{9f8.11} і \eqref{9f8.12}. Твердження~(ii) обґрунтоване.
\end{proof}

Стосовно теореми \ref{9lem8.1} зауважимо, що використання функціонального показника регулярності $\varphi$ для простору $H^{s,s/(2b);\varphi}(\mathbb{R}^{n+1})$ дало змогу досягти точного значення $p+b+n/2$ числового показника $s$ в умовах тверджень (i) та (ii) цієї теореми. На класі соболєвських просторів $H^{s,s/(2b)}(\mathbb{R}^{n+1})$ це неможливо, оскільки функція $\varphi(\cdot)\equiv1$ не задовольняє інтегральну умову \eqref{9f4.7}. Для таких просторів замість останньої доводиться використовувати більш сильну умову $s>p+b+n/2$.

\markright{\emph \ref{rem-ch1}. Бібліографічні коментарі до розд.~1}

\section[Бібліографічні коментарі до розд.~1]{Бібліографічні коментарі до розд.~1}\label{rem-ch1}

\markright{\emph \ref{rem-ch1}. Бібліографічні коментарі до розд.~1}

\small

Розглянутий у п.~\ref{sec2.1} метод квадратичної інтерполяції з функціональним параметром є природним узагальненням класичного методу інтерполяції пар гільбертових просторів, уведеного незалежно Ж.-Л.~Ліонсом (J.-L.~Lions) \cite{Lions58} і С.~Г.~Крейном \cite{Krein60a} (див. також їх монографії \cite[розд.~1, пп.~2 і~5]{LionsMagenes72i} і \cite[розд.~4, п.~1.10]{KreinPetuninSemenov82}). У~класичному методі інтерполяційним параметром є число $\theta\in(0,1)$, а інтерполяційні простори є областями визначення степеневих функцій $\psi(t)\equiv t^{\theta}$ від породжуючого оператора регулярної пари гільбертових просторів. За допомогою цього методу будується гільбертова шкала просторів, яка з'єднує гільбертові простори, що утворюють регулярну пару (див., наприклад, довідник \cite[розд.~IV, \S~9]{FunctionalAnalysis72}).

Квадратична інтерполяція з (досить загальним) функціональним параметром уперше з'явилася в статті Ч.~Фояша (C. Foia\c{s}) і Ж.-Л.~Ліонса \cite[с.~278]{FoiasLions61}. Будучи застосованою до сумісної пари гільбертових просторів, вона дає також гільбертів простір, чим зумовлена і назва цієї інтерполяції. Квадратична інтерполяція та пов'язані з нею інтерполяційні властивості гільбертових шкал вивчалися у працях С.~Г.~Крейна і Ю.~І.~Пєтуніна \cite[\S~9]{KreinPetunin66}, В.~Ф.~Донохью (W.~F.~Donoghue ) \cite{Donoghue67}, Ж.-Л.~Ліонса і Е.~Мадженеса (E.~Magenes) \cite[розд.~1]{LionsMagenes72i}, Г.~Шлензак \cite{Shlenzak74}, Є.~І.~Пустильніка \cite{Pustylnik82},
В.~І.~Овчиннікова \cite[\S~11]{Ovchinnikov84}, J.~L\"ofstr\"om \cite{Lofstrom91}, Y.~Ameur \cite{Ameur04, Ameur19}, В.~А.~Михайлеця і О.~О.~Мурача \cite{MikhailetsMurach08MFAT1, MikhailetsMurach10, MikhailetsMurach13UMJ3, MikhailetsMurach21arxiv02}, M.~Fan \cite{Fan11}, Б.~Сімона (B.~Simon) \cite[розд.~15 і~30]{Simon19}. Квадратична інтерполяція з функціональним параметром тісно пов'язана \cite[п.~4]{MikhailetsMurach21arxiv02} з методом змінних гільбертових шкал, запропонованим М.~Хегландом (M.~Hegland) \cite{Hegland92, Hegland95}. Цей метод виявився досить корисним у теорії некоректних задач; див., наприклад, статті П.~Мате (P.~Mathe) і С.~В.~Пєрєвєрзєва \cite{MathePereverzev03}, П.~Мате і У.~Тотенхана (U.~Tautenhahn) \cite{MatheTautenhahn06}, М.~Хегланда \cite{Hegland10}, М.~Хегланда і Р.~С.~Андерсена (R.~S.~Anderssen) \cite{HeglandAnderssen11}, М.~Хегланда і Б.~Хофмана (B.~Hofmann) \cite{HeglandHofmann11}, К.~Джін (Q.~Jin) і У.~Тотенхана \cite{JinTautenhahn11}.

Метод Ліонса і Крейна інтерполяції гільбертових просторів є історично першим методом інтерполяції досить загальних пар просторів. Його було поширено на нормовані простори у вигляді різних еквівалентних методів дійсної інтерполяції у працях Ж.-Л.~Ліонса і Я.~Петре (J.~Peetre) \cite{Lions59, LionsPeetre61, Peetre63} та методу комплексної або голоморфної інтерполяції у працях С.~Г.~Крейна \cite{Krein60b}, Ж.-Л.~Ліонса \cite{Lions60} і А.~П.~Кальдерона (A.~P.~Calderon) \cite{Calderon64}. У цих методах скінченні набори чисел є параметрами інтерполяції. Взагалі кажучи, дійсний і комплексний методи інтерполяції дають різні інтерполяційні простори. Принципи побудови загальних інтерполяційних методів розроблені Е.~Гальярдо (E.~Gagliardo) \cite{Gagliardo68}. Теорія інтерполяції нормованих та більш загальних лінійних топологічних просторів викладена у монографіях К.~Бенета (C.~Bennet) і Р.~Шарплі (R.~Sharpley) \cite{BennetSharpley88}, Й.~Берга (J.~Bergh) і Й.~Льофстрьома (J.~L\"ofstr\"om) \cite{BerghLefstrem76}, Ю.~А.~Брудного і Н.~Я.~Кругляка \cite{BrudnyiKrugljak91}, С.~Г.~Крейна, Ю.~І.~Пєтуніна і Є.~М.~Семьонова \cite{KreinPetuninSemenov82}, В.~І.~Овчиннікова \cite{Ovchinnikov84}, Л.~Тартара (L.~Tartar) \cite{Tartar07}, Ґ.~Трібеля (H.~Triebel) \cite{Triebel80}.

Інтерполяцію з функціональним параметром нормованих просторів уперше розглядали в статті Ч.~Фояша і Ж.-Л.~Ліонса \cite{FoiasLions61}. Різні методи дійсної інтерполяції з функціональним параметром уведено та досліджено у працях Т.~Ф.~Калугіної \cite{Kalugina75}, Й.~Густавсона (J.~Gustavsson) \cite{Gustavsson78}, С.~Янсона (S.~Janson) \cite{Janson81}, К.~Меручі (C.~Merucci) \cite{Merucci82}, $\nobreak{\mbox{Л.-Е.}}$~Персона (L.-E.~Persson) \cite{Persson86}, Н.~Я.~Кругляка \cite{Krugljak93}, В.~І.~Овчиннікова \cite{Ovchinnikov05, Obchinnikov14MatSb, Obchinnikov14UspMatNauk}, L.~Loosveldt і S.~Nicolay \cite{LoosveldtNicolay19}, а комплексної інтерполяції з функціональним параметром~--- у працях М.~Шехтера (M.~Schechter) \cite{Schechter67}, M.~J.~Carro і J.~Cerd\`a \cite{CarroCerda90}, M.~Fan і S.~Kaijser \cite{Fan94, FanKaijser94}. Окрема увага приділялася інтерполяції з логарифмічним параметром; див., наприклад, статті Р.~Я.~Докторського \cite{Doktorskii91}, В.~Д.~Еванса (W.~D.~Evans) і Б.~Опіка (B.~Opic) \cite{EvansOpic00, EvansOpic02}, B.~F.~Besoy і F.~Cobos \cite{BesoyCobos18}.

Різні методи інтерполяції нормованих просторів застосовують у теорії функціональних просторів, теорії операторів, багатовимірних крайових задач, теорії апроксимації (див. монографії Г.~Амана (H.~Amann) \cite{Amann95, Amann19}, Й.~Берга і Й.~Льофстрьома \cite{BerghLefstrem76}, П.~Л.~Бутцера (P.~L.~Butzer) і Х.~Беренса (H.~Behrens) \cite{ButzerBehrens67}, Ж.-Л.~Ліонса і Е.~Мадженеса \cite{LionsMagenes72i, LionsMagenes72ii}, А.~Лунарді (A.~Lunardi) \cite{Lunardi95},
В.~А.~Михайлеця і О.~О.~Мурача \cite{MikhailetsMurach10, MikhailetsMurach14}, Л.~Тартара \cite{Tartar07}, Ґ.~Трібеля \cite{Triebel80, Triebel86} й наведену там літературу). Стаття W.~Yuan, W.~Sickel і D.~Yang \cite{YuanSickelYang15} містить сучасний огляд застосувань різних методів інтерполяції до просторів розподілів. Чимало  прикладів застосувань інтерполяції просторів до рівнянь з частинними похідними наведено у статті Л.~Малігранди (L.~Maligranda), Л.-Е.~Персона і J.~Wyller \cite{MaligrandaPerssonWyller94}.

У нашій монографії систематично використовується квадратична інтерполяція з параметром, який є правильно змінною функцією на нескінченності порядку $\theta\in(0,1)$. Поняття правильно змінної функції уведено Й.~Караматою (J.~Karamata) \cite{Karamata30a} (для неперервних функцій). Ним же \cite{Karamata30b, Karamata33} встановлено основні властивості таких функції. Теорія правильно змінних функцій, їх різних узагальнень та застосувань викладена у монографіях Н.~Х.~Бінхема (N.~H.~Bingham), Ч.~М.~Голді (C.~M.~Goldie) і Дж.~Л.~Тайгельза (J.~L.~Teugels) \cite{BinghamGoldieTeugels89}, В.~В.~Булдигіна, К.-Х.~Індлекофера (K.-H.~Indlekofer), О.~І.~Клесова і Й.~Г.~Стейнбаха (J.~G.~Steinebach) \cite{BuldyginIndlekoferKlesovSteinebach18}, J.~L.~Geluk і
L.~de~Haan \cite{GelukHaan87}, V.~Maric \cite{Maric00}, С.~І.~Рєшніка (S.~I.~Reshnick) \cite{Reshnick87}, Е.~Сєнєти (E.~Seneta) \cite{Seneta85}, Л.~де~Хаан (L.~de~Haan) \cite{Haan70}. У цих працях наведено різні приклади правильно змінних функцій. Їх називають повільно змінними (у сенсі Карамати), якщо $\theta=0$. Повільно змінна (на нескінченності) функція $\varphi$ може осцилювати у такий спосіб, що $\varphi([r,\infty))=(0,\infty)$ для довільного числа $r>1$ (див., наприклад, \cite[п.~1.2.1]{MikhailetsMurach10, MikhailetsMurach14}).

Узагальнені простори Соболєва, розглянуті у розділі~1 монографії, належать до так званих просторів узагальненої гладкості. Остання характеризується за допомогою функціонального параметра, залежного від частотних змінних (дуальних до просторових відносно перетворення Фур'є), або в еквівалентній формі за допомогою нескінченної числової послідовності. Для класичних ізотропних або анізотропних просторів Соболєва використовують числові параметри, тобто степеневі функції частотних змінних. Звісно, у теорії параболічних задач потрібні відповідні анізотропні простори. Теорія анізотропних соболєвських просторів, параметризованих за допомогою скінченного набору числових параметрів, започаткована у статті Л.~Н.~Слободецького \cite{Slobodetskii58}, де розглянуто лише гільбертові простори та наведено деякі застосування вказаних просторів до багатовимірних крайових задач, зокрема, для рівняння теплопровідності. Теорія банахових анізотропних просторів Соболєва та пов'язаних з ними анізотропних просторів Нікольського--Бєсова і Лізоркіна--Трібеля викладена у монографіях О.~В.~Бєсова, В.~П.~Ільїна і С.~М.~Нікольського \cite{BesovIlinNikolskii75}, С.~М.~Нікольського \cite{Nikolskii77}, Ґ.~Трібеля \cite[розд.~5]{Triebel06}, Г.~Амана \cite{Amann19}.

Простори узагальненої гладкості вперше з'явилися у статті Б.~Мальгранжа (B.~{Malgrange) \cite{Malgrange57}, де були застосовані до гіпоеліптичних диференціальних рівнянь (до яких належать і параболічні рівняння). Згодом систематичне вивчення широких класів просторів узагальненої гладкості було виконано у монографії Л.~Хермандера (L.~H\"ormander) \cite[розд.~II]{Hormander63}, статті Л.~Р.~Волєвіча і Б.~П.~Панеяха \cite{VolevichPaneah65}, серії статей Ґ.~Трібеля \cite{Triebel77I, Triebel77II, Triebel77III, Triebel77IV, Triebel79V}. Простори, введені у цих працях, збігаються у гільбертовому випадку. У~вказаній статті Л.~Р.~Волєвіча і Б.~П.~Панеяха \cite{VolevichPaneah65} уперше розглянуто простори узагальненої гладкості в евклідових областях. Ґрунтовне дослідження загальних властивостей багатовимірних диференціальних рівнянь було виконано Л.~Хермандером \cite{Hormander63} за допомогою введених ним просторів (див. також його монографію \cite{Hormander86}).

В останні десятиліття простори узагальненої гладкості є предметом низки глибоких досліджень: див. огляди П.~І.~Лізоркіна
\cite{Lizorkin86}, Г.~А.~Калябіна і П.~І.~Лізоркіна \cite{KalyabinLizorkin87}, монографії Ґ.~Трібеля \cite[розд.~III]{Triebel01} і \cite{Triebel10}, Н.~Якоба (N.~Jacob) \cite{Jacob010205}, Ф.~Нікола (F.~Nicola) і Л.~Родіно (L.~Rodino) \cite{NicolaRodino10}, В.~А.~Михайлеця і О.~О.~Мурача \cite{MikhailetsMurach10, MikhailetsMurach14} та недавні статті
A.~Almeida \cite{Almeida05}, A.~M.~Caetano і H.-G.~Leopold \cite{CaetanoLeopold06, CaetanoLeopold13}, F.~Cobos і T.~K\"uhn \cite{CobosKuhn09}, F.~Cobos, \'O.~Dom\'{\i}ngues і H.~Triebel \cite{CobosDominguesTriebel16}, W.~Farkas, N.~Jacob і R.~L.~Schilling \cite{FarkasJacobScilling01b}, W.~Farkas і H.-G.~Leopold \cite{FarkasLeopold06}, D.~D.~Haroske і S.~D.~Moura \cite{HaroskeMoura04, HaroskeMoura08}, L.~Loosveldt і S.~Nicolay \cite{LoosveldtNicolay19}, S.~D.~Moura, J.~S.~Neves і C.~Schneider \cite{MouraNevesSchneider11, MouraNevesSchneider14}, J.~S.~Neves і B.~Opic \cite{NevesOpic20}, H.~Wang і K.~Wang \cite{WangWang14} й наведену там літературу. Побудовано різні версії просторів Нікольського--Бєсова і Лізоркіна--Трібеля, параметризовані функціональними параметрами. Для них вдалося отримати точні теореми вкладення одних класів просторів у інші, теореми про продовження через межу області, де задано простір, теореми про сліди на межі області та інші вагомі результати.

Інтерполяційні властивості різних просторів узагальненої гладкості досліджено в працях М.~Шехтера \cite{Schechter67}, Ґ.~Трібеля \cite{Triebel77III}, С.~Меручі \cite{Merucci84}, F.~Cobos і D.~L.~Fernandez \cite{CobosFernandez88}, M.~J.~Carro і J.~Cerd\`a \cite{CarroCerda90}, серії статей А.~Г.~Багдасаряна, серед яких відмітимо \cite{Bagdasaryan97, Bagdasaryan10}, у працях A.~Almeida \cite{Almeida05, Almeida09}, A.~Almeida і A.~Caetano \cite{AlmeidaCaetano11}, В.~П.~Кноповой \cite{Knopova06}, В.~А.~Михайлеця і О.~О.~Мурача \cite{MikhailetsMurach08MFAT1, MikhailetsMurach10, MikhailetsMurach13UMJ3, MikhailetsMurach14, MikhailetsMurach15ResMath1}, B.~F.~Besoy і F.~Cobos \cite{BesoyCobos18}, L.~Loosveldt і S.~Nicolay \cite{LoosveldtNicolay19}.

Простори узагальненої гладкості мають важливі застосування у різних розділах математики: див. монографії Ґ.~Трібеля \cite{Triebel01, Triebel10}, О.~І.~Степанця \cite{Stepanets87, Stepanets02, Stepanets05} (математичний аналіз), Л.~Хермандера \cite{Hormander63, Hormander86}, Б.~П.~Панеяха \cite{Paneah00}, В.~А.~Михайлеця і О.~О.~Мурача \cite{MikhailetsMurach10, MikhailetsMurach14}, Ф.~Нікола і Л.~Родіно \cite{NicolaRodino10} (диференціальні рівняння з частинними похідними), В.~Г.~Маз'ї і Т.~О.~Шапошнікової \cite[розд.~16]{MazyaShaposhnikova09} (інтегральні рівняння), Н.~Якоба \cite{Jacob010205} (теорія випадкових процесів) та наведену там літературу.

Ізотропні версії просторів, розглянутих у першому розділі нашої монографії, були застосовані недавно до еліптичних диференціальних рівнянь і еліптичних крайових задач у статтях В.~А.~Михайлеця і О.~О.~Мурача \cite{MikhailetsMurach05UMJ5, MikhailetsMurach06UMJ2, MikhailetsMurach06UMJ3, MikhailetsMurach06UMJ11, MikhailetsMurach07UMJ5, MikhailetsMurach08MFAT1, MikhailetsMurach08UMJ4, MikhailetsMurach08BPAS3, Murach07UMJ6, Murach08UMB3, Murach08MFAT2, Murach09UMJ3}. Їх дослідження підсумовані у монографіях \cite{MikhailetsMurach10, MikhailetsMurach14} і оглядах \cite{MikhailetsMurach11VisnChernivUniv, MikhailetsMurach09OperatorTheory191, MikhailetsMurach12BJMA2}, де викладено нову теорію розв'язності еліптичних систем і еліптичних крайових задач в уточнених соболєвських шкалах. В~останні роки цю теорію було доповнено у працях О.~О.~Мурача і його учнів А.~В.~Аноп, Т.~М.~Зінченко, Т.~М.~Касіренко, І.~С.~Чепурухіної \cite{Anop19Dop2, AnopMurach18Dop3, KasirenkoMurachChepurukhina19Dop3, MurachChepurukhina20Dop8, AnopKasirenko16MFAT, AnopKasirenkoMurach18UMJ3, AnopMurach14MFAT2, AnopMurach14UMJ7, ChepurukhinaMurach15MFAT1, ChepurukhinaMurach20MFAT2, KasirenkoMurach18MFAT2, KasirenkoMurach18UMJ11, MurachChepurukhina15UMJ5, MurachZinchenko13MFAT1, Zinchenko17OpenMath, ZinchenkoMurach12UMJ11, ZinchenkoMurach14JMS}, де розглянуто  більш широкі класи узагальнених просторів Соболєва і досліджено більш загальні еліптичні крайові задачі. У новітній статті А.~В.~Аноп, Р.~Денка (R.~Denk) і О.~О.~Мурача \cite{AnopDenkMurach21CPAA2} наведено застосування узагальнених просторів Соболєва до деяких еліптичних задач з білим шумом у крайових умовах.
У цих працях використовуються виключно гільбертові простори (як і в нашій монографії). На відміну від них, стаття  М.~Файермана (M.~Faierman) \cite{Faierman20} присвячена дослідженню деяких еліптичних рівнянь (із спектральним параметром) у досить широкому класі банахових узагальнених просторів Соболєва. У~працях В.~С.~Ільківа, Н.~І.~Страп і І.~І.~Волянської \cite{IlkivStrap15, IlkivStrapVolyanska20} уточнена соболєвська шкала застосована до дослідження деяких нелокальних крайових задач для диференціально-операторних рівнянь, лінійних та слабко нелінійних.

Класи анізотропних узагальнених просторів Соболєва $H^{s,s\gamma;\varphi}$ і $H_{+}^{s,s\gamma;\varphi}$, розглянутих відповідно в пп.~\ref{sec2.3} і~\ref{sec2.5}, введені та досліджені авторами цієї монографії. Простори на $\mathbb{R}^k$ і евклідових областях в $\mathbb{R}^k$ введені у статті \cite[п.~3]{LosMurach13MFAT2} у випадку $k=2$, а у загальному випадку~--- у статті \cite[п.~2]{LosMurach14Dop6}. Інтерполяційні теореми \ref{9lem7.1}--\ref{9lem7.3} для цих просторів доведені в \cite[п.~5]{LosMurach13MFAT2}, а у загальному випадку~--- в~\cite[п.~7]{LosMikhailetsMurach17CPAA1}. Інтерполяційна теорема~\ref{9lem7.3a} для просторів, заданих у евклідових областях, наведена в замітці~\cite{Los18Dop6}. Простір $H^{s,s\gamma;\varphi}(S)$, де $S$~--- бічна поверхня циліндра, досліджений в статті~\cite[п.~1]{Los16JMS4}. Там для нього доведені теореми \ref{9lem3.1a} і \ref{9lem7.3a} у випадку, коли число $\gamma$ є раціональним. Простір $H^{s,s\gamma;\varphi}_{+}(S)$ вивчався в статті \cite[пп.~3 і~7]{LosMikhailetsMurach17CPAA1}, у якій для нього доведені теореми~\ref{9lem3.1} і~\ref{9lem7.3}. Теорема \ref{8lem1} (про оператор даних Коші) доведена в \cite[п.~6]{LosMurach17OM15} у випадку $b=1$, а у загальному випадку~--- в~\cite[п.~6]{LosMikhailetsMurach21arXiv}.
Теорема \ref{9lem8.1} (про умови вкладення простору $H^{s,s\gamma;\varphi}$ в деякі анізотропні простори неперервно диференційовних функцій) доведена в \cite[п.~8]{LosMikhailetsMurach17CPAA1}.

У соболєвському випадку, коли $\varphi(\cdot)\equiv1$, ці простори та їх $L_{p}$-аналоги широко застосовуються у теорії параболічних задач, починаючи з праць О.~О.~Ладиженської \cite{Ladyzhenskaja54, Ladyzhenskaja58}, Л.~Н.~Слободецького \cite{Slobodetskii58, Slobodetskii58DocAN}, В.~О.~Солоннікова \cite{Solonnikov62, Solonnikov64, Solonnikov65, Solonnikov67}, М.~С.~Аграновіча і М.~І.~Вішика \cite{AgranovichVishik64} (див. також монографії О.~О.~Ладиженської, В.~О.~Солоннікова і Н.~М.~Уральцевої \cite{LadyzhenskajaSolonnikovUraltzeva67}, Ж.-Л.~Ліонса і Е.~Мадженеса \cite{LionsMagenes72ii} та огляд С.~Д.~Ейдельмана \cite[\S~2]{Eidelman94}). Теореми про сліди функцій на многовидах для анізотропних просторів Соболєва та аналоги цих просторів на придатних гладких многовидах вивчалися ще в основоположній праці С.~Л.~Слободецького \cite[розд~I, \S~5 і розд~II, \S~5]{Slobodetskii58}, а також у вказаних щойно працях. Істотно більш складний випадок многовидів, геометрія яких не узгоджена з анізотропією функціонального простору, вивчався С.~В.~Успенським \cite{Uspenskii72} та його послідовниками (див. монографію С.~В.~Успенського, Г.~В.~Демиденко і В.~Г.~Перепьолкіна
\cite[розд.~2]{UspenskiiDemidenkoPerepelkin84} та наведену там літературу), Ґ.~Трібелем \cite{Triebel84I, Triebel84II}, М.~Маларскі (M.~Malarski) і Ґ.~Трібелем \cite{MalarskiTriebel91}. Недавній прогрес у дослідженні слідів на многовидах для анізотропних функціональних просторів відображений у статтях П.~Вайдемайера \cite{Weidemaier05}, J.~Jonsen, S.~M.~Hansen і W.~Sickel  \cite{JonsenHansenSickel15}, де розглядаються анізотропні простори Лізоркіна--Трібеля з огляду на їх застосування до параболічних задач.

Інтерполяція з числовими параметрами анізотропних просторів Соболєва та пов'язаних з ними анізотропних просторів Бєсова і Лізоркіна--Трібеля вивчалася у низці праць, серед яких вкажемо монографії  Ж.-Л.~Ліонса і Е.~Мадженеса \cite[розд.~4, п.~14]{LionsMagenes72ii}, Ґ.~Трібеля \cite[п.~2.13.2]{Triebel80}, Г.~Аманна \cite[розд.~VII, пп. 2.7, 4.5 і 5.4, розд.~VIII, п.~2.4]{Amann19}.

Умови вкладення соболєвських просторів у вибраний простір неперервно диференційовних функцій встановлені С.~Л.~Cоболєвим \cite{Sobolev38} для ізотропних просторів і Л.~Н.~Слободецьким \cite[\S~8]{Slobodetsky60} для анізотропних просторів (див. також монографії С.~Л.~Cоболєва \cite{Sobolev50}, О.~В.~Бєсова, В.~П.~Ільїна і С.~М.~Нікольського\cite[п.~10.4]{BesovIlinNikolskii75}). Для узагальнених соболєвських просторів такі умови знайшли Л.~Хермандер \cite[п.~2.2]{Hormander63}, Л.~Р.~Волєвіч і Б.~П.~Панеях \cite[\S~5, п.~1, \S~13, п.~4]{VolevichPaneah65}. В останній праці    \cite[теорема~9.1]{VolevichPaneah65}
досліджено також умови вкладення в анізотропний простір Гельдера. Цей результат не містить у собі теорему~\ref{9lem8.1}, доведену у п~\ref{sec2.8}. Для узагальнених просторів Бєсова і Лізоркіна--Трібеля умови вкладення у простір неперервних функцій містяться у статтях  М.~Л.~Гольдмана \cite{Goldman76, Goldman77} і Г.~А.~Калябіна \cite{Kalyabin77, Kalyabin81} (див. також огляди П.~Л.~Лізоркіна \cite[п.~Д.1.9]{Lizorkin86} і Г.~А.~Калябіна і П.~Л.~Лізоркіна \cite[п.~5]{KalyabinLizorkin87}). Точні умови вкладення цих просторів у узагальнені простори Гельдера отримані в статтях S.~D.~Moura, J.~S.~Neves і C.~Schneider \cite{MouraNevesSchneider11, MouraNevesSchneider14}.

\normalsize


\chapter{\textbf{Напіводнорідні параболічні задачі}}\label{ch3}

\chaptermark{\emph Розд. \ref{ch3}. Напіводнорідні параболічні задачі}

У цьому розділі досліджуємо параболічні задачі з однорідними початковими умовами. Такі задачі називаємо (дещо умовно) напіводнорідними. Головний результат розділу~--- теорема про ізоморфізми, породжені вказаними задачами на парах гільбертових функціональних просторів, уведених у п.~\ref{sec2.5}. Спочатку розглядаємо одновимірну за просторовою змінною задачу у прямокутнику, а потім --- багатовимірну задачу у циліндрі. Цю теорему застосуємо до дослідження локальної регулярності (аж до межі циліндричної області) розв'язків таких задач в анізотропних узагальнених просторах Соболєва та в анізотропних просторах неперервно диференційовних функцій.

\section[Задача у прямокутнику]
{Задача у прямокутнику}\label{sec3.2.1}

\markright{\emph \ref{sec3.2.1}. Задача у прямокутнику}

Нехай довільно задано дійсні числа $l>0$ і $\tau>0$. На декартовій площині $\mathbb{R}^{2}$ маємо відкритий прямокутник
$\Omega:=(0,l)\times(0,\tau)$. Довільну точку на цій площині інтерпретуємо як упорядковану пару $(x,t)$, де $x$~--- просторова, а $t$~--- часова змінні. Розглядаємо у прямокутнику $\Omega$ лінійну параболічну задачу, яка складається з диференціального рівняння
\begin{align}\notag
&A(x,t,D_x,\partial_t)u(x,t)\equiv\\
&\equiv\sum_{\alpha+2b\beta\leq 2m}a^{\alpha,\beta}(x,t)\,D^\alpha_x\partial^\beta_t
u(x,t)=f(x,t),\label{f2.1}\\
&\mbox{якщо}\;\,0<x<l\;\,\mbox{і}\;\,0<t<\tau,\notag
\end{align}
крайових умов
\begin{align}
&B_{j,0}(t,D_x,\partial_t)u(x,t)\big|_{x=0}\equiv\notag\\
&\equiv\sum_{\alpha+2b\beta\leq m_j}
b_{j,0}^{\alpha,\beta}(t)\,D^\alpha_x\partial^\beta_t u(x,t)\big|_{x=0}=g_{j,0}(t)
\quad\mbox{і}\label{f2.2}\\
&B_{j,1}(t,D_x,\partial_t)u(x,t)\big|_{x=l}\equiv\notag\\
&\equiv\sum_{\alpha+2b\beta\leq m_j}
b_{j,1}^{\alpha,\beta}(t)\,D^\alpha_x\partial^\beta_t
u(x,t)\big|_{x=l}=g_{j,1}(t),\label{f2.3}\\
&\mbox{якщо}\;\,0<t<\tau,\;\,\mbox{де}\;\,j=1,\dots,m,\notag
\end{align}
та однорідних початкових умов
\begin{equation}
\partial^k_t u(x,t)
\big|_{t=0}=0,\;\,\mbox{якщо}\;\,0<x<l,\;\,\mbox{де}\;\,
k=0,\ldots,\varkappa-1. \label{f2.4}
\end{equation}
Тут $b$, $m$ і кожне $m_j$ є довільно вибраними цілими числами, такими, що $m\geq b\geq1$, $\varkappa:=m/b\in\mathbb{Z}$ і $m_j\geq0$. Число $2b$ називають параболічною вагою цієї задачі. Усі коефіцієнти лінійних диференціальних виразів
$A:=A(x,t,D_x,\partial_t)$ та $B_{j,k}:=B_{j,k}(t,D_x,\partial_t)$, де
$j\in\{1,\dots,m\}$ і $k\in\{0,\,1\}$, є нескінченно гладкими
комплекснозначними функціями. А саме, $a^{\alpha,\beta}\in
C^{\infty}(\overline{\Omega})$ та $b_{j,k}^{\alpha,\beta}\in C^{\infty}[0,\tau]$, де
$\overline{\Omega}:=[0,l]\times[0,\tau]$. Використовуємо позначення
$D_x:=i\,\partial/\partial x$ та $\partial_t:=\partial/\partial t$ для частинних похідних; тут, як звичайно, $i$~--- уявна одиниця. Підсумовування здійснюємо за цілими індексами $\alpha,\beta\geq0$, які задовольняють нерівності, записані під знаком суми.

Нагадаємо \cite[\S~9, п.~1]{AgranovichVishik64}, що початково-крайову задачу \eqref{f2.1}--\eqref{f2.4} називають параболічною в $\Omega$, якщо вона задовольняє такі три умови:

\begin{condition}\label{9cond2.3} \rm
Для довільних чисел $x\in[0,l]$ і $t\in[0,\tau]$ та чисел
$\xi\in\mathbb{R}$ і $p\in\mathbb{C}$ таких, що $\mathrm{Re}\,p\geq0$ і $|\xi|+|p|\neq0$, виконується нерівність
\begin{equation*}
A^{\circ}(x,t,\xi,p) \equiv\sum_{\alpha+2b\beta=2m} a^{\alpha,\beta}(x,t)\,\xi^\alpha p^{\beta}\neq0.
\end{equation*}
\end{condition}

\begin{condition}\label{9cond2.4} \rm
Нехай довільно вибрано числа $x\in\{0,l\}$ і $t\in\nobreak[0,\tau]$ та число $p\in\mathbb{C}\setminus\{0\}$ таке, що $\mathrm{Re}\,p\geq\nobreak0$. Тоді многочлен $A^{\circ}(x,t,\zeta,p)$ відносно $\zeta\in\mathbb{C}$ має $m$ коренів $\zeta^{+}_{j}(x,t,p)$, де $j=\nobreak1,\ldots,m$, з додатною уявною частиною та $m$ коренів з \nobreak{від'ємною} уявною частиною з урахуванням їх кратності.
\end{condition}

\begin{condition}\label{9cond2.5} \rm
Нехай числа $x$, $t$ і $p$ такі самі, як в умові~$\ref{9cond2.4}$.
Покладемо $k:=0$, якщо $x=0$, або $k:=1$, якщо $x=l$. Тоді многочлени
$$
B_{j,k}^{\circ}(t,\zeta,p)\equiv\sum_{\alpha+2b\beta=m_{j}}
b_{j,k}^{\alpha,\beta}(t)\,\zeta^{\alpha}p^{\beta}
$$
змінної $\zeta\in\mathbb{C}$, де $j=1,\dots,m$, лінійно незалежні за модулем многочлена
$$
\prod_{j=1}^{m}\bigl(\zeta-\zeta^{+}_{j}(x,t,p)\bigr).
$$
\end{condition}

Умова~\ref{9cond2.3} є умовою $2b$-параболічності за І.~Г.~Петровським \cite{Petrovskii38} диференціального рівняння $Au=f$ у замкненому
прямокутнику $\overline{\Omega}$, а умова~\ref{9cond2.5} виражає той факт, що система крайових диференціальних операторів $\{B_{1,k},\ldots,B_{m,k}\}$ накриває диференціальний оператор $A$ на стороні $x=0$, якщо $k=0$, або $x=l$, якщо $k=1$, цього прямокутника. Умова~\ref{9cond2.5} уперше з'явилася у монографії Т.~Я.~Загорського \cite{Zagorskii61}.

Пов'яжемо з параболічною задачею \eqref{f2.1}--\eqref{f2.4} лінійне відображення
\begin{equation}\label{f2.5}
\begin{gathered}
C^{\infty}_{+}(\overline{\Omega})\ni u\mapsto (Au,Bu):=\\
:=\bigl(Au,B_{1,0}u,B_{1,1}u,\ldots,B_{m,0}u,B_{m,1}u\bigr)\in
\bigl(C^{\infty}_{+}[0,\tau]\bigr)^{2m}.
\end{gathered}
\end{equation}
Тут використано такі позначення:
\begin{gather*}
C^{\infty}_{+}(\overline{\Omega}):=\bigl\{w\!\upharpoonright\overline{\Omega}:\,
w\in C^{\infty}(\mathbb{R}^{2}),\;\,
\mathrm{supp}\,w\subseteq\mathbb{R}\times[0,\infty)\bigr\}=\\
=\bigl\{u\in C^{\infty}(\overline{\Omega}):
\partial_{t}^{\beta}u(x,t)|_{t=0}=0,\;\,\mbox{якщо}\;\,
0\leq\beta\in\mathbb{Z},\;x\in[0,l]\bigr\}
\end{gather*}
та
\begin{gather*}
C^{\infty}_{+}[0,\tau]:=\bigl\{h\!\upharpoonright[0,\tau]:\,h\in
C^{\infty}(\mathbb{R}),\;\,\mathrm{supp}\,h\subseteq[0,\infty)\bigr\}=\\
=\bigl\{v\in C^{\infty}[0,\tau]:\,v^{(\beta)}(0)=0,\;\,\mbox{якщо}\;\,
0\leq\beta\in\mathbb{Z}\bigr\}.
\end{gather*}

Відображення \eqref{f2.5} встановлює взаємно однозначну відповідність між лінійними просторами $C^{\infty}_{+}(\overline{\Omega})$ і
$C^{\infty}_{+}(\overline{\Omega})\times
\bigl(C^{\infty}_{+}[0,\tau]\bigr)^{2m}$
(це пояснено після формулювання теореми~\ref{th3.4}).
Воно продовжується єдиним чином до ізоморфізму між деякими узагальненими соболєвськими просторами. Сформулюємо відповідний результат.

Нехай $\sigma_0$~--- найменше ціле число таке, що
\begin{equation}\label{sigma0-Murach}
\sigma_0\geq\max\{2m,m_1+1,\ldots,m_m+1\}
\;\;\mbox{і}\;\;\frac{\sigma_0}{2b}\in\mathbb{Z}.
\end{equation}
Зокрема, якщо $m_j\leq2m-1$ для кожного номера $j\in\{1,\ldots,m\}$, то $\sigma_0=2m$.

\begin{theorem}\label{th3.4}
Для довільних дійсного числа $\sigma>\sigma_0$ і функціонального параметра $\varphi\in\mathcal{M}$ відображення \eqref{f2.5} продовжується єдиним чином (за неперервністю) до ізоморфізму:
\begin{gather}\label{f2.6}
(A,B):H^{\sigma,\sigma/(2b);\varphi}_{+}(\Omega)\;\leftrightarrow\\ \notag
\leftrightarrow H^{\sigma-2m,(\sigma-2m)/(2b);\varphi}_{+}(\Omega)\oplus
\bigoplus_{j=1}^{m}\bigl(H^{(\sigma-m_j-1/2)/(2b);\varphi}_{+}(0,\tau)\bigr)^{2}:=\\
:=\mathcal{H}^{\sigma-2m,(\sigma-2m)/(2b);\varphi}_{+}. \notag
\end{gather}
\end{theorem}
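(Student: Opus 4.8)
The plan is to obtain the isomorphism \eqref{f2.6} from its already known classical counterpart (the case $\varphi(\cdot)\equiv1$) by quadratic interpolation with the function parameter $\psi$ of \eqref{9f7.2}. As a preliminary, I use the following classical fact: for the parabolic problem \eqref{f2.1}--\eqref{f2.4} satisfying Conditions~\ref{9cond2.3}--\ref{9cond2.5}, the mapping \eqref{f2.5} extends, for every real $s>\sigma_{0}$, to an isomorphism
\[
(A,B):H^{s,s/(2b)}_{+}(\Omega)\leftrightarrow
H^{s-2m,(s-2m)/(2b)}_{+}(\Omega)\oplus
\bigoplus_{j=1}^{m}\bigl(H^{(s-m_{j}-1/2)/(2b)}_{+}(0,\tau)\bigr)^{2}
\]
(this is \eqref{f2.6} with $\varphi(\cdot)\equiv1$). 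This belongs to the classical $L_{2}$-theory of parabolic initial--boundary-value problems; if one prefers to stay inside the present framework, it can be reduced for all real $s>\sigma_{0}$ to the subcase $s/(2b)\in\mathbb{Z}$ by the same interpolation device applied with the power parameter $r\mapsto r^{\theta}$ (Theorem~\ref{9prop6.1}) together with Theorems~\ref{9lem7.3a}, \ref{9lem7.3} and Remark~\ref{lem5.4}.

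Now fix $\sigma>\sigma_{0}$ and $\varphi\in\mathcal{M}$, choose real $s_{0},s_{1}$ with $\sigma_{0}<s_{0}<\sigma<s_{1}$, put $\theta:=(\sigma-s_{0})/(s_{1}-s_{0})\in(0,1)$, and let $\psi$ be the parameter \eqref{9f7.2}; by Theorem~\ref{9prop6.1} $\psi$ is an interpolation parameter. By the preliminary step, \eqref{f2.5} extends to isomorphisms $(A,B):H^{s_{j},s_{j}/(2b)}_{+}(\Omega)\leftrightarrow\mathcal{H}^{s_{j}-2m,(s_{j}-2m)/(2b)}_{+}$ for $j\in\{0,1\}$, where the target is the $\varphi(\cdot)\equiv1$ analogue of $\mathcal{H}^{\sigma-2m,(\sigma-2m)/(2b);\varphi}_{+}$. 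Applying the interpolation functor $[\cdot,\cdot]_{\psi}$ to this pair of endpoint isomorphisms, \eqref{f2.5} extends to a bounded operator
\[
(A,B):\bigl[H^{s_{0},s_{0}/(2b)}_{+}(\Omega),\,H^{s_{1},s_{1}/(2b)}_{+}(\Omega)\bigr]_{\psi}
\to\bigl[\mathcal{H}^{s_{0}-2m,(s_{0}-2m)/(2b)}_{+},\,\mathcal{H}^{s_{1}-2m,(s_{1}-2m)/(2b)}_{+}\bigr]_{\psi}.
\]
By Theorem~\ref{9lem7.3} (with $\gamma=1/(2b)$) the left-hand space equals $H^{\sigma,\sigma/(2b);\varphi}_{+}(\Omega)$. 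By Theorem~\ref{9prop6.3} the right-hand space is the direct sum of $[H^{s_{0}-2m,(s_{0}-2m)/(2b)}_{+}(\Omega),H^{s_{1}-2m,(s_{1}-2m)/(2b)}_{+}(\Omega)]_{\psi}$ and, for each $j$, two copies of $[H^{(s_{0}-m_{j}-1/2)/(2b)}_{+}(0,\tau),H^{(s_{1}-m_{j}-1/2)/(2b)}_{+}(0,\tau)]_{\psi}$; by Theorem~\ref{9lem7.3} and Remark~\ref{lem5.4} these are, respectively, $H^{\sigma-2m,(\sigma-2m)/(2b);\varphi}_{+}(\Omega)$ and $H^{(\sigma-m_{j}-1/2)/(2b);\varphi}_{+}(0,\tau)$, so the right-hand space is exactly $\mathcal{H}^{\sigma-2m,(\sigma-2m)/(2b);\varphi}_{+}$. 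What makes this work is that all the differences of smoothness indices on the two sides of $(A,B)$ are proportional to $s_{1}-s_{0}$, so one and the same $\psi$ serves every pair above; the shifts by $2m$, $m_{j}$, $1/2$ and the anisotropy factor $1/(2b)$ enter the resulting indices but not $\psi$.

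To upgrade boundedness to an isomorphism I apply $[\cdot,\cdot]_{\psi}$ to the inverse operators, which are bounded at both endpoints by the preliminary step; this produces a bounded operator from $\mathcal{H}^{\sigma-2m,(\sigma-2m)/(2b);\varphi}_{+}$ back to $H^{\sigma,\sigma/(2b);\varphi}_{+}(\Omega)$, and it is two-sided inverse to the operator built above because the two compositions coincide with the identity on $C^{\infty}_{+}(\overline{\Omega})$, which is dense in $H^{\sigma,\sigma/(2b);\varphi}_{+}(\Omega)$ by \eqref{9f3.6}. This yields \eqref{f2.6}.

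The main obstacle is the classical ($\varphi(\cdot)\equiv1$) statement itself, that is, the a priori estimate and the anisotropic trace/extension theory that make a parabolic problem under Conditions~\ref{9cond2.3}--\ref{9cond2.5} an isomorphism of anisotropic Sobolev spaces at two bracketing exponents $s_{0},s_{1}$; once this is available the passage to an arbitrary $\varphi\in\mathcal{M}$ is the essentially formal interpolation argument above. A secondary point requiring care is the index bookkeeping that makes a single $\psi$ compatible with all $1+2m$ summands in \eqref{f2.6} and with the precise way the anisotropy $\gamma=1/(2b)$ enters the boundary spaces $H^{(\sigma-m_{j}-1/2)/(2b);\varphi}_{+}(0,\tau)$.
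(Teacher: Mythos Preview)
Your argument is correct and follows essentially the same interpolation scheme as the paper. The only organizational difference is that the paper is slightly more economical: instead of first establishing the $\varphi(\cdot)\equiv1$ case for \emph{all} real $s>\sigma_{0}$ and then interpolating between two arbitrary endpoints $s_{0}<\sigma<s_{1}$, it takes $s_{0}:=\sigma_{0}$ (which already satisfies $\sigma_{0}/(2b)\in\mathbb{Z}$ by \eqref{sigma0-Murach}) and picks $s_{1}:=\sigma_{1}>\sigma$ with $\sigma_{1}/(2b)\in\mathbb{Z}$, so that the Agranovich--Vishik isomorphism applies directly at both endpoints and your preliminary reduction step becomes unnecessary.
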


Якщо $\varphi(\cdot)\equiv1$, то оператор \eqref{f2.6} діє на парі соболєвських просторів. У цьому випадку теорема~\ref{th3.4} випливає з результату М.~С.~Аграновіча і М.~І.~Вішика \cite[теорема~12.1]{AgranovichVishik64}, доведеного у припущенні, що $\sigma/(2b)\in\mathbb{Z}$. (Це показано в наступному підрозділі у першій частині доведення теореми~\ref{9th4.1}.) Їх результат охоплює граничний випадок $\sigma=\sigma_0$ та стосується загальних параболічних
задач із, взагалі кажучи, неоднорідними початковими умовами.

Згідно з теоремою вкладення Соболєва, лемою~\ref{9lem5.1}  і згаданим щойно результатом \cite[теорема 12.1]{AgranovichVishik64} маємо такі рівності:
\begin{gather*}
C^{\infty}_{+}(\overline{\Omega})=
\bigcap_{\substack{\sigma>\sigma_0,\\\sigma/(2b)\in\mathbb{Z}}}
H^{\sigma,\sigma/(2b)}_{+}(\Omega),\\
C^{\infty}_{+}(\overline{\Omega})\times\bigl(C^{\infty}_{+}[0,\tau]\bigr)^{2m}
=\bigcap_{\substack{\sigma>\sigma_0,\\\sigma/(2b)\in\mathbb{Z}}}
(A,B)\bigl(H^{\sigma,\sigma/(2b)}_{+}(\Omega)\bigr).
\end{gather*}
З них випливає, що відображення \eqref{f2.5} встановлює взаємно однозначну відповідність між просторами $C^{\infty}_{+}(\overline{\Omega})$ і
$C^{\infty}_{+}(\overline{\Omega})\times$ $\times
\bigl(C^{\infty}_{+}[0,\tau]\bigr)^{2m}$, як було зазначено вище.

\begin{proof}[\indent Доведення теореми $\ref{th3.4}$.]
Нехай $\sigma>\sigma_0$ і $\varphi\in\mathcal{M}$. Виберемо ціле число $\sigma_1>\sigma$ таке, що $\sigma_1/(2b)\in\mathbb{Z}$. Згідно з \cite[теорема 12.1]{AgranovichVishik64} відображення \eqref{f2.5} продовжується єдиним чином (за неперервністю) до ізоморфізмів
\begin{equation}\label{f5.9}
\begin{gathered}
(A,B):\,H^{\sigma_k,\sigma_k/(2b)}_{+}(\Omega)\leftrightarrow
\mathcal{H}_k,\quad\mbox{де}\quad k\in\{0,1\}.
\end{gathered}
\end{equation}
Тут
$$
\mathcal{H}_k:=H^{\sigma_k-2m,(\sigma_k-2m)/(2b)}_{+}(\Omega)\oplus
\bigoplus_{j=1}^{m}\bigl(H^{(\sigma_k-m_j-1/2)/(2b)}_{+}(0,\tau)\bigr)^{2}.
$$
Означимо інтерполяційний параметр $\psi$ формулою \eqref{9f7.2}, у якій покладаємо $s:=\sigma$, $s_{0}:=\sigma_{0}$ і $s_{1}:=\sigma_{1}$. Застосувавши квадратичну інтерполяцію з функціональним параметром $\psi$ до операторів \eqref{f5.9}, отримаємо ізоморфізм
\begin{equation}\label{f5.10}
(A,B):\,\bigl[H^{\sigma_0,\sigma_0/(2b)}_{+}(\Omega),
H^{\sigma_1,\sigma_1/(2b)}_{+}(\Omega)\bigr]_{\psi}\leftrightarrow
[\mathcal{H}_0,\mathcal{H}_1]_{\psi}.
\end{equation}
Він є звуженням оператора \eqref{f5.9}, де $k=0$.

На підставі теореми~\ref{9lem7.3} і зауваження~\ref{lem5.4} маємо такі рівності просторів з еквівалентністю норм у них:
\begin{equation*}
\bigl[H^{\sigma_0,\sigma_0/(2b)}_{+}(\Omega),
H^{\sigma_1,\sigma_1/(2b)}_{+}(\Omega)\bigr]_{\psi}=
H^{\sigma,\sigma/(2b);\varphi}_{+}(\Omega)
\end{equation*}
та
\begin{align*}
&\bigl[\mathcal{H}_0,\mathcal{H}_1\bigr]_{\psi}\\
&=\bigl[H^{\sigma_0-2m,(\sigma_0-2m)/(2b)}_{+}(\Omega),
H^{\sigma_1-2m,(\sigma_1-2m)/(2b)}_{+}(\Omega)\bigr]_{\psi}\oplus\\
&\oplus\bigoplus_{j=1}^{m}
\bigl(\bigl[H^{(\sigma_0-m_j-1/2)/(2b)}_{+}(0,\tau),
H^{(\sigma_1-m_j-1/2)/(2b)}_{+}(0,\tau)\bigr]_{\psi}\bigr)^{2}=\\
&=H^{\sigma-2m,(\sigma-2m)/(2b);\varphi}_{+}(\Omega)\oplus
\bigoplus_{j=1}^{m}
\bigl(H^{(\sigma-m_j-1/2)/(2b);\varphi}_{+}(0,\tau)\bigr)^{2}.
\end{align*}
Тут також використано теорему~\ref{9prop6.3} про квадратичну інтерполяцію прямих сум гільбертових просторів. Отже, ізоморфізм \eqref{f5.10} діє на парі просторів \eqref{f2.6}. Він є продовженням за неперервністю відображення \eqref{f2.5} оскільки множина $C^{\infty}_{+}(\overline{\Omega})$ щільна в просторі $H^{\sigma,\sigma/(2b);\varphi}_{+}(\Omega)$.
\end{proof}

Дослідимо регулярність узагальненого розв'язку параболічної задачі \eqref{f2.1}--\eqref{f2.4} за допомогою просторів $H^{\sigma,\sigma/(2b);\varphi}_{+}(\Omega)$.
Згідно з \cite[теорема 12.1]{AgranovichVishik64} кожний вектор
\begin{equation}\label{f11}
F:=(f,g_{1,0},g_{1,1},...,g_{m,0},g_{m,1})\in
\mathcal{H}^{\sigma_0-2m,(\sigma_0-2m)/(2b)}_{+}
\end{equation}
має єдиний прообраз $u\in H^{\sigma_0,\sigma_0/(2b)}_{+}(\Omega)$ щодо відображення \eqref{f2.6}. Цю функцію $u$ називаємо узагальненим розв'язком параболічної задачі \eqref{f2.1}--\eqref{f2.4}, праві частини якої задовольняють умову~\eqref{f11}.

Негайним наслідком теореми~\ref{th3.4} є така властивість глобального підвищення регулярності цього розв'язку:

\begin{corollary}\label{cor2}
Припустимо, що  функція $u\in
H^{\sigma_0,\sigma_0/(2b)}_{+}(\Omega)$ є узагальненим розв'язком параболічної задачі \eqref{f2.1}--\eqref{f2.4}, праві частини якої задовольняють умову
$$
F:=(f,g_{1,0},g_{1,1},...,g_{m,0},g_{m,1}) \in
\mathcal{H}^{\sigma-2m,(\sigma-2m)/(2b);\varphi}_{+}
$$
для деяких $\sigma>\sigma_0$ і $\varphi\in\mathcal{M}$. Тоді $u\in
H^{\sigma,\sigma/(2b);\varphi}_{+}(\Omega)$.
\end{corollary}

Як бачимо, уточнена регулярність $\varphi$ правих частин параболічної задачі успадковується її розв'язком.

Розглянемо локальну версію цього результату. Нехай $U$~--- відкрита підмножина простору $\mathbb{R}^{2}$ така, що $\Omega_0:=U\cap\Omega\neq\varnothing$. Покладемо
$\Omega':=U\cap\partial\overline{\Omega}$, $S_{0,0}:=U\cap\{(0,t): 0<t<\tau\}$, $S_{0,1}:=U\cap\{(l,t): 0<t<\tau\}$, $S'_{0}:=U\cap \{(0,0),(0,\tau)\}$ і $S'_{1}:=U\cap \{(l,0),(l,\tau)\}$. Введемо потрібні локальні версії просторів
$H^{s,s\gamma;\varphi}_{+}(\Omega)$ і $H^{s;\varphi}_{+}(0,\tau)$, де $s>0$, $\gamma=1/(2b)$ і $\varphi\in\mathcal{M}$.

Позначимо через $H^{s,s\gamma;\varphi}_{+,\mathrm{loc}}(\Omega_0,\Omega')$ лінійний простір усіх розподілів $u$
в області $\Omega$ таких, що $\chi u\in H^{s,s\gamma;\varphi}_{+}(\Omega)$ для кожної функції $\chi\in C^\infty (\overline\Omega)$, яка задовольняє умову $\mathrm{supp}\,\chi\subset\Omega_0\cup\Omega'$. Аналогічно, позначимо через $H^{s;\varphi}_{+,\mathrm{loc}}(S_{0,k},S'_{k})$ лінійний простір усіх розподілів $v$ на інтервалі $(0,\tau)$ таких, що $\chi v\in H^{s;\varphi}_{+}(0,\tau)$ для кожної функції $\chi\in C^\infty [0,\tau]$, яка задовольняє умову $\mathrm{supp}\,\chi\subset S_{0,k}\cup S'_{k}$; тут $k\in\{0,1\}$.

\begin{theorem}\label{th3.5}
Нехай функція $u\in H^{\sigma_0,\sigma_0/(2b)}_+(\Omega)$ є узагальненим розв'язком параболічної задачі \eqref{f2.1}--\eqref{f2.4}, праві частини якої задовольняють умову~\eqref{f11}. Нехай, крім того, $\sigma>\sigma_0$ і $\varphi\in\mathcal{M}$. Припустимо, що
\begin{equation}\label{f12}
f\in H^{\sigma-2m,(\sigma-2m)/(2b);\varphi}_{+,\,\mathrm{loc}}(\Omega_0,\Omega')
\end{equation}
та
\begin{equation}\label{f13}
\begin{gathered}
g_{j,k}\in H^{(\sigma-m_j-1/2)/(2b);\varphi}_{+,\,\mathrm{loc}}(S_{0,k},S'_{k})\\
\mbox{для всіх}\;\;k\in\{0,\,1\}\;\;\mbox{та}\;\;j\in\{1,\dots,m\}.
\end{gathered}
\end{equation}
Тоді $u\in H^{\sigma,\sigma/(2b);\varphi}_{+,\mathrm{loc}}(\Omega_0,\Omega')$.
\end{theorem}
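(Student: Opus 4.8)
The plan is to deduce Theorem~\ref{th3.5} from the global regularity result of Corollary~\ref{cor2} by a localization argument carried out along a finite ladder of smoothness indices descending from $\sigma$ past $\sigma_{0}$. By the definition of the local space, it is enough to prove that $\chi u\in H^{\sigma,\sigma/(2b);\varphi}_{+}(\Omega)$ for an arbitrary $\chi\in C^{\infty}(\overline{\Omega})$ with $\mathrm{supp}\,\chi\subset\Omega_{0}\cup\Omega'$. The key tool is a Leibniz commutator identity: for any $\chi\in C^{\infty}(\overline{\Omega})$ and any $\chi^{\sharp}\in C^{\infty}(\overline{\Omega})$ with $\chi^{\sharp}\equiv 1$ on a neighbourhood of $\mathrm{supp}\,\chi$, one has, first for $u\in C^{\infty}_{+}(\overline{\Omega})$ and then, by density of $C^{\infty}_{+}(\overline{\Omega})$ and continuity of all operators involved, for every $u\in H^{\sigma_{0},\sigma_{0}/(2b)}_{+}(\Omega)$,
\[
(A,B)(\chi u)=\chi F+(A',B')(\chi^{\sharp}u).
\]
Here $F$ is the right-hand side \eqref{f11}, $\chi F:=\bigl(\chi f,\ \chi|_{x=0}\,g_{1,0},\ \chi|_{x=l}\,g_{1,1},\ \dots,\ \chi|_{x=0}\,g_{m,0},\ \chi|_{x=l}\,g_{m,1}\bigr)$, and $A'$ (respectively $B'_{j,k}$) is a linear differential operator of anisotropic order at most $2m-1$ (respectively at most $m_{j}-1$) whose $C^{\infty}$ coefficients are supported in $\mathrm{supp}\,\chi$. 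If $\mathrm{supp}\,\chi\subset\Omega_{0}\cup\Omega'$, then $\mathrm{supp}(\chi|_{x=0})\subset S_{0,0}\cup S'_{0}$ and $\mathrm{supp}(\chi|_{x=l})\subset S_{0,1}\cup S'_{1}$, so assumptions \eqref{f12} and \eqref{f13} give $\chi F\in\mathcal{H}^{\sigma-2m,(\sigma-2m)/(2b);\varphi}_{+}$.

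Next I would fix a finite sequence of real numbers $\sigma=\sigma^{(0)}>\sigma^{(1)}>\dots>\sigma^{(N)}$ with consecutive gaps at most $1$, such that $\sigma^{(q)}>\sigma_{0}$ for $0\le q\le N-1$ and $0<\sigma^{(N)}<\sigma_{0}$; this is possible because $\sigma>\sigma_{0}\ge 2m>0$ (when $\sigma-\sigma_{0}\in\mathbb{Z}$ one inserts one additional rung of length $\tfrac12$ just above $\sigma_{0}$). Then I would choose cutoff functions $\chi^{(0)}=\chi,\ \chi^{(1)},\dots,\chi^{(N)}\in C^{\infty}(\overline{\Omega})$, all supported in $\Omega_{0}\cup\Omega'$, with $\chi^{(q+1)}\equiv 1$ on a neighbourhood of $\mathrm{supp}\,\chi^{(q)}$; this is always possible, since $\Omega_{0}\cup\Omega'$ is relatively open in $\overline{\Omega}$ and $N$ is fixed.

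The proof then proceeds by downward induction on $q$. For $q=N$: the hypothesis gives $\chi^{(N)}u\in H^{\sigma_{0},\sigma_{0}/(2b)}_{+}(\Omega)$ (multiplication by $C^{\infty}(\overline{\Omega})$ being bounded), hence $\chi^{(N)}u\in H^{\sigma^{(N)},\sigma^{(N)}/(2b);\varphi}_{+}(\Omega)$ by the embedding \eqref{8f5a}, which applies because $\sigma^{(N)}<\sigma_{0}$. For the step from $q+1$ to $q$ I would assume $\chi^{(q+1)}u\in H^{\sigma^{(q+1)},\sigma^{(q+1)}/(2b);\varphi}_{+}(\Omega)$ and apply the commutator identity with $\chi=\chi^{(q)}$ and $\chi^{\sharp}=\chi^{(q+1)}$. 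The term $\chi^{(q)}F$ lies in $\mathcal{H}^{\sigma-2m,(\sigma-2m)/(2b);\varphi}_{+}$, hence in $\mathcal{H}^{\sigma^{(q)}-2m,(\sigma^{(q)}-2m)/(2b);\varphi}_{+}$ since the scale decreases monotonically in the smoothness index for fixed $\varphi$ and $\sigma^{(q)}\le\sigma$. For the commutator term $(A',B')(\chi^{(q+1)}u)$ I would use that $D^{\alpha}_{x}\partial^{\beta}_{t}$ maps $H^{s,s/(2b);\varphi}_{+}(\Omega)$ boundedly into $H^{s-(\alpha+2b\beta),(s-(\alpha+2b\beta))/(2b);\varphi}_{+}(\Omega)$ (its Fourier symbol being dominated by $r_{\gamma}^{\,\alpha+2b\beta}$ with $\gamma=1/(2b)$, cf.\ \eqref{r-gamma-new}), that multiplication by $C^{\infty}$ is bounded, and that the trace onto $x=0$ (or $x=l$) maps $H^{s,s/(2b);\varphi}_{+}(\Omega)$ into $H^{(s-1/2)/(2b);\varphi}_{+}(0,\tau)$ --- these mapping properties being available either directly or by quadratic interpolation from their classical analogues, exactly as in the proofs of Theorems~\ref{8prop4}, \ref{8lem1} and \ref{th3.4}. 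Since the gap $\sigma^{(q)}-\sigma^{(q+1)}$ is at most $1$, this yields $A'(\chi^{(q+1)}u)\in H^{\sigma^{(q)}-2m,(\sigma^{(q)}-2m)/(2b);\varphi}_{+}(\Omega)$ and $B'_{j,k}(\chi^{(q+1)}u)\in H^{(\sigma^{(q)}-m_{j}-1/2)/(2b);\varphi}_{+}(0,\tau)$, so that $(A,B)(\chi^{(q)}u)\in\mathcal{H}^{\sigma^{(q)}-2m,(\sigma^{(q)}-2m)/(2b);\varphi}_{+}$. As $\chi^{(q)}u\in H^{\sigma_{0},\sigma_{0}/(2b)}_{+}(\Omega)$ and $\sigma^{(q)}>\sigma_{0}$, Corollary~\ref{cor2}, applied to the parabolic problem \eqref{f2.1}--\eqref{f2.4} with this right-hand side (of which $\chi^{(q)}u$ is a generalized solution, again by density), gives $\chi^{(q)}u\in H^{\sigma^{(q)},\sigma^{(q)}/(2b);\varphi}_{+}(\Omega)$. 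Taking $q=0$ we obtain $\chi u\in H^{\sigma,\sigma/(2b);\varphi}_{+}(\Omega)$, and since $\chi$ was arbitrary, $u\in H^{\sigma,\sigma/(2b);\varphi}_{+,\mathrm{loc}}(\Omega_{0},\Omega')$.

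The hard part is the index bookkeeping in the inductive step: one must check that every lower-order commutator term falls into \emph{precisely} the target generalized anisotropic space, which depends on the sharp mapping properties of $D^{\alpha}_{x}\partial^{\beta}_{t}$ and of the traces in the scale $\{H^{s,s/(2b);\varphi}_{+}\}$, and on arranging the ladder so that the strict inequality $\sigma^{(q)}>\sigma_{0}$ required by Corollary~\ref{cor2} holds at every rung while the bottom rung still drops below $\sigma_{0}$ --- the latter being where the weak global hypothesis $u\in H^{\sigma_{0},\sigma_{0}/(2b)}_{+}(\Omega)$ enters, through \eqref{8f5a}. A secondary point that has to be made rigorous is the passage of the Leibniz commutator identity, and of the statement that $\chi^{(q)}u$ is a generalized solution of the problem with the modified right-hand side, to the level of these spaces; this follows from the density of $C^{\infty}_{+}(\overline{\Omega})$ (built into the definition of the spaces and of the operator \eqref{f2.5}, cf.\ Section~\ref{sec2.5}) together with the continuity of all operations involved. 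Finally, the corner points $(0,0),(0,\tau),(l,0),(l,\tau)$ and the initial segment $(0,l)\times\{0\}$ of $\partial\overline{\Omega}$ require no additional data, since membership in the ``$+$''-space already encodes the vanishing of all $t$-derivatives at $t=0$.
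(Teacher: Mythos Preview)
Your proposal is correct and follows essentially the same route as the paper (which defers the proof of Theorem~\ref{th3.5} to that of its multidimensional analogue, Theorem~\ref{9th4.3}): a Leibniz commutator identity feeding the lower-order remainder $(A',B')$ into Corollary~\ref{cor2}, iterated along a ladder of smoothness indices with unit gaps, with the integer obstruction handled by a half-step. The only organizational difference is that the paper phrases each rung as an implication between \emph{local} spaces (for all admissible $\chi$ at once) rather than fixing a nested tower $\chi^{(0)},\dots,\chi^{(N)}$, but the two formulations are equivalent.
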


Якщо $\Omega=\Omega_0$ і $\Omega'=\partial\overline{\Omega}$, то за цією теоремою маємо глобальне підвищення регулярності, тобто наслідок~\ref{cor2}. У випадку, коли $\Omega'=\varnothing$,
теорема~\ref{th3.5} стверджує, що регулярність розв'язку підвищується в околах внутрішніх точок замкненого прямокутника~$\overline{\Omega}$. Відмітимо, що ця теорема є новою навіть у випадку анізотропних просторів Соболєва в $\Omega$ (тобто, коли $\varphi(\cdot)\equiv1$).

Наступна теорема дає достатні умови, за яких узагальнений розв'язк $u$ досліджуваної параболічної задачі та його узагальнені частинні похідні заданого порядку є неперервними на множині $\Omega_0\cup\Omega'$.

\begin{theorem}\label{th3.6}
Нехай функція $u\in H^{\sigma_0,\sigma_0/(2b)}_+(\Omega)$ є узагальненим розв'язком параболічної задачі \eqref{f2.1}--\eqref{f2.4}, праві частини якої задовольняють умову~\eqref{f11}. Нехай ціле число $p\geq0$ таке, що $p+b+1/2>\sigma_{0}$. Припустимо, що праві частини цієї задачі задовольняють умови \eqref{f12} і \eqref{f13} для $\sigma:=p+b+1/2$ та деякого функціонального параметра $\varphi\in\mathcal{M}$, підпорядкованого умові \eqref{9f4.7}. Тоді розв'язок $u(x,t)$ та кожна його узагальнена частинна похідна
$D_{x}^{\alpha}\partial_{t}^{\beta}u(x,t)$, де $\alpha+2b\beta\leq p$, неперервні на множині $\Omega_0\cup\Omega'$.
\end{theorem}

Важливо, що умова \eqref{9f4.7} є точною в останній теоремі (див. зауваження~\ref{9rem4.5} у п.~\ref{sec3.1.3}).

Доведення теорем~\ref{th3.5} і~\ref{th3.6} наводити не будемо, оскільки вони ідентичні до доведень аналогів цих теорем для напіводнорідної параболічної задачі у багатовимірному циліндрі. Ці аналоги разом із їх доведеннями подано у п.~\ref{sec3.1.3} (див. теореми~\ref{9th4.3} і~\ref{9th4.4}).

\markright{\emph \ref{sec3.1.1}. Задача у циліндрі}

\section[Задача у циліндрі]{Задача у циліндрі}\label{sec3.1.1}

\markright{\emph \ref{sec3.1.1}. Задача у циліндрі}

Нехай довільно задано ціле число $n\geq2$, дійсне число $\tau>0$
і обмежену область $G\subset\mathbb{R}^{n}$ з
нескінченно гладкою межею $\Gamma$. Як у п.~\ref{sec2.3}, покладемо
$\Omega:=G\times(0,\tau)$ і $S:=\Gamma\times(0,\tau)$. Нагадаємо, що  $\overline{\Omega}=\overline{G}\times[0,\tau]$ і
$\overline{S}=\Gamma\times[0,\tau]$.

Розглянемо у (багатовимірному) циліндрі $\Omega$ параболічну початково-крайову задачу, яка складається з диференціального рівняння
\begin{equation}\label{9f2.1}
\begin{aligned}
&A(x,t,D_x,\partial_t)u(x,t)\equiv \\
&\equiv\sum_{|\alpha|+2b\beta\leq2m}
a^{\alpha,\beta}(x,t)\,D^\alpha_x\partial^\beta_t
u(x,t)=f(x,t),\\
&\mbox{якщо}\;\,x\in G\;\,\mbox{і}\;\,0<t<\tau,
\end{aligned}
\end{equation}
$m$ крайових умов
\begin{equation}\label{9f2.2}
\begin{aligned}
&B_{j}(x,t,D_x,\partial_t)u(x,t)\big|_{S}\equiv \\
&\equiv\sum_{|\alpha|+2b\beta\leq m_j}
b_{j}^{\alpha,\beta}(x,t)\,D^\alpha_x\partial^\beta_t u(x,t)\big|_{S}=g_{j}(x,t),\\
&\mbox{якщо}\;\,x\in\Gamma\;\,\mbox{і}\;\,0<t<\tau,\;\,\mbox{де}\;\,
j=1,\dots,m,
\end{aligned}
\end{equation}
і $\varkappa$ однорідних початкових умов
\begin{equation}\label{9f2.3}
\partial^k_t u(x,t)
\big|_{t=0}=0,\;\;\mbox{якщо}\;\,x\in G,\;\,\mbox{де}\;\,
k=0,\ldots,\varkappa-1.
\end{equation}
Як і раніше, $b$, $m$ і кожне $m_j$ є довільно заданими цілими числами, що задовольняють умови $m\geq b\geq1$, $\varkappa:=m/b\in\mathbb{Z}$ і $m_j\geq0$. Усі коефіцієнти лінійних диференціальних виразів $A:=A(x,t,D_x,\partial_t)$ і $B_{j}:=B_{j}(x,t,D_x,\partial_t)$, де $j\in\{1,\dots,m\}$, є нескінченно гладкими комплекснозначними функціями, заданими на $\overline{\Omega}$ і $\overline{S}$ відповідно. Отже, \begin{gather*}
a^{\alpha,\beta}\in C^{\infty}(\overline{\Omega})=
\bigl\{w\!\upharpoonright\overline{\Omega}\!:\,w\in C^{\infty}(\mathbb{R}^{n+1})\bigr\},\\
b_{j}^{\alpha,\beta}\in C^{\infty}(\overline{S})=
\bigl\{v\!\upharpoonright\overline{S}\!:\,v\in C^{\infty}(\Gamma\times\mathbb{R})\bigr\}
\end{gather*}
для всіх допустимих значень мультиіндексу $\alpha=(\alpha_1,...,\alpha_n)$ та скалярного індексу $\beta$. У формулах \eqref{9f2.1} і \eqref{9f2.2} та їх аналогах підсумовування здійснюємо за цілими невід'ємними індексами
$\alpha_1$,..., $\alpha_n$ і $\beta$, які задовольняють умову, написану під знаком суми. У постановці параболічної задачі використовуємо позначення
$D_x^\alpha:=i^{|\alpha|}\partial_x^\alpha$. При цьому (пряме) перетворення Фур'є за векторною змінною $x=(x_1,\ldots,x_n)$ означене так, що воно переводить диференціальний оператор $D^\alpha_x$ в оператор множення на функцію  $\xi^{\alpha}:=\xi_{1}^{\alpha_{1}}\ldots\xi_{n}^{\alpha_{n}}$ аргументу $\xi:=(\xi_{1},\ldots,\xi_{n})\in\mathbb{C}^{n}$.

Нагадаємо \cite[\S~9, п.~1]{AgranovichVishik64}, що початково-крайову задачу \eqref{9f2.1}--\eqref{9f2.3} називають параболічною у  циліндрі $\Omega$, якщо вона задовольняє умови \ref{9cond2.1} і \ref{9cond2.2}, які тепер сформулюємо.

\begin{condition}\label{9cond2.1}\rm
Для довільних точок $x\in\overline{G}$ і $t\in[0,\tau]$, вектора
$\xi\in\mathbb{R}^{n}$ і числа $p\in\mathbb{C}$ таких, що  $\mathrm{Re}\,p\geq0$ і $|\xi|+|p|\neq0$, виконується нерівність
\begin{equation*}
A^{\circ}(x,t,\xi,p)\equiv\sum_{|\alpha|+2b\beta=2m} a^{\alpha,\beta}(x,t)\,\xi^\alpha p^{\beta}\neq0.
\end{equation*}
\end{condition}

Довільно виберемо точку $x\in\Gamma$, число $t\in[0,\tau]$, дотичний вектор $\xi\in\mathbb{R}^{n}$ до межі $\Gamma$ у точці $x$ та число $p\in\mathbb{C}$ такі, що
$\mathrm{Re}\,p\geq0$ і $|\xi|+|p|\neq0$. Нехай $\nu(x)$~--- орт
внутрішньої нормалі до межі $\Gamma$ у точці $x$. З умови~\ref{9cond2.1} та нерівності $n\geq2$ випливає, що многочлен $A^{\circ}(x,t,\xi+\zeta\nu(x),p)$ змінної $\zeta\in\mathbb{C}$ має
точно $m$ коренів $\zeta^{+}_{j}(x,t,\xi,p)$, де $j=\nobreak1,\ldots,m$, із додатною уявною частиною і $m$ коренів з від'ємною уявною частиною (з урахуванням їхньої кратності).

\begin{condition}\label{9cond2.2}\rm
При кожному такому виборі $x$, $t$, $\xi$ і $p$ многочлени
$$
B_{j}^{\circ}(x,t,\xi+\zeta\nu(x),p)\equiv\sum_{|\alpha|+2b\beta=m_{j}}
b_{j}^{\alpha,\beta}(x,t)\,(\xi+\zeta\nu(x))^{\alpha}\,p^{\beta}
$$
змінної $\zeta\in\mathbb{C}$, де $j=1,\dots,m$, лінійно незалежні за модулем многочлена
$$
\prod_{j=1}^{m}(\zeta-\zeta^{+}_{j}(x,t,\xi,p)).
$$
\end{condition}

Як бачимо, умови \ref{9cond2.1} і \ref{9cond2.2} є багатовимірними аналогами умов \ref{9cond2.3} і \ref{9cond2.5} відповідно. У випадку $n\geq2$, що розглядається, багатовимірний аналог умови \ref{9cond2.4} є наслідком умови \ref{9cond2.1}, на відміну від випадку $n=1$, дослідженому у попередньому підрозділі.

Пов'яжемо з параболічною задачею \eqref{9f2.1}--\eqref{9f2.3} лінійне відображення
\begin{equation}\label{9f2.4}
\begin{gathered}
C^{\infty}_{+}(\overline{\Omega})\ni u\mapsto(Au,Bu):= \\
:=\bigl(Au,B_1u,\ldots,B_mu\bigr)\in
C^{\infty}_{+}(\overline{\Omega})\times
\bigl(C^{\infty}_{+}(\overline{S})\bigr)^{m}.
\end{gathered}
\end{equation}
При цьому покладаємо
\begin{gather*}
C^{\infty}_{+}(\overline{\Omega}):=
\bigl\{w\!\upharpoonright\!\overline{\Omega}:\,
w\in C^{\infty}(\mathbb{R}^{n+1}),\;\,
\mathrm{supp}\,w\subseteq\mathbb{R}\times[0,\infty)\bigr\}=\\
=\bigl\{u\in C^{\infty}(\overline{\Omega}):
\partial_{t}^{\beta}u(x,t)|_{t=0}=0,\;\,\mbox{якщо}\;\,
0\leq\beta\in\mathbb{Z},\;x\in\overline{G}\,\bigr\}
\end{gather*}
та
\begin{gather*}
C^{\infty}_{+}(\overline{S}):=\bigl\{h\!\upharpoonright\!\overline{S}:
h\in C^{\infty}(\Gamma\times\mathbb{R}),\;\,\mathrm{supp}\,
h\subseteq\Gamma\times[0,\infty)\bigr\}=\\
=\bigl\{v\in C^{\infty}(\overline{S}):
\partial_{t}^{\beta}v(x,t)|_{t=0}=0,\;\,\mbox{якщо}\;\,
0\leq\beta\in\mathbb{Z},\;x\in\Gamma\bigr\}.
\end{gather*}
Відображення \eqref{9f2.4} встановлює взаємно однозначну відповідність між лінійними просторами
$C^{\infty}_{+}(\overline{\Omega})$ і $C^{\infty}_{+}(\overline{\Omega})\times(C^{\infty}_{+}(\overline{S}))^{m}$.
Це обґрунтовується так само, як і у випадку $n=1$, дослідженому у попередньому підрозділі.

Відображення \eqref{9f2.4} продовжується єдиним чином до ізоморфізму між деякими узагальненими анізотропними просторами Соболєва. Сформулюємо відповідний результат; він є головним у цьому розділі.

Нехай $\sigma_0$~--- найменше ціле число, яке задовольняє умову \eqref{sigma0-Murach}. Введемо гільбертів простір
\begin{equation}\label{9f4.2}
\begin{gathered}
\mathcal{H}^{\sigma-2m,(\sigma-2m)/(2b);\varphi}_{+}(\Omega,S)
:=H^{\sigma-2m,(\sigma-2m)/(2b);\varphi}_{+}(\Omega)\oplus \\
\oplus\bigoplus_{j=1}^{m}
H^{\sigma-m_j-1/2,(\sigma-m_j-1/2)/(2b);\varphi}_{+}(S),
\end{gathered}
\end{equation}
де $\sigma\geq\sigma_0$ і $\varphi\in \mathcal{M}$.

\begin{theorem}\label{9th4.1}
Для довільних дійсного числа $\sigma>\sigma_0$ і функціонального параметра $\varphi\in\nobreak\mathcal{M}$ відображення \eqref{9f2.4} продовжується єдиним чином (за неперервністю) до ізоморфізму
\begin{equation}\label{9f4.1}
(A,B):H^{\sigma,\sigma/(2b);\varphi}_{+}(\Omega)\leftrightarrow
\mathcal{H}^{\sigma-2m,(\sigma-2m)/(2b);\varphi}_{+}(\Omega,S).
\end{equation}
\end{theorem}

\begin{proof}[\indent Доведення.]
У випадку анізотропних просторів Соболєва, коли $\varphi(\cdot)\equiv1$ і $\sigma/(2b)\in\mathbb{Z}$, ця теорема випливає з результату М.~С.~Аграновіча і М.~І.~Вішика \cite[теорема 12.1]{AgranovichVishik64}, який охоплює граничний випадок $\sigma=\sigma_0$ та стосується параболічних задач із, взагалі кажучи, неоднорідними початковими умовами. Обґрунтуємо це.

Отже, дослідимо спочатку випадок, коли $\sigma\geq\sigma_{0}$, $\sigma/(2b)\in\mathbb{Z}$ і $\varphi(\cdot)\equiv1$.
Нехай праві частини параболічної задачі \eqref{9f2.1}--\eqref{9f2.3} задовольняють умову
\begin{equation}\label{9f5.14}
(f,g_1,\ldots,g_m)\in
\mathcal{H}_{+}^{\sigma-2m,(\sigma-2m)/(2b)}(\Omega,S).
\end{equation}
(Нагадаємо, що у випадку, коли $\varphi(\cdot)\equiv1$, ми прибираємо індекс $\varphi$ у позначеннях просторів.) Вектор \eqref{9f5.14} задовольняє умови узгодження \cite[\S~11, с.~136]{AgranovichVishik64} правих частин параболічної задачі у випадку нульових початкових даних.

Теорема Аграновіча--Вішика \cite[теорема~12.1]{AgranovichVishik64}
стверджує з огляду на вкладення \eqref{9f5.3}, що задача \eqref{9f2.1}--\eqref{9f2.3} має єдиний розв'язок $u\in H^{\sigma,\sigma/(2b)}(\Omega)$ і він задовольняє двобічну нерівність
\begin{equation}\label{9f5.15}
\begin{split}
\|u\|_{H^{\sigma,\sigma/(2b)}(\Omega)}&\leq c_1
\|(f,g_1,\ldots,g_m)\|_
{\mathcal{H}^{\sigma-2m,(\sigma-2m)/(2b)}(\Omega,S)}\\
&\leq c_2\,\|u\|_{H^{\sigma,\sigma/(2b)}(\Omega)},
\end{split}
\end{equation}
де $c_1$ і $c_2$~--- деякі додатні числа, незалежні від вектора \eqref{9f5.14} і функції~$u$. Тут
\begin{equation*}
\begin{split}
&\mathcal{H}^{\sigma-2m,(\sigma-2m)/(2b)}(\Omega,S):=\\
&:=H^{\sigma-2m,(\sigma-2m)/(2b)}(\Omega)\oplus
\bigoplus_{j=1}^{m}H^{\sigma-m_j-1/2,(\sigma-m_j-1/2)/(2b)}(S).
\end{split}
\end{equation*}

Покажемо, що $u\in H^{\sigma,\sigma/(2b)}_{+}(\Omega)$. Для цього скористаємося лемою~\ref{9lem5.1}, у якій візьмемо $s:=\sigma$ і $\gamma:=1/(2b)$. Згідно з початковими умовами \eqref{9f2.3} функція $u$ задовольняє \eqref{9f5.4}, якщо $0\leq k\leq\varkappa-1$. Тут
\begin{equation*}
\varkappa-1=\frac{2m}{2b}-1<\frac{\sigma}{2b}-\frac{1}{2}.
\end{equation*}
Доведемо, що $u$ задовольняє \eqref{9f5.4} для усіх інших значень цілого параметра $k$ таких, що $\varkappa-1<k<\sigma/(2b)-1/2$ (якщо такі значення існують).

Нехай кількість цих значень $l\geq1$; тоді
\begin{equation}\label{9f5.16}
\varkappa+l-1<\frac{\sigma}{2b}-\frac{1}{2}<\varkappa+l.
\end{equation}
Умова \ref{9cond2.1} у випадку $\xi=0$ і $p=1$ означає, що коефіцієнт $a^{(0,\ldots,0),\varkappa}(x,t)\neq0$ для всіх $x\in\overline{G}$ і $t\in[0,\tau]$. Тому параболічне рівняння \eqref{9f2.1} можна розв'язати відносно  $\partial^\varkappa_t u(x,t)$. Отже,
\begin{equation}\label{9f5.17}
\begin{split}
\partial^\varkappa_t u(x,t)=&
\sum_{\substack{|\alpha|+2b\beta\leq 2m,\\ \beta\leq\varkappa-1}}
a_{0}^{\alpha,\beta}(x,t)\,D^\alpha_x\partial^\beta_t
u(x,t)+\\&+(a^{(0,\ldots,0),\varkappa}(x,t))^{-1}f(x,t)
\end{split}
\end{equation}
для деяких функцій $a_{0}^{\alpha,\beta}\in C^{\infty}(\overline{\Omega})$. Якщо $l\geq2$, то диференціюючи рівність \eqref{9f5.17} $l-1$ разів за змінною $t$, отримаємо $l-1$ рівностей
\begin{equation}\label{9f5.18}
\begin{split}
\partial^{\varkappa+j}_{t}u(x,t)=&
\sum_{\substack{|\alpha|+2b\beta\leq 2m+2bj,\\
|\alpha|\leq2m,\;\beta\leq\varkappa+j-1}}
a_{j}^{\alpha,\beta}(x,t)\,D^\alpha_x\partial^\beta_t u(x,t)+ \\
&+\partial^{j}_{t}((a^{(0,\ldots,0),\varkappa}(x,t))^{-1}f(x,t)),\\
&\mbox{де}\quad j=1,\ldots,l-1.
\end{split}
\end{equation}
Тут кожне $a_{j}^{\alpha,\beta}(x,t)$~--- деяка функція класу $C^{\infty}(\overline{\Omega})$. Рівності \eqref{9f5.17} і
\eqref{9f5.18} розглядаються у відкритому циліндрі $\Omega$. Оскільки $f\in H_{+}^{\sigma-2m,(\sigma-2m)/(2b)}(\Omega)$, то
$\partial^{j}_{t}f(x,t)\big|_{t=0}=0$ для майже всіх $x\in G$ і для кожного цілого $j\in\{0,\ldots,l-1\}$ згідно з лемою~\ref{9lem5.1} і двобічною нерівністю \eqref{9f5.16}. Використовуючи ці рівності, виводимо послідовно з формул \eqref{9f5.17} і \eqref{9f5.18}, що $\partial^{\varkappa+j}_{t}u(x,t)\big|_{t=0}=0$ для вказаних $x$ і~$j$.

Таким чином, функція $u\in H^{\sigma,\sigma/(2b)}(\Omega)$ задовольняє умову \eqref{9f5.4}. Тому $u\in H_{+}^{\sigma,\sigma/(2b)}(\Omega)$ за  лемою~\ref{9lem5.1}. Більше того, на підставі цієї леми та формул \eqref{9f5.14} і \eqref{9f5.15} виконуються нерівності
\begin{equation}\label{9f5.19}
\begin{split}
\|u\|_{H_{+}^{\sigma,\sigma/(2b)}(\Omega)}&\leq c_3
\|(f,g_1,\ldots,g_m)\|_
{\mathcal{H}_{+}^{\sigma-2m,(\sigma-2m)/(2b)}(\Omega,S)}\leq\\
&\leq c_4\,\|u\|_{H_{+}^{\sigma,\sigma/(2b)}(\Omega)}.
\end{split}
\end{equation}
Тут $c_3$ і $c_4$~--- деякі додатні числа, які не залежать від вектора \eqref{9f5.14} і функції~$u$.

Таким чином, для довільного вектора \eqref{9f5.14} існує єдиний розв'язок $u\in H_{+}^{\sigma,\sigma/(2b)}(\Omega)$ параболічної задачі \eqref{9f2.1}--\eqref{9f2.3} і цей розв'язок задовольняє двобічну нерівність~\eqref{9f5.19}. Очевидно, цей висновок еквівалентний теоремі~\ref{9th4.1} у випадку, що досліджується. Тому теорема~\ref{9th4.1} у досліджуваному випадку є наслідком результату М.~С.~Аграновіча і М.~І.~Вішика \cite[теорема~12.1]{AgranovichVishik64}.

У загальній ситуації виведемо теорему~\ref{9th4.1} із цього випадку за допомогою квадратичної інтерполяції з функціональним параметром. Отже, виберемо довільні число $\sigma>\sigma_0$ і функцію $\varphi\in\mathcal{M}$. Нехай ціле число $\sigma_1>\sigma$ таке, що $\sigma_1/(2b)\in\mathbb{Z}$. Згідно з теоремою Аграновіча--Вішика \cite[теорема 12.1]{AgranovichVishik64}, відображення \eqref{9f2.4} продовжується єдиним чином (за неперервністю) до ізоморфізмів
\begin{equation}\label{9f8.1}
\begin{gathered}
(A,B):H^{\sigma_k,\sigma_k/(2b)}_{+}(\Omega)\leftrightarrow
\mathcal{H}^{\sigma_k-2m,(\sigma_k-2m)/(2b)}_{+}(\Omega,S),
\\ \quad\mbox{де}\quad k\in\{0,1\}.
\end{gathered}
\end{equation}
Означимо інтерполяційний параметр $\psi$ за формулою \eqref{9f7.2}, в якій покладемо $s:=\sigma$, $s_{0}:=\sigma_{0}$ і $s_{1}:=\sigma_{1}$. Інтерполюючи з функціональним параметром $\psi$ оператори \eqref{9f8.1} отримаємо ізоморфізм
\begin{equation}\label{9f8.2}
\begin{split}
&(A,B):\bigl[H^{\sigma_0,\sigma_0/(2b)}_{+}(\Omega),
H^{\sigma_1,\sigma_1/(2b)}_{+}(\Omega)\bigr]_{\psi}\leftrightarrow\\
&\leftrightarrow
\bigl[\mathcal{H}^{\sigma_0-2m,(\sigma_0-2m)/(2b)}_{+}(\Omega,S),
\mathcal{H}^{\sigma_1-2m,(\sigma_1-2m)/(2b)}_{+}(\Omega,S)\bigr]_{\psi}.
\end{split}
\end{equation}
Він є звуженням оператора \eqref{9f8.1}, де $k=0$.

На підставі теореми~\ref{9lem7.3} маємо такі рівності просторів з  еквівалентністю норм у них:
\begin{equation*}
\bigl[H^{\sigma_0,\sigma_0/(2b)}_{+}(\Omega),
H^{\sigma_1,\sigma_1/(2b)}_{+}(\Omega)\bigr]_{\psi}=
H^{\sigma,\sigma/(2b);\varphi}_{+}(\Omega)
\end{equation*}
та
\begin{equation*}
\begin{aligned}
&[\mathcal{H}^{\sigma_0-2m,(\sigma_0-2m)/(2b)}_{+}(\Omega,S),
\mathcal{H}^{\sigma_1-2m,(\sigma_1-2m)/(2b)}_{+}(\Omega,S)]_{\psi}=\\
&=\bigl[H^{\sigma_0-2m,(\sigma_0-2m)/(2b)}_{+}(\Omega),
H^{\sigma_1-2m,(\sigma_1-2m)/(2b)}_{+}(\Omega)\bigr]_{\psi}\oplus\\
&\phantom{kk}\oplus\bigoplus_{j=1}^{m}
\bigl[H^{\sigma_0-m_j-1/2,(\sigma_0-m_j-1/2)/(2b)}_{+}(S),\\
&\phantom{kkkkkkkk}
H^{\sigma_1-m_j-1/2,(\sigma_1-m_j-1/2)/(2b)}_{+}(S)\bigr]_{\psi}=\\
&=H^{\sigma-2m,(\sigma-2m)/(2b);\varphi}_{+}(\Omega)\oplus\\
&\phantom{kk}\oplus\bigoplus_{j=1}^{m}
H^{\sigma-m_j-1/2,(\sigma-m_j-1/2)/(2b);\varphi}_{+}(S)=\\
&=\mathcal{H}^{\sigma-2m,(\sigma-2m)/(2b);\varphi}_{+}(\Omega,S).
\end{aligned}
\end{equation*}
Тут скористалися також теоремою~\ref{9prop6.3}. Отже, ізоморфізм \eqref{9f8.2} діє на парі просторів \eqref{9f4.1}. Він є продовженням за неперервністю відображення \eqref{9f2.4}, оскільки множина $C^{\infty}_{+}(\overline{\Omega})$ є щільною у просторі $H^{\sigma,\sigma/(2b);\varphi}_{+}(\Omega)$.
\end{proof}

\markright{\emph \ref{sec3.1.3}. Регулярність розв'язків}

\section[Регулярність розв'язків]
{Регулярність розв'язків}\label{sec3.1.3}

\markright{\emph \ref{sec3.1.3}. Регулярність розв'язків}

Як було показано у попередньому підрозділі, із теореми Аграновіча--Вішика \cite[теорема 12.1]{AgranovichVishik64} випливає, що відображення \eqref{9f2.4} продовжується за неперервністю до ізоморфізму
\begin{equation}\label{9f4.3}
(A,B):H^{\sigma_{0},\sigma_{0}/(2b)}_{+}(\Omega)\leftrightarrow
\mathcal{H}^{\sigma_{0}-2m,(\sigma_{0}-2m)/(2b)}_{+}(\Omega,S).
\end{equation}
Звісно, кожний ізоморфізм \eqref{9f4.1}, де $\sigma>\sigma_0$ і $\varphi\in\nobreak\mathcal{M}$, є звуженням оператора \eqref{9f4.3}.
Для довільного вектора
\begin{equation}\label{9f4.4}
(f,g_{1},...,g_{m})\in
\mathcal{H}^{\sigma_{0}-2m,(\sigma_{0}-2m)/(2b)}_{+}(\Omega,S)
\end{equation}
існує єдиний прообраз $u\in H^{\sigma_{0},\sigma_{0}/(2b)}_{+}(\Omega)$  відносно відображення \eqref{9f4.3}. Цю функцію $u$ називаємо узагальненим розв'язком параболічної задачі \eqref{9f2.1}--\eqref{9f2.3}, праві частини якої задовольняють умову~\eqref{9f4.4}.

Розглянемо застосування теореми \ref{9th4.1} до дослідження регулярності узагальненого розв'язку задачі \eqref{9f2.1}--\eqref{9f2.3}. Негайним наслідком цієї теореми є така властивість розв'язку:

\begin{corollary}\label{9cor4.2}
Припустимо, що функція $u\in H^{\sigma_{0},\sigma_{0}/(2b)}_{+}(\Omega)$ є узагальненим розв'язком параболічної задачі
\eqref{9f2.1}--\eqref{9f2.3}, праві частини якої задовольняють умову
\begin{equation*}
(f,g_{1},...,g_{m})\in
\mathcal{H}^{\sigma-2m,(\sigma-2m)/(2b);\varphi}_{+}(\Omega,S)
\end{equation*}
для деяких $\sigma>\sigma_0$ і  $\varphi\in\nobreak\mathcal{M}$. Тоді
$u\in H^{\sigma,\sigma/(2b);\varphi}_{+}(\Omega)$.
\end{corollary}

Сформулюємо локальний аналог цієї властивості. Нехай $U$~--- відкрита підмножина простору $\mathbb{R}^{n+1}$ така, що $\Omega_0:=U\cap\Omega\neq\varnothing$. Покладемо $\Omega':=U\cap\partial\overline{\Omega}$, $S_0:=U\cap S$ і $S':=U\cap \partial S$. Нехай $s>0$, $\gamma=1/(2b)$ і $\varphi\in\mathcal{M}$. Лінійний простір  $H^{s,s\gamma;\varphi}_{+,\mathrm{loc}}(\Omega_0,\Omega')$ означаємо так само, як і у випадку $n=1$, розглянутому у п.~\ref{sec3.2.1}. Позначимо через $H^{s,s\gamma;\varphi}_{+,\mathrm{loc}}(S_0,S')$ лінійний простір усіх розподілів $v$ на многовиді $S$ таких, що $\chi v\in H^{s,s\gamma;\varphi}_{+}(S)$ для кожної функції $\chi\in C^\infty (\overline S)$, яка задовольняє умову $\mathrm{supp}\,\chi\subset S_0\cup S'$.

\begin{theorem}\label{9th4.3}
Нехай функція $u\in H^{\sigma_0,\sigma_0/(2b)}_+(\Omega)$ є узагальненим розв'язком параболічної задачі \eqref{9f2.1}--\eqref{9f2.3}, праві частини якої задовольняють умову \eqref{9f4.4}. Нехай, крім того, $\sigma>\sigma_0$ і $\varphi\in\mathcal{M}$. Припустимо, що
\begin{equation}\label{9f4.5}
f\in H^{\sigma-2m,(\sigma-2m)/(2b);\varphi}_{+,\,\mathrm{loc}}(\Omega_0,\Omega')
\end{equation}
та
\begin{equation}\label{9f4.6}
\begin{gathered}
g_{j}\in H^{\sigma-m_j-1/2,(\sigma-m_j-1/2)/(2b);\varphi}_{+,\mathrm{loc}}
(S_{0},S')\\
\mbox{для кожного}\;\;j\in\{1,\dots,m\}.
\end{gathered}
\end{equation}
Тоді $u\in H^{\sigma,\sigma/(2b);\varphi}_{+,\mathrm{loc}}(\Omega_0,\Omega')$.
\end{theorem}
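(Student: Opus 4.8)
\medskip

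\noindent\textbf{Plan of the proof.} The plan is to localize and then to raise the smoothness index by a finite chain of ``bootstrap'' steps, each gaining less than one unit of regularity and each reduced to the global isomorphism of Theorem~\ref{9th4.1}; the function parameter $\varphi$ will be carried trivially (as $\equiv1$) at all intermediate levels and inserted only at the last step. First I would note that, by the very definition of $H^{\sigma,\sigma/(2b);\varphi}_{+,\mathrm{loc}}(\Omega_{0},\Omega')$, it suffices to show $\chi u\in H^{\sigma,\sigma/(2b);\varphi}_{+}(\Omega)$ for an arbitrary $\chi\in C^{\infty}(\overline{\Omega})$ with $\operatorname{supp}\chi\subset\Omega_{0}\cup\Omega'$. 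I would fix such a $\chi$, choose functions $\chi=\zeta_{0},\zeta_{1},\dots,\zeta_{N}\in C^{\infty}(\overline{\Omega})$ with supports in $\Omega_{0}\cup\Omega'$ and $\zeta_{i-1}=\zeta_{i-1}\zeta_{i}$ (so $\zeta_{i}\equiv1$ on a neighbourhood of $\operatorname{supp}\zeta_{i-1}$), and choose reals $\sigma_{0}=s_{0}<s_{1}<\dots<s_{N}=\sigma$ with $s_{k}-s_{k-1}<1$ for all $k$. Putting $\varphi_{k}:=1$ for $k<N$ and $\varphi_{N}:=\varphi$, I would prove by induction on $k\in\{0,\dots,N\}$ that $\zeta_{N-k}u\in H^{s_{k},s_{k}/(2b);\varphi_{k}}_{+}(\Omega)$; the case $k=N$ is the assertion. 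The base $k=0$ holds because $u\in H^{\sigma_{0},\sigma_{0}/(2b)}_{+}(\Omega)$ by hypothesis and multiplication by a function from $C^{\infty}(\overline{\Omega})$ is bounded on this space and respects the support condition $\operatorname{supp}\subseteq\{t\ge0\}$.

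\medskip

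\noindent For the inductive step I would set $\tilde u:=\zeta_{N-k+1}u$, so that $w:=\zeta_{N-k}u=\zeta_{N-k}\tilde u$; by the inductive hypothesis $\tilde u$, and hence $w$, lies in $H^{s_{k-1},s_{k-1}/(2b);\varphi_{k-1}}_{+}(\Omega)$. Leibniz's rule gives $Aw=\zeta_{N-k}f+[A,\zeta_{N-k}]u$ and $B_{j}w|_{S}=\zeta_{N-k}g_{j}+\bigl([B_{j},\zeta_{N-k}]u\bigr)|_{S}$. Since $[A,\zeta_{N-k}]$ and $[B_{j},\zeta_{N-k}]$ are differential operators whose coefficients are supported in $\operatorname{supp}\zeta_{N-k}$, on a neighbourhood of which $\zeta_{N-k+1}\equiv1$, one may replace $u$ by $\tilde u$ in them: $[A,\zeta_{N-k}]u=[A,\zeta_{N-k}]\tilde u$ and $[B_{j},\zeta_{N-k}]u=[B_{j},\zeta_{N-k}]\tilde u$. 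These commutators have parabolic orders at most $2m-1$ and $m_{j}-1$ (the top-order terms cancel); so, using that a differential operator of parabolic order $p$ with $C^{\infty}$ coefficients maps $H^{s,s/(2b);\varrho}_{+}(\Omega)$ into $H^{s-p,(s-p)/(2b);\varrho}_{+}(\Omega)$ and that the lateral trace operator maps $H^{t,t/(2b);\varrho}_{+}(\Omega)$ into $H^{t-1/2,(t-1/2)/(2b);\varrho}_{+}(S)$, I obtain $[A,\zeta_{N-k}]\tilde u\in H^{s_{k-1}-2m+1,(s_{k-1}-2m+1)/(2b);\varphi_{k-1}}_{+}(\Omega)$ and $\bigl([B_{j},\zeta_{N-k}]\tilde u\bigr)|_{S}\in H^{s_{k-1}-m_{j}+1/2,(s_{k-1}-m_{j}+1/2)/(2b);\varphi_{k-1}}_{+}(S)$. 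Because $s_{k}-s_{k-1}<1$, the inequalities $s_{k}-2m<s_{k-1}-2m+1$ and $s_{k}-m_{j}-1/2<s_{k-1}-m_{j}+1/2$ are strict, so by the embeddings \eqref{8f5a} and \eqref{9f3.6} these commutator terms lie in $H^{s_{k}-2m,(s_{k}-2m)/(2b);\varphi_{k}}_{+}(\Omega)$ and $H^{s_{k}-m_{j}-1/2,(s_{k}-m_{j}-1/2)/(2b);\varphi_{k}}_{+}(S)$. Also $\zeta_{N-k}f\in H^{\sigma-2m,(\sigma-2m)/(2b);\varphi}_{+}(\Omega)$ and $\zeta_{N-k}g_{j}\in H^{\sigma-m_{j}-1/2,(\sigma-m_{j}-1/2)/(2b);\varphi}_{+}(S)$ by \eqref{9f4.5} and \eqref{9f4.6} (note $\operatorname{supp}\zeta_{N-k}\subset\Omega_{0}\cup\Omega'$ forces $\operatorname{supp}(\zeta_{N-k}|_{\overline{S}})\subset S_{0}\cup S'$), and these embed into the same $(s_{k},\varphi_{k})$-spaces: strictly via \eqref{8f5a} when $k<N$ (then $s_{k}<\sigma$, $\varphi_{k}\equiv1$) and trivially when $k=N$. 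Hence $(A,B)w\in\mathcal{H}^{s_{k}-2m,(s_{k}-2m)/(2b);\varphi_{k}}_{+}(\Omega,S)$.

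\medskip

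\noindent To close the step I would observe that $w\in H^{\sigma_{0},\sigma_{0}/(2b)}_{+}(\Omega)$ (it lies in the $s_{k-1}$-space and $s_{k-1}\ge\sigma_{0}$), so the distribution $(A,B)w$ just computed is the image of $w$ under the operator of Theorem~\ref{9th4.1}. Since $s_{k}>\sigma_{0}$, Theorem~\ref{9th4.1} furnishes the isomorphism $(A,B):H^{s_{k},s_{k}/(2b);\varphi_{k}}_{+}(\Omega)\leftrightarrow\mathcal{H}^{s_{k}-2m,(s_{k}-2m)/(2b);\varphi_{k}}_{+}(\Omega,S)$, and because $(A,B)$ is already injective on $H^{\sigma_{0},\sigma_{0}/(2b)}_{+}(\Omega)$ (cf.~\eqref{9f4.3}), the element $w$ must equal the unique preimage of $(A,B)w$; thus $w=\zeta_{N-k}u\in H^{s_{k},s_{k}/(2b);\varphi_{k}}_{+}(\Omega)$. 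This completes the induction, and for $k=N$ it yields $\chi u\in H^{\sigma,\sigma/(2b);\varphi}_{+}(\Omega)$; as $\chi$ was arbitrary, $u\in H^{\sigma,\sigma/(2b);\varphi}_{+,\mathrm{loc}}(\Omega_{0},\Omega')$.

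\medskip

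\noindent The technical heart is the two auxiliary mapping facts used in the step: boundedness of a parabolic-order-$p$ differential operator with $C^{\infty}$ coefficients from $H^{s,s/(2b);\varrho}_{+}(\Omega)$ to $H^{s-p,(s-p)/(2b);\varrho}_{+}(\Omega)$, and of the lateral trace from $H^{t,t/(2b);\varrho}_{+}(\Omega)$ to $H^{t-1/2,(t-1/2)/(2b);\varrho}_{+}(S)$ for $t>1/2$, for an arbitrary $\varrho\in\mathcal{M}$. Both should be obtained from their Sobolev-scale versions --- boundedness of multiplication by $C^{\infty}$-functions, the elementary bound $|\xi^{\alpha}\eta^{\beta}|\le c\,r_{\gamma}(\xi,\eta)^{|\alpha|+2b\beta}$ already exploited in \S\ref{sec2.6}, and the classical lateral trace theorem --- by quadratic interpolation with the parameter $\psi$ of \eqref{9f7.2}, just as in Theorems~\ref{8prop4} and~\ref{9lem7.3}; on $S$ one descends to the charts $\theta^{*}_{j}$ of the atlas of $\Gamma$ as in those proofs. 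The remaining point requiring care is the ``margin'' bookkeeping: keeping $\varphi_{k}\equiv1$ for $k<N$ and imposing $s_{k}-s_{k-1}<1$ makes every inequality above strict, which is precisely what allows the embeddings $H^{a}(\cdot)\hookrightarrow H^{b;\varphi}(\cdot)$ with $b<a$ of \eqref{8f5} to absorb the single passage to the parameter $\varphi$ at the final step; a one-step argument without the chain would force $\sigma<\sigma_{0}+1$, which is why the bootstrap is unavoidable.
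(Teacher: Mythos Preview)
Your argument is correct and follows essentially the same commutator--bootstrap scheme as the paper: localise by a cut-off, use that the commutators $[A,\chi]$, $[B_{j},\chi]$ have parabolic order lowered by one, feed the resulting right-hand side into the global isomorphism of Theorem~\ref{9th4.1} (equivalently Corollary~\ref{9cor4.2}), and iterate. The organisational differences are minor: the paper carries the parameter~$\varphi$ at \emph{every} step and gains exactly one unit per step, which forces a separate treatment of the case $\sigma\in\mathbb{Z}$ (handled there via an auxiliary half-step at level $\sigma-1/2$); your choice of sub-unit increments $s_{k}-s_{k-1}<1$ with $\varphi_{k}\equiv1$ for $k<N$ and $\varphi_{N}=\varphi$ absorbs $\varphi$ through the strict embedding~\eqref{8f5a} only once and thereby avoids that case distinction, at the cost of fixing a nested family of cut-offs $\zeta_{0},\dots,\zeta_{N}$ in advance rather than re-choosing $(\chi,\eta)$ at each step. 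Both variants rest on the same auxiliary fact, stated in the paper as the boundedness~\eqref{9f8.7} of the lower-order operator $(A',B')$, which is exactly your pair of mapping properties for differential operators and lateral traces in the $H^{s,s/(2b);\varrho}_{+}$ scale.
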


Якщо $\Omega_0=\Omega$ і $\Omega'=\partial\overline{\Omega}$ (тоді $S_0=S$ і $S'=\partial S$), то теорема~\ref{9th4.3} збігається з  наслідком~\ref{9cor4.2}. Якщо $\Omega'=\varnothing$, то ця теорема
стверджує, що регулярність розв'язку підвищується в околах внутрішніх точок замкненого циліндра $\overline{\Omega}$.

\begin{proof}[\indent Доведення теореми $\ref{9th4.3}$.]
Спочатку покажемо, що з її умов \eqref{9f4.5} і \eqref{9f4.6} випливає, що  імплікація
\begin{equation}\label{9f8.3}
\begin{split}
&u\in H^{\sigma-\lambda,(\sigma-\lambda)/(2b);\varphi}_{+,\mathrm{loc}}
(\Omega_0,\Omega')\Longrightarrow\\
&\Longrightarrow\;u\in H^{\sigma-\lambda+1,(\sigma-\lambda+1)/(2b);\varphi}_{+,\mathrm{loc}}
(\Omega_0,\Omega')
\end{split}
\end{equation}
істинна для кожного цілого числа $\lambda\geq1$, яке задовольняє нерівність $\sigma-\lambda+1>\sigma_{0}$.

Виберемо довільно функцію $\chi\in C^\infty(\overline\Omega)$, яка задовольняє умову $\mbox{supp}\,\chi\subset\Omega_0\cup\Omega'$.
Для $\chi$ існує функція $\eta\in C^\infty(\overline\Omega)$ така, що $\mbox{supp}\,\eta\subset\Omega_0\cup\Omega'$ і
$\eta=1$ в околі $\mbox{supp}\,\chi$. Переставивши диференціальні  оператори $A$ і $B_{j}$ з оператором множення на функцію $\chi$, отримаємо такі рівності:
\begin{equation}\label{9f8.4}
\begin{split}
(A,B)(\chi u)&=(A,B)(\chi\eta u)=\chi\,(A,B)(\eta u)+ (A',B')(\eta u)=\\
&=\chi\,(A,B)u+(A',B')(\eta u)=\\
&=\chi\,(f,g_{1},...,g_{m})+(A',B')(\eta u).
\end{split}
\end{equation}
Тут оператор
$$
(A',B'):=(A',B'_{1},\ldots,B'_{m})
$$
утворений за допомогою лінійного диференціального оператора
\begin{equation*}
A'(x,t,D_x,\partial_t)=\sum_{|\alpha|+2b\beta\leq 2m-1}a^{\alpha,\beta}_{1}(x,t)\,D^\alpha_x\partial^\beta_t
\end{equation*}
і $m$ крайових лінійних диференціальних операторів
\begin{equation*}
B_{j}'(x,t,D_x,\partial_t)=\sum_{|\alpha|+2b\beta\leq m_j-1}
b_{j,1}^{\alpha,\beta}(x,t)\,D^\alpha_x\partial^\beta_t,
\end{equation*}
де $j=1,\ldots,m$. Коефіцієнти цих диференціальних операторів задовольняють умови $a^{\alpha,\beta}_{1}\in C^{\infty}(\overline{\Omega})$ і $b_{j,1}^{\alpha,\beta}\in C^{\infty}(\overline{S})$. Оператор $(A',B')$ обмежений на парі просторів
\begin{equation}\label{9f8.7}
(A',B'):\,H^{s,s/(2b);\varphi}_{+}(\Omega)\rightarrow
\mathcal{H}^{s+1-2m,(s+1-2m)/(2b);\varphi}_{+}(\Omega,S)
\end{equation}
для кожного числа $s>\sigma_{0}-1$. Якщо $\varphi(\cdot)\equiv1$ і другі індекси не напівцілі, це є прямим наслідком відомих властивостей анізотропного простору Соболєва $H^{s,s/(2b)}(\Omega)$ (див., наприклад, \cite[розд.~I, лема~4 і розд.~II, теореми~3 і~7]{Slobodetskii58}). Звідси обмеженість оператора \eqref{9f8.7} у загальній ситуації випливає на підставі інтерполяційної теореми~\ref{9lem7.3}.

Згідно з умовами \eqref{9f4.5} і \eqref{9f4.6} виконується включення
$$
\chi\,(f,g_{1},...,g_{m})
\in\mathcal{H}^{\sigma-2m,(\sigma-2m)/(2b);\varphi}_{+}(\Omega,S).
$$
Крім того, згідно з \eqref{9f8.7}, де $s:=\sigma-\lambda$, істинною є імплікація
\begin{equation*}
\begin{split}
&u\in H^{\sigma-\lambda,(\sigma-\lambda)/(2b);\varphi}_{+,\mathrm{loc}}
(\Omega_0,\Omega')\Longrightarrow\\
&\Longrightarrow\;(A',B')(\eta u)\in
\mathcal{H}^{\sigma-\lambda+1-2m,(\sigma-\lambda+1-2m)/(2b);\varphi}_{+}
(\Omega,S).
\end{split}
\end{equation*}
Тому на підставі рівностей \eqref{9f8.4} і наслідку~\ref{9cor4.2} робимо висновок, що
\begin{equation*}
\begin{split}
&u\in H^{\sigma-\lambda,(\sigma-\lambda)/(2b);\varphi}_{+,\mathrm{loc}}
(\Omega_0,\Omega')\Longrightarrow\\
&\Longrightarrow\;(A,B)(\chi u)\in
\mathcal{H}^{\sigma-\lambda+1-2m,(\sigma-\lambda+1-2m)/(2b);\varphi}_{+}
(\Omega,S)\Longrightarrow\\
&\Longrightarrow\;\chi u\in
H^{\sigma-\lambda+1,(\sigma-\lambda+1)/(2b);\varphi}_{+}(\Omega).
\end{split}
\end{equation*}
Тут наслідок~\ref{9cor4.2} застосовний, оскільки
$\chi u\in H^{\sigma_0,\sigma_0/(2b)}_{+}(\Omega)$
за умовою теореми і з огляду на нерівність $\sigma-\lambda+1>\sigma_{0}$.
Таким чином, потрібну імплікацію \eqref{9f8.3} доведено, якщо зважити на зроблений вибір функції $\chi$.

Використаємо цю імплікацію для доведення включення $u\in\nobreak H^{\sigma,\sigma/(2b);\varphi}_{+,\mathrm{loc}}(\Omega_0,\Omega')$. Розглянемо окремо випадки, коли $\sigma\notin\mathbb{Z}$ і коли $\sigma\in\mathbb{Z}$.

Дослідимо перший з них. Якщо $\sigma\notin\mathbb{Z}$, то існує ціле число $\lambda_{0}\geq1$ таке, що
\begin{equation}\label{9f8.8}
\sigma-\lambda_{0}<\sigma_{0}<\sigma-\lambda_{0}+1.
\end{equation}
Використовуючи імплікацію \eqref{9f8.3} послідовно для значень $\nobreak{\lambda:=\lambda_{0}}$, $\lambda:=\lambda_{0}-1$, ...,$\lambda:=1$, виводимо потрібне включення у такий спосіб:
\begin{equation*}
\begin{split}
&u\in H^{\sigma_0,\sigma_0/(2b)}_{+}(\Omega)\subset
H^{\sigma-\lambda_{0},(\sigma-\lambda_{0})/(2b);\varphi}_
{+,\mathrm{loc}}(\Omega_0,\Omega')\Longrightarrow\\
&\Longrightarrow\;u\in
H^{\sigma-\lambda_{0}+1,(\sigma-\lambda_{0}+1)/(2b);\varphi}_
{+,\mathrm{loc}}(\Omega_0,\Omega')\Longrightarrow\\
&\Longrightarrow\;\ldots\;\Longrightarrow\;u\in H^{\sigma,\sigma/(2b);\varphi}_
{+,\mathrm{loc}}(\Omega_0,\Omega').
\end{split}
\end{equation*}
Зауважимо, що $u\in H^{\sigma_0,\sigma_0/(2b)}_{+}(\Omega)$ за умовою теореми.

Дослідимо тепер випадок, коли $\sigma\in\mathbb{Z}$. У цьому випадку не існує такого цілого числа $\lambda_{0}$, яке задовольняє умову~\eqref{9f8.8}. Але, оскільки $\sigma-1/2\notin\mathbb{Z}$ і $\sigma-1/2>\sigma_{0}$, то, як доведено вище, виконується включення
\begin{equation*}
u\in H^{\sigma-1/2,(\sigma-1/2)/(2b);\varphi}_
{+,\mathrm{loc}}(\Omega_0,\Omega').
\end{equation*}
Звідси, скориставшись імплікацією \eqref{9f8.3}, де  $\lambda:=1$,
виводимо потрібне включення у такий спосіб:
\begin{align*}
u&\in H^{\sigma-1/2,(\sigma-1/2)/(2b);\varphi}_
{+,\mathrm{loc}}(\Omega_0,\Omega')\subset
H^{\sigma-1,(\sigma-1)/(2b);\varphi}_
{+,\mathrm{loc}}(\Omega_0,\Omega')\Longrightarrow\\
&\Longrightarrow\;u\in H^{\sigma,\sigma/(2b);\varphi}_
{+,\mathrm{loc}}(\Omega_0,\Omega').
\end{align*}
\end{proof}

За допомогою узагальнених анізотропних просторів Соболєва можна сформулювати тонкі достатні умови, за яких узагальнений розв'язк $u$ напіводнорідної параболічної задачі та його узагальнені частинні похідні заданого порядку неперервні на множині $\Omega_0\cup\Omega'$.

\begin{theorem}\label{9th4.4}
Нехай функція $u\in H^{\sigma_0,\sigma_0/(2b)}_+(\Omega)$ є узагальненим розв'язком параболічної задачі \eqref{9f2.1}--\eqref{9f2.3}, праві частини якої задовольняють умову~\eqref{9f4.4}. Нехай ціле число $p\geq0$ таке, що $p+b+n/2>\sigma_{0}$. Припустимо, що праві частини цієї задачі задовольняють умови \eqref{9f4.5} і \eqref{9f4.6} для $\sigma:=p+b+n/2$ і деякого функціонального параметра $\varphi\in\mathcal{M}$, підпорядкованого умові \eqref{9f4.7}. Тоді розв'язок $u(x,t)$ і кожна його узагальнена частинна похідна
$D_{x}^{\alpha}\partial_{t}^{\beta}u(x,t)$, де $|\alpha|+2b\beta\leq p$, є неперервними на множині $\Omega_0\cup\Omega'$.
\end{theorem}

\begin{remark}\label{rem-continuous-Murach}\rm
Стосовно висновку цієї теореми та подібних тверджень, домовимося про таке: узагальнену функцію $v\in\mathcal{D}'(\Omega)$ називаємо неперервною на множині $\Omega_0\cup\Omega'$, якщо існує неперервна функція $v_{0}$ на $\Omega_0\cup\Omega'$ така, що
\begin{equation}\label{rem-to-th4.4}
v(\omega)=\int\limits_{\Omega_0}v_{0}(x,t)\,\omega(x,t)\,dxdt
\end{equation}
для довільної функції $\omega\in C^{\infty}(\Omega)$, носій якої задовольняє умову $\mathrm{supp}\,\omega\subset\Omega_0$.
Тут $v(\omega)$~--- значення функціонала $v$ на функції~$\omega$. Звісно, якщо $\Omega'=\varnothing$, то це означення еквівалентно тому, що звуження узагальненої функції $v$ на відкриту множину $\Omega_0$ є неперервним на цій множині.
\end{remark}

\begin{proof}[\indent Доведення теореми $\ref{9th4.4}$.]
Виберемо достатньо мале число $\varepsilon>\nobreak0$ і позначимо
$$
U_{\varepsilon}:=\{x\in U:\mathrm{dist}(x,\partial U)>\varepsilon\},
$$
$\Omega_{\varepsilon}:=U_{\varepsilon}\cap\Omega$ та $\Omega'_{\varepsilon}:=U_{\varepsilon}\cap\partial\overline{\Omega}$. Розглянемо функцію $\chi_{\varepsilon}\in C^\infty(\overline\Omega)$ таку, що $\mbox{supp}\,\chi_{\varepsilon}\subset\Omega_0\cup\Omega'$ і $\chi_{\varepsilon}=1$ на $\Omega_{\varepsilon}\cup\Omega'_{\varepsilon}$. Згідно з теоремою~\ref{9th4.3} виконується включення $\chi_{\varepsilon}u\in H^{\sigma,\sigma/(2b);\varphi}_{+}(\Omega)$, де  $\sigma=p+b+n/2$, а $\varphi$ задовольняє \eqref{9f4.7}. Отже, існує функція $w_{\varepsilon}\in H^{\sigma,\sigma/(2b);\varphi}(\mathbb{R}^{n+1})$ така, що $w_{\varepsilon}=\chi_{\varepsilon}u=u$ на $\Omega_{\varepsilon}$. Нехай мультиіндекс $\alpha=(\alpha_{1},\ldots,\alpha_{n})$ і цілий індекс $\beta\geq0$ задовольняють умову $|\alpha|+2b\beta\leq p$. За  теоремою~\ref{9lem8.1}~(i) узагальнена частинна похідна $D_{x}^{\alpha}\partial_{t}^{\beta}w_{\varepsilon}(x,t)$ є неперервною на множині $\mathbb{R}^{n+1}$. (Зокрема, там неперервна і сама функція $w_{\varepsilon}(x,t)$ як похідна порядку нуль.) Отже, узагальнена функція $v(x,t):=D_{x}^{\alpha}\partial_{t}^{\beta}u(x,t)$ (яка належить до $\mathcal{D}'(\Omega)$) задовольняє умову
\begin{equation*}
v(\omega)=\int\limits_{\Omega_\varepsilon}v_{\varepsilon}(x,t)\,
\omega(x,t)\,dxdt
\end{equation*}
для будь-якої функції $\omega\in C^{\infty}(\Omega)$ такої, що $\mathrm{supp}\,\omega\subset\Omega_\varepsilon$. Тут $v_{\varepsilon}$ позначає неперервну функцію $v_{\varepsilon}(x,t):=
D_{x}^{\alpha}\partial_{t}^{\beta}w_{\varepsilon}(x,t)$ аргументів $(x,t)\in\Omega_{\varepsilon}\cup\Omega'_{\varepsilon}$. Означимо неперервну функцію $v_{0}$ на $\Omega_{0}\cup\Omega'_{0}$ за формулою $v_{0}:=v_{\varepsilon}$ на $\Omega_{\varepsilon}\cup\Omega'_{\varepsilon}$, де $0<\varepsilon\ll1$. Це означення коректне, бо з нерівності $0<\delta<\varepsilon$ випливає, що $v_{\delta}=v_{\varepsilon}$ на $\Omega_{\varepsilon}\cup\Omega'_{\varepsilon}$. Узагальнена функція $v$ задовольняє умову \eqref{rem-to-th4.4} для будь-якої функції $\omega\in C^{\infty}(\Omega)$ такої, що $\mathrm{supp}\,\omega\subset\Omega_0$, оскільки $\mathrm{supp}\,\omega\subset\Omega_\varepsilon$ для достатньо малого числа $\varepsilon>0$ (залежного від $\omega$). Отже, узагальнена функція $v(x,t)=D_{x}^{\alpha}\partial_{t}^{\beta}u(x,t)$ неперервна на множині $\Omega_{0}\cup\Omega'_{0}$.
\end{proof}

\begin{remark}\label{9rem4.5}\rm
Інтегральна умова \eqref{9f4.7} у теоремі~\ref{9th4.4} є точною. А саме: нехай $\sigma:=p+b+n/2$ і $\varphi\in\mathcal{M}$ та припустимо, що для кожної функції $u\in\nobreak H^{\sigma_0,\sigma_0/(2b)}_+(\Omega)$ істинна така імплікація:
\begin{gather*}
\bigl(\,u\;\mbox{є розв'язком задачі \eqref{9f2.1}--\eqref{9f2.3},}\\
\mbox{праві частини якої задовольняють \eqref{9f4.5} і \eqref{9f4.6}}\,\bigr)\Longrightarrow\\
\Longrightarrow\;\bigl(\,u\;\mbox{задовольняє висновок теореми \ref{9th4.4}}\,\bigr);
\end{gather*}
тоді $\varphi$ задовольняє умову \eqref{9f4.7}.
\end{remark}

Справді, нехай $\varphi\in\mathcal{M}$, а ціле число $p\geq0$ таке, що  $$
\sigma:=p+b+n/2>\sigma_{0}.
$$
Припустимо, що кожна функція $u\in\nobreak H^{\sigma_0,\sigma_0/(2b)}_{+}(\Omega)$ задовольняє імплікацію, наведену в зауваженні~\ref{9rem4.5}. Отже, довільна функція $u\in H^{\sigma,\sigma/(2b);\varphi}_{+}(\Omega)$ задовольняє висновок теореми~\ref{9th4.4}, якщо покласти
\begin{equation*}
(f,g_{1},...,g_{m}):=(A,B)u\in
\mathcal{H}^{\sigma-2m,(\sigma-2m)/(2b);\varphi}_{+}(\Omega,S).
\end{equation*}
Тому, якщо функція $u$ належить до $H^{\sigma,\sigma/(2b);\varphi}_{+}(\Omega)$, то, зокрема, її узагальнена похідна $\partial^{p}u/\partial x_{1}^{j}$ є неперервною на $\Omega_0\cup\Omega'$.

Нехай $V$~--- непорожня відкрита підмножина $\mathbb{R}^{n+1}$ така, що $\overline{V}\subset\Omega_0$. Довільно виберемо функцію $w\in H^{\sigma,\sigma/(2b);\varphi}(\mathbb{R}^{n+1})$, яка задовольняє умову $\mathrm{supp}\,w\subset V$. Покладемо
\begin{equation*}
u:=w\!\upharpoonright\!\Omega\in H^{\sigma,\sigma/(2b);\varphi}_{+}(\Omega).
\end{equation*}
Узагальнена похідна $\partial^{p}w/\partial x_{1}^{j}$ неперервна на $\mathbb{R}^{n+1}$ внаслідок властивості $u$, вказаної у попередньому абзаці. Отже, $\varphi$ задовольняє умову \eqref{9f4.7} згідно з теоремою~\ref{9lem8.1}~(ii). Зауваження~\ref{9rem4.5} обґрунтоване.

\begin{remark}\rm
Використання узагальнених просторів Соболєва дозволяє в теоремі~\ref{9th4.4} досягти мінімальних допустимих значень числових показників гладкості в умовах \eqref{9f4.5} і \eqref{9f4.6}. Це неможливо для соболєвських просторів, оскільки функція $\varphi(\cdot)\equiv1$ не задовольняє інтегральну умову \eqref{9f4.7}. Якщо формулювати для них аналог теореми~\ref{9th4.4}, то доведеться замінити цю умову на більш сильну. Потрібно припускати, що для деякого числа $\sigma>p+b+n/2$ праві частини задачі \eqref{9f2.1}--\eqref{9f2.3} задовольняють умови \eqref{9f4.5}, \eqref{9f4.6}, в яких $\varphi(\cdot)\equiv1$. Аналогічне зауваження стосується теорем~\ref{16th4.4} і~\ref{24th4.1}, наведених у підрозділах~\ref{sec4.1.3} і~\ref{4.1.5} відповідно.
\end{remark}

\begin{remark}\rm
В умові \eqref{9f4.5} і в кожній з $m$ умов \eqref{9f4.6}, наявних у теоремі~\ref{9th4.4}, можна використовувати різні функціональні параметри $\varphi$ класу $\mathcal{M}$, які задовольняють~\eqref{9f4.7}. При цьому висновок теореми буде правильним. Справа у тому, що функція, яка є мінімумом значень цих функціональних параметрів у кожній точці півосі $[1,\infty)$, належить до $\mathcal{M}$ і також задовольняє~\eqref{9f4.7}. Крім того, якщо виконуються умови \eqref{9f4.5} і \eqref{9f4.6} для різних  $\varphi$, то вони виконуються і для функції-мінімуму цих $\varphi$. Отже, більш загальна ситуація, коли в цих умовах параметри $\varphi$ різні, зводиться до розглянутого випадку, коли в них параметр $\varphi$ один і той самий. Аналогічне зауваження стосується теорем~\ref{16th4.4} і~\ref{24th4.1}.
\end{remark}

\newpage

\chaptermark{}


\chapter{\textbf{Неоднорідні параболічні задачі}}\label{ch4}

\chaptermark{\emph Розд. \ref{ch4}. Неоднорідні параболічні задачі}

У цьому розділі досліджуємо параболічні задачі з неоднорідними початковими умовами. Спочатку розглядаємо основні крайові задачі для рівняння теплопровідності: задачу Діріхле і задачу Неймана. Потім вивчаємо параболічні задачі, одновимірні за просторовою змінною, і зрештою~--- багатовимірні параболічні задачі у циліндрі. Центральне місце у цьому розділі посідають теореми про ізоморфізми, породжені вказаними задачами на парах анізотропних узагальнених просторів Соболєва, введених у п.~\ref{sec2.3}. Іншими словами, ці теореми стверджують, що вказані задачі є коректно розв'язними (за Адамаром) на парах зазначених просторів. Теореми про ізоморфізми застосуємо до дослідження локальної регулярності (аж до межі циліндра) розв'язків параболічних задач у зазначених просторах, а також в анізотропних просторах неперервно диференційовних функцій. Зокрема, отримаємо тонкі достатні умови класичності узагальнених розв'язків параболічних задач.

\section[Задачі для рівняння теплопровідності]
{Задачі для рівняння теплопровідності}\label{sec4.4.1}

\markright{\emph \ref{sec4.4.1}. Задачі для рівняння теплопровідності}

Як і в підрозділі~\ref{sec3.2.1}, додатні числа $l$ і $\tau$ вибрано довільно, а $\Omega:=(0,l)\times(0,\tau)$~--- відкритий прямокутник на декартовій площині $\mathbb{R}^{2}$ точок $(x,t)$, де $x$~--- просторова, а $t$~--- часова змінні. У прямокутнику $\Omega$ розглядаємо рівняння теплопровідності
\begin{equation}\label{6f1}
\begin{aligned}
Au(x,t):=&u_{t}'(x,t)-u_{xx}''(x,t)=f(x,t)\\
&\mbox{для всіх}\;\;x\in(0,l)\;\;\mbox{і}\;\;t\in(0,\tau).
\end{aligned}
\end{equation}
Досліджуємо початково-крайову параболічну задачу, яка складається з рівняння~\eqref{6f1}, початкової умови
\begin{equation}\label{6f3}
u(x,0)=h(x)\quad\mbox{для всіх}\;\;x\in (0,l)
\end{equation}
і крайової умови Діріхле
\begin{equation}\label{6f2}
u(0,t)=g_0(t)\quad\mbox{і}\quad u(l,t)=g_1(t)
\quad\mbox{для всіх}\;\;t\in(0,\tau),
\end{equation}
або крайової умови Неймана
\begin{equation}\label{6f2n}
u_x'(0,t)=g_0(t)\quad\mbox{і}\quad u_x'(l,t)=g_1(t)
\quad\mbox{для всіх}\;\;t\in(0,\tau).
\end{equation}

Для того, щоб існував достатньо регулярний розв'язок $u(x,t)$ задачі \eqref{6f1}, \eqref{6f3}, \eqref{6f2} (або \eqref{6f2n}),
її праві частини повинні задовольняти природні умови узгодження. Вони полягають у тому, що похідні $u^{(k)}_t(x,0)$, які можна обчислити за допомогою параболічного рівняння \eqref{6f1} і початкової умови \eqref{6f3}, задовольняють крайову умову \eqref{6f2} (або \eqref{6f2n}) та співвідношення, що утворюються внаслідок диференціювання крайової умови за часовою змінною~$t$. Сформулюємо ці умови узгодження, скориставшись деякими анізотропними просторами Соболєва.

Почнемо із задачі \eqref{6f1}--\eqref{6f2}. Пов'яжемо з нею лінійне відображення
\begin{equation}\label{6f7}
u\mapsto\Lambda_{0}u:=
(Au,u(0,\cdot),u(l,\cdot),u(\cdot,0)),\;\,
\mbox{де}\;\,u\in C^{\infty}(\overline{\Omega}).
\end{equation}
Звісно, тут $u(0,\cdot)$ і $u(l,\cdot)$ позначають відповідно функції $u(0,t)$ і $u(l,t)$ аргументу $t\in[0,\tau]$, а $u(\cdot,0)$~--- функцію $u(x,0)$ аргументу $x\in[0,l]$. Отже,
$$
u\in C^{\infty}(\overline{\Omega})\,\Longrightarrow\,
\Lambda_{0}u\in C^{\infty}(\overline{\Omega})\times
(C^{\infty}[0,\tau])^{2}\times C^{\infty}[0,l].
$$

Нехай дійсне число $s\geq2$. Відображення \eqref{6f7} продовжується єдиним чином (за неперервністю) до лінійного обмеженого оператора
\begin{equation}\label{6f7-BoundOper}
\begin{split}
\Lambda_0:\,H^{s,s/2}(\Omega)\rightarrow &H^{s-2,s/2-1}(\Omega)\oplus\\
&\oplus(H^{s/2-1/4}(0,\tau))^2\oplus H^{s-1}(0,l).
\end{split}
\end{equation}
Такий висновок робимо на підставі відомих властивостей анізотропних просторів Соболєва \cite[розд.~II, теореми~3 і~7]{Slobodetskii58}.
Виберемо довільно функцію $u(x,t)$ з простору $H^{s,s/2}(\Omega)$ і означимо праві частини
\begin{equation}\label{6f7-FGH}
f\in H^{s-2,s/2-1}(\Omega),\,\, g_0,g_1\in H^{s/2-1/4}(0,\tau)\,\,
\mbox{і}\,\, h\in H^{s-1}(G)
\end{equation}
задачі за формулою $(f,g_0,g_1,h):=\Lambda_0u$ з використанням  оператора~\eqref{6f7-BoundOper}.

Згідно з \cite[розд.~II, теорема 7]{Slobodetskii58} сліди
$$
u^{(k)}_t(\cdot,0)\in H^{s-1-2k}(0,l)
$$
означені за замиканням для всіх цілих чисел $k$, які задовольняють умову $0\leq k<s/2-1/2$ (і лише для цих $k$). Скориставшись \eqref{6f1} і \eqref{6f3}, виразимо ці сліди через функції $f(x,t)$ і $h(x)$ за рекурентною формулою
\begin{equation}\label{6f9}
\begin{split}
&u^{(0)}_t(x,0)=u(x,0)=h(x),\\
&u^{(k)}_t(x,0)=(u^{(k-1)}_t(x,0))''_{xx}+f^{(k-1)}_t(x,0)\\
&\mbox{для кожного}\;\,k\in\mathbb{Z}\;\,\mbox{такого, що}\;\,
1\leq k<s/2-1/2.
\end{split}
\end{equation}
Окрім того, означено сліди $g^{(k)}_0(0)$ і $g^{(k)}_1(0)$ для всіх  $k\in\mathbb{Z}$ таких, що $0\leq k<s/2-3/4$ (і тільки для цих $k$); це випливає з~\eqref{6f7-FGH}. Тому внаслідок крайової умови Діріхле \eqref{6f2} виконуються рівності
\begin{equation}\label{6fcond}
g^{(k)}_0(0)=u^{(k)}_t(0,0)\quad\mbox{і}\quad
g^{(k)}_1(0)=u^{(k)}_t(l,0)
\end{equation}
для цих цілих $k$. Праві частини рівностей \eqref{6fcond} означені коректно, оскільки функція $u^{(k)}_t(\cdot,0)\in H^{s-1-2k}(0,l)$ має сліди
$u^{(k)}_t(0,0)$ і $u^{(k)}_t(l,0)$ за умови $s-3/2-2k>0$.

Тепер, підставивши співвідношення \eqref{6f9} у \eqref{6fcond}, отримуємо умови узгодження
\begin{equation}\label{6f10}
\begin{gathered}
g^{(k)}_{0}(0)=v_k(0)\;\;\mbox{і}\;\;g^{(k)}_{1}(0)=v_k(l)\\
\mbox{для всіх}\;\,k\in\mathbb{Z}\;\,\mbox{таких, що}\;\,
0\leq k<s/2-3/4.
\end{gathered}
\end{equation}
Тут функції $v_k$ означено за рекурентною формулою
\begin{equation}\label{6f9rec}
\begin{split}
&v_0(x)=h(x),\\
&v_k(x)=(v_{k-1}(x))''_{xx}+f^{(k-1)}_t(x,0)\\
\mbox{для кожного}&\;\,k\in\mathbb{Z}\;\,\mbox{такого, що}\;\,
1\leq k<s/2-1/2.
\end{split}
\end{equation}
З огляду на \eqref{6f7-FGH} виконується включення
\begin{equation}\label{6fv-k}
v_k\in H^{s-1-2k}(0,l)\;\;\mbox{для кожного}\;\;
k\in\mathbb{Z}\cap[0,s/2-1/2).
\end{equation}
Тому сліди $v_k(0)$ і $v_k(l)$ означені, якщо $s-3/2-2k>0$. Отже, умови узгодження \eqref{6f10} сформульовано коректно.

Покладемо
$$
E_{0}:=\{2r+3/2:1\leq r\in\mathbb{Z}\}.
$$
Як бачимо, $E_{0}$~--- множина всіх розривів функції, яка дорівнює числу  умов узгодження \eqref{6f10} залежно від $s\geq2$.

Вкажемо узагальнені соболєвські простори, на яких параболічна задача
\eqref{6f1}--\eqref{6f2} породжує ізоморфізми. Нехай $s>2$ і $\varphi\in\mathcal{M}$. Беремо простір $H^{s,s/2;\varphi}(\Omega)$ як область визначення породженого ізоморфізму. Отже, $H^{s,s/2;\varphi}(\Omega)$ є простором розв'язків цієї задачі. Введемо простір її правих частин, тобто область значень ізоморфізму, яку позначаємо через $\widetilde{\mathcal{Q}}_{0}^{s-2,s/2-1;\varphi}$. Вона є лінійним многовидом у гільбертовому просторі
\begin{align*}
&\widetilde{\mathcal{H}}_{0}^{s-2,s/2-1;\varphi}:=\\
&:=H^{s-2,s/2-1;\varphi}(\Omega)\oplus
(H^{s/2-1/4;\varphi}(0,\tau))^2\oplus H^{s-1;\varphi}(0,l).
\end{align*}
Останній у соболєвському випадку $\varphi(\cdot)\equiv1$ збігається з простором, у який діє оператор \eqref{6f7-BoundOper}.

Гільбертів простір $\widetilde{\mathcal{Q}}_{0}^{s-2,s/2-1;\varphi}$ означаємо по-різному у випадку, коли $s\notin E_{0}$, і у випадку, коли $s\in E_{0}$.

Розглянемо спочатку випадок, коли $s\notin E_{0}$. За означенням лінійний простір $\widetilde{\mathcal{Q}}_{0}^{s-2,s/2-1;\varphi}$ складається з усіх векторів
\begin{equation}\label{(f,g,h)-Dirichlet-Murach}
\bigl(f,g_0,g_1,h\bigr)\in\widetilde{\mathcal{H}}_{0}^{s-2,s/2-1;\varphi},
\end{equation}
які задовольняють умови узгодження \eqref{6f10}. Останні сформульовано коректно для кожного вектора
$$
\bigl(f,g_0,g_1,h\bigr)\in
\widetilde{\mathcal{H}}_{0}^{s-2-\varepsilon,s/2-1-\varepsilon/2},
$$
де число $\varepsilon>0$ достатньо мале. Тому ці умови мають сенс для будь-якого вектора \eqref{(f,g,h)-Dirichlet-Murach} з огляду на неперервне вкладення
\begin{equation}\label{v8f69a-1}
\widetilde{\mathcal{H}}_{0}^{s-2,s/2-1;\varphi}
\hookrightarrow\widetilde{\mathcal{H}}_{0}^
{s-2-\varepsilon,s/2-1-\varepsilon/2},
\end{equation}
яке випливає з формул \eqref{8f5a} і \eqref{8f5b}. Отже, лінійний простір $\widetilde{\mathcal{Q}}_{0}^{s-2,s/2-1;\varphi}$ означено коректно. Наділяємо його скалярним добутком і нормою з гільбертового простору
$\widetilde{\mathcal{H}}_{0}^{s-2,s/2-1;\varphi}$. Простір $\widetilde{\mathcal{Q}}_{0}^{s-2,s/2-1;\varphi}$ є повним, тобто гільбертовим. Це випливає з рівності
$$
\widetilde{\mathcal{Q}}_{0}^{s-2,s/2-1;\varphi}=
\widetilde{\mathcal{H}}_{0}^{s-2,s/2-1;\varphi}\cap
\widetilde{\mathcal{Q}}_{0}^{s-2-\varepsilon,s/2-1-\varepsilon/2}.
$$
Тут простір $\widetilde{\mathcal{Q}}_{0}^{s-2-\varepsilon,s/2-1-\varepsilon/2}$ є повним, оскільки диференціальні оператори та оператори слідів, використані в умовах узгодження, обмежені на відповідних парах просторів Соболєва. Тому простір, записаний у правій частині цієї рівності, є повним відносно суми норм у просторах, що є компонентами перетину. Ця сума еквівалентна нормі у $\widetilde{\mathcal{H}}_{0}^{s-2,s/2-1;\varphi}$ на підставі
\eqref{v8f69a-1}. Таким чином, простір  $\widetilde{\mathcal{Q}}_{0}^{s-2,s/2-1;\varphi}$ є повним (відносно останньої норми).

Якщо $s\in E_{0}$, то гільбертів простір $\widetilde{\mathcal{Q}}_{0}^{s-2,s/2-1;\varphi}$ означаємо
за допомогою квадратичної інтерполяції у такий спосіб:
\begin{equation}\label{6f-interp1}
\widetilde{\mathcal{Q}}_{0}^{s-2,s/2-1;\varphi}:=\bigl[
\widetilde{\mathcal{Q}}_{0}^{s-2-\varepsilon,s/2-1-\varepsilon/2;\varphi},
\widetilde{\mathcal{Q}}_{0}^{s-2+\varepsilon,s/2-1+\varepsilon/2;\varphi}
\bigr]_{1/2}.
\end{equation}
Тут число $\varepsilon\in(0,1/2)$ вибрано довільно, а права частина рівності є результатом квадратичної інтерполяції з числовим параметром $1/2$ (тобто із степеневим параметром $\psi(r)\equiv r^{1/2}$). Гільбертів простір $\widetilde{\mathcal{Q}}_{0}^{s-2,s/2-1;\varphi}$, означений  формулою \eqref{6f-interp1}, не залежить з точністю до еквівалентності норм від вибору числа $\varepsilon$ і неперервно вкладається у простір $\widetilde{\mathcal{H}}_{0}^{s-2,s/2-1;\varphi}$. Це покажемо у зауваженні~\ref{v8rem2-6} наприкінці поточного підрозділу.

Сформулюємо теорему про ізоморфізми, породжені параболічною початково-крайовою задачею \eqref{6f1}--\eqref{6f2}.

\begin{theorem}\label{6th1}
Для довільних дійсного числа $s>2$ і функціонального параметра $\varphi\in\nobreak\mathcal{M}$ відображення \eqref{6f7} продовжується єдиним чином (за неперервністю) до ізоморфізму
\begin{equation}\label{6f8}
\Lambda_{0}:H^{s,s/2;\varphi}(\Omega)\leftrightarrow
\widetilde{\mathcal{Q}}_{0}^{s-2,s/2-1;\varphi}.
\end{equation}
\end{theorem}

Зауважимо, що потреба означити простір $\widetilde{\mathcal{Q}}_{0}^{s-2,s/2-1;\varphi}$ за формулою \eqref{6f-interp1} у випадку, коли $s\in E_{0}$, зумовлена такою обставиною: якщо означити цей простір у цьому випадку в той самий спосіб, що і для $s\notin E_{0}$, то висновок цієї теореми буде хибним, принаймні, коли $\varphi(\cdot)\equiv1$. Це випливає з результату В.~А.~Солоннікова \cite[\S~6, с.~186]{Solonnikov64} (див. також \cite[зауваження~6.4]{LionsMagenes72ii}).

Доведення сформульованої теореми (як і її версії для крайових умов Неймана) дамо наприкінці цього підрозділу.

Розглянемо тепер параболічну задачу \eqref{6f1}, \eqref{6f3}, \eqref{6f2n} (з крайовими умовами Неймана). Запишемо умови узгодження для її правих частин. Пов'яжемо з нею лінійне відображення:
\begin{gather}\label{6f11}
u\mapsto\Lambda_1u:=(Au,u'_x(0,\cdot),u'_x(l,\cdot),u(\cdot,0)),
\;\,\mbox{де}\;\,u\in C^{\infty}(\overline{\Omega}).
\end{gather}
Тут, нагадаємо, $u=u(x,t)$, а $u'_x(0,\cdot)$ і $u'_x(l,\cdot)$
позначають відповідно функції $u'_x(0,t)$ і $u'_x(l,t)$ аргументу $t\in[0,\tau]$. Отже,
$$
u\in C^{\infty}(\overline{\Omega})\,\Longrightarrow\,
\Lambda_{1}u\in C^{\infty}(\overline{\Omega})\times
(C^{\infty}[0,\tau])^{2}\times C^{\infty}[0,l].
$$

Для довільного дійсного числа $s\geq2$ це відображення продовжується єдиним чином (за неперервністю) до лінійного обмеженого оператора
\begin{equation}\label{6f7N-BoundOper}
\begin{split}
\Lambda_1:\,H^{s,s/2}(\Omega)\rightarrow&
H^{s-2,s/2-1}(\Omega)\\ &\oplus
(H^{s/2-3/4}(0,\tau))^2\oplus H^{s-1}(0,l).
\end{split}
\end{equation}
Виберемо довільно функцію $u(x,t)$ з простору $H^{s,s/2}(\Omega)$ і означимо праві частини
\begin{equation*}
f\in H^{s-2,s/2-1}(\Omega),\quad g_0,g_1\in H^{s/2-3/4}(0,\tau)\quad
\mbox{і}\quad h\in H^{s-1}(0,l)
\end{equation*}
розглянутої задачі за формулою $(f,g_0,g_1,h):=\Lambda_1u$, використовуючи оператор~\eqref{6f7N-BoundOper}.

Тут (на відміну від \eqref{6f7-FGH}) з включення $u\in H^{s,s/2}(\Omega)$ випливає, що функції $g_0(\cdot)=u'_x(0,\cdot)$ і $g_1(\cdot)=u'_x(l,\cdot)$ належать до простору $H^{s/2-3/4}(0,\tau)$ згідно з \cite[розд.~II, теорема 7]{Slobodetskii58}. Отже, сліди $g^{(k)}_0(0)$ і $g^{(k)}_1(0)$ означено для всіх $k\in\mathbb{Z}$ таких, що $0\leq k<s/2-5/4$ (і лише для цих~$k$). Ці сліди можна виразити через функцію $u(x,t)$ та її похідні за часом, а саме:
\begin{equation}\label{6fNcond}
g^{(k)}_0(0)=(u^{(k)}_t(x,0))'_x\big|_{x=0}\quad\mbox{і}\quad
g^{(k)}_1(0)=(u^{(k)}_t(x,0))'_x\big|_{x=l}.
\end{equation}
Тут функція $u^{(k)}_t(x,0)$ змінної $x\in(0,l) $ виражена через функції $f(x,t)$ і $h(x)$ за рекурентною формулою \eqref{6f9}.

Підставивши \eqref{6f9} у праві частини рівностей \eqref{6fNcond}, отримаємо умови узгодження:
\begin{equation}\label{6f14}
\begin{gathered}
g^{(k)}_{0}(0)=(v_k)'_x(0)\;\;\mbox{і}\;\; g^{(k)}_{1}(0)=(v_k)'_x(l)\\
\mbox{для всіх}\;\,k\in\mathbb{Z}\;\,\mbox{таких, що}\;\,
0\leq k<s/2-5/4,
\end{gathered}
\end{equation}
де функції $v_k$ означено на інтервалі $(0,l)$ за рекурентною формулою \eqref{6f9rec}. З огляду на включення \eqref{6fv-k} праві частини рівностей~\eqref{6f14} означено, якщо $s-5/2-2k>0$. Отже, умови узгодження~\eqref{6f14} сформульовано коректно. Звісно, якщо $s\leq5/2$, то умови узгодження для розглянутої задачі відсутні.

Покладемо
$$
E_{1}:=\{2r+1/2:1\leq r\in\mathbb{Z}\}.
$$
Помічаємо, що $E_{1}$~--- множина всіх розривів функції, яка дорівнює числу умов узгодження \eqref{6f14} залежно від змінної $s\geq2$.

Перед тим як сформулювати версію теореми \ref{6th1} для параболічної задачі \eqref{6f1}, \eqref{6f3}, \eqref{6f2n}, вкажемо гільбертові простори, на яких діє ізоморфізм, породжений задачею. Нехай $s>2$ і $\varphi\in\mathcal{M}$. Як і у випадку крайових умов Діріхле, беремо $H^{s,s/2;\varphi}(\Omega)$ як область визначення ізоморфізму. Область значень ізоморфізму позначаємо через $\widetilde{\mathcal{Q}}_{1}^{s-2,s/2-1;\varphi}$. Вона є лінійним многовидом у гільбертовому просторі
\begin{align*}
&\widetilde{\mathcal{H}}_{1}^{s-2,s/2-1;\varphi}:=\\
&:=H^{s-2,s/2-1;\varphi}(\Omega)\oplus
(H^{s/2-3/4;\varphi}(0,\tau))^2\oplus H^{s-1;\varphi}(0,l).
\end{align*}
У соболєвському випадку, коли $\varphi(\cdot)\equiv1$, простір $\widetilde{\mathcal{H}}_{1}^{s-2,s/2-1;\varphi}$ збігається з простором, в який діє оператор \eqref{6f7N-BoundOper}.

Якщо $s\notin E_{1}$, то за означенням лінійний простір $\widetilde{\mathcal{Q}}_{1}^{s-2,s/2-1;\varphi}$ складається з усіх векторів
$$
\bigl(f,g_0,g_1,h\bigr)\in\widetilde{\mathcal{H}}_{1}^{s-2,s/2-1;\varphi},
$$
які задовольняють умови узгодження \eqref{6f14}. Це означення коректне, оскільки умови узгодження мають сенс для довільного вектора
$$
\bigl(f,g_0,g_1,h\bigr)\in
\widetilde{\mathcal{H}}_{1}^{s-2-\varepsilon,s/2-1-\varepsilon/2},
$$
де число $\varepsilon>0$ досить мале, та оскільки виконується неперервне вкладення
\begin{equation*}
\widetilde{\mathcal{H}}_{1}^{s-2,s/2-1;\varphi}\hookrightarrow
\widetilde{\mathcal{H}}_{1}^{s-2-\varepsilon,s/2-1-\varepsilon/2}.
\end{equation*}
Останнє є наслідком формул \eqref{8f5a} і \eqref{8f5b}.
Лінійний простір $\widetilde{\mathcal{Q}}_{1}^{s-2,s/2-1;\varphi}$ наділяємо скалярним добутком і нормою з гільбертового простору
$\widetilde{\mathcal{H}}_{1}^{s-2,s/2-1;\varphi}$. Простір $\widetilde{\mathcal{Q}}_{1}^{s-2,s/2-1;\varphi}$ повний, тобто гільбертів, що доводиться так само, як повнота простору $\widetilde{\mathcal{Q}}_{0}^{s-2,s/2-1;\varphi}$.
Якщо $2\leq s<5/2$, то простори $\widetilde{\mathcal{H}}_{1}^{s-2,s/2-1;\varphi}$ і $\widetilde{\mathcal{Q}}_{1}^{s-2,s/2-1;\varphi}$ збігаються, оскільки в цьому випадку умов узгоджен\-ня нема.

Якщо $s\in E_{1}$, то покладаємо
\begin{equation}\label{6f-interp2}
\widetilde{\mathcal{Q}}_{1}^{s-2,s/2-1;\varphi}:=\bigl[
\widetilde{\mathcal{Q}}_{1}^{s-2-\varepsilon,s/2-1-\varepsilon/2;\varphi},
\widetilde{\mathcal{Q}}_{1}^{s-2+\varepsilon,s/2-1+\varepsilon/2;\varphi}
\bigr]_{1/2},
\end{equation}
де число $\varepsilon\in(0,1/2)$ вибираємо довільно. Так означений гільбертів простір $\widetilde{\mathcal{Q}}_{1}^{s-2,s/2-1;\varphi}$ не залежить з точністю до еквівалентності норм від вибору числа $\varepsilon$  і неперервно вкладається у простір $\widetilde{\mathcal{H}}_{1}^{s-2,s/2-1;\varphi}$, як буде показано в зауваженні~\ref{v8rem2-6}.

Сформулюємо теорему про ізоморфізми для параболічної початково-крайової задачі \eqref{6f1}, \eqref{6f3}, \eqref{6f2n}.

\begin{theorem}\label{6th2}
Для довільних дійсного числа $s>2$ і функціонального параметра $\varphi\in\nobreak\mathcal{M}$ відображення \eqref{6f11} продовжується єдиним чином (за неперервністю) до ізоморфізму
\begin{equation}\label{6f12}
\Lambda_{1}:H^{s,s/2;\varphi}(\Omega)\leftrightarrow
\widetilde{\mathcal{Q}}_{1}^{s-2,s/2-1;\varphi}.
\end{equation}
\end{theorem}

Якщо означити простір $\widetilde{\mathcal{Q}}_{1}^{s-2,s/2-1;\varphi}$ для $s\in E_{1}$ у той самий спосіб, що і для $s\notin E_{1}$, то висновок цієї теореми буде хибним для $s\in E_{1}$ принаймні, коли $\varphi(\cdot)\equiv1$. Це випливає з результату В.~А.~Солоннікова \cite[\S~6, c.~186]{Solonnikov64}.

У соболєвському випадку, коли $\varphi(\cdot)\equiv1$, теореми \ref{6th1} і \ref{6th2} відомі. Вони є окремими випадками результату Ж.-Л.~Ліонса і Е.~Мадженеса \cite[теорема 6.2]{LionsMagenes72ii}, який стосується
параболічного диференціального рівняння довільного парного порядку $2m$ за просторовими змінними і першого порядку за часовою змінною та нормальних крайових умов (до яких належать умови Діріхле і Неймана) й доведений за припущення, що $s+1/2\notin\mathbb{Z}$ і $s/(2m)+1/2\notin\mathbb{Z}$
(їх результат охоплює і граничний випадок $s=2$). Аналогічна теорема для загальних $2b$-параболічних початково-крайових задач у анізотропних просторах Соболєва встановлена раніше М.~С.~Аграновічем
та М.~І.~Вішиком \cite[теорема 12.1]{AgranovichVishik64} за припущення, що $s/(2b)\in\mathbb{N}$. Його можна позбутися, як це
випливає з результату М.~В.~Житарашу
\cite[теорема 9.1]{Zhitarashu85}.

Ми виведемо теореми \ref{6th1} і \ref{6th2} зі згаданого результату Ж.-Л.~Ліонса і Е.~Мадженеса за допомогою квадратичної інтерполяції з функціональним параметром. Для цього встановимо деякі інтерполяційні формули для гільбертових
просторів $\widetilde{\mathcal{Q}}_{0}^{s-2,s/2-1;\varphi}$, де $s\in(2,\infty)\setminus E_{0}$, і $\widetilde{\mathcal{Q}}_{1}^{s-2,s/2-1;\varphi}$, де $s\in(2,\infty)\setminus E_{1}$, які є областями значень ізоморфізмів \eqref{6f8} і \eqref{6f12} відповідно. Точки множини $E_{0}$ розбивають промінь $(2,\infty)$ на інтервали
$$
J_{0,1}:=(2,\,7/2),\quad J_{0,r}:=(2r-1/2,\,2r+3/2),
\;\;\mbox{де}\;\; 2\leq r\in\mathbb{Z},
$$
а точки множини $E_{1}$~--- на інтервали
$$
J_{1,0}:=(2,5/2),\quad J_{1,r}:=(2r+1/2,\,2r+5/2),
\;\;\mbox{де}\;\; 1\leq r\in\mathbb{Z}.
$$
Якщо $s$ належить до деякого $J_{0,r}$, то число умов узгодження \eqref{6f10} дорівнює $2r$. Аналогічно, якщо $s$ належить до деякого $J_{1,r}$, то число умов узгодження \eqref{6f14} також дорівнює  $2r$. Отже, промінь $(2,\infty)$ розбито на інтервали сталості числа умов узгодження як функції аргументу~$s$.

\begin{lemma}\label{6lem2}
Нехай $\lambda\in\{0,1\}$ і $r\in\mathbb{N}$. Припустимо, що $s_0,s,s_1\in J_{\lambda,r}$, $s_0<s<s_1$ і $\varphi\in\mathcal{M}$. Нехай інтерполяційний параметр $\psi\in\mathcal{B}$ означено формулою \eqref{9f7.2}. Тоді виконується така рівність просторів з еквівалентністю норм у них:
\begin{equation}\label{6f28}
\widetilde{\mathcal{Q}}_{\lambda}^{s-2,s/2-1;\varphi}=
\bigl[\widetilde{\mathcal{Q}}_{\lambda}^{s_0-2,s_{0}/2-1},
\widetilde{\mathcal{Q}}_{\lambda}^{s_1-2,s_{1}/2-1}\bigr]_{\psi}.
\end{equation}
\end{lemma}

\begin{proof}[\indent Доведення.]
Нагадаємо, що $\widetilde{\mathcal{Q}}_{\lambda}^{s-2,s/2-1;\varphi}$ і $\widetilde{\mathcal{Q}}_{\lambda}^{s_j-2,s_{j}/2-1}$, де $\nobreak{j\in\{0,1\}}$,
є підпросторами гільбертових просторів $\widetilde{\mathcal{H}}_{\lambda}^{s-2,s/2-1;\varphi}$ і $\widetilde{\mathcal{H}}_{\lambda}^{s_j-2,s_{j}/2-1}$  відповідно.
На підставі теорем  \ref{9prop6.3}, \ref{8prop4} і \ref{9lem7.3a}
виконуються такі рівності:
\begin{align*}
&\bigl[\widetilde{\mathcal{H}}_{\lambda}^{s_0-2,s_0/2-1},
\widetilde{\mathcal{H}}_{\lambda}^{s_1-2,s_1/2-1}\bigr]_{\psi}=\\
&=\bigl[H^{s_0-2,s_0/2-1}(\Omega),
H^{s_1-2,s_1/2-1}(\Omega)\bigr]_{\psi}\oplus\\
&\quad\oplus
\bigl(\bigl[H^{s_0-(2\lambda+1)/2,s_0/2-(2\lambda+1)/4}(0,\tau),\\
&\qquad\quad\, H^{s_1-(2\lambda+1)/2,s_1/2-(2\lambda+1)/4}(0,\tau)\bigr]_{\psi}\bigr)^2
\oplus\\
&\quad\oplus\bigl[H^{s_0-1}(0,l),H^{s_1-1}(0,l)\bigr]_{\psi}=\\
&=
H^{s-2,s/2-1;\varphi}(\Omega)\oplus
(H^{s-(2\lambda+1)/2,s/2-(2\lambda+1)/4;\varphi}(0,\tau))^2\oplus\\
&\qquad\qquad\qquad\qquad\;\oplus H^{s-1;\varphi}(0,l)=\\
&=\widetilde{\mathcal{H}}_{\lambda}^{s-2,s/2-1;\varphi}.
\end{align*}
Отже,
\begin{equation}\label{6f30}
\bigl[\widetilde{\mathcal{H}}_\lambda^{s_0-2,s_0/2-1},
\widetilde{\mathcal{H}}_\lambda^{s_1-2,s_1/2-1}\bigr]_{\psi}=
\widetilde{\mathcal{H}}_\lambda^{s-2,s/2-1;\varphi}
\end{equation}
з еквівалентністю норм.

Виведемо потрібну формулу \eqref{6f28} з рівності \eqref{6f30} за допомогою теореми~\ref{9prop6.2} (тієї її частини, яка стосується інтерполяції підпросторів). Для цього побудуємо лінійне відображення $P$, задане на $\widetilde{\mathcal{H}}_\lambda^{s_0-2,s_0/2-1}$ і таке, що $P$ є проєктором простору $\widetilde{\mathcal{H}}_\lambda^{s_j-2,s_j/2-1}$ на його підпростір $\widetilde{\mathcal{Q}}_\lambda^{s_j-2,s_j/2-1}$ для кожного $j\in\{0,\,1\}$. За наявності такого відображення отримаємо потрібну рівність \eqref{6f28}, оскільки
\begin{align*}
\bigl[&\widetilde{\mathcal{Q}}_{\lambda}^{s_0-2,s_{0}/2-1},
\widetilde{\mathcal{Q}}_{\lambda}^{s_1-2,s_{1}/2-1}\bigr]_{\psi}=\\
&=\bigl[\widetilde{\mathcal{H}}_{\lambda}^{s_0-2,s_0/2-1},
\widetilde{\mathcal{H}}_{\lambda}^{s_1-2,s_1/2-1}\bigr]_{\psi}\cap
\widetilde{\mathcal{Q}}_{\lambda}^{s_0-2,s_{0}/2-1}=\\
&=\widetilde{\mathcal{H}}_{\lambda}^{s-2,s/2-1;\varphi}\cap
\widetilde{\mathcal{Q}}_{\lambda}^{s_0-2,s_{0}/2-1}=\\
&=\widetilde{\mathcal{Q}}_{\lambda}^{s-2,s/2-1;\varphi}
\end{align*}
з огляду на теорему~\ref{9prop6.2}, формулу \eqref{6f30} та умови $s_0,s\in J_{\lambda,r}$ і $s_0<s$. Зауважимо, що з цих умов випливає остання рівність, оскільки елементи просторів
$\widetilde{\mathcal{Q}}_{\lambda}^{s_0-2,s_{0}/2-1}$ і $\widetilde{\mathcal{Q}}_{\lambda}^{s-2,s/2-1;\varphi}$ задовольняють одні й ті самі умови узгодження, а простір $\widetilde{\mathcal{H}}_{\lambda}^{s-2,s/2-1;\varphi}$ неперервно вкладений у
$\widetilde{\mathcal{H}}_{\lambda}^{s_{0}-2,s_{0}/2-1}$. Крім того, пара
просторів $\widetilde{\mathcal{Q}}_{\lambda}^{s_0-2,s_{0}/2-1}$ і $\widetilde{\mathcal{Q}}_{\lambda}^{s_1-2,s_{1}/2-1}$ регулярна за теоремою~\ref{9prop6.2} і, отже, права частина рівності \eqref{6f28} має сенс.

Побудуємо вказане відображення $P$. Розглянемо лінійне відображення
\begin{equation}\label{6f32-Murach}
T:\{z_0,\dots,z_{r-1}\}\mapsto
w(t):=\sum_{k=0}^{r-1}\frac{z_k t^k}{k!},
\end{equation}
де $z_0,\dots,z_{r-1}\in\mathbb{C}$. Воно є обмеженим оператором на парі просторів
\begin{equation}\label{6f33a}
T:\mathbb{C}^{r}\rightarrow H^{s;\varphi}(0,\tau)
\end{equation}
для будь-яких $s\in\mathbb{R}$ і $\varphi\in\mathcal{M}$.
Якщо $w=T(z_0,\dots,z_{r-1})$, то $w^{(k)}_{t}(0)=z_k$ для кожного номера $k\in\{0,\dots,r-1\}$.

Дослідимо спочатку випадок, коли $\lambda=0$. Для кожного вектора
$$
(f,g_0,g_1,h)\in\widetilde{\mathcal{H}}_0^{s_{0}-2,s_{0}/2-1}
$$
покладемо
\begin{equation*}
\begin{split}
&g^*_0=g_0+T\bigl(v_0(0)-g_0^{(0)}(0),\dots,v_{r-1}(0)-g_0^{(r-1)}(0)\bigr),
\\
&g^*_1=g_1+T\bigl(v_0(l)-g_1^{(0)}(0),\dots,v_{r-1}(l)-g_1^{(r-1)}(0)\bigr).
\end{split}
\end{equation*}
Тут функції $v_k\in H^{s_{0}-1-2k}(0,l)$, де $k=0,\ldots,r-1$, означено за рекурентною формулою \eqref{6f9rec}. Лінійне відображення
$$
P:(f,g_0,g_1,h)\mapsto (f,g^*_0,g^*_1,h),\;\;\mbox{де}\;\;
(f,g_0,g_1,h)\in\widetilde{\mathcal{H}}_0^{s_{0}-2,s_{0}/2-1},
$$
шукане. Справді, його звуження на кожний простір $\widetilde{\mathcal{H}}_0^{s_j-2,s_j/2-1}$, де $j\in\{0,\,1\}$,
є обмеженим оператором на цьому просторі. Це випливає з обмеженості оператора~\eqref{6f33a}, розглянутого у випадку, коли $s=s_j-1/2$ і $\varphi(\cdot)=1$. Крім того, якщо
$$
(f,g_0,g_1,h)\in\widetilde{\mathcal{Q}}_0^{s_j-2,s_j/2-1},
$$
то з огляду на умови узгодження \eqref{6f10} виконується рівність
$$
P(f,g_0,g_1,h)=(f,g_0,g_1,h).
$$

Дослідимо тепер випадок, коли $\lambda=1$. Для будь-якого вектора
$$
(f,g_0,g_1,h)\in\widetilde{\mathcal{H}}_1^{s_{0}-2,s_{0}/2-1}
$$
покладемо
\begin{equation*}
\begin{split}
&g^*_0=g_0+T\bigl((v_0)'_x(0)-g_0^{(0)}(0),\dots,
(v_{r-1})'_x(0)-g_0^{(r-1)}(0)\bigr),\\
&g^*_1=g_1+T\bigl((v_0)'_x(l)-g_1^{(0)}(0),\dots,
(v_{r-1})'_x(l)-g_1^{(r-1)}(0)\bigr).
\end{split}
\end{equation*}
Тут функції $v_0,\ldots,v_{r-1}$ і відображення $T$ такі самі, як і у випадку $\lambda=0$. Як бачимо, лінійне відображення
$$
P:(f,g_0,g_1,h)\mapsto (f,g^*_0,g^*_1,h),\;\;\mbox{де}\;\;
(f,g_0,g_1,h)\in\widetilde{\mathcal{H}}_1^{s_{0}-2,s_{0}/2-1},
$$
є шуканим.
\end{proof}

\begin{remark}\label{v8rem1-6}
Якщо $\lambda=1$ і $r=0$, то висновок леми~\ref{6lem2} залишається істинним. Справді, у цьому випадку виконуються рівності
$$
\widetilde{\mathcal{Q}}_{1}^{s-2,s/2-1;\varphi}=
\widetilde{\mathcal{H}}_{1}^{s-2,s/2-1;\varphi}
\;\;\,\mbox{і}\;\;\,
\widetilde{\mathcal{Q}}_{1}^{s_j-2,s_{j}/2-1}=
\widetilde{\mathcal{H}}_{1}^{s_j-2,s_{j}/2-1}
$$
для кожного номера $j\in\{0,\,1\}$. Тому \eqref{6f28} збігається з рівністю \eqref{6f30}. Звісно, остання є правильною і в розглянутому випадку.
\end{remark}

\begin{proof}[\indent Доведення теорем $\ref{6th1}$ і $\ref{6th2}$]
Нехай $s>2$, $\varphi\in\mathcal{M}$ і $\lambda\in\{0,\,1\}$. Якщо $\lambda=0$ (або $\lambda=1$), то наші міркування стосуються теореми \ref{6th1} (відповідно, теореми \ref{6th2}). Спочатку дослідимо випадок, коли $s\notin E_{\lambda}$. Тоді $s\in J_{\lambda,r}$ для деякого цілого числа~$r$. Виберемо числа $s_0,s_1\in J_{\lambda,r}$ такі, що жодне з чисел $s_0$, $s_0/2$, $s_1$ чи $s_1/2$ не є напівцілим і виконуються нерівності $s_0<s<s_1$. Згідно з результатом Ж.-Л.~Ліонса і Е.~Мадженеса \cite[теорема~6.2]{LionsMagenes72ii} відображення
\begin{equation}\label{v8fmap-smooth-6}
u\mapsto\Lambda_{\lambda}u,\;\;\mbox{де}\;\; u\in C^{\infty}(\overline{\Omega}),
\end{equation}
продовжується єдиним чином (за неперервністю) до ізоморфізму
\begin{equation}\label{v8f35-6}
\Lambda_\lambda:H^{s_j,s_j/2}(\Omega)\leftrightarrow
\widetilde{\mathcal{Q}}_{\lambda}^{s_j-2,s_j/2-1}
\end{equation}
для кожного номера $j\in\{0,1\}$. Нехай $\psi$~--- інтерполяційний параметр~\eqref{9f7.2}. Тоді звуження оператора \eqref{v8f35-6}, де $j=0$, на простір
\begin{equation*}
\bigl[H^{s_0,s_{0}/2}(\Omega),H^{s_1,s_{1}/2}(\Omega)\bigr]_{\psi}=
H^{s,s/2;\varphi}(\Omega)
\end{equation*}
є ізоморфізмом
\begin{equation}\label{v8f36-6}
\begin{split}
\Lambda_\lambda:H^{s,s/2;\varphi}(\Omega)\leftrightarrow
\bigl[&\widetilde{\mathcal{Q}}_{\lambda}^{s_0-2,s_0/2-1},
\widetilde{\mathcal{Q}}_{\lambda}^{s_1-2,s_1/2-1}\bigr]_{\psi}=\\
&=\widetilde{\mathcal{Q}}_{\lambda}^{s-2,s/2-1;\varphi}.
\end{split}
\end{equation}
Тут рівності просторів виконуються разом з еквівалентністю норм з огляду на теорему~\ref{9lem7.3a} і лему~\ref{6lem2} (див. також зауваження~\ref{v8rem1-6}). Оператор \eqref{v8f36-6} є продовженням за неперервністю відображення \eqref{v8fmap-smooth-6}, оскільки множина $C^{\infty}(\overline{\Omega})$ щільна в $H^{s,s/2;\varphi}(\Omega)$.
Отже, у вказаному випадку теореми \ref{6th1} і \ref{6th2} доведено.

Дослідимо тепер випадок, коли $s\in E_{\lambda}$. Виберемо довільно число $\varepsilon\in(0,1/2)$. Оскільки $s\pm\varepsilon\notin E_{\lambda}$ і $s-\varepsilon>2$, то виконуються ізоморфізми
\begin{equation*}
\Lambda_\lambda:H^{s\pm\varepsilon,(s\pm\varepsilon)/2;\varphi}(\Omega)
\leftrightarrow
\widetilde{\mathcal{Q}}_{\lambda}^
{s\pm\varepsilon-2,(s\pm\varepsilon)/2-1;\varphi}.
\end{equation*}
З них випливає, що відображення \eqref{v8fmap-smooth-6} продовжується єдиним чином (за неперервністю) до ізоморфізму
\begin{equation}\label{v8f38-6}
\begin{aligned}
\Lambda_\lambda\,:\,&
\bigl[H^{s-\varepsilon,(s-\varepsilon)/2;\varphi}(\Omega),
H^{s+\varepsilon,(s+\varepsilon)/2;\varphi}(\Omega)\bigr]_{1/2}
\leftrightarrow\\
&\leftrightarrow
\bigl[\widetilde{\mathcal{Q}}_{\lambda}^{s-\varepsilon-2,(s-\varepsilon)/2-1;\varphi},
\widetilde{\mathcal{Q}}_{\lambda}^{s+\varepsilon-2,
(s+\varepsilon)/2-1;\varphi}\bigr]_{1/2}=\\
&=\widetilde{\mathcal{Q}}_{\lambda}^{s-2,s/2-1;\varphi}.
\end{aligned}
\end{equation}
Нагадаємо, що остання рівність є означенням простору $\widetilde{\mathcal{Q}}_{\lambda}^{s-2,s/2-1;\varphi}$.
Для завершення доведення потрібно до \eqref{v8f38-6} застосувати формулу
\begin{equation*}
H^{s,s/2;\varphi}(\Omega)=
\bigl[H^{s-\varepsilon,(s-\varepsilon)/2;\varphi}(\Omega),
H^{s+\varepsilon,(s+\varepsilon)/2;\varphi}(\Omega)\bigr]_{1/2},
\end{equation*}
яка є окремим випадком формули~\eqref{16f46}.
\end{proof}

\begin{remark}\label{v8rem2-6}
Простори, означені формулами \eqref{6f-interp1} і \eqref{6f-interp2}, не залежать з точністю до еквівалентності норм від вибору числа $\varepsilon\in(0,1/2)$. Справді, нехай $\lambda\in\{0,\,1\}$ і $s\in E_{\lambda}$; тоді за теоремами~\ref{6th1} і \ref{6th2} виконуються ізоморфізми
\begin{equation*}
\Lambda_\lambda:H^{s,s/2;\varphi}(\Omega)\leftrightarrow
\bigl[\widetilde{\mathcal{Q}}_{\lambda}^{s-2-\varepsilon,
s/2-1-\varepsilon/2;\varphi},
\widetilde{\mathcal{Q}}_{\lambda}^{s-2+\varepsilon,
s/2-1+\varepsilon/2;\varphi}\bigr]_{1/2},
\end{equation*}
якщо $0<\varepsilon<1/2$. Це означає вказану незалежність.
Крім того, виконується неперервне вкладення
$$
\widetilde{\mathcal{Q}}_{\lambda}^{s-2,s/2-1;\varphi}\hookrightarrow
\widetilde{\mathcal{H}}_{\lambda}^{s-2,s/2-1;\varphi}.
$$
Воно обґрунтовується так само, як і його аналог у зауваженні~\ref{16rem8.1}, наведеному в наступному підрозділі.
\end{remark}

\markright{\emph \ref{sec4.2.1}. Неоднорідна задача у прямокутнику}

\section[Неоднорідна задача у прямокутнику]
{Неоднорідна задача у прямокутнику}\label{sec4.2.1}

\markright{\emph \ref{sec4.2.1}. Неоднорідна задача у прямокутнику}

У цьому підрозділі досліджуємо загальну неоднорідну по\-чат\-ко\-во-крайову задачу для $2b$-параболічного диференціального рівняння, одновимірного за просторовою змінною. Буде показано, що ця задача породжує ізоморфізми між відповідними узагальненими просторами Соболєва, тобто є коректно розв'язною на парах цих просторів.

Як і в попередньому підрозділі, задаємо числа $l>0$ і $\tau>0$ та покладаємо $\Omega:=(0,l)\times(0,\tau)$. Розглядаємо у прямокутнику $\Omega$ лінійну параболічну задачу, яка складається з диференціального рівняння
\begin{align}\notag
&A(x,t,D_x,\partial_t)u(x,t)\equiv\\
&\equiv\sum_{\alpha+2b\beta\leq 2m}a^{\alpha,\beta}(x,t)\,D^\alpha_x\partial^\beta_t
u(x,t)=f(x,t),\label{f2.1a}\\
&\mbox{якщо}\;\,0<x<l\;\,\mbox{і}\;\,0<t<\tau,\notag
\end{align}
крайових умов
\begin{align}
&B_{j,0}(t,D_x,\partial_t)u(x,t)\big|_{x=0}\equiv\notag\\
&\equiv\sum_{\alpha+2b\beta\leq m_j}
b_{j,0}^{\alpha,\beta}(t)\,D^\alpha_x\partial^\beta_t u(x,t)\big|_{x=0}=g_{j,0}(t)
\quad\mbox{і}\label{f2.2a}\\
&B_{j,1}(t,D_x,\partial_t)u(x,t)\big|_{x=l}\equiv\notag\\
&\equiv\sum_{\alpha+2b\beta\leq m_j}
b_{j,1}^{\alpha,\beta}(t)\,D^\alpha_x\partial^\beta_t
u(x,t)\big|_{x=l}=g_{j,1}(t),\label{f2.3a}\\
&\mbox{якщо}\;\,0<t<\tau,\;\,\mbox{де}\;\,j=1,\dots,m,\notag
\end{align}
та початкових умов
\begin{equation}\label{f2.4a}
\begin{aligned}
&\partial^k_t u(x,t)\big|_{t=0}=h_k(x),\\
&\mbox{якщо}\;\,0<x<l,\;\,\mbox{де}\;\,k=0,\ldots,\varkappa-1.
\end{aligned}
\end{equation}
Припущення щодо цілих чисел $b$, $m$, $m_j$ і $\varkappa$ та коефіцієнтів диференціальних виразів $A:=A(x,t,D_x,\partial_t)$ і $B_{j,k}:=B_{j,k}(t,D_x,\partial_t)$ робимо такі самі, як і в п.~\ref{sec3.2.1}. Отже, $m\geq b\geq1$,
$\varkappa:=m/b\in\mathbb{Z}$ і $m_j\geq0$ та $a^{\alpha,\beta}\in
C^{\infty}(\overline{\Omega})$ і $b_{j,k}^{\alpha,\beta}\in\nobreak C^{\infty}[0,\tau]$, де
$\overline{\Omega}:=[0,l]\times[0,\tau]$.

Нагадаємо, що початково-крайова задача \eqref{f2.1a}--\eqref{f2.4a} називається параболічною в $\Omega$, якщо вона задовольняє
умови \ref{9cond2.3}--\ref{9cond2.5}, сформульовані у п.~\ref{sec3.2.1}.

У третьому розділі покладаємо
\begin{equation}\label{sigma0-Murach3}
\sigma_0:=\max\{2m,m_1+1,\dots,m_m+1\}
\end{equation}
(на відміну від другого розділу додатково не припускаємо, що $\sigma_0/2b\in\mathbb{Z}$). Якщо $m_j\leq2m-1$ для кожного  $j\in\{1,\ldots,m\}$, то $\sigma_0=2m$.

Пов'яжемо з параболічною задачею \eqref{f2.1a}--\eqref{f2.4a} лінійне відображення
\begin{equation}\label{f4.1a}
\begin{aligned}
u\mapsto\Lambda u:=
\bigl(&Au,B_{1,0}u,B_{1,1}u,\ldots,B_{m,0}u,B_{m,1}u,\\
&u(\cdot,0),\ldots,\partial^{\varkappa-1}_{t}u(\cdot,0)\bigr),
\;\;\mbox{де}\;\;u\in C^{\infty}(\overline{\Omega}).
\end{aligned}
\end{equation}
Звісно, тут $u(\cdot,0)$ і $\partial^{\varkappa-1}_{t}u(\cdot,0)$ позначають функції $u(x,0)$ і $\partial^{\varkappa-1}_{t}u(x,0)$ аргументу $x\in[0,l]$, де $u=u(x,t)$. Отже,
$$
u\in C^{\infty}(\overline{\Omega})\,\Longrightarrow\,
\Lambda u\in C^{\infty}(\overline{\Omega})\times
(C^{\infty}[0,\tau])^{2m}\times(C^{\infty}[0,l])^{\varkappa}.
$$

Нехай $s\geq\sigma_0$. Введемо гільбертів простір
\begin{equation*}
\begin{split}
&\widetilde{\mathcal{H}}^{s-2m,(s-2m)/(2b)}:=
H^{s-2m,(s-2m)/(2b)}(\Omega)\oplus\\
&\oplus\bigoplus_{j=1}^{m}\bigl(H^{(s-m_j-1/2)/(2b)}(0,\tau)\bigr)^{2}
\oplus\bigoplus_{k=0}^{\varkappa-1}H^{s-2bk-b}(0,l).
\end{split}
\end{equation*}
Відображення \eqref{f4.1a} продовжується єдиним чином (за неперервністю) до обмеженого лінійного оператора
\begin{equation}\label{16f4a-a}
\Lambda:H^{s,s/(2b)}(\Omega) \rightarrow
\widetilde{\mathcal{H}}^{s-2m,(s-2m)/(2b)}.
\end{equation}
Це випливає з \cite[розд.~I, лема~4 і розд.~II, теореми 3 і 7]{Slobodetskii58}. Виберемо довільно функцію $u(x,t)$ з простору $H^{s,s/(2b)}(\Omega)$ і означимо праві частини досліджуваної задачі за формулою
$$
(f,g_{1,0},g_{1,1},...,g_{m,0},g_{m,1},h_0,...,h_{\varkappa-1}):=
\Lambda u
$$
з використанням оператора \eqref{16f4a-a}. Отже,
\begin{equation}\label{16f4b-a}
\begin{aligned}
&f\in H^{s-2m,(s-2m)/(2b)}(\Omega),\\
&g_{j,\lambda}\in H^{(s-m_j-1/2)/(2b)}(0,\tau)\;\;\mbox{і}\\
&h_k\in H^{s-2bk-b}(0,l)
\end{aligned}
\end{equation}
для будь-яких $j\in\{1,\dots,m\}$, $\lambda\in\{0,1\}$ і $k\in\{0,\dots,\varkappa-1\}$.

Функції \eqref{16f4b-a} задовольняють природні умови узгодження. Запишемо ці умови. Згідно з \cite[розд.~II, теорема 7]{Slobodetskii58} означені за замиканням сліди
$$
\partial^{\,k}_t u(\cdot,0)\in H^{s-2bk-b}(0,l)
$$
для всіх $k\in\mathbb{Z}$ таких, що $0\leq k<s/(2b)-1/2$ (і лише для цих номерів $k$). Використовуючи параболічне диференціальне рівняння \eqref{f2.1a} і початкові умови \eqref{f2.4a}, виразимо ці сліди через функції $f$ і $h_k$.

Умова~\ref{9cond2.3} параболічності у випадку, коли $\xi=0$ і $p=1$, означає, що коефіцієнт $a^{0,\varkappa}(x,t)\neq0$ для всіх $x\in[0,l]$ і $t\in[0,\tau]$. Тому параболічне рівняння \eqref{f2.1a} можна розв'язати відносно старшої похідної $\partial^\varkappa_t u(x,t)$ за часом. Отже,
\begin{equation}\label{16f5a}
\begin{split}
\partial^\varkappa_t u(x,t)&=
\sum_{\substack{\alpha+2b\beta\leq 2m,\\ \beta\leq\varkappa-1}}
a_{0}^{\alpha,\beta}(x,t)\,D^\alpha_x\partial^\beta_t u(x,t)+\\
&+(a^{0,\varkappa}(x,t))^{-1}f(x,t),
\end{split}
\end{equation}
де $a_{0}^{\alpha,\beta}:=-a^{\alpha,\beta}/a^{0,\varkappa}\in C^{\infty}(\overline{\Omega})$.

З умов \eqref{f2.4a}, рівності \eqref{16f5a} та рівностей, одержаних внаслідок диференціювання за змінною $t$ рівності \eqref{16f5a} $k-\varkappa$ разів (при $\varkappa<k<s/(2b)-1/2$, якщо такі цілі $k$ існують) отримуємо для слідів $\partial^{\,k}_t u(x,0)$ таку рекурентну формулу:
\begin{equation}\label{16f6a}
\begin{split}
&\partial^{\,k}_t u(x,0)=h_k(x),\quad\mbox{якщо}
\quad 0\leq k\leq\varkappa-1,\\
&\partial^{\,k}_t u(x,0)=\\
&=\sum_{\substack{\alpha+2b\beta\leq 2m\\ \beta\leq\varkappa-1}}
\sum\limits_{q=0}^{k-\varkappa}
\binom{k-\varkappa}{q}\partial^{\,k-\varkappa-q}_t
a_{0}^{\alpha,\beta}(x,0)D^\alpha_x\partial^{\beta+q}_t
u(x,0)+\\
&\phantom{aaaaa}+
\partial^{\,k-\varkappa}_t\bigl((a^{0,\varkappa}(x,0))^{-1}f(x,0)\bigr),
\quad\mbox{якщо}\quad k\geq\varkappa.
\end{split}
\end{equation}
У ній ціле $k$ задовольняє умову $0\leq k<s/(2b)-1/2$, а  рівності виконуються для майже всіх $x\in (0,l)$. Тут, як звичайно,
$$
\binom{k-\varkappa}{q}:=\frac{(k-\varkappa)!}{q!\,(k-\varkappa-q)!}
$$
є біномним коефіцієнтом.

Крім того, згідно з \eqref{16f4b-a} для будь-яких $j\in\{1,\dots,m\}$ і $\lambda\in\nobreak\{0,1\}$ сліди $\partial^{\,k}_t g_{j,\lambda}(0)\in\mathbb{C}$ означені для усіх $k\in\mathbb{Z}$ таких, що $0\leq k<(s-m_j-1/2-b)/(2b)$ (і тільки для цих $k$). Ці сліди виражаються через функцію $u(x,t)$ та її похідні за часом за формулою
\begin{equation}\label{16f4bb-a}
\begin{aligned}
\partial^{k}_t g_{j,\lambda}(0)&=\bigl(\partial^{k}_{t}B_{j,\lambda}u(x,t)\bigr)|_{t=0}=\\
&=\sum_{\alpha+2b\beta\leq m_j}
\sum_{q=0}^{k}\binom{k}{q}
\partial^{\,k-q}_t b^{\alpha,\beta}_{j,\lambda}(0)
D^\alpha_x\partial^{\,\beta+q}_t u(x,0),
\end{aligned}
\end{equation}
де $x=0$, якщо $\lambda=0$, та $x=l$, якщо $\lambda=1$. Тут функції
$$
u(x,0),\,\partial_{t}u(x,0),...,\,\partial^{\,[m_j/(2b)]+k}_{t}u(x,0)
$$
аргументу $x\in (0,l)$ виражаються за рекурентною формулою \eqref{16f6a} через функції $f(x,t)$ і $h_0(x)$,..., $h_{\varkappa-1}(x)$. Як звичайно, $[m_j/(2b)]$~--- ціла частина числа $m_j/(2b)$.

Підставляючи \eqref{16f6a} у праву частину формули \eqref{16f4bb-a}, отримуємо умови узгодження
\begin{equation}\label{16f8a}
\begin{gathered}
\partial^{\,k}_t g_{j,0}(t)|_{t=0}=
B_{j,k,0}(v_0,\dots,v_{[m_j/(2b)]+k})(x)|_{x=0}, \\
\partial^{\,k}_t g_{j,1}(t)|_{t=0}=
B_{j,k,1}(v_0,\dots,v_{[m_j/(2b)]+k})(x)|_{x=l}, \\
\mbox{де}\;\;j\in\{1,\dots,m\},\;\;k\in\mathbb{Z}\;\;\mbox{і}\;\;
0\leq k<\frac{s-m_j-1/2-b}{2b}.
\end{gathered}
\end{equation}
Тут функції $v_0$, ..., $v_{[m_j/(2b)]+k}$ для вказаних індексів $j$ і $k$  означено на інтервалі $(0,l)$ за рекурентною формулою
\begin{equation}\label{16f9a}
\begin{split}
&v_\mu(x):=h_\mu(x),\quad\mbox{якщо}
\quad 0\leq\mu\leq\varkappa-1,\\
&v_\mu(x):=\\
&:=
\sum_{\substack{\alpha+2b\beta\leq 2m\\ \beta\leq\varkappa-1}}
\sum\limits_{q=0}^{\mu-\varkappa}
\binom{\mu-\varkappa}{q}\partial^{\,\mu-\varkappa-q}_t
a_{0}^{\alpha,\beta}(x,0)\,D^\alpha_x v_{\beta+q}(x)+\\
&+\partial^{\,\mu-\varkappa}_t \bigl((a^{0,\varkappa}(x,0))^{-1}f(x,0)\bigr),
\quad\mbox{якщо}\quad \mu\geq\varkappa.
\end{split}
\end{equation}
Крім того, поклали
\begin{equation*}
\begin{split}
&B_{j,k,\lambda}(v_0,\dots,v_{[m_j/(2b)]+k})(x):=\\
&:=
\sum_{\alpha+2b\beta\leq m_j}
\sum_{q=0}^{k}\binom{k}{q}
\partial^{\,k-q}_t b^{\alpha,\beta}_{j,\lambda}(0)
\bigl(D^\alpha_x v_{\beta+q}(x)\bigr)
\end{split}
\end{equation*}
для майже всіх $x\in(0,l)$ і кожного $\lambda\in\{0,1\}$. Згідно з \eqref{16f4b-a} виконується включення
\begin{equation}\label{8f69aa-a}
v_\mu\in H^{s-b-2b\mu}(0,l),\quad\mbox{якщо}\quad
\mu\in\mathbb{Z}\cap[0,s/(2b)-1/2).
\end{equation}

Праві частини рівностей \eqref{16f8a} означені коректно. Справді, кожна функція $D^\alpha_x v_{\beta+q}(x)$ належить до просторів
$$
H^{s-\alpha-b-2b(\beta+q)}(0,l)\hookrightarrow H^{s-m_j-2bk-b}(0,l)
$$
завдяки \eqref{8f69aa-a}. Тому сліди $\bigl(D^\alpha_x v_{\beta+q}(x)\bigr)\big|_{x=0}$ і $\bigl(D^\alpha_x v_{\beta+q}(x)\bigr)\big|_{x=l}$ означені, якщо
$$
s-m_j-2bk-b-\frac{1}{2}>0.
$$

Число умов узгодження  \eqref{16f8a} є функцією аргументу $s\geq\sigma_0$. Вона розривна в точці $s$ тоді і лише тоді, коли
$$
\frac{s-m_j-1/2-b}{2b}\in\mathbb{Z}.
$$
Отже,
\begin{equation}\label{setE}
\begin{aligned}
E&:=\{(2l+1)b+m_j+1/2:j,l\in\mathbb{Z},\,1\leq j\leq m,\,l\geq0\}\cap\\
&\cap(\sigma_0,\infty)
\end{aligned}
\end{equation}
є множиною всіх точок розриву цієї функції. Якщо
$$
s\leq\min\{m_1,\ldots,m_m\}+b+\frac{1}{2},
$$
то умов узгодження нема.

Наш основний результат для параболічної задачі \eqref{f2.1a}--\eqref{f2.4a} полягає в тому, що лінійне відображення \eqref{f4.1a} продовжується єдиним чином (за неперервністю) до ізоморфізму між відповідними узагальненими соболєвськими просторами. Вкажемо їх. Нехай $s>\sigma_0$ і $\varphi\in\mathcal{M}$. У соболєвському випадку, коли $\varphi(\cdot)\equiv1$, допускаємо також граничне значення $s=\sigma_0$.
Беремо гільбертів простір $H^{s,s/(2b);\varphi}(\Omega)$ як область визначення ізоморфізму. Область значень ізоморфізму позначаємо через $\widetilde{\mathcal{Q}}^{s-2m,(s-2m)/(2b);\varphi}$. Вона є лінійним многовидом у гільбертовому просторі
\begin{equation*}
\begin{split}
&\widetilde{\mathcal{H}}^{s-2m,(s-2m)/(2b);\varphi}:=\\
&:=H^{s-2m,(s-2m)/(2b);\varphi}(\Omega)
\oplus\bigoplus_{j=1}^{m}
\bigl(H^{(s-m_j-1/2)/(2b);\varphi}(0,\tau)\bigr)^{2}\oplus\\
&\oplus\bigoplus_{k=0}^{\varkappa-1}H^{s-2bk-b;\varphi}(0,l).
\end{split}
\end{equation*}
Якщо $\varphi(\cdot)\equiv1$, то він збігається з простором, у який діє оператор \eqref{16f4a-a}. Дамо означення простору $\widetilde{\mathcal{Q}}^{s-2m,(s-2m)/(2b);\varphi}$ окремо у випадках, коли $s\notin E$ і коли $s\in E$.

Розглянемо спочатку випадок, коли $s\notin E_{0}$. За означенням лінійний простір $\widetilde{\mathcal{Q}}^{s-2m,(s-2m)/(2b);\varphi}$ складається з усіх векторів
\begin{align*}
F&=\bigl(f,g_{1,0},g_{1,1},...,g_{m,0},g_{m,1},h_0,...,h_{\varkappa-1}\bigr)
\in\\
&\in\widetilde{\mathcal{H}}^{s-2m,(s-2m)/(2b);\varphi},
\end{align*}
які задовольняють умови узгодження \eqref{16f8a}. Для вказаних векторів ці умови сформульовані коректно. Справді, вони мають сенс для довільного вектора
$$
F\in\widetilde{\mathcal{H}}^{s-2m-\varepsilon,(s-2m-\varepsilon)/(2b)},
$$
якщо число $\varepsilon>0$ достатньо мале. Крім того, виконується неперервне вкладення
\begin{equation}\label{8f69a-a}
\widetilde{\mathcal{H}}^{s-2m,(s-2m)/(2b);\varphi}\hookrightarrow
\widetilde{\mathcal{H}}^{s-2m-\varepsilon,(s-2m-\varepsilon)/(2b)}
\end{equation}
внаслідок формул \eqref{8f5a} і \eqref{8f5b}. (Звісно, якщо $s=\sigma_0$ і $\varphi(\cdot)\equiv1$, то ці міркування не потрібні.)

Наділимо лінійний простір $\widetilde{\mathcal{Q}}^{s-2m,(s-2m)/(2b);\varphi}$ скалярним добутком і нормою з гільбертового простору
$\widetilde{\mathcal{H}}^{s-2m,(s-2m)/(2b);\varphi}$. Простір $\widetilde{\mathcal{Q}}^{s-2m,(s-2m)/(2b);\varphi}$ повний, тобто гільбертів. Справді, якщо число $\varepsilon>0$ достатньо мале, то
\begin{align*}
&\widetilde{\mathcal{Q}}^{s-2m,(s-2m)/(2b);\varphi}=\\
&=\widetilde{\mathcal{H}}^{s-2m,(s-2m)/(2b);\varphi}\cap
\widetilde{\mathcal{Q}}^{s-2m-\varepsilon,(s-2m-\varepsilon)/(2b)}.
\end{align*}
Простір $\widetilde{\mathcal{Q}}^{s-2m-\varepsilon,(s-2m-\varepsilon)/(2b)}$  повний, оскільки диференціальні оператори та оператори слідів, що використовуються в умовах узгодження, є обмеженими на відповідних парах просторів Соболєва. Тому простір, вказаний у правій частині цієї рівності, є повним відносно суми норм у просторах, які є компонентами перетину. Ця сума еквівалентна нормі у просторі  $\widetilde{\mathcal{H}}^{s-2m,(s-2m)/(2b);\varphi}$ на підставі вкладення \eqref{8f69a-a}. Таким чином, простір  $\widetilde{\mathcal{Q}}^{s-2m,(s-2m)/(2b);\varphi}$ є повним (відносно останньої норми).

Якщо $s\in E$, то гільбертів простір  $\widetilde{\mathcal{Q}}^{s-2m,(s-2m)/(2b);\varphi}$ означаємо за інтерполяційною формулою
\begin{equation}\label{16f10a}
\begin{split}
&\widetilde{\mathcal{Q}}^{s-2m,(s-2m)/(2b);\varphi}:=\\
&:=\bigl[
\widetilde{\mathcal{Q}}^{s-2m-\varepsilon,(s-2m-\varepsilon)/(2b);\varphi},
\widetilde{\mathcal{Q}}^{s-2m+\varepsilon,(s-2m+\varepsilon)/(2b);\varphi}
\bigr]_{1/2},
\end{split}
\end{equation}
де число $\varepsilon\in(0,1/2)$ вибрано довільно. Так означений простір не залежить з точністю до еквівалентності норм від вибору числа  $\varepsilon$ і неперервно вкладається у простір $\widetilde{\mathcal{H}}^{s-2m,(s-2m)/(2b);\varphi}$. Це обґрунтовується такими самими міркуванням, які наведено у зауваженні~\ref{16rem8.1} наприкінці наступного підрозділу.

Сформулюємо основний результат цього підрозділу~--- теорему про ізоморфізми, породжені неоднорідною параболічною задачею \eqref{6f1}--\eqref{6f2}.

\begin{theorem}\label{th4.1a}
Для довільних дійсного числа $s>\sigma_0$ і функціонального параметра $\varphi\in\nobreak\mathcal{M}$ відображення \eqref{f4.1a} продовжується єдиним чином (за неперервністю) до ізоморфізму
\begin{equation}\label{16f11a}
\Lambda:H^{s,s/(2b);\varphi}(\Omega)\leftrightarrow
\widetilde{\mathcal{Q}}^{s-2m,(s-2m)/(2b);\varphi}.
\end{equation}
\end{theorem}

Ця теорема відома у соболєвському випадку, коли $\varphi(\cdot)\equiv1$. У цьому випадку вона міститься у результаті М.~С.~Аграновіча і М.~І.~Вішика
\cite[теорема~12.1]{AgranovichVishik64}, доведеного за припущення, що  $s/(2b)\in\mathbb{Z}$, та у пізнішому результаті
М.~В.~Житарашу \cite[теорема~9.1]{Zhitarashu85} (доведеного без цього припущення). Зазначені результати охоплюють граничний випадок, коли $s=\sigma_0$. Виведемо теорему~\ref{th4.1a} для довільного $\varphi\in\nobreak\mathcal{M}$ із соболєвського випадку, використовуючи  вказаний результат М.~В.~Житарашу і квадратичну інтерполяцію з функціональним параметром.

Спочатку покажемо, що простір  $\widetilde{\mathcal{Q}}^{s-2m,(s-2m)/(2b);\varphi}$, де $s\notin E$, є результатом квадратичної інтерполяції його соболєвських аналогів.
Позначимо через $\{J_r:r\in\mathbb{N}\}$ набір усіх зв'язних компонент множини $(\sigma_0,\infty)\setminus E$. Кожна компонента $J_r$ є деяким скінченним відкритим інтервалом на проміні $(\sigma_0,\infty)$.

\begin{lemma}\label{16lem7.4a}
Нехай $r\in\mathbb{N}$. Припустимо, що $s_0,s,s_1\in J_{r}$, $\nobreak{s_0<s<s_1}$ і $\varphi\in\mathcal{M}$. Означимо інтерполяційний параметр $\psi\in\mathcal{B}$ за формулою \eqref{9f7.2}. Тоді виконується така рівність просторів з еквівалентністю норм у них:
\begin{equation}\label{16f42a}
\begin{split}
&\widetilde{\mathcal{Q}}^{s-2m,(s-2m)/(2b);\varphi}=\\
&=\bigl[\widetilde{\mathcal{Q}}^{s_0-2m,(s_0-2m)/(2b)},
\widetilde{\mathcal{Q}}^{s_1-2m,(s_1-2m)/(2b)}\bigr]_{\psi}.
\end{split}
\end{equation}
\end{lemma}

\begin{proof}[\indent Доведення.]
На підставі теорем \ref{9prop6.3}, \ref{8prop4} і \ref{9lem7.3a} маємо такі рівності:
\begin{align*}
&\bigl[\widetilde{\mathcal{H}}^{s_0-2m,(s_0-2m)/(2b)},
\widetilde{\mathcal{H}}^{s_1-2m,(s_1-2m)/(2b)}\bigr]_{\psi}=\\
&=\bigl[H^{s_0-2m,(s_0-2m)/(2b)}(\Omega),H^{s_1-2m,(s_1-2m)/(2b)}(\Omega)
\bigr]_{\psi}\oplus\\
&\qquad
\oplus\bigoplus_{j=1}^{m}
\biggl(\bigl[H^{(s_0-m_j-1/2)/(2b)}(0,\tau),
H^{(s_1-m_j-1/2)/(2b)}(0,\tau)\bigr]_{\psi}\biggr)^{2}\oplus\\
&\qquad\oplus\bigoplus_{k=0}^{\varkappa-1}
\bigl[H^{s_0-2bk-b}(0,l),H^{s_1-2bk-b}(0,l)\bigr]_{\psi}=\\
&=H^{s-2m,(s-2m)/(2b);\varphi}(\Omega)
\oplus\bigoplus_{j=1}^{m}
\bigr(H^{(s-m_j-1/2)/(2b);\varphi}(0,\tau)\bigl)^{2}\oplus\\
&\phantom{a}\qquad\qquad\qquad\qquad\qquad\;\;\,
\oplus\bigoplus_{k=0}^{\varkappa-1}H^{s-2bk-b;\varphi}(0,l)=\\
&=\widetilde{\mathcal{H}}^{s-2m,(s-2m)/(2b);\varphi}.
\end{align*}
Отже,
\begin{equation}\label{16f45a}
\begin{aligned}
&\bigl[\widetilde{\mathcal{H}}^{s_0-2m,(s_0-2m)/(2b)},
\widetilde{\mathcal{H}}^{s_1-2m,(s_1-2m)/(2b)}\bigr]_{\psi}=\\
&=\widetilde{\mathcal{H}}^{s-2m,(s-2m)/(2b);\varphi}
\end{aligned}
\end{equation}
з еквівалентністю норм.

Виведемо потрібну формулу \eqref{16f42a} з рівності \eqref{16f45a} за допомогою теореми~\ref{9prop6.2}. Для цього побудуємо лінійне відображення $P$, задане на $\widetilde{\mathcal{H}}^{s_0-2m,(s_0-2m)/(2b)}$ і таке, що $P$ є проєктором простору $\widetilde{\mathcal{H}}^{s_j-2m,(s_j-2m)/(2b)}$ на його підпростір $\widetilde{\mathcal{Q}}^{s_j-2m,(s_j-2m)/(2b)}$ для кожного $j\in\{0,1\}$. Якщо таке відображення існує, то за теоремою~\ref{9prop6.2} пара
$$
\bigl[\widetilde{\mathcal{Q}}^{s_0-2m,(s_0-2m)/(2b)},
\widetilde{\mathcal{Q}}^{s_1-2m,(s_1-2m)/(2b)}\bigr]
$$
є регулярною; отже, ліва частина рівності \eqref{16f42a} має сенс. Крім того,
\begin{align*}
&\bigl[\widetilde{\mathcal{Q}}^{s_0-2m,(s_0-2m)/(2b)},
\widetilde{\mathcal{Q}}^{s_1-2m,(s_1-2m)/(2b)}\bigr]_{\psi}=\\
&=\bigl[\widetilde{\mathcal{H}}^{s_0-2m,(s_0-2m)/(2b)},
\widetilde{\mathcal{H}}^{s_1-2m,(s_1-2m)/(2b)}\bigr]_{\psi}\cap\\
&\quad\;\cap\widetilde{\mathcal{Q}}^{s_0-2m,(s_0-2m)/(2b)}=\\
&=\widetilde{\mathcal{H}}^{s-2m,(s-2m)/(2b);\varphi}\cap
\widetilde{\mathcal{Q}}^{s_0-2m,(s_0-2m)/(2b)}=\\
&=\widetilde{\mathcal{Q}}^{s-2m,(s-2m)/(2b);\varphi}
\end{align*}
на підставі зазначеної теореми і формули \eqref{16f45a} та з огляду на умови $s_0,s\in J_{r}$ і $s_0<s$. Остання рівність випливає з цих умов, оскільки всі елементи просторів
$$
\widetilde{\mathcal{Q}}^{s_0-2m,(s_0-2m)/(2b)}\quad\mbox{і}\quad
\widetilde{\mathcal{Q}}^{s-2m,(s-2m)/(2b);\varphi}
$$
задовольняють одні й ті самі умови узгодження та виконується неперервне вкладення
$$
\widetilde{\mathcal{H}}^{s-2m,(s-2m)/(2b);\varphi}\hookrightarrow
\widetilde{\mathcal{H}}^{s_0-2m,(s_0-2m)/(2b)}.
$$

Побудуємо вказане відображення $P$. З огляду на умови узгодження \eqref{16f8a} виділимо множину
\begin{equation}\label{set-kj}
\biggl\{k\in\mathbb{Z}:0\leq k<\frac{s-m_j-1/2-b}{2b}\biggr\}
\end{equation}
для кожного $j\in\{1,\dots,m\}$. Вона не залежить від $s\in J_r$. Позначимо через $q_{r,j}^{\star}$ число її елементів і покладемо
$q_{r,j}:=q_{r,j}^{\star}-1$ для зручності.

Для довільного вектора
\begin{align*}
F&:=(f,g_{1,0},g_{1,1},...,g_{m,0},g_{m,1},h_0,...,h_{\varkappa-1})\in\\
&\in\widetilde{\mathcal{H}}^{s_0-2m,(s_0-2m)/(2b)}
\end{align*}
покладемо
\begin{equation}\label{16f43a}
\begin{aligned}
&g^{*}_{j,\lambda}:=g_{j,\lambda},\quad\mbox{якщо}\quad q_{r,j}=-1;\\
&g^{*}_{j,\lambda}:=g_{j,\lambda}+T(z_{j,0,\lambda},\dots,
z_{j,q_{r,j},\lambda}),\quad\mbox{якщо}\quad q_{r,j}\geq0,
\end{aligned}
\end{equation}
де $j\in\{1,\dots,m\}$ і $\lambda\in\{0,1\}$. Тут
\begin{align*}
&z_{j,0,\lambda}:=
B_{j,0,\lambda}(v_0,\dots,v_{[m_j/(2b)]})(p_\lambda)-g_{j,\lambda}(0),\\
&\dots\\
&z_{j,q_{r,j},\lambda}:=
B_{j,q_{r,j},\lambda}(v_{0},\dots,v_{[m_j/(2b)]+q_{r,j}})(p_\lambda)-
\partial_{t}^{q_{r,j}} g_{j,\lambda}(0),
\end{align*}
де $p_\lambda=0$, якщо $\lambda=0$, і $p_\lambda=l$, якщо $\lambda=1$, а функції $v_\mu$ означені за рекурентною формулою \eqref{16f9a} і $T$~--- відображення \eqref{6f32-Murach}, в якому замість $r$ беремо $q_{r,j}^{\star}$. Лінійне відображення
\begin{align*}
P&:(f,g_{1,0},g_{1,1},...,g_{m,0},g_{m,1},h_0,...,h_{\varkappa-1})
\mapsto\\
&\mapsto (f,g^*_{1,0},g^*_{1,1},...,g^*_{m,0},g^*_{m,1},h_0,...,h_{\varkappa-1}),
\end{align*}
означене на всіх векторах $F\in\widetilde{\mathcal{H}}^{s_{0}-2m,(s_{0}-2m)/(2b)}$, є шуканим.
Справді, його звуження на кожний простір $\widetilde{\mathcal{H}}^{s_{j}-2m,(s_{j}-2m)/(2b)}$, де $j\in\{0,1\}$, є обмеженим оператором на ньому, що випливає з \eqref{6f33a}. Крім того, якщо $F\in\widetilde{\mathcal{Q}}^{s_{j}-2m,(s_{j}-2m)/(2b)}$, то
$PF=F$ завдяки умовам узгодження \eqref{16f8a}.
\end{proof}

\begin{proof}[\indent Доведення теореми~$\ref{th4.1a}$.]
Нехай $s>\sigma_0$ і $\varphi\in\mathcal{M}$. Спочатку дослідимо випадок, коли $s\notin E$. Тоді $s\in J_{r}$ для деякого $r\in\mathbb{Z}$.
Виберемо числа $s_0,s_1\in J_{r}$ такі, що $s_0<s<s_1$,
$s_j+1/2\notin\mathbb{Z}$ і $s_j/(2b)+1/2\notin\mathbb{Z}$ для кожного номера $j\in\{0,1\}$. Згідно з результатом М.~В.~Житарашу \cite[теорема~9.1]{Zhitarashu85} відображення \eqref{f4.1a} продовжується єдиним чином (за неперервністю) до ізоморфізмів
\begin{equation}\label{16f49a}
\Lambda:H^{s_j,s_j/(2b)}(\Omega)\leftrightarrow
\widetilde{\mathcal{Q}}^{s_j-2m,(s_j-2m)/(2b)},
\end{equation}
де $j\in\{0,1\}$. Означимо інтерполяційний параметр $\psi$ за формулою~\eqref{9f7.2}. Застосувавши до операторів \eqref{16f49a} квадратичну інтерполяцію з функціональним параметром $\psi$, отримаємо ізоморфізм
\begin{equation}\label{16f50a}
\begin{aligned}
\Lambda&:H^{s,s/(2b);\varphi}(\Omega)\leftrightarrow\\
&\leftrightarrow\bigl[\widetilde{\mathcal{Q}}^{s_0-2m,(s_0-2m)/(2b)},
\widetilde{\mathcal{Q}}^{s_1-2m,(s_1-2m)/(2b)}\bigr]_{\psi}=\\
&\quad\;\;=\widetilde{\mathcal{Q}}^{{s-2m,(s-2m)/(2b)};\varphi}.
\end{aligned}
\end{equation}
При цьому скористалися теоремою~\ref{9lem7.3a} і лемою~\ref{16lem7.4a}. Оператор \eqref{16f50a} є продовженням за неперервністю відображення \eqref{f4.1a}, оскільки множина $C^{\infty}(\overline{\Omega})$ щільна у просторі $H^{s,s/(2b);\varphi}(\Omega)$. У вказаному випадку теорему \ref{th4.1a} доведено.

Розглянемо тепер випадок, коли $s\in E$. Виберемо довільно число $\varepsilon\in(0,1/2)$.
Оскільки $s\pm\varepsilon\notin E$ і $s-\varepsilon>\sigma_0$, то виконуються ізоморфізми
\begin{equation}\label{8f37a}
\Lambda:H^{s\pm\varepsilon,(s\pm\varepsilon)/(2b);\varphi}(\Omega)
\leftrightarrow
\widetilde{\mathcal{Q}}^{s\pm\varepsilon-2m,(s\pm\varepsilon-2m)/(2b);
\varphi}.
\end{equation}
Застосувавши до операторів \eqref{8f37a} квадратичну інтерполяцію з числовим параметром $1/2$, отримаємо ізоморфізм
\begin{equation}\label{8f38a}
\begin{aligned}
&\Lambda:
\bigl[H^{s-\varepsilon,(s-\varepsilon)/(2b);\varphi}(\Omega),
H^{s+\varepsilon,(s+\varepsilon)/(2b);\varphi}(\Omega)\bigr]_{1/2}
\leftrightarrow\\
&\leftrightarrow
\bigl[
\widetilde{\mathcal{Q}}^{s-\varepsilon-2m,(s-\varepsilon-2m)/(2b);\varphi},
\widetilde{\mathcal{Q}}^{s+\varepsilon-2m,(s+\varepsilon-2m)/(2b);\varphi}
\bigr]_{1/2}=\\
&\quad\;\;=\widetilde{\mathcal{Q}}^{s-2m,(s-2m)/(2b);\varphi}.
\end{aligned}
\end{equation}
Нагадаємо, що остання рівність є означенням. Областю визначення оператора \eqref{8f38a} є простір $H^{s,s/(2b);\varphi}(\Omega)$ згідно з лемою~\ref{16lem7.5}, доведеною у наступному підрозділі. (Її доведення не використовує результати цього підрозділу і залишається правильним у випадку, коли $\Omega$~--- прямокутник.) Отже, оператор \eqref{8f38a} є продовженням за неперервністю відображення \eqref{f4.1a}. У розглянутому випадку теорему \ref{th4.1a} також доведено.
\end{proof}

\newpage

\markright{\emph \ref{sec4.1.1}. Неоднорідна задача у циліндрі}

\section[Неоднорідна задача у циліндрі]
{Неоднорідна задача у циліндрі}\label{sec4.1.1}

\markright{\emph \ref{sec4.1.1}. Неоднорідна задача у циліндрі}

У цьому підрозділі досліджуємо загальну багатовимірну неоднорідну початково-крайову задачу для $2b$-параболічного диференціального рівняння, заданого у циліндрі. Головний результат підрозділу~--- теорема про ізоморфізми, породжені досліджуваною задачею на парах узагальнених соболєвських просторів.

Нехай ціле число $n\geq2$. Як і в п.~\ref{sec3.1.1}, розглядаємо відкритий багатовимірний циліндр $\Omega:=G\times(0,\tau)$, де $G$~--- довільна обмежена область в $\mathbb{R}^{n}$ з нескінченно гладкою межею $\Gamma$, а $\tau$~--- довільне додатне число. Нагадаємо, що $S:=\Gamma\times(0,\tau)$,
$\overline{\Omega}=\overline{G}\times[0,\tau]$ і
$\overline{S}=\Gamma\times[0,\tau]$.

У циліндрі $\Omega$ задано параболічну початково-крайову задачу, яка складається з диференціального рівняння
\begin{equation}\label{16f1}
\begin{aligned}
&A(x,t,D_x,\partial_t)u(x,t)\equiv \\
&\equiv\sum_{|\alpha|+2b\beta\leq2m}
a^{\alpha,\beta}(x,t)\,D^\alpha_x\partial^\beta_t
u(x,t)=f(x,t),\\
&\mbox{якщо}\;\,x\in G\;\,\mbox{і}\;\,0<t<\tau,
\end{aligned}
\end{equation}
$m$ крайових умов
\begin{equation}\label{16f2}
\begin{aligned}
&B_{j}(x,t,D_x,\partial_t)u(x,t)\big|_{S}\equiv \\
&\equiv\sum_{|\alpha|+2b\beta\leq m_j}
b_{j}^{\alpha,\beta}(x,t)\,D^\alpha_x\partial^\beta_t u(x,t)\big|_{S}=g_{j}(x,t),\\
&\mbox{якщо}\;\,x\in\Gamma\;\,\mbox{і}\;\,0<t<\tau,\;\,\mbox{де}\;\,
j=1,\dots,m,
\end{aligned}
\end{equation}
і $\varkappa$ початкових умов
\begin{equation}\label{16f3}
\begin{aligned}
&\partial^k_t u(x,t)\big|_{t=0}=h_k(x),\\
&\mbox{якщо}\;\,x\in G,\;\,\mbox{де}\;\,k=0,\ldots,\varkappa-1.
\end{aligned}
\end{equation}
Тут цілі числа $b$, $m$, $m_j$ і $\varkappa$ такі самі, як і в попередньому підрозділі, тобто $m\geq b\geq1$, $\varkappa=m/b\in\mathbb{Z}$ і кожне $m_j\geq0$. Стосовно коефіцієнтів лінійних диференціальних виразів $A:=A(x,t,D_x,\partial_t)$ і $B_{j}:=B_{j}(x,t,D_x,\partial_t)$ припускаємо, що $a^{\alpha,\beta}\in C^{\infty}(\overline{\Omega})$ і $b_{j}^{\alpha,\beta}\in C^{\infty}(\overline{S})$ для усіх допустимих значень мультиіндексу $\alpha=(\alpha_1,\ldots,\alpha_n)$ і скалярного індексу $\beta$.

Нагадаємо, що початково-крайова задача
\eqref{16f1}--\eqref{16f3} називається параболічною у циліндрі $\Omega$, якщо вона задовольняє умови~\ref{9cond2.1} і~\ref{9cond2.2}, сформульовані у п.~\ref{sec3.1.1}.

Пов'яжемо з нею лінійне відображення
\begin{equation}\label{16f4}
\begin{gathered}
u\mapsto\Lambda u:=
\bigl(Au,B_1u,\ldots,B_mu,u\!\upharpoonright\!\overline{G},\ldots,
(\partial^{\varkappa-1}_tu)\!\upharpoonright\!\overline{G}\bigr),\\
\mbox{де}\;\,u\in C^{\infty}(\overline{\Omega}).
\end{gathered}
\end{equation}
Нехай $s\geq\sigma_0$ (нагадаємо, що натуральне число $\sigma_0$ означене формулою \eqref{sigma0-Murach3}). Введемо гільбертів простір
\begin{equation}\label{16rangeH}
\begin{aligned}
&\mathcal{H}^{s-2m,(s-2m)/(2b)}
:= H^{s-2m,(s-2m)/(2b)}(\Omega)\oplus\\
&\oplus\bigoplus_{j=1}^{m}H^{s-m_j-1/2,(s-m_j-1/2)/(2b)}(S)
\oplus\bigoplus_{k=0}^{\varkappa-1}H^{s-2bk-b}(G).
\end{aligned}
\end{equation}
Відображення \eqref{16f4} продовжується єдиним чином (за неперервністю) до обмеженого лінійного оператора
\begin{equation}\label{16f4a}
\Lambda:H^{s,s/(2b)}(\Omega)\rightarrow
\mathcal{H}^{s-2m,(s-2m)/(2b)}.
\end{equation}
Це є наслідком відомих властивостей диференціальних операторів і операторів слідів у анізотропних просторах Соболєва (див. \cite[розд.~I, лема~4 і розд.~II, теореми 3 і~7]{Slobodetskii58}).
Виберемо довільно функцію $u(x,t)$ з простору $H^{s,s/(2b)}(\Omega)$ і означимо праві частини параболічної задачі за формулою
$$
(f,g_1,...,g_m,h_0,...,h_{\varkappa-1}):=\Lambda u,
$$
використовуючи оператор \eqref{16f4a}. Таким чином,
\begin{equation}\label{16f4b}
\begin{split}
&f\in H^{s-2m,(s-2m)/(2b)}(\Omega),\\
&g_j\in H^{s-m_j-1/2,(s-m_j-1/2)/(2b)}(S),\\
&h_k\in H^{s-2bk-b}(G)
\end{split}
\end{equation}
для довільних $j\in\{1,\dots,m\}$ і $k\in\{0,\dots,\varkappa-1\}$.

Функції \eqref{16f4b} задовольняють умови узгодження, які полягають у тому, що похідні $\partial^{\,k}_t u(x,0)=\partial^k_t u(x,t)\big|_{t=0}$, які можна виразити через $f$ і $h_k$ за допомогою параболічного рівняння \eqref{16f1} і початкових умов \eqref{16f3}, задовольняють крайові умови \eqref{16f2} та співвідношення, що утворюються внаслідок  диференціювання крайових умов за змінною $t$ (див., наприклад,  \cite[\S~11]{AgranovichVishik64} або \cite[розд.~4, \S~5]{LadyzhenskajaSolonnikovUraltzeva67}). Запишемо ці умови узгодження.

Згідно з \cite[розд.~II, теорема 7]{Slobodetskii58}, сліди
$\partial^{\,k}_t u(\cdot,0)\in H^{s-2bk-b}(G)$ означені коректно (за замиканням) для всіх $k\in\mathbb{Z}$ таких, що $\nobreak{0\leq k<s/(2b)-1/2}$ (і лише для цих $k$). Умова~\ref{9cond2.1} параболічності, розглянута у випадку, коли $\xi=0$ і $p=1$, означає, що $a^{(0,\ldots,0),\varkappa}(x,t)\neq0$ для всіх $x\in\overline{G}$ і $t\in[0,\tau]$. Тому параболічне рівняння \eqref{9f2.1} можна розв'язати відносно $\partial^\varkappa_t u(x,t)$, тобто записати
\begin{equation}\label{16f5}
\begin{split}
\partial^\varkappa_t u(x,t) &=
\sum_{\substack{|\alpha|+2b\beta\leq 2m,\\ \beta\leq\varkappa-1}}
a_{0}^{\alpha,\beta}(x,t)\,D^\alpha_x\partial^\beta_t
u(x,t)+\\
&+(a^{(0,\ldots,0),\varkappa}(x,t))^{-1}f(x,t),
\end{split}
\end{equation}
де $a_{0}^{\alpha,\beta}:=-a^{\alpha,\beta}/a^{(0,\ldots,0),\varkappa}\in C^{\infty}(\overline{\Omega})$.

Нехай ціле число $k$ таке, що $0\leq k<s/(2b)-1/2$. З початкових умов \eqref{16f3}, рівності \eqref{16f5} та рівностей, одержаних диференціюванням її за часовою змінною $k-\varkappa$ разів (де $\varkappa<k<s/(2b)-1/2$, якщо такі $k$ існують) отримуємо для слідів $\partial^{\,k}_t u(x,0)$ рекурентну формулу
\begin{equation}\label{16f6}
\begin{split}
&\partial^{k}_t u(x,0)=h_k(x),\quad\mbox{якщо}
\quad 0\leq k\leq\varkappa-1,\\
&\partial^{k}_t u(x,0)=\\
&=\!\sum_{\substack{|\alpha|+2b\beta\leq 2m\\ \beta\leq\varkappa-1}}
\sum\limits_{q=0}^{k-\varkappa}\!
\binom{k-\varkappa}{q}\partial^{k-\varkappa-q}_t
a_{0}^{\alpha,\beta}(x,0)D^\alpha_x\partial^{\beta+q}_t
u(x,0)+\\
&\phantom{aaaaa}+\partial^{k-\varkappa}_t
\bigl((a^{(0,\ldots,0),\varkappa}(x,0))^{-1}f(x,0)\bigr),
\quad\mbox{якщо}\quad k\geq\varkappa.
\end{split}
\end{equation}
Ці рівності виконуються для майже всіх $x\in G$, а частинні похідні розуміються у сенсі теорії розподілів.

Для кожного $j\in\{1,\dots,m\}$ згідно з \cite[розд.~II, теорема 7]{Slobodetskii58} означені за замиканням сліди
$$
\partial^{\,k}_t g_j(\cdot,0)\in H^{s-m_j-1/2-2bk-b}(\Gamma)
$$
для всіх цілих чисел $k$ таких, що
$$
0\leq k<\frac{s-m_j-1/2-b}{2b}
$$
(і тільки цих $k$). Ці сліди виражаються через функцію $u(x,t)$ та її похідні за часом у такий спосіб:
\begin{equation}\label{16f4bb}
\begin{aligned}
\partial^{k}_t g_j(x,0)&=\bigl(\partial^{k}_{t}B_ju(x,t)\bigr)\big|_{t=0}=\\
&=\!\sum_{|\alpha|+2b\beta\leq m_j}
\sum_{q=0}^{k}\!\binom{k}{q}\!
\partial^{k-q}_t b^{\alpha,\beta}_j(x,0)
D^\alpha_x\partial^{\beta+q}_t u(x,0)
\end{aligned}
\end{equation}
для майже всіх $x\in\Gamma$. Тут функції
$$
u(x,0),\partial_{t}u(x,0),\ldots,\partial^{\,[m_j/(2b)]+k}_{t}u(x,0)
$$
аргументу $x\in G$ виражаються через функції $f(x,t)$ і $h_k(x)$
за рекурентною формулою \eqref{16f6}.

Підставляючи \eqref{16f6} у праву частину формули \eqref{16f4bb}, отримуємо умови узгодження:
\begin{equation}\label{16f8}
\begin{gathered}
\partial^{\,k}_t g_j\!\upharpoonright\!\Gamma=
B_{j,k}(v_0,\dots,v_{[m_j/(2b)]+k})\!\upharpoonright\!\Gamma, \\
\mbox{де}\;\;j\in\{1,\dots,m\},\;\;k\in\mathbb{Z}\;\;
\mbox{і}\;\;0\leq k<\frac{s-m_j-1/2-b}{2b}.
\end{gathered}
\end{equation}
Тут функції $v_0$,..., $v_{[m_j/(2b)]+k}$ для вказаних $j$ і $k$ означені за рекурентною формулою
\begin{equation}\label{16f9}
\begin{split}
&v_\mu(x):=h_\mu(x),\quad\mbox{якщо}
\quad 0\leq \mu\leq\varkappa-1,\\
&v_\mu(x):=\\
&:=\sum_{\substack{|\alpha|+2b\beta\leq 2m\\ \beta\leq\varkappa-1}}
\sum\limits_{q=0}^{\mu-\varkappa}
\binom{\mu-\varkappa}{q}\partial^{\,\mu-\varkappa-q}_t
a_{0}^{\alpha,\beta}(x,0)\,D^\alpha_x v_{\beta+q}(x)+\\
&+\partial^{\,\mu-\varkappa}_t \bigl((a^{(0,\ldots,0),\varkappa}(x,0))^{-1}f(x,0)\bigr),
\quad\mbox{якщо}\quad \mu\geq\varkappa.
\end{split}
\end{equation}
Крім того, поклали
\begin{equation}\label{16f9B}
\begin{aligned}
&B_{j,k}(v_0,\dots,v_{[m_j/(2b)]+k})(x):=\\
&:=\sum_{|\alpha|+2b\beta\leq m_j}
\sum_{q=0}^{k}\binom{k}{q}
\partial^{\,k-q}_t b^{\alpha,\beta}_j(x,0)
D^\alpha_x v_{\beta+q}(x)
\end{aligned}
\end{equation}
для майже всіх  $x\in G$. Згідно з \eqref{16f4b} виконується включення
\begin{equation*}
v_\mu\in H^{s-b-2bk}(G),\quad\mbox{якщо}\quad
\mu\in\mathbb{Z}\cap[0,s/(2b)-1/2\bigr).
\end{equation*}
Права частина рівності, записаної в \eqref{16f8}, має сенс, оскільки
$$
B_{j,k}(v_0,\dots,v_{[m_j/(2b)]+k})\in H^{s-m_j-2bk-b}(G),
$$
тому слід
\begin{equation}\label{8f69aa}
B_{j,k}(v_0,\dots,v_{[m_j/(2b)]+k})\!\upharpoonright\!\Gamma\in H^{s-m_j-2bk-b-1/2}(\Gamma)
\end{equation}
означений за замиканням, якщо є додатним індекс останнього простору.

Число умов узгодження \eqref{16f8} є функцією аргументу $s\geq\sigma_0$.
Її точки розриву утворюють множину $E$, означену за формулою \eqref{setE}.

Вкажемо узагальнені соболєвські простори, між якими діє ізоморфізм, породжений параболічною задачею \eqref{16f1}--\eqref{16f3}. Нехай $s>\sigma_0$ і $\varphi\in\mathcal{M}$. Якщо $\varphi(\cdot)\equiv1$, допускаємо також значення $s=\sigma_0$. Областю визначення ізоморфізму є гільбертів простір $H^{s,s/(2b);\varphi}(\Omega)$, а областю значень~--- деякий лінійний многовид $\mathcal{Q}^{s-2m,(s-2m)/(2b);\varphi}$ у гільбертовому просторі
\begin{equation*}
\begin{aligned}
\mathcal{H}^{s-2m,(s-2m)/(2b);\varphi}&:=
H^{s-2m,(s-2m)/(2b);\varphi}(\Omega)\oplus\\
&\oplus\bigoplus_{j=1}^{m}H^{s-m_j-1/2,(s-m_j-1/2)/(2b);\varphi}(S)\oplus\\
&\oplus\bigoplus_{k=0}^{\varkappa-1}H^{s-2bk-b;\varphi}(G).
\end{aligned}
\end{equation*}
Останній збігається з простором, у який діє оператор \eqref{16f4a}, якщо $\varphi(\cdot)\equiv1$. Означаючи гільбертів простір $\mathcal{Q}^{s-2m,(s-2m)/(2b);\varphi}$, слід розглянути окремо два випадки $s\notin E$ і $s\in E$ (як і в попередньому підрозділі).

Нехай спочатку $s\notin E$. За означенням лінійний простір $\mathcal{Q}^{s-2m,(s-2m)/(2b);\varphi}$ складається з усіх векторів
\begin{equation}\label{F-sect3-3-Murach}
F=\bigl(f,g_1,\dots,g_m,h_0,\dots,h_{\varkappa-1}\bigr)\in
\mathcal{H}^{s-2m,(s-2m)/(2b);\varphi},
\end{equation}
які задовольняють умови узгодження \eqref{16f8}. Ці умови сформульовані коректно для будь-якого вектора
$$
F\in\mathcal{H}^{s-2m-\varepsilon,(s-2m-\varepsilon)/(2b)},
$$
де число $\varepsilon>0$ достатньо мале. Отже, вони мають сенс і для довільного вектора \eqref{F-sect3-3-Murach} завдяки неперервному вкладенню
\begin{equation}\label{8f69a}
\mathcal{H}^{s-2m,(s-2m)/(2b);\varphi}\hookrightarrow
\mathcal{H}^{s-2m-\varepsilon,(s-2m-\varepsilon)/(2b)}.
\end{equation}
Воно випливає із формул \eqref{8f5a} і \eqref{8f5b}. (Якщо $s=\sigma_0$ і $\varphi(\cdot)\equiv1$, то ці міркування зайві.)

Наділимо лінійний простір $\mathcal{Q}^{s-2m,(s-2m)/(2b);\varphi}$  скалярним добутком і нормою з гільбертового простору
$\mathcal{H}^{s-2m,(s-2m)/(2b);\varphi}$. Простір $\mathcal{Q}^{s-2m,(s-2m)/(2b);\varphi}$
є повним, тобто гільбертовим. Це обґрунтовується така само, як і повнота простору $\widetilde{\mathcal{Q}}^{s-2m,(s-2m)/(2b);\varphi}$ у попередньому підрозділі.

Нехай тепер $s\in E$. У цьому випадку покладаємо
\begin{equation}\label{16f10}
\begin{split}
&\mathcal{Q}^{s-2m,(s-2m)/(2b);\varphi}:= \\
&:=
\bigl[\mathcal{Q}^{s-2m-\varepsilon,(s-2m-\varepsilon)/(2b);\varphi},
\mathcal{Q}^{s-2m+\varepsilon,(s-2m+\varepsilon)/(2b);\varphi}\bigr]_{1/2}.
\end{split}
\end{equation}
Тут число $\varepsilon\in(0,1/2)$ вибрано довільно. Так означений гільбертів простір $\mathcal{Q}^{s-2m,(s-2m)/(2b);\varphi}$ не залежить з точністю до еквівалентності норм від вибору числа $\varepsilon$ і неперервно вкладається у простір $\mathcal{H}^{s-2m,(s-2m)/(2b);\varphi}$. Це буде показано у зауваженні~\ref{16rem8.1} наприкінці цього підрозділу.

Сформулюємо головний результат підрозділу.

\begin{theorem}\label{16th4.1}
Для довільного дійсного числа $s>\sigma_0$ і функціонального параметра $\varphi\in\nobreak\mathcal{M}$ відображення \eqref{16f4} продовжується єдиним чином (за неперервністю) до ізоморфізму
\begin{equation}\label{16f11}
\Lambda:H^{s,s/(2b);\varphi}(\Omega)\leftrightarrow
\mathcal{Q}^{s-2m,(s-2m)/(2b);\varphi}.
\end{equation}
\end{theorem}

Ця теорема відома для анізотропних соболєвських просторів, тобто коли $\varphi(\cdot)\equiv1$. У цьому випадку її довели М.~С.~Аграновіч і М.~І.~Вішик \cite[теорема~12.1]{AgranovichVishik64} за умови, що  $s,s/(2b)\in\mathbb{Z}$. Ця умова була знята у випадку $b=1$ і нормальних крайових умов у монографії Ж.-Л.~Ліонса і Е.~Мадженеса \cite[теорема~6.2]{LionsMagenes72ii}, а в загальному випадку~--- у статті М.~В.~Житарашу \cite[теорема~9.1]{Zhitarashu85}. Ці результати охоплюють  граничний випадок, коли $s=\sigma_0$. Ми доведемо теорему~\ref{16th4.1} для довільного $\varphi\in\nobreak\mathcal{M}$ за допомогою вказаного результату М.~В.~Житарашу і квадратичної інтерполяції. Зауважимо, що у зазначених працях використано іншу форму умов узгодження, еквівалентну \eqref{16f8}. Її обговоримо одразу після доведення теореми~\ref{16th4.1}.

Щодо області значень ізоморфізму \eqref{16f11} зауважимо таке: потреба означати простір $\mathcal{Q}^{s-2m,(s-2m)/(2b);\varphi}$ у різні способи у випадку, коли $s\in E$, і у випадку, коли $s\notin E$, зумовлена такою обставиною \cite[зауваження~6.4]{LionsMagenes72ii}: якщо означити вказаний простір для $s\in E$ у той самий спосіб, що і для $s\notin E$, то висновок теореми~\ref{16th4.1} буде хибним для $s\in E$ принаймні, коли $\varphi(\cdot)\equiv1$. Для рівняння теплопровідності і крайових умов Діріхле і Неймана це показано  В.~А.~Солонніковим \cite[\S~6, c.~186]{Solonnikov64}. Звісно, це зауваження стосується і теореми~\ref{th4.1a}.

Спочатку доведемо версію леми~\ref{16lem7.4a} для гільбертового простору $\mathcal{Q}^{s-2m,(s-2m)/(2b);\varphi}$. Як і в п.~\ref{sec4.2.1},
$\{J_l:l\in\mathbb{N}\}$~--- набір усіх зв'язних компонент множини
$(\sigma_0,\infty)\setminus E$.

\begin{lemma}\label{16lem7.4}
Нехай $l\in\mathbb{N}$. Припустимо, що $s_0,s,s_1\in J_{l}$, $\nobreak{s_0<s<s_1}$ і $\varphi\in\mathcal{M}$. Означимо інтерполяційний параметр $\psi\in\mathcal{B}$ за формулою \eqref{9f7.2}. Тоді виконується така рівність просторів з еквівалентністю норм у них:
\begin{equation}\label{16f42}
\begin{aligned}
&\mathcal{Q}^{s-2m,(s-2m)/(2b);\varphi}=\\
&=\bigl[\mathcal{Q}^{s_0-2m,(s_0-2m)/(2b)},
\mathcal{Q}^{s_1-2m,(s_1-2m)/(2b)}\bigr]_{\psi}.
\end{aligned}
\end{equation}
\end{lemma}

\begin{proof}[\indent Доведення]
Скориставшись теоремами \ref{9prop6.3}, \ref{8prop4} і \ref{9lem7.3a},
отримаємо такі рівності:
\begin{align*}
&\bigl[\mathcal{H}^{s_0-2m,(s_0-2m)/(2b)},
\mathcal{H}^{s_1-2m,(s_1-2m)/(2b)}\bigr]_{\psi}= \\
&=\bigl[H^{s_0-2m,(s_0-2m)/(2b)}(\Omega),H^{s_1-2m,(s_1-2m)/(2b)}(\Omega)
\bigr]_{\psi}\oplus\\
&\quad\;
\oplus\bigoplus_{j=1}^{m}
\bigl[H^{s_0-m_j-1/2,\,(s_0-m_j-1/2)/(2b)}(S),\\
&\qquad\qquad\;\,
H^{s_1-m_j-1/2,\,(s_1-m_j-1/2)/(2b)}(S)\bigr]_{\psi}\oplus\\
&\qquad
\oplus\bigoplus_{k=0}^{\varkappa-1}\bigl[H^{s_0-2bk-b}(G),H^{s_1-2bk-b}(G)
\bigr]_{\psi}=\\
&=H^{s-2m,(s-2m)/(2b);\varphi}(\Omega)
\oplus\bigoplus_{j=1}^{m}H^{s-m_j-1/2,(s-m_j-1/2)/(2b);\varphi}(S)\oplus\\
&\phantom{}\qquad\qquad\qquad\qquad\qquad\quad\;
\oplus\bigoplus_{k=0}^{\varkappa-1}H^{s-2bk-b;\varphi}(G)=\\
&=\mathcal{H}^{s-2m,(s-2m)/(2b);\varphi}.
\end{align*}
Отже,
\begin{equation}\label{16f45}
\begin{aligned}
&\bigl[\mathcal{H}^{s_0-2m,(s_0-2m)/(2b)},
\mathcal{H}^{s_1-2m,(s_1-2m)/(2b)}\bigr]_{\psi}=\\
&=\mathcal{H}^{s-2m,(s-2m)/(2b);\varphi}
\end{aligned}
\end{equation}
з еквівалентністю норм.

Виведемо потрібну формулу \eqref{16f42} з щойно отриманої рівності \eqref{16f45} за допомогою теореми~\ref{9prop6.2}. Для цього побудуємо лінійне відображення $P$, задане на $\mathcal{H}^{s_0-2m,(s_0-2m)/(2b)}$
і таке, що $P$ є проєктором кожного простору
$\mathcal{H}^{s_j-2m,(s_j-2m)/(2b)}$, де $j\in\{0,1\}$, на його підпростір $\mathcal{Q}^{s_j-2m,(s_j-2m)/(2b)}$. Якщо таке відображення існує, то
на підставі теореми~\ref{9prop6.2} і формули \eqref{16f45} отримаємо рівності
\begin{align*}
&\bigl[\mathcal{Q}^{s_0-2m,(s_0-2m)/(2b)},
\mathcal{Q}^{s_1-2m,(s_1-2m)/(2b)}\bigr]_{\psi}=\\
&=\bigl[\mathcal{H}^{s_0-2m,(s_0-2m)/(2b)},
\mathcal{H}^{s_1-2m,(s_1-2m)/(2b)}\bigr]_{\psi}\cap\\
&\quad\;\cap\mathcal{Q}^{s_0-2m,(s_0-2m)/(2b)}=\\
&=\mathcal{H}^{s-2m,(s-2m)/(2b);\varphi}\cap
\mathcal{Q}^{s_0-2m,(s_0-2m)/(2b)}=\\
&=\mathcal{Q}^{s-2m,(s-2m)/(2b);\varphi},
\end{align*}
тобто формулу \eqref{16f42}. Тут остання рівність правильна, оскільки $s,s_0\in J_l$ і тому усі елементи підпросторів
$$
\mathcal{Q}^{s_0-2m,(s_0-2m)/(2b)}\quad\mbox{і}\quad
\mathcal{Q}^{s-2m,(s-2m)/(2b);\varphi}
$$
задовольняють одні й ті самі умови узгодження і до того ж виконується неперервне вкладення
$$
\mathcal{H}^{s-2m,(s-2m)/(2b);\varphi}\hookrightarrow
\mathcal{H}^{s_0-2m,(s_0-2m)/(2b)}.
$$
Зазначимо також, що пара
$$
\bigl[\mathcal{Q}^{s_0-2m,(s_0-2m)/(2b)},
\mathcal{Q}^{s_1-2m,(s_1-2m)/(2b)}\bigr]
$$
є регулярною за теоремою~\ref{9prop6.2}; отже, права частина рівності \eqref{16f42} має сенс.

Побудуємо вказане відображення $P$. Для кожного номера  $\nobreak{j\in\{1,\dots,m\}}$ розглянемо множину \eqref{set-kj}. З  умов узгодження \eqref{16f8} випливає, що вона не залежить від $s\in J_l$. Нехай $\nobreak{q_{l,j}:=q_{l,j}^{\star}-1}$, де $q_{l,j}^{\star}$~--- число її елементів.

Для довільного вектора
\begin{equation}\label{F-cylinder-Murach}
F:=\bigl(f,g_1,\dots,g_m,h_0,\dots,h_{\varkappa-1}\bigr)\in
\mathcal{H}^{s_0-2m,(s_0-2m)/(2b)}
\end{equation}
покладемо
\begin{equation}\label{16f43}
\left\{
  \begin{array}{ll}
    g^{*}_{j}:=g_j, & \hbox{якщо}\quad q_{l,j}=-1,\\
    g^{*}_{j}:=g_j+T(w_{j,0},\dots,w_{j,q_{l,j}}),
    & \hbox{якщо}\quad q_{l,j}\geq0,
  \end{array}
\right.
\end{equation}
де $j\in\{1,\dots,m\}$. Тут
\begin{align*}
w_{j,0}&:=B_{j,0}(v_{0},\dots,v_{[m_j/(2b)]})\!\upharpoonright\!
\Gamma-g_j\!\upharpoonright\!\Gamma,\\
&\dots\\
w_{j,q_{l,j}}&:=B_{j,q_{l,j}}(v_{0},\dots,v_{[m_j/(2b)]+q_{l,j}})
\!\upharpoonright\!\Gamma-
\partial_{t}^{q_{l,j}} g_j\!\upharpoonright\!\Gamma,
\end{align*}
де функції $v_{\mu}$ і диференціальні оператори $B_{j,k}$ означені формулами \eqref{16f9} і \eqref{16f9B}, а $T$~--- лінійний оператор з теореми~\ref{8lem1}, де беремо $r=q_{l,j}^\star$.

Лінійне відображення
\begin{equation*}
P:\bigl(f,g_1,\dots,g_m,h_0,\dots,h_{\varkappa-1}\bigr)\mapsto
\bigl(f,g^*_1,\dots,g^*_m,h_0,\dots,h_{\varkappa-1}\bigr),
\end{equation*}
задане на всіх векторах \eqref{F-cylinder-Murach}, є шуканим. Справді, його звужен\-ня на кожний простір $\mathcal{H}^{s_{j}-2m,(s_{j}-2m)/(2b)}$, де $j\in\{0,1\}$, є обмеженим оператором на ньому, що випливає з \eqref{8f69aa} і \eqref{8f43}. Ми використовуємо обмеженість оператора \eqref{8f43}, узявши $r=q_{l,j}^\star$, $s=s_{j}-m_j-1/2$ і $\varphi(\cdot)\equiv1$ в теоремі~\ref{8lem1}; при цьому умова $s>2br-b$ виконується, оскільки
$$
q_{l,j}^\star<\frac{s_{j}-m_j-1/2-b}{2b}+1.
$$

Крім того, з означення оператора $P$ і умов узгодження \eqref{16f8} випливає, що $PF\in\mathcal{Q}^{s_{j}-2m,(s_{j}-2m)/(2b)}$ для будь-якого вектора $F\in\mathcal{H}^{s_{j}-2m,(s_{j}-2m)/(2b)}$. Справді, ці умови для вектора
$$
PF=(f,g^*_1,\dots,g^*_m,h_0,\dots,h_{\varkappa-1})
$$
набувають вигляду
\begin{equation*}
\partial^{k}_t g_j^*\!\upharpoonright\!\Gamma=
B_{j,k}(v_0,\dots,v_{[m_j/(2b)]+k})\!\upharpoonright\!\Gamma
\end{equation*}
для всіх індексів $j$ і $k$, вказаних у \eqref{16f8}. Але
\begin{align*}
\partial^{k}_t g_j^*\!\upharpoonright\!\Gamma=&
\partial^{k}_t g_j\!\upharpoonright\!\Gamma+
\partial^{k}_t T(w_{j,0},\dots,w_{j,q_{l,j}})\!\upharpoonright\!\Gamma=\\
=& \partial^{k}_t g_j\!\upharpoonright\!\Gamma+w_{j,k}=
B_{j,k}(v_0,\dots,v_{[m_j/(2b)]+k})\!\upharpoonright\!\Gamma,
\end{align*}
якщо $q_{l,j}\geq0$. Якщо $q_{l,j}=-1$, то нема тих умов узгодження, які містять~$g_{j}$. Таким чином, вектор $PF$ задовольняє умови узгодження \eqref{16f8}, тобто належить до
$\mathcal{Q}^{\sigma-2m,(\sigma-2m)/(2b)}$. Нарешті, $PF=F$ для довільного вектора $F\in\mathcal{Q}^{s_{j}-2m,(s_{j}-2m)/(2b)}$, оскільки він задовольняє умови \eqref{16f8}, з яких випливає, що всі $w_{j,k}=0$, тобто $g_1^*=g_1$,..., $g_m^*=g_m$.
\end{proof}

Лема~\ref{16lem7.4} відіграватиме ключову роль у доведенні теореми~\ref{16th4.1} у випадку, коли $s\notin E$. У противному випадку знадобиться такий результат:

\begin{lemma}\label{16lem7.5}
Нехай $s,\varepsilon\in\mathbb{R}$, $s>\varepsilon>0$ і $\varphi\in\mathcal{M}$. Тоді виконується така рівність просторів з еквівалентністю норм у них:
\begin{equation}\label{16f46}
\begin{aligned}
&H^{s,s/(2b);\varphi}(W)=\\
&=\bigl[H^{s-\varepsilon,(s-\varepsilon)/(2b);\varphi}(W),
H^{s+\varepsilon,(s+\varepsilon)/(2b);\varphi}(W)\bigr]_{1/2},
\end{aligned}
\end{equation}
де $W:=\Omega$ або $W:=S$.
\end{lemma}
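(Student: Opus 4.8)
The plan is to deduce \eqref{16f46} from the interpolation formula of Theorem~\ref{9lem7.3a} combined with the reiteration property of quadratic interpolation given by Theorem~\ref{9prop6.4}. Put $\gamma:=1/(2b)$, so that $H^{\sigma,\sigma/(2b);\varphi}(W)=H^{\sigma,\sigma\gamma;\varphi}(W)$ for the relevant values of $\sigma$. Since $s>\varepsilon>0$, we may fix real numbers $s_0,s_1$ with
\begin{equation*}
0\le s_0<s-\varepsilon<s<s+\varepsilon<s_1,
\end{equation*}
and we work with the couple $X:=[H^{s_0,s_0\gamma}(W),H^{s_1,s_1\gamma}(W)]$, which is regular since $H^{s_1,s_1\gamma}(W)$ is continuously and densely embedded in $H^{s_0,s_0\gamma}(W)$ (cf.\ \eqref{8f5a} and the density of $C^{\infty}(\overline{W})$). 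Let $\psi_{-}$, $\psi_{\ast}$, $\psi_{+}$ be the functions given by \eqref{9f7.2} with the number $s$ there replaced by $s-\varepsilon$, $s$, $s+\varepsilon$ respectively and with the same $s_0$, $s_1$, $\varphi$; thus for $r\ge1$
\begin{equation*}
\psi_{-}(r)=r^{(s-\varepsilon-s_0)/(s_1-s_0)}\varphi\bigl(r^{1/(s_1-s_0)}\bigr),\qquad
\psi_{\ast}(r)=r^{(s-s_0)/(s_1-s_0)}\varphi\bigl(r^{1/(s_1-s_0)}\bigr),
\end{equation*}
and $\psi_{+}(r)=r^{(s+\varepsilon-s_0)/(s_1-s_0)}\varphi(r^{1/(s_1-s_0)})$, while all three equal $\varphi(1)$ for $0<r<1$. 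By Theorem~\ref{9prop6.1} each of $\psi_{-}$, $\psi_{\ast}$, $\psi_{+}$ belongs to $\mathcal{B}$ and is an interpolation parameter, their indices $(s-\varepsilon-s_0)/(s_1-s_0)$, $(s-s_0)/(s_1-s_0)$, $(s+\varepsilon-s_0)/(s_1-s_0)$ all lying in $(0,1)$ by the choice of $s_0,s_1$ (recall that $\varphi\in\mathcal{M}$ is slowly varying).

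First I would apply Theorem~\ref{9lem7.3a} three times, with the role of $s$ in \eqref{9f7.1}--\eqref{9f7.2} played in turn by $s-\varepsilon$, $s$ and $s+\varepsilon$; the extra hypothesis $s_0\ge0$ of that theorem holds, and \eqref{9f7.1} is satisfied in each case. For $W=\Omega$ this uses \eqref{9f7.12a} and for $W=S$ it uses \eqref{9f7.13a}. This yields, with equivalence of norms,
\begin{equation*}
H^{s-\varepsilon,(s-\varepsilon)\gamma;\varphi}(W)=X_{\psi_{-}},\qquad
H^{s,s\gamma;\varphi}(W)=X_{\psi_{\ast}},\qquad
H^{s+\varepsilon,(s+\varepsilon)\gamma;\varphi}(W)=X_{\psi_{+}}.
\end{equation*}

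Next I would invoke Theorem~\ref{9prop6.4} with $\chi:=\psi_{-}$, $\eta:=\psi_{+}$ and with the interpolation parameter $r\mapsto r^{1/2}$ of index $1/2$, giving $[X_{\psi_{-}},X_{\psi_{+}}]_{1/2}=X_{\omega}$ with
\begin{equation*}
\omega(r)=\psi_{-}(r)\bigl(\psi_{+}(r)/\psi_{-}(r)\bigr)^{1/2}=\psi_{-}(r)^{1/2}\psi_{+}(r)^{1/2}.
\end{equation*}
The hypotheses of Theorem~\ref{9prop6.4} hold: $\psi_{-}/\psi_{+}$ equals $r^{-2\varepsilon/(s_1-s_0)}$ for $r\ge1$ and hence is bounded near infinity, and $\omega$ is an interpolation parameter because $\psi_{-}$, $\psi_{+}$ and $r^{1/2}$ are. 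A short computation of exponents identifies $\omega$ with $\psi_{\ast}$: for $r\ge1$ the $\varphi$-factor of $\psi_{-}(r)^{1/2}\psi_{+}(r)^{1/2}$ is $\varphi(r^{1/(s_1-s_0)})$ and the power exponent is $\tfrac12\bigl((s-\varepsilon-s_0)+(s+\varepsilon-s_0)\bigr)/(s_1-s_0)=(s-s_0)/(s_1-s_0)$, while for $0<r<1$ one has $\omega(r)=\varphi(1)$ as well, so $\omega\equiv\psi_{\ast}$. Combining the two displays,
\begin{equation*}
\bigl[H^{s-\varepsilon,(s-\varepsilon)\gamma;\varphi}(W),\,H^{s+\varepsilon,(s+\varepsilon)\gamma;\varphi}(W)\bigr]_{1/2}
=[X_{\psi_{-}},X_{\psi_{+}}]_{1/2}=X_{\omega}=X_{\psi_{\ast}}=H^{s,s\gamma;\varphi}(W)
\end{equation*}
with equivalence of norms, which is exactly \eqref{16f46}.

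The argument is essentially bookkeeping in the function-parameter calculus, with no genuine analytic obstacle. The one point that needs attention is that the three anisotropic spaces must all satisfy the assumptions of Theorem~\ref{9lem7.3a} with a single common pair $(s_0,s_1)$ — which is exactly why the conditions $s>\varepsilon$ and $s_0\ge0$ are imposed — together with the verification that $\psi_{-},\psi_{+},\omega$ lie in $\mathcal{B}$ and are interpolation parameters. The behaviour of these parameters near $r=0$ is immaterial because the couple $X$ is regular; in any event the identity $\omega\equiv\psi_{\ast}$ holds on all of $(0,\infty)$.
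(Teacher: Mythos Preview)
Your proof is correct and follows essentially the same route as the paper: express each of the three anisotropic spaces as $[X_0,X_1]_{\psi}$ for a single Sobolev couple via Theorem~\ref{9lem7.3a}, then apply the reiteration Theorem~\ref{9prop6.4} with the index $1/2$ and check that the resulting parameter $\omega$ coincides with the one for $H^{s,s/(2b);\varphi}(W)$. The only cosmetic difference is that the paper takes the symmetric choice $s_0=s-\varepsilon-\delta$, $s_1=s+\varepsilon+\delta$ for some $\delta>0$, whereas you work with arbitrary $0\le s_0<s-\varepsilon$ and $s_1>s+\varepsilon$; both lead to the same calculation.
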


\begin{proof}[\indent Доведення]
Виберемо число $\delta>0$ таке, що $s-\varepsilon-\delta>0$.
За теоремою~\ref{9lem7.3a} маємо рівності
\begin{align*}
&H^{s-\varepsilon,(s-\varepsilon)/(2b);\varphi}(W)=\\
&=\bigl[H^{s-\varepsilon-\delta,(s-\varepsilon-\delta)/(2b)}(W),
H^{s+\varepsilon+\delta,(s+\varepsilon+\delta)/(2b)}(W)\bigr]_{\chi}
\end{align*}
та
\begin{align*}
&H^{s+\varepsilon,(s+\varepsilon)/(2b);\varphi}(W)=\\
&=\bigl[H^{s-\varepsilon-\delta,(s-\varepsilon-\delta)/(2b)}(W),
H^{s+\varepsilon+\delta,(s+\varepsilon+\delta)/(2b)}(W)\bigr]_{\eta}.
\end{align*}
Тут інтерполяційні параметри  $\chi$ і $\eta$ означено формулами
\begin{equation*}
\begin{split}
&\chi(r):=r^{\delta/(2\varepsilon+2\delta)}
\varphi(r^{1/(2\varepsilon+2\delta)}),\\
&\eta(r):=r^{(2\varepsilon+\delta)/(2\varepsilon+2\delta)}
\varphi(r^{1/(2\varepsilon+2\delta)}),
\end{split}
\end{equation*}
якщо $r\geq1$, і $\chi(r)=\eta(r):=1$, якщо $0<r<1$. Тому на підставі теорем~\ref{9prop6.4} і~\ref{9lem7.3a} маємо рівності
\begin{align*}
&\bigl[H^{s-\varepsilon,(s-\varepsilon)/(2b);\varphi}(W),
H^{s+\varepsilon,(s+\varepsilon)/(2b);\varphi}(W)\bigr]_{1/2}=\\
&=\Bigl[
\bigl[H^{s-\varepsilon-\delta,(s-\varepsilon-\delta)/(2b)}(W),
H^{s+\varepsilon+\delta,(s+\varepsilon+\delta)/(2b)}(W)\bigr]_{\chi},\\
&\quad\;\;\;\bigl[H^{s-\varepsilon-\delta,(s-\varepsilon-\delta)/(2b)}(W),
H^{s+\varepsilon+\delta,(s+\varepsilon+\delta)/(2b)}(W)\bigr]_{\eta}
\Bigr]_{1/2}=\\
&=\bigl[H^{s-\varepsilon-\delta,(s-\varepsilon-\delta)/(2b)}(W),
H^{s+\varepsilon+\delta,(s+\varepsilon+\delta)/(2b)}(W)\bigr]_{\omega}=\\
&=H^{s,s/(2b);\varphi}(W).
\end{align*}
Тут інтерполяційний параметр $\omega$ означено формулами
\begin{equation*}
\omega(r):=\chi(r)(\eta(r)/\chi(r))^{1/2}=
r^{1/2}\varphi(r^{1/(2\varepsilon+2\delta)}),
\;\;\mbox{якщо}\;\;r\geq1,
\end{equation*}
і $\omega(r):=1$, якщо $0<r<1$.
Стосовно застосування теореми~\ref{9lem7.3a} зауважимо, що
інтерполяційний параметр $\omega$ дорівнює правій частині рівності \eqref{9f7.2}, якщо покласти у ній $s_0:=s-\varepsilon-\delta$ і $s_1:=s+\varepsilon+\delta$. Потрібну формулу \eqref{16f46} доведено.
\end{proof}

Буде корисною також версія леми \ref{16lem7.5} для гільбертового простору $\mathcal{H}^{s-2m,(s-2m)/(2b);\varphi}$.

\begin{lemma}\label{16lem7.6}
Нехай $s\in\mathbb{R}$, $\varepsilon>0$ і $\varphi\in\mathcal{M}$. Припустимо, що $s-\varepsilon>\sigma_0$. Тоді виконується така рівність просторів з еквівалентністю норм у них:
\begin{equation}\label{16f67}
\begin{split}
&\mathcal{H}^{s-2m,(s-2m)/(2b);\varphi}=\\
&=\bigl[\mathcal{H}^{s-2m-\varepsilon,(s-2m-\varepsilon)/(2b);\varphi},
\mathcal{H}^{s-2m+\varepsilon,(s-2m+\varepsilon)/(2b);\varphi}\bigr]_{1/2}.
\end{split}
\end{equation}
\end{lemma}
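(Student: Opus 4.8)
The plan is to reduce \eqref{16f67} to the corresponding interpolation identity for each direct summand of $\mathcal{H}^{s-2m,(s-2m)/(2b);\varphi}$. Since $r\mapsto r^{1/2}$ is an interpolation parameter (Theorem~\ref{9prop6.1} with $\theta=1/2$), quadratic interpolation with index $1/2$ commutes with finite orthogonal sums by Theorem~\ref{9prop6.3}. It therefore suffices to prove, with equivalence of norms, the analogue of \eqref{16f67} obtained by replacing $\mathcal{H}^{\bullet}$ by each summand in turn. There are three kinds of summand: the space $H^{s-2m,(s-2m)/(2b);\varphi}(\Omega)$; the spaces $H^{s-m_j-1/2,(s-m_j-1/2)/(2b);\varphi}(S)$ with $j\in\{1,\dots,m\}$; and the (isotropic) spaces $H^{s-2bk-b;\varphi}(G)$ with $k\in\{0,\dots,\varkappa-1\}$.

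For the $\Omega$-summand this is Lemma~\ref{16lem7.5} with $W:=\Omega$ and with its parameter ``$s$'' replaced by $s-2m$; the hypothesis $s-2m>\varepsilon>0$ of that lemma holds because $s-\varepsilon>\sigma_0\geq 2m$. For each $j$, the $S$-summand is Lemma~\ref{16lem7.5} with $W:=S$ and ``$s$'' replaced by $s-m_j-1/2$: from $\sigma_0\geq m_j+1$ and $s-\varepsilon>\sigma_0$ one gets $s-m_j-1/2-\varepsilon>1/2>0$, so the hypothesis again holds.

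The isotropic summands on $G$ are not literally instances of Lemma~\ref{16lem7.5}, but they are handled by exactly the same argument, with Theorem~\ref{8prop4} playing the role of Theorem~\ref{9lem7.3a}. Fixing $k$, writing $\sigma:=s-2bk-b$ and choosing any $\delta>0$, Theorem~\ref{8prop4} expresses both $H^{\sigma-\varepsilon;\varphi}(G)$ and $H^{\sigma+\varepsilon;\varphi}(G)$ as quadratic interpolations of the pair $[H^{\sigma-\varepsilon-\delta}(G),H^{\sigma+\varepsilon+\delta}(G)]$ with function parameters $\chi,\eta$ of the form \eqref{9f7.2}; since $\chi/\eta$ is bounded near infinity, Theorem~\ref{9prop6.4} then gives $\bigl[H^{\sigma-\varepsilon;\varphi}(G),H^{\sigma+\varepsilon;\varphi}(G)\bigr]_{1/2}=\bigl[H^{\sigma-\varepsilon-\delta}(G),H^{\sigma+\varepsilon+\delta}(G)\bigr]_{\omega}$ with $\omega(r)=\chi(r)\bigl(\eta(r)/\chi(r)\bigr)^{1/2}=r^{1/2}\varphi\bigl(r^{1/(2\varepsilon+2\delta)}\bigr)$ for $r\geq1$. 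This $\omega$ is precisely the parameter \eqref{9f7.2} for the triple $(\sigma-\varepsilon-\delta,\sigma,\sigma+\varepsilon+\delta)$, so Theorem~\ref{8prop4} identifies the right-hand side with $H^{\sigma;\varphi}(G)$, which is exactly the computation already carried out in the proof of Lemma~\ref{16lem7.5}.

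Assembling the three identities via Theorem~\ref{9prop6.3} yields \eqref{16f67}. I do not expect a genuine obstacle here: the whole argument is index bookkeeping layered on top of Lemma~\ref{16lem7.5}. The only points deserving attention are checking that the hypothesis $s-\varepsilon>\sigma_0$ keeps each shifted smoothness index inside the admissible range of Lemma~\ref{16lem7.5} (for the $G$-summands this is vacuous, since the isotropic H\"ormander spaces are defined for all real indices) and noticing that the $G$-summands need the isotropic analogue of Lemma~\ref{16lem7.5}, obtained by the same proof.
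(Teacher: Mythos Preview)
Your proposal is correct and follows essentially the same route as the paper: reduce to each summand via Theorem~\ref{9prop6.3}, invoke Lemma~\ref{16lem7.5} for the anisotropic summands on $\Omega$ and $S$, and handle the isotropic $G$-summands by the same reiteration argument with Theorem~\ref{8prop4} in place of Theorem~\ref{9lem7.3a}. One small remark: the isotropic analogue is stated in the paper for $\sigma>0$, not for all real indices, but this does not matter since the hypothesis $s-\varepsilon>\sigma_0\ge 2m$ forces $s-2bk-b-\varepsilon>b>0$ for every $k\le\varkappa-1$.
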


\begin{proof}[\indent Доведення]
Ця рівність випливає з твердження~\ref{9prop6.3}, формули \eqref{16f46} та її аналога для ізотропних просторів $H^{\sigma;\varphi}(G)$, де $\sigma>0$. Останній доводиться так само як і формула \eqref{16f46} (див. також \cite[лема~4.3]{MikhailetsMurach14}). Справді,
\begin{align*}
&\bigl[\mathcal{H}^{s-\varepsilon-2m,(s-\varepsilon-2m)/(2b);\varphi},
\mathcal{H}^{s+\varepsilon-2m,(s+\varepsilon-2m)/(2b);\varphi}\bigr]_{1/2}= \\
&=\bigl[H^{s-2m-\varepsilon,(s-2m-\varepsilon)/(2b);\varphi}(\Omega),
H^{s-2m+\varepsilon,(s-2m+\varepsilon)/(2b);\varphi}(\Omega)\bigr]_{1/2}
\oplus\\
&\quad\;\oplus\bigoplus_{j=1}^{m}
\bigl[H^{s-m_j-1/2-\varepsilon,\,(s-m_j-1/2-\varepsilon)/(2b);\varphi}(S),\\
&\qquad\qquad\;\;
H^{s-m_j-1/2+\varepsilon,\,(s-m_j-1/2+\varepsilon)/(2b);\varphi}(S)\bigr]_
{1/2}\oplus\\
&\quad\;
\oplus\bigoplus_{k=0}^{\varkappa-1}\bigl[H^{s-2bk-b-\varepsilon;\varphi}(G),
H^{s-2bk-b+\varepsilon;\varphi}(G)\bigr]_{1/2}=\\
&=H^{s-2m,(s-2m)/(2b);\varphi}(\Omega)
\oplus\bigoplus_{j=1}^{m}H^{s-m_j-1/2,(s-m_j-1/2)/(2b);\varphi}(S)\oplus\\
&\qquad\qquad\qquad\qquad\qquad\quad\,\,
\oplus\bigoplus_{k=0}^{\varkappa-1}H^{s-2bk-b;\varphi}(G)=\\
&=\mathcal{H}^{s-2m,(s-2m)/(2b);\varphi},
\end{align*}
тобто виконується рівність \eqref{16f67}.
\end{proof}

\begin{proof}[\indent Доведення теореми $\ref{16th4.1}$.]
Нехай $s>\sigma_0$ і $\varphi\in\mathcal{M}$. Дослідимо спочатку випадок, коли $s\notin E$. Тоді $s\in J_{l}$ для деякого $l\in\mathbb{N}$.
Виберемо числа $s_0,s_1\in J_{l}$ такі, що
$s_0<s<s_1$,  $s_j+1/2\notin\mathbb{Z}$ і $s_j/(2b)+1/2\notin\mathbb{Z}$ для будь-якого номера $j\in\{0,1\}$. Згідно з результатом М.~В.~Житарашу \cite[теорема~9.1]{Zhitarashu85}, відображення \eqref{16f4} продовжується єдиним чином (за неперервністю) до ізоморфізмів
\begin{equation*}
\Lambda:H^{s_j,s_j/(2b)}(\Omega)\leftrightarrow
\mathcal{Q}^{s_j-2m,(s_j-2m)/(2b)},
\end{equation*}
де $j\in\{0,1\}$ (див. також монографію \cite[теорема~5.7]{EidelmanZhitarashu98}). Застосувавши до них квадратичну інтерполяцію з функціональним параметром $\psi$, означеним за  формулою~\eqref{9f7.2}, отримаємо ізоморфізм
\begin{equation}\label{16f50}
\begin{aligned}
\Lambda&:H^{s,s/(2b);\varphi}(\Omega)=
\bigl[H^{s_0,s_{0}/(2b)}(\Omega),H^{s_1,s_{1}/(2b)}(\Omega)\bigr]_{\psi}
\leftrightarrow\\
&\leftrightarrow
\bigl[\mathcal{Q}^{s_0-2m,(s_0-2m)/(2b)},
\mathcal{Q}^{s_1-2m,(s_1-2m)/(2b)}\bigr]_{\psi}=\\
&\quad\;\;=\mathcal{Q}^{{s-2m,(s-2m)/(2b)};\varphi}.
\end{aligned}
\end{equation}
Тут рівності просторів виконуються з еквівалентністю норм у них завдяки теоремі~\ref{9lem7.3a} і лемі~\ref{16lem7.4}. Оператор \eqref{16f50} є продовженням за неперервністю відображення \eqref{16f4}, оскільки множина $C^{\infty}(\overline{\Omega})$ щільна у просторі $H^{s,s/(2b);\varphi}(\Omega)$. У розглянутому випадку теорему~\ref{16th4.1} доведено.

Дослідимо тепер випадок, коли $s\in E$. Виберемо довільно число $\varepsilon\in(0,1/2)$. Оскільки $s\pm\varepsilon\notin E$ і $s-\varepsilon>\sigma_0$, то за щойно доведеним виконуються ізоморфізми
\begin{equation*}
\Lambda:H^{s\pm\varepsilon,(s\pm\varepsilon)/(2b);\varphi}(\Omega)
\leftrightarrow
\mathcal{Q}^{s\pm\varepsilon-2m,(s\pm\varepsilon-2m)/(2b);\varphi}.
\end{equation*}
Застосувавши до них квадратичну інтерполяцію з числовим параметром $1/2$, отримаємо потрібний ізоморфізм
\begin{equation}\label{8f38}
\begin{aligned}
&\Lambda:H^{s,s/(2b);\varphi}(\Omega)=\\
&\quad\;=\bigl[H^{s-\varepsilon,(s-\varepsilon)/(2b);\varphi}(\Omega),
H^{s+\varepsilon,(s+\varepsilon)/(2b);\varphi}(\Omega)\bigr]_{1/2}
\leftrightarrow\\
&\leftrightarrow
\bigl[\mathcal{Q}^{s-\varepsilon-2m,(s-\varepsilon-2m)/(2b);\varphi},
\mathcal{Q}^{s+\varepsilon-2m,(s+\varepsilon-2m)/(2b);\varphi}
\bigr]_{1/2}=\\
&\quad\;\;=\mathcal{Q}^{s-2m,(s-2m)/(2b);\varphi}.
\end{aligned}
\end{equation}
Тут перша рівність виконується за лемою~\ref{16lem7.5}, а друга~--- за означенням простору $\mathcal{Q}^{s-2m,(s-2m)/(2b);\varphi}$ у випадку, коли $s\in E$. Отже, теорему доведено і у цьому випадку.
\end{proof}

Повторимо, що це доведення спирається на результат М.~В.~Жи\-та\-рашу \cite[теорема~9.1]{Zhitarashu85}, наведений також в  \cite[Теорема~5.7]{EidelmanZhitarashu98}. У ньому використано іншу (менш явну) форму умов узгодження, введену в \cite[\S\,11]{AgranovichVishik64}.
Нагадаємо її та покажемо, що вона еквівалентна умовам \eqref{16f8}.

Нехай $s\geq\sigma_0$, $s\notin E$ і $s/(2b)+1/2\notin\mathbb{Z}$.
Вектор
\begin{equation}\label{vector-F}
F:=\bigl(f,g_1,...,g_m,h_0,...,h_{\varkappa-1}\bigr)\in
\mathcal{H}^{s-2m,(s-2m)/(2b)}
\end{equation}
задовольняє умови узгодження в сенсі \cite[\S\,11]{AgranovichVishik64}, якщо існує функція $v=v(x,t)$ з простору $H^{s,s/(2b)}(\Omega)$ така, що
\begin{gather}\label{16f73}
f-Av\in H^{s-2m,(s-2m)/(2b)}_{+}(\Omega), \\ \label{16f74}
g_j-B_jv\in H^{s-m_j-1/2,(s-m_j-1/2)/(2b)}_{+}(S)\\ \notag
\mbox{для кожного}\quad j\in\{1,\dots,m\},\\ \label{16f75}
h_k=\partial^{k}_{t}v\big|_{t=0}\quad\mbox{для кожного}\quad
k\in\{0,\dots,\varkappa-1\}.
\end{gather}
(Помітимо, що обмеження, накладені на $s_{j}$ на початку доведення теореми~\ref{16th4.1} і спричинені використанням леми~\ref{16lem7.4} у цьому доведенні, є дещо сильнішими, ніж щойно зроблені припущення стосовно $s$.)

Доведемо, що набір цих умов узгодження еквівалентний \eqref{16f8} для кожного вектора \eqref{vector-F}.
Скориставшись лемою~\ref{9lem5.1}, запишемо умову
\eqref{16f73} в еквівалентній формі:
\begin{equation}\label{16f76}
\begin{gathered}
\partial^l_t(f-Av)(x,0)=0\;\;\mbox{для майже всіх}\;\;x\in G,\\
\mbox{де}\;\;l\in\mathbb{Z}\;\;
\mbox{і}\;\;0\leq l<\frac{s-2m}{2b}-\frac{1}{2},
\end{gathered}
\end{equation}
а умову \eqref{16f74} в еквівалентній формі:
\begin{equation}\label{16f77}
\begin{gathered}
\partial^k_t(g_j-B_jv)(x,0)=0\;\;\mbox{для майже всіх}\;\;x\in\Gamma,\\
\mbox{де}\;\;j\in\{1,\dots,m\},\;\;k\in\mathbb{Z}\;\;\mbox{і}\;\;
0\leq k<\frac{s-m_j-1/2}{2b}-\frac{1}{2}.
\end{gathered}
\end{equation}
Набір умов \eqref{16f75} і \eqref{16f76} еквівалентний співвідношенням \eqref{16f9}, де $0\leq \mu<s/(2b)-1/2$, якщо покладемо
\begin{equation}\label{functions-v}
\begin{gathered}
v_\mu(x)=\partial^{\mu}_{t}v(x,0)\;\;\mbox{для майже всіх}\;\;x\in G,\\
\mbox{де}\;\;\mu\in\mathbb{Z}\;\;\mbox{і}\;\;
0\leq\mu<\frac{s}{2b}-\frac{1}{2}.
\end{gathered}
\end{equation}
Справді, \eqref {16f76} еквівалентно набору таких умов:
\begin{gather*}
\partial^{\mu-\varkappa}_{t}\frac{f-Av}{a^{(0,\ldots,0),\varkappa}}
(x,0)=0\;\;\mbox{для майже всіх}\;\;x\in G,\\
\mbox{де}\;\mu\in\mathbb{Z}\;\;\mbox{і}\;\;
\varkappa\leq\mu<\frac{s}{2b}-\frac{1}{2}.
\end{gather*}
За останньої умови на ціле $\mu$ виконується рівність
\begin{align*}
&-\partial^{\mu-\varkappa}_{t}
\frac{Av}{a^{(0,\ldots,0),\varkappa}}(x,0)=\\
&=\sum_{|\alpha|+2b\beta\leq 2m}\,
\sum\limits_{q=0}^{\mu-\varkappa}
\binom{\mu-\varkappa}{q}(\partial^{\mu-\varkappa-q}_{t}
a_{0}^{\alpha,\beta})(x,0)\,D^\alpha_{x}v_{\beta+q}(x)
\end{align*}
для майже всіх $x\in G$. Тут функція $v_{\mu}$ присутня лише у доданку, для якого $\nobreak{\alpha=(0,\ldots,0)}$, $\beta=\varkappa$ і $q=\mu-\varkappa$, причому він дорівнює $-v_{\mu}$. Отже,
\begin{align*}
v_{\mu}(x)&=\partial^{\mu-\varkappa}_{t}
\frac{Av}{a^{(0,\ldots,0),\varkappa}}(x,0)+\\
&+\sum_{\substack{|\alpha|+2b\beta\leq 2m\\ \beta\leq\varkappa-1}}\,
\sum\limits_{q=0}^{\mu-\varkappa}
\binom{\mu-\varkappa}{q}\partial^{\mu-\varkappa-q}_t
a_{0}^{\alpha,\beta}(x,0)\,D^\alpha_{x}v_{\beta+q}(x).
\end{align*}
Звідси випливає, що набір умов \eqref {16f75} і \eqref{16f76} еквівалентний співвідношенням \eqref{16f9}, де $\varkappa\leq\mu<s/(2b)-1/2$.

Припустимо тепер, що вектор \eqref{vector-F} задовольняє умови узгодження \eqref{16f8}, в яких $v_{\mu}$ і $B_{j,k}$ означені формулами \eqref{16f9} і \eqref{16f9B}. Маємо вектор
$$
(v_0,...,v_r)\in \bigoplus_{k=0}^{r}H^{s-2bk-b}(G),
$$
де $r:=[s/(2b)-1/2]$. Як відомо \cite[розд.~2, теорема~10]{Slobodetskii58}, існує функція $v\in H^{s,s/(2b)}(\Omega)$, яка задовольняє \eqref{functions-v}. З цього факту та з формул \eqref{16f2} і \eqref{16f9B} випливає, що
\begin{equation}\label{last-formula}
\begin{gathered}
\partial^k_{t}B_jv\big|_{t=0}=B_{j,k}(v_0,\dots,v_{[m_j/(2b)]+k})
\;\;\mbox{на}\;\;G,\\
\mbox{де}\;\;j\in\{1,\dots,m\}\;\;k\in\mathbb{Z}\;\;\mbox{і}\;\;
0\leq k<\frac{s-m_j-1/2-b}{2b}.
\end{gathered}
\end{equation}
Отже, з умов \eqref{16f8} випливає \eqref{16f77}. Крім того, як показано вище, з формули \eqref{16f9} випливають співвідношення \eqref{16f75} і \eqref{16f76}. Таким чином, вектор \eqref{vector-F} задовольняє умови \eqref{16f73}--\eqref{16f75}.

Зворотно, припустимо, що вектор \eqref{vector-F} задовольняє умови  \eqref{16f73}--\eqref{16f75} для деякої функції $v\in H^{s,s/(2b)}(\Omega)$. Звідси випливають співвідношення \eqref{16f9} і \eqref {16f77} за умови, що функції $v_\mu$ означені формулою \eqref{functions-v}. Тоді умови узгодження \eqref{16f8} є наслідками  цих співвідношень з огляду на \eqref{last-formula}.

\begin{remark}\label{16rem8.1}
Нехай $s\in E$. Простір $\mathcal{Q}^{s-2m,(s-2m)/(2b);\varphi}$, означений формулою \eqref{16f10}, не залежить з точністю до еквівалентності норм від вибору числа $\varepsilon\in(0,1/2)$.
Справді, за теоремою \ref{16th4.1} виконується ізоморфізм
\begin{equation*}
\begin{split}
\Lambda&:H^{s,s/(2b);\varphi}(\Omega)\leftrightarrow\\
&\leftrightarrow
\bigl[\mathcal{Q}^{s-\varepsilon-2m,(s-\varepsilon-2m)/(2b);\varphi},
\mathcal{Q}^{s+\varepsilon-2m,(s+\varepsilon-2m)/(2b);\varphi}\bigr]_{1/2},
\end{split}
\end{equation*}
якщо $0<\varepsilon<1/2$. Це означає вказану незалежність.
Крім того, простір $\mathcal{Q}^{s-2m,(s-2m)/(2b);\varphi}$ є неперервно вкладеним у простір $\mathcal{H}^{s-2m,(s-2m)/(2b);\varphi}$. Справді, для кожного $\varepsilon\in(0,1/2)$ виконуються неперервні вкладення
\begin{equation*}
\mathcal{Q}^{s\mp\varepsilon-2m,(s\mp\varepsilon-2m)/(2b);\varphi}
\hookrightarrow
\mathcal{H}^{s\mp\varepsilon-2m,(s\mp\varepsilon-2m)/(2b);\varphi}
\end{equation*}
з урахуванням включень $s\mp\varepsilon\in(\sigma_{0},\infty)\setminus E$ і означення просторів, що стоять ліворуч у цих вкладеннях. Звідси на підставі формул \eqref{16f10} і \eqref{16f67} робимо висновок, що оператор вкладення є неперервним на парі просторів
$$
\mathcal{Q}^{s-2m,(s-2m)/(2b);\varphi}\quad\mbox{і}\quad \mathcal{H}^{s-2m,(s-2m)/(2b);\varphi}.
$$
\end{remark}

\markright{\emph \ref{sec4.1.3}. Регулярність узагальнених розв'язків}

\section[Регулярність узагальнених розв'язків]
{Регулярність узагальнених розв'язків}\label{sec4.1.3}

\markright{\emph \ref{sec4.1.3}. Регулярність узагальнених розв'язків}

Розглянемо застосування теореми \ref{16th4.1} про ізоморфізми, породжені параболічною задачею \eqref{16f1}--\eqref{16f3}, до дослідження регулярності її узагальненого розв'язку. Дамо його означення.

Нехай праві частини $f$, $g_{j}$ і $h_k$ вказаної задачі є довільними розподілами на $\Omega$, $S$ і $G$ відповідно. Функцію
$u\in H^{\sigma_0,\sigma_0/(2b)}(\Omega)$ називаємо узагальненим розв'язком цієї задачі, якщо
\begin{equation}\label{Lambdau=f-Murach}
\Lambda u=(f,g_{1},\dots,g_{m},h_0,\dots,h_{\varkappa-1}),
\end{equation}
де $\Lambda$~--- обмежений оператор \eqref{16f4a}, розглянутий у випадку, коли  $s=\sigma_0$. З рівності \eqref{Lambdau=f-Murach} випливає, що
\begin{equation}\label{16f12}
(f,g_{1},\dots,g_{m},h_0,\dots,h_{\varkappa-1})\in
\mathcal{Q}^{\sigma_0-2m,(\sigma_0-2m)/(2b)}.
\end{equation}
Згідно з \cite[теорема~9.1]{Zhitarashu85} параболічна задача \eqref{16f1}--\eqref{16f3} має єдиний узагальнений розв'язок  $u\in H^{\sigma_0,\sigma_0/(2b)}(\Omega)$ для кожного вектора \eqref{16f12} .

Негайним наслідком теореми~\ref{16th4.1} є така властивість:

\begin{corollary}\label{16cor4.2}
Припустимо, що функція $u\in H^{\sigma_0,\sigma_0/(2b)}(\Omega)$
є узагальненим розв'язком параболічної задачі
\eqref{16f1}--\eqref{16f3}, праві частини якої задовольняють умову
$$
(f,g_{1},\dots,g_{m},h_0,\dots,h_{\varkappa-1})\in
\mathcal{Q}^{s-2m,(s-2m)/(2b);\varphi}
$$
для деяких $s>\sigma_0$ і $\varphi\in\mathcal{M}$. Тоді $u\in H^{s,s/(2b);\varphi}(\Omega)$.
\end{corollary}

Сформулюємо локальну версію цього результату. Нехай $U$~--- відкрита множина в $\mathbb{R}^{n+1}$ така, що $\nobreak{\Omega_0:=U\cap\Omega\neq\varnothing}$ і $U\cap\Gamma=\varnothing$.
Покладемо $\Omega':=U\cap\partial\overline{\Omega}$, $S_0:=U\cap S$, $S':=U\cap \{(x,\tau):x\in\Gamma\}$ і $G_0:=U\cap G$.
Відповідний результат зручно формулювати за допомогою локальних версій просторів $H^{\sigma,\sigma/(2b);\varphi}(\Omega)$,  $H^{\sigma,\sigma/(2b);\varphi}(S)$ і $H^{\sigma;\varphi}(G)$, де $\sigma>0$ і $\varphi\in\mathcal{M}$.

Позначимо через $H^{\sigma,\sigma/(2b);\varphi}_{\mathrm{loc}}(\Omega_0,\Omega')$ лінійний простір усіх розподілів $u$ на $\Omega$ таких, що $\chi u\in H^{\sigma,\sigma/(2b);\varphi}(\Omega)$ для кожної функції $\chi\in C^\infty (\overline\Omega)$, яка задовольняє умову $\mathrm{supp}\,\chi\subset\Omega_0\cup\Omega'$.
Аналогічно, позначимо через $H^{\sigma,\sigma/(2b);\varphi}_{\mathrm{loc}}(S_0,S')$ лінійний простір усіх розподілів $v$ на $S$ таких, що $\chi v\in H^{\sigma,\sigma/(2b);\varphi}(S)$ для будь-якої функції $\chi\in C^\infty (\overline S)$, яка задовольняє умову $\mathrm{supp}\,\chi\subset S_0\cup S'$. Нарешті, $H^{\sigma;\varphi}_{\mathrm{loc}}( G_0)$ позначає лінійний простір усіх розподілів $w$ на $G$ таких, що $\chi w\in H^{\sigma;\varphi}(G)$ для кожної функції $\chi\in C^\infty (\overline G)$, яка задовольняє умову $\mathrm{supp}\,\chi\subset G_0$.

\begin{theorem}\label{16th4.3}
Нехай $s>\sigma_0$ і $\varphi\in\mathcal{M}$. Припустимо, що функція
$u\in H^{\sigma_0,\sigma_0/(2b)}(\Omega)$ є узагальненим розв'язком параболічної задачі \eqref{16f1}--\eqref{16f3}, праві частини якої задовольняють такі умови:
\begin{equation}\label{16f13}
f\in H^{s-2m,(s-2m)/(2b);\varphi}_{\mathrm{loc}}(\Omega_0,\Omega'),
\end{equation}
\begin{equation}\label{16f14}
\begin{gathered}
g_{j}\in H^{s-m_j-1/2,(s-m_j-1/2)/(2b);\varphi}_{\mathrm{loc}}(S_0,S')\\
\mbox{для кожного}\;\;j\in\{1,\dots,m\},
\end{gathered}
\end{equation}
\begin{equation}\label{16f15}
\begin{gathered}
h_{k}\in H^{s-2bk-b;\varphi}_{\mathrm{loc}}(G_0)\\
\mbox{для кожного}\;\;k\in\{0,\dots,\varkappa-1\}.
\end{gathered}
\end{equation}
Тоді $u\in H^{s,s/(2b);\varphi}_{\mathrm{loc}}(\Omega_0,\Omega')$.
\end{theorem}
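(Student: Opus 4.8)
The plan is to follow the scheme of the proof of Theorem~\ref{9th4.3}, with the operator $(A,B)$ replaced by the full operator $\Lambda$ of \eqref{16f4} (so that the initial data $h_0,\dots,h_{\varkappa-1}$ are carried along), the spaces $H^{\bullet}_{+}$ replaced by $H^{\bullet}$, and Corollary~\ref{9cor4.2} replaced by Corollary~\ref{16cor4.2}. Everything reduces to the one-step local gain
\[
u\in H^{s-\lambda,(s-\lambda)/(2b);\varphi}_{\mathrm{loc}}(\Omega_0,\Omega')
\;\Longrightarrow\;
u\in H^{s-\lambda+1,(s-\lambda+1)/(2b);\varphi}_{\mathrm{loc}}(\Omega_0,\Omega'),
\]
valid for each integer $\lambda\geq1$ with $s-\lambda+1>\sigma_0$. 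Starting from the free embedding $H^{\sigma_0,\sigma_0/(2b)}(\Omega)\subset H^{s-\lambda_0,(s-\lambda_0)/(2b);\varphi}_{\mathrm{loc}}(\Omega_0,\Omega')$, which holds whenever $s-\lambda_0<\sigma_0$ by \eqref{8f5a} and \eqref{8f5b}, iteration of this implication climbs from $\sigma_0$ to $s$; the split into $s\notin\mathbb{Z}$ and $s\in\mathbb{Z}$ is handled exactly as in Theorem~\ref{9th4.3} (for $s\in\mathbb{Z}$ one first reaches level $s-1/2$ and then applies the implication once).

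To establish the one-step gain, fix $\chi\in C^{\infty}(\overline\Omega)$ with $\mathrm{supp}\,\chi\subset\Omega_0\cup\Omega'$ and pick $\eta\in C^{\infty}(\overline\Omega)$ with $\mathrm{supp}\,\eta\subset\Omega_0\cup\Omega'$ and $\eta\equiv1$ on $\mathrm{supp}\,\chi$, so that $\chi u=\chi\eta u$. Commuting $\chi$ through $\Lambda$ and expanding the trace components by the Leibniz rule gives
\[
\Lambda(\chi u)=\bigl(\chi f,\chi g_1,\dots,\chi g_m,\chi h_0,\dots,\chi h_{\varkappa-1}\bigr)+\Lambda'(\eta u),
\]
where $\chi$ denotes its appropriate restriction in each slot and $\Lambda'$ collects the commutators $[A,\chi]$ (of order $\leq2m-1$), $[B_j,\chi]$ (of order $\leq m_j-1$), and the initial-data commutators arising from $\partial_t^{l}\chi$ with $l\geq1$; each block of $\Lambda'$ has anisotropic order one less than the corresponding block of $\Lambda$. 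The local hypotheses \eqref{16f13}--\eqref{16f15} together with $s-\lambda+1\leq s$ put the first summand in the global space $\mathcal{H}^{s-\lambda+1-2m,(s-\lambda+1-2m)/(2b);\varphi}(\Omega,S)$. Since $\mathrm{supp}\,\eta\subset\Omega_0\cup\Omega'$ forces $\eta u\in H^{s-\lambda,(s-\lambda)/(2b);\varphi}(\Omega)$ globally, the mapping properties underlying \eqref{16f4a} together with the interpolation Theorems~\ref{9prop6.3} and \ref{9lem7.3a} put $\Lambda'(\eta u)$ in the same global space (the initial-data commutators even gain the surplus $2b-1\geq1$, since there $l\geq1$). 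Hence $\Lambda(\chi u)\in\mathcal{H}^{s-\lambda+1-2m,(s-\lambda+1-2m)/(2b);\varphi}(\Omega,S)$.

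The decisive step is to upgrade this to membership in the constrained range $\mathcal{Q}^{s-\lambda+1-2m,(s-\lambda+1-2m)/(2b);\varphi}$, i.e.\ to verify the compatibility relations \eqref{16f8}. Here the hypothesis $U\cap\Gamma=\varnothing$, meaning that $U$ avoids the edge $\Gamma\times\{0\}$, is indispensable: every derivative of $\chi$ vanishes near $\Gamma\times\{0\}$, so all components of $\Lambda(\chi u)$, hence both sides of each relation \eqref{16f8}, vanish near $\Gamma\times\{0\}$. Consequently the compatibility defects $w_{j,k}$ entering the projection $P$ built in Lemma~\ref{16lem7.4} all vanish, so that $P\bigl(\Lambda(\chi u)\bigr)=\Lambda(\chi u)$. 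Because $P$ is a bounded projection of $\mathcal{H}^{\sigma-2m,\dots;\varphi}(\Omega,S)$ onto $\mathcal{Q}^{\sigma-2m,\dots;\varphi}$ for every admissible $\sigma$---including $\sigma\in E$, where this follows by interpolating the endpoint projections with Theorem~\ref{9prop6.2}, Lemma~\ref{16lem7.6} and the definition \eqref{16f10} together with Remark~\ref{16rem8.1}---this identity gives $\Lambda(\chi u)\in\mathcal{Q}^{s-\lambda+1-2m,\dots;\varphi}$. Since $\chi u\in H^{\sigma_0,\sigma_0/(2b)}(\Omega)$ is a generalized solution of \eqref{16f1}--\eqref{16f3} with precisely these data, Corollary~\ref{16cor4.2} yields $\chi u\in H^{s-\lambda+1,(s-\lambda+1)/(2b);\varphi}(\Omega)$, and the arbitrariness of $\chi$ is the one-step gain.

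The main obstacle is exactly this passage from $\mathcal{H}$ to the compatibility-constrained range $\mathcal{Q}$, above all when the target level belongs to the exceptional set $E$, where $\mathcal{Q}$ is given only by the interpolation formula \eqref{16f10} and not by the explicit conditions \eqref{16f8}. The corner-avoidance $U\cap\Gamma=\varnothing$ is what forces every compatibility defect to vanish, and the projection-interpolation argument (via Theorem~\ref{9prop6.2}) is what transfers that conclusion to the $E$-levels; the remaining bookkeeping---that each commutator block lowers anisotropic order by one and that the cut-off data inherit the required local smoothness from \eqref{16f13}--\eqref{16f15}---is routine.
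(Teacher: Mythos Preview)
Your overall strategy matches the paper's proof: the commutator decomposition $\Lambda(\chi u)=\chi\Lambda u+\Lambda'(\eta u)$, the boundedness \eqref{16f59} of $\Lambda'$, the one-step gain \eqref{16f54}, and the final iteration are all as in the paper. Your key observation that the corner-avoidance $U\cap\Gamma=\varnothing$ makes the compatibility relations \eqref{16f8} trivially satisfied for $\Lambda(\chi u)$ is exactly the paper's mechanism as well.

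There is, however, a genuine gap in your handling of the exceptional levels $\sigma:=s-\lambda+1\in E$. You invoke the projector $P$ from Lemma~\ref{16lem7.4} and propose to reach level $\sigma$ by ``interpolating the endpoint projections'' via Theorem~\ref{9prop6.2}. But at the two endpoints $\sigma\pm\varepsilon$, which lie in \emph{adjacent} intervals $J_l$ and $J_{l+1}$, the projectors $P$ are \emph{different} operators: they correct for different numbers $q_{l,j}\neq q_{l+1,j}$ of compatibility defects, and the extra trace $\partial_t^{q_{l+1,j}}g_j\!\upharpoonright\!\Gamma$ needed to build $P$ for $J_{l+1}$ is not even defined at level $\sigma-\varepsilon$. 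Theorem~\ref{9prop6.2} requires a single operator acting on both members of the pair, so it does not apply to two distinct projectors. The paper avoids this by replacing $P$ with the multiplication operator $M_{\chi_1}\colon F\mapsto\chi_1 F$, where $\chi_1\in C^\infty(\overline\Omega)$ equals $1$ near $\mathrm{supp}\,\chi$ and vanishes near $\Gamma$. This single, level-independent operator maps $\mathcal{H}^{\sigma\pm\varepsilon-2m,\dots;\varphi}\to\mathcal{Q}^{\sigma\pm\varepsilon-2m,\dots;\varphi}$ (since $\chi_1 F$ vanishes near $\Gamma$ and hence satisfies \eqref{16f8} trivially), and because $\chi_1\Lambda(\chi u)=\Lambda(\chi u)$, straightforward interpolation via Lemma~\ref{16lem7.6} and \eqref{16f10} gives the required membership $\Lambda(\chi u)\in\mathcal{Q}^{\sigma-2m,\dots;\varphi}$. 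With that substitution, your argument is complete.
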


Якщо $\Omega'=\varnothing$, то теорема~\ref{16th4.3} стверджує, що підвищення регулярності узагальненого розв'язку $u$ відбувається в околах внутрішніх точок множини $\overline{\Omega}$, при цьому умови \eqref{16f14} і \eqref{16f15} зникають. Якщо $G_0=\varnothing$, то ця теорема стверджує, що регулярність розв'язку $u(x,t)$ підвищується при $t>0$, при цьому зникає умова \eqref{16f15}. У цих випадках теорема~\ref{16th4.3} випливає з теореми~\ref{9th4.3}, якщо $\sigma_0/(2b)\in\mathbb{Z}$. Наголосимо, що зроблене припущення $U\cap\Gamma=\varnothing$ є істотним, оскільки без нього висновок теореми~\ref{16th4.3} є взагалі хибним. Умови \eqref{16f13}--\eqref{16f15} є необхідними для того, щоб $u\in H^{s,s/(2b);\varphi}_{\mathrm{loc}}(\Omega_0,\Omega')$ у випадку $U\cap\Gamma\neq\varnothing$, але не є достатніми. Для того, щоб висновок цієї теореми був істинним у цьому випадку, треба, щоб праві частини задачі \eqref{16f1}--\eqref{16f3} задовольняли деякі додаткові умови узгодження на множині $U\cap\Gamma$.

\begin{proof}[\indent Доведення теореми $\ref{16th4.3}$.]
Спочатку покажемо, що з умов \eqref{16f13}--\eqref{16f15} цієї теореми випливає, що імплікація
\begin{equation}\label{16f54} 
\begin{aligned}
&u\in H^{s-\lambda,(s-\lambda)/(2b);\varphi}_{\mathrm{loc}}
(\Omega_0,\Omega')\Longrightarrow\\
&\Longrightarrow u\in H^{s-\lambda+1,(s-\lambda+1)/(2b);\varphi}_{\mathrm{loc}}
(\Omega_0,\Omega')
\end{aligned}
\end{equation}
істинна для кожного цілого числа $\lambda\geq1$, яке задовольняє нерівність $s-\lambda+1>\sigma_{0}$.

Виберемо довільно функцію  $\chi\in C^\infty(\overline\Omega)$, яка задовольняє умову $\mbox{supp}\,\chi\subset\Omega_0\cup\Omega'$. Для $\chi$ знайдеться функція $\eta\in C^\infty(\overline\Omega)$ така, що $\mbox{supp}\,\eta\subset\Omega_0\cup\Omega'$ і
$\eta=1$ в околі $\mbox{supp}\,\chi$. Переставивши диференціальні  оператори $A$, $B_{j}$ і $\partial^k_t$ з оператором множення на функцію $\chi$, запишемо
\begin{equation}\label{16f55}
\begin{aligned}
\Lambda(\chi u)&=\Lambda(\chi\eta u)=
\chi\Lambda(\eta u)+ \Lambda'(\eta u)=\\
&=\chi\Lambda u+\Lambda'(\eta u)=\\
&=\chi(f,g_{1},\dots,g_{m},h_{0},\dots,h_{\varkappa-1})+
\Lambda'(\eta u).
\end{aligned}
\end{equation}
Тут
$$
\Lambda':=(A',B'_{1},\ldots,B'_{m},C_{0},\dots C_{\varkappa-1})
$$
є оператором, компоненти якого означені формулами
\begin{align*}
&A'(x,t,D_x,\partial_t)w(x,t)=\\
&=\sum_{|\alpha|+2b\beta\leq 2m-1}a^{\alpha,\beta}_{1}(x,t)\,D^\alpha_x\partial^\beta_t w(x,t),\\
&B_{j}'(x,t,D_x,\partial_t)w(x,t)=\\
&=\sum_{|\alpha|+2b\beta\leq m_j-1}
b_{j,1}^{\alpha,\beta}(x,t)\,D^\alpha_x\partial^\beta_t
w(x,t)\!\upharpoonright\!S,
\end{align*}
де $j\in\{1,\ldots,m\}$, та
\begin{align*}
C_{0}w=0\quad\mbox{і}\quad
C_{k}(x,\partial_t)w=\sum_{l=0}^{k-1}
c_{l,k}(x)\,(\partial^{l}_t w)(x,0),
\end{align*}
де $k\in\{1,\ldots,\varkappa-1\}$. У цих формулах $w=w(x,t)$~--- довільна функція з простору $H^{\sigma_{0}-1,(\sigma_{0}-1)/(2b)}(\Omega)$. Крім того,  $a^{\alpha,\beta}_{1}\in C^{\infty}(\overline{\Omega})$,
$b_{j,1}^{\alpha,\beta}\in C^{\infty}(\overline{S})$ і $c_{l,\,k}\in C^{\infty}(\overline{G})$~--- деякі функції. Коротко кажучи, $A'$, $B_{j}'$ і $C_{k}$~--- деякі диференціальні оператори, порядки яких принаймні на одиницю менші, ніж порядки операторів $A$, $B_{j}$ і $\partial^k_t$ відповідно.

Оператор $\Lambda'$ обмежений на парі просторів
\begin{equation}\label{16f59}
\Lambda':H^{\sigma,\sigma/(2b);\varphi}(\Omega)\rightarrow
\mathcal{H}^{\sigma+1-2m,(\sigma+1-2m)/(2b);\varphi}
\end{equation}
для кожного числа $\sigma>\sigma_{0}-1$. Якщо $\varphi(\cdot)\equiv1$, це випливає з відомих властивостей диференціальних операторів і операторів слідів на парах анізотропних просторів Соболєва (див., наприклад, \cite[розд.~I, лема~4 і розд.~II, теореми~3 і~7]{Slobodetskii58}). При цьому зауважимо, що кожний оператор $C_{k}$, де $1\leq k\leq\varkappa-1$, є обмеженим на парі просторів $$
H^{\sigma,\sigma/(2b)}(\Omega)\quad\mbox{і}\quad
H^{\sigma-2b(k-1)-b}(G)\hookrightarrow H^{\sigma+1-2bk-b}(G).
$$
Звідси обмеженість оператора \eqref{16f59} для довільного $\varphi\in\mathcal{M}$ випливає за допомогою квадратичної інтерполяції на підставі теорем \ref{9lem7.3a} і~\ref{8prop4}.

На підставі умов \eqref{16f13}--\eqref{16f15} маємо включення
$$
\chi\,(f,g_{1},\dots,g_{m},h_0,\dots,h_{\varkappa-1})
\in\mathcal{H}^{s-2m,(s-2m)/(2b);\varphi}.
$$
Крім того, згідно з формулою \eqref{16f59}, де беремо $\sigma:=s-\lambda$, є істинною імплікація
\begin{align*}
&u\in H^{s-\lambda,(s-\lambda)/(2b);\varphi}_{\mathrm{loc}}
(\Omega_0,\Omega')\Longrightarrow\\
&\Longrightarrow\Lambda'(\eta u)\in
\mathcal{H}^{s-\lambda+1-2m,(s-\lambda+1-2m)/(2b);\varphi}.
\end{align*}
Тому, скориставшись рівністю \eqref{16f55}, отримаємо імплікацію
\begin{equation}\label{16f60}
\begin{aligned}
&u\in H^{s-\lambda,(s-\lambda)/(2b);\varphi}_{\mathrm{loc}}
(\Omega_0,\Omega')\Longrightarrow\\
&\Longrightarrow\Lambda(\chi u)\in
\mathcal{H}^{s-\lambda+1-2m,(s-\lambda+1-2m)/(2b);\varphi}.
\end{aligned}
\end{equation}

Для того щоб вивести потрібну властивість \eqref {16f54} з формули \eqref{16f60}, покажемо, що
\begin{equation}\label{16f61}
\begin{aligned}
\Lambda(\chi u)\in
\mathcal{H}^{\sigma-2m,(\sigma-2m)/(2b);\varphi}\Longrightarrow\\
\Longrightarrow
\Lambda(\chi u)\in\mathcal{Q}^{\sigma-2m,(\sigma-2m)/(2b);\varphi}
\end{aligned}
\end{equation}
для будь-якого числа $\sigma>\sigma_0$. Припустимо, що посилка $\nobreak\mbox{цієї}$ імплікації є істинною для деякого числа $\sigma>\sigma_0$. Оскільки $\mathrm{dist}(\mathrm{supp}\,\chi,\Gamma)>0$, то $\Lambda(\chi u)=0$ в деякому околі множини $\Gamma$. Тому вектор $\Lambda(\chi u)$ задовольняє умови узгодження \eqref{16f8}, в яких
\begin{equation}\label{(fgh)-Murach}
(f,g_{1},\dots,g_{m},h_{0},\dots,h_{\varkappa-1})
\end{equation}
позначає вектор $\Lambda(\chi u)$, а $s$ замінено на $\sigma$. Отже, імплікація  \eqref{16f61} істинна, якщо $\sigma\notin E$, зважаючи на означення простору $\mathcal{Q}^{\sigma-2m,(\sigma-2m)/(2b);\varphi}$.

У випадку, коли $\sigma\in E$, цей простір означено за допомогою квадратичної інтерполяції. Розглянемо цей випадок. Виберемо функцію $\chi_1\in C^{\infty}(\overline\Omega)$ таку, що $\chi_1=0$ в околі множини $\Gamma$ і  $\chi_1=1$ в околі множини $\mathrm{supp}\,\chi$.
Оператор $M_{\chi_1}:F\mapsto\chi_1 F$ є обмеженим на парі просторів
\begin{equation}\label{16f63}
\begin{aligned}
M_{\chi_1}&:
\mathcal{H}^{\sigma\pm\varepsilon-2m,(\sigma\pm\varepsilon-2m)/(2b);\varphi}
\to\\
&\to\mathcal{Q}^
{\sigma\pm\varepsilon-2m,(\sigma\pm\varepsilon-2m)/(2b);\varphi},
\end{aligned}
\end{equation}
якщо $0<\varepsilon<1/2$, оскільки вектор $\chi_1 F$ задовольняє умови узгодження \eqref{16f8}, в яких \eqref{(fgh)-Murach} позначає вектор $\chi_1 F$, а замість $s$ узято числа $\sigma\pm\varepsilon\notin E$.
Застосувавши квадратичну інтерполяцію з числовим параметром $1/2$ до операторів \eqref{16f63}, одержимо обмежений оператор
\begin{equation*}
\begin{aligned}
M_{\chi_1}&:
\bigl[
\mathcal{H}^{\sigma-\varepsilon-2m,(\sigma-\varepsilon-2m)/(2b);\varphi},
\mathcal{H}^{\sigma+\varepsilon-2m,(\sigma+\varepsilon-2m)/(2b);\varphi}
\bigr]_{1/2}\to
\\
&\to\bigl[
\mathcal{Q}^{\sigma-\varepsilon-2m,(\sigma-\varepsilon-2m)/(2b);\varphi},
\mathcal{Q}^{\sigma+\varepsilon-2m,(\sigma+\varepsilon-2m)/(2b);\varphi}
\bigr]_{1/2}.
\end{aligned}
\end{equation*}
З огляду на інтерполяційні формули \eqref{16f67} і
\eqref{16f10} він діє на парі просторів
\begin{equation}\label{16f65}
M_{\chi_1}:\mathcal{H}^{\sigma-2m,(\sigma-2m)/(2b);\varphi}\to
\mathcal{Q}^{\sigma-2m,(\sigma-2m)/(2b);\varphi}.
\end{equation}
Зважаючи на вибір функції $\chi_1$, маємо рівність $\chi_1\Lambda(\chi u)=\Lambda(\chi u)$. Оскільки за припущенням
$$
\Lambda(\chi u)\in
\mathcal{H}^{\sigma-2m,(\sigma-2m)/(2b);\varphi},
$$
то згідно з формулою \eqref{16f65} виконується включення
$$
\Lambda(\chi u)=\chi_1\Lambda(\chi u)\in \mathcal{Q}^{\sigma-2m,(\sigma-2m)/(2b);\varphi}.
$$
Отже, імплікацію \eqref{16f61} доведено і у випадку, коли $\sigma\in E$.

Скориставшись формулами \eqref{16f60} і \eqref{16f61}, причому поклавши в останній $\sigma:=s-\lambda+1$, та наслідком~\ref{16cor4.2}, отримуємо такий ланцюжок імплікацій:
\begin{align*}
&u\in H^{s-\lambda,(s-\lambda)/(2b);\varphi}_{\mathrm{loc}}
(\Omega_0,\Omega')\Longrightarrow\\
&\Longrightarrow\Lambda(\chi u)\in
\mathcal{H}^{s-\lambda+1-2m,(s-\lambda+1-2m)/(2b);\varphi}\Longrightarrow\\
&\Longrightarrow \Lambda(\chi u)\in
\mathcal{Q}^{s-\lambda+1-2m,(s-\lambda+1-2m)/(2b);\varphi}\Longrightarrow\\
&\Longrightarrow
\chi u\in H^{s-\lambda+1,(s-\lambda+1)/(2b);\varphi}(\Omega).
\end{align*}
Тут, нагадаємо, $\chi$~--- довільна функція класу $C^\infty(\overline\Omega)$ така, що $\mbox{supp}\,\chi\subset\Omega_0\cup\Omega'$.
Зауважимо, що наслідок~\ref{16cor4.2} застосовний у цих міркуваннях, оскільки $\chi u\in H^{\sigma_0,\sigma_0/(2b)}(\Omega)$ згідно з
умовою теореми і нерівністю $s-\lambda+1>\sigma_{0}$. Таким чином, довели імплікацію \eqref{16f54}.

Використаємо її у доведенні потрібного нам включення
$u\in\nobreak H^{s,s/(2b);\varphi}_{\mathrm{loc}}(\Omega_0,\Omega')$. Нагадаємо, що $s>\sigma_{0}$. Розглянемо спочатку випадок, коли $s\notin\mathbb{Z}$. У цьому випадку існує число $\lambda_{0}\in\mathbb{N}$ таке, що
\begin{equation}\label{16f66}
s-\lambda_{0}<\sigma_{0}<s-\lambda_{0}+1.
\end{equation}
Скориставшись формулою \eqref{16f54} послідовно для значень
$\lambda:=\nobreak\lambda_{0}$, $\lambda:=\lambda_{0}-1$,..., $\lambda:=1$, маємо такий ланцюжок імплікацій:
\begin{align*}
&u\in H^{\sigma_0,\sigma_0/(2b)}(\Omega)\subset
H^{s-\lambda_{0},(s-\lambda_{0})/(2b);\varphi}_
{\mathrm{loc}}(\Omega_0,\Omega')\Longrightarrow\\
&\Longrightarrow u\in
H^{s-\lambda_{0}+1,(s-\lambda_{0}+1)/(2b);\varphi}_
{\mathrm{loc}}(\Omega_0,\Omega')
\Longrightarrow\ldots\\
&\ldots\Longrightarrow
u\in H^{s,s/(2b);\varphi}_{\mathrm{loc}}(\Omega_0,\Omega').
\end{align*}
Оскільки $u\in H^{\sigma_0,\sigma_0/(2b)}(\Omega)$ за умовою теореми, то потрібне включення доведено у розглянутому випадку.

Дослідимо тепер випадок, коли $s\in\mathbb{Z}$. У цьому випадку жодне число $\lambda_{0}\in\mathbb{N}$ не задовольняє умову~\eqref{16f66}.
Але оскільки $\nobreak{s-\varepsilon\notin\mathbb{Z}}$ і $s-\varepsilon>\sigma_{0}$, якщо $0<\varepsilon\ll1$, то за щойно доведеним виконується включення
$$
u\in H^{s-\varepsilon,(s-\varepsilon)/(2b);\varphi}_
{\mathrm{loc}}(\Omega_0,\Omega').
$$
Звідси, скориставшись імплікацією \eqref{16f54}, де $\lambda:=1$, робимо висновок, що
\begin{align*}
&u\in H^{s-\varepsilon,(s-\varepsilon)/(2b);\varphi}_
{\mathrm{loc}}(\Omega_0,\Omega')\subset
H^{s-1,(s-1)/(2b);\varphi}_{\mathrm{loc}}(\Omega_0,\Omega')
\Longrightarrow\\
&\Longrightarrow u\in H^{s,s/(2b);\varphi}_{\mathrm{loc}}(\Omega_0,\Omega').
\end{align*}
Отже, потрібне включення доведено і у цьому випадку.
\end{proof}

У термінах узагальнених просторів Соболєва формулюються тонкі достатні умови, за яких узагальнений розв'язок $u$ та його узагальнені похідні заданого порядку є неперервними на множині $\Omega_0\cup\Omega'$.

\begin{theorem}\label{16th4.4}
Нехай ціле число $p\geq0$ таке, що
\begin{equation}\label{p-condition}
p+b+n/2>\sigma_0.
\end{equation}
Припустимо, що функція $u\in H^{\sigma_0,\sigma_0/(2b)}(\Omega)$ є узагальненим розв'язком параболічної задачі \eqref{16f1}--\eqref{16f3}, праві частини якої задовольняють умови \eqref{16f13}--\eqref{16f15} для $s:=p+b+n/2$ і деякого функціонального параметра $\varphi\in\mathcal{M}$, підпорядкованого умові \eqref{9f4.7}. Тоді розв'язок $u(x,t)$ і кожна його узагальнена частинна похідна
$D_{x}^{\alpha}\partial_{t}^{\beta}u(x,t)$, де $|\alpha|+2b\beta\leq p$,  неперервні на множині $\Omega_0\cup\Omega'$.
\end{theorem}

\begin{proof}[\indent Доведення.] Виберемо достатньо мале число $\varepsilon>\nobreak0$. Нехай множини $\Omega_{\varepsilon}$ і $\Omega'_{\varepsilon}$ та функція $\chi_{\varepsilon}$ такі самі, як у доведенні теореми~\ref{9th4.4}, поданому у п.~\ref{sec3.1.3}. Згідно з теоремою~\ref{16th4.3} виконується включення $\chi_{\varepsilon}u\in H^{s,s/(2b);\varphi}(\Omega)$, де $s=p+b+n/2$, а $\varphi$ задовольняє умову \eqref{9f4.7}. Отже, існує функція $w_{\varepsilon}\in H^{s,s/(2b);\varphi}(\mathbb{R}^{n+1})$ така, що $w_{\varepsilon}=\chi_{\varepsilon}u=u$ на $\Omega_{\varepsilon}$. Звідси виводиться висновок теореми~\ref{16th4.4} за допомогою тих самих міркувань, що використані у доведенні теореми~\ref{9th4.4}. Нагадаємо, що вони спираються на теорему~\ref{9lem8.1}~(i).
\end{proof}

\begin{remark}\label{16rem4.5}\rm
Умова \eqref{9f4.7} є точною в теоремі~\ref{16th4.4}. А~саме: нехай $s:=p+b+n/2$ і $\varphi\in\mathcal{M}$ та припустимо, що для кожної функції $u\in\nobreak H^{\sigma_0,\sigma_0/(2b)}(\Omega)$ істинна така імплікація:
\begin{equation}\label{f-rem4.5}
\begin{aligned}
&\bigl(u\;\mbox{є розв'язком задачі \eqref{16f1}--\eqref{16f3}}\\
&\mbox{ для деяких правих частин \eqref{16f13}--\eqref{16f15}}\bigr)
\Longrightarrow\\
&\Longrightarrow\bigl(u\;\mbox{задовольняє висновок теореми \ref{16th4.4}}\bigr);
\end{aligned}
\end{equation}
тоді $\varphi$ задовольняє умову \eqref{9f4.7}.
\end{remark}

Справді, нехай $\varphi\in\mathcal{M}$, а ціле число $p\geq0$ таке, що
$$
s:=p+b+n/2>\sigma_{0}.
$$
Припустимо, що кожна функція $u\in\nobreak H^{\sigma_0,\sigma_0/(2b)}(\Omega)$ задовольняє імплікацію \eqref{f-rem4.5}, і покажемо, що тоді $\varphi$ задовольняє умову \eqref{9f4.7}. Нехай $V$~--- непорожня відкрита підмножина простору $\mathbb{R}^{n+1}$ така, що $\overline{V}\subset\Omega_0$. Виберемо довільно функцію $w\in\nobreak H^{s,s/(2b);\varphi}(\mathbb{R}^{n+1})$, підпорядковану умові $\mathrm{supp}\,w\subset V$. Покладемо $u:=w\!\upharpoonright\!\Omega\in H^{s,s/(2b);\varphi}(\Omega)$ і
\begin{equation*}
(f,g_{1},...,g_{m},h_0,\dots,h_{\varkappa-1}):=\Lambda u\in
\mathcal{H}^{s-2m,(s-2m)/(2b);\varphi}.
\end{equation*}
Функція $u$ задовольняє посилку імплікації \eqref{f-rem4.5}. Тому $u$ задовольняє висновок теореми~\ref{16th4.4} згідно зі зробленим припущенням. Зокрема, узагальнена похідна $\partial^{p}u/\partial x_{1}^{j}$ неперервна на $V$. Тому узагальнена похідна $\partial^{p}w/\partial x_{1}^{j}$ неперервна на $\mathbb{R}^{n+1}$. Отже, функціональний параметр $\varphi$ задовольняє умову \eqref{9f4.7} завдяки теоремі~\ref{9lem8.1}~(ii).
Зауваження~\ref{16rem4.5} обґрунтоване.

\markright{\emph \ref{4.1.5}. Умови класичності розв'язків}

\section[Умови класичності розв'язків]
{Умови класичності розв'язків}\label{4.1.5}

\markright{\emph \ref{4.1.5}. Умови класичності розв'язків}

Розв'язок $u$ початково-крайової задачі \eqref{16f1}--\eqref{16f3} вважають класичним, якщо він має таку гладкість, що ліві частини задачі обчислюються за допомогою неперервних класичних похідних функції $u$. У цьому випадку сліди функцій у крайових і початкових умовах є звуженнями неперервних функцій на відповідних множинах. Виявляється, неперервність правих частин параболічної задачі не гарантує класичності її розв'язку.

Розглянемо, наприклад, початково-крайову задачу Діріхле для параболічного рівняння другого порядку в циліндрі $\nobreak{\Omega=G\times(0,\tau)}$, тобто параболічну задачу вигляду
\begin{equation}\label{8f1}
\begin{gathered}
\partial_{t}u(x,t)+
\sum_{|\alpha|\leq2}a_{\alpha}(x,t)\,D^\alpha_x u(x,t)=f(x,t),\\
\mbox{якщо}\;\,x\in G\;\,\mbox{і}\;\,0<t<\tau,
\end{gathered}
\end{equation}
\begin{equation}\label{8f2}
u(x,t)=g(x,t),\;\;\mbox{якщо}\;\,x\in\Gamma\;\,\mbox{і}\;\,0<t<\tau,
\end{equation}
\begin{equation}\label{8f3}
u(x,0)=h(x),\;\;\mbox{якщо}\;\,x\in G;
\end{equation}
тут кожне $a_{\alpha}\in C^{\infty}(\overline{\Omega})$. Якщо її праві частини задовольняють умови $f\in C(\overline{\Omega})$,
$g\in C(\overline{S})$ і $h\in C(\overline{G})$, то це ще не забезпечує  класичність її розв'язку~$u$. Це випливає, зокрема, з результату Л.~Хермандера \cite[теорема 7.9.8 і зауваження до неї на с.~298]{Hormander86-1}. Згідно з ним навіть у випадку сталих коефіцієнтів $a_{\alpha}$ існує функція $f\in C(\Omega)$, причому $\mathrm{supp}\,f\subset\Omega$, така, що рівняння \eqref{8f1} має узагальнений розв'язок $u$, який задовольняє умови $u\in C^1(\Omega)\setminus C^{2}_{x}(\Omega)$ і $\mathrm{supp}\,u\subset\Omega$. (Як звичайно, $C^{2}_{x}(\Omega)$~--- простір усіх функцій, двічі неперервно диференційовних на $\Omega$ за просторовими змінними $x_1,\ldots,x_n$.) Отже, розглянута задача може мати некласичний розв'язок $u$ у випадку “гарних” правих частин $f\in C(\overline{\Omega})$, $g(\cdot)\equiv0$ і $h(\cdot)\equiv0$.

Для того, щоб існував класичний розв'язок параболічної задачі, її праві частини повинні задовольняти більш сильні умови, ніж неперервність. Зокрема, задача \eqref{8f1}--\eqref{8f3} має класичний розв'язок, якщо $f\in C^\alpha(\overline\Omega)$ для деякого числа $\alpha>0$, $g\in C(\overline{S})$, $h\in C(\overline{G})$ і $g\!\upharpoonright\!\Gamma=h\!\upharpoonright\!\Gamma$ (див., наприклад, \cite[с.42]{MathEncikl-5}). Тут, як звичайно, $C^\alpha(\overline\Omega)$~--- простір Гельдера порядку $\alpha$ на $\overline\Omega$.

За допомогою узагальнених просторів Соболєва можна отримати інші, причому досить тонкі, достатні умови класичності розв'язків параболічних задач. Для загальної параболічної задачі \eqref{16f1}--\eqref{16f3} природно використовувати означення класичного розв'язку, наведене нижче.

Нехай $m_0:=\max\{m_1,\dots,m_m\}$. Покладемо
\begin{align*}
S_{\varepsilon}:=\{x\in\Omega:\mbox{dist}(x,S)<\varepsilon\},\\
G_{\varepsilon}:=\{x\in\Omega:\mbox{dist}(x,G)<\varepsilon\},
\end{align*}
де число $\varepsilon>0$.

Узагальнений розв'язок $u\in H^{\sigma_0,\sigma_0/(2b)}(\Omega)$ задачі \eqref{16f1}--\eqref{16f3} називаємо класичним, якщо узагальнені частинні похідні функції $u=u(x,t)$ задовольняють такі три умови:
\begin{itemize}
\item [(a)] $D^\alpha_x\partial^\beta_t u$ неперервна на $\Omega$, якщо $0\leq|\alpha|+2b\beta\leq 2m$;
\item [(b)] $D^\alpha_x\partial^\beta_t u$ неперервна на $S_{\varepsilon}\cup S$ для деякого числа $\varepsilon>0$, якщо $0\leq|\alpha|+2b\beta\leq m_0$;
\item [(c)] $\partial^k_t u$ неперервна на $G_{\varepsilon}\cup G$ для деякого числа $\varepsilon>0$, якщо $0\leq k\leq\varkappa-1$.
\end{itemize}

У цьому означенні використано мінімальні умови, за яких ліві частини задачі обчислюються за допомогою неперервних класичних похідних функції $u$, причому в лівих частинах крайових і початкових умов сліди функцій розуміються як звуження неперервних функцій на множини $S$ і $G$ відповідно.

\begin{theorem}\label{24th4.1}
Покладемо $\sigma_1:=2m+b+n/2$, $\sigma_2:=m_0+b+n/2$ і $\sigma_3:=2m-b+n/2$.
Припустимо, що $\sigma_2>\sigma_0$ і $\sigma_3>\sigma_0$.
Припустимо також, що функція $u\in H^{\sigma_0,\sigma_0/(2b)}(\Omega)$ є узагальненим розв'язком параболічної задачі \eqref{16f1}--\eqref{16f3}, праві частини якої задовольняють такі умови:
\begin{equation}\label{24f12}
\begin{aligned}
f\in &H_{\mathrm{loc}}^{\sigma_1-2m,(\sigma_1-2m)/(2b);\varphi}
(\Omega,\varnothing)\cap\\
&\cap
H_{\mathrm{loc}}^{\sigma_2-2m,(\sigma_2-2m)/(2b);\varphi}
(S_{\varepsilon},S)\cap\\
&\cap
H_{\mathrm{loc}}^{\sigma_3-2m,(\sigma_3-2m)/(2b);\varphi}
(G_{\varepsilon},G),
\end{aligned}
\end{equation}
\begin{equation}\label{24f12bis}
\begin{gathered}
g_j\in H_{\mathrm{loc}}^{\sigma_2-m_j-1/2,(\sigma_2-m_j-1/2)/(2b);\varphi}
(S,\varnothing)\\
\mbox{для кожного}\;\;j\in\{1,\dots,m\},
\end{gathered}
\end{equation}
\begin{equation}\label{18f9}
\begin{gathered}
h_k\in H_{\mathrm{loc}}^{\sigma_3-2bk-b;\varphi}(G)\\
\mbox{для кожного}\;\;k\in\{0,\dots,\varkappa-1\},
\end{gathered}
\end{equation}
де $\varphi$~--- деякий функціональний параметр класу $\mathcal{M}$, який задовольняє умову \eqref{9f4.7}. Тоді розв'язок $u$ класичний.
\end{theorem}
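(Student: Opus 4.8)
The plan is to establish the three defining properties~(a), (b), (c) of a classical solution one at a time, in each case by covering the relevant part of $\overline{\Omega}$ by suitable open sets $U$ and invoking Theorem~\ref{16th4.4} (that is, the local smoothing Theorem~\ref{16th4.3} followed by the embedding Theorem~\ref{9lem8.1}~(i)) with the order parameter $p$ chosen to match the number of continuous derivatives demanded, and with the fixed $\varphi\in\mathcal{M}$ of the hypothesis, which satisfies~\eqref{9f4.7}. The role of~\eqref{9f4.7} is precisely to make Theorem~\ref{9lem8.1}~(i) yield continuity of $D^\alpha_x\partial^\beta_t u$ for all $|\alpha|+2b\beta\leq p$ rather than only $<p$.

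For~(a) I would let $U$ run over open sets with $\overline{U}\subset\Omega$; then $\Omega':=U\cap\partial\overline{\Omega}=\varnothing$, $S_0:=U\cap S=\varnothing$ and $G_0:=U\cap(G\times\{0\})=\varnothing$, so conditions~\eqref{16f14}, \eqref{16f15} on $g_j$ and $h_k$ are vacuous while~\eqref{16f13} reduces to the first requirement in~\eqref{24f12}. Applying Theorem~\ref{16th4.4} with $p:=2m$ and $s:=\sigma_1=2m+b+n/2$ gives continuity of $D^\alpha_x\partial^\beta_t u$ on $\Omega$ for $|\alpha|+2b\beta\leq 2m$. Here $\sigma_1>\sigma_0$ holds automatically when $\sigma_0=2m$, in particular whenever $m_j\leq 2m-1$ for every $j$; in the remaining situation $\sigma_0>\sigma_1$ and one combines $u\in H^{\sigma_0,\sigma_0/(2b)}_{\mathrm{loc}}(\Omega,\varnothing)$ with the embedding~\eqref{8f5a} and Theorem~\ref{9lem8.1}~(i) instead.

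For~(b) I would cover $S$ by small open sets $U$ centred at its points; since every point of $S$ has time coordinate in $(0,\tau)$, $U$ may be taken so that $U\cap\partial\overline{\Omega}=S_0:=U\cap S$ while $S':=U\cap(\Gamma\times\{\tau\})=\varnothing$, $G_0=\varnothing$, and $\Omega_0:=U\cap\Omega\subset S_{\varepsilon}$. Then~\eqref{16f15} is vacuous, \eqref{16f14} follows from~\eqref{24f12bis}, and~\eqref{16f13} follows from the second requirement in~\eqref{24f12}. Applying Theorem~\ref{16th4.4}, in the form — noted in the remark following Theorem~\ref{16th4.3} — that remains valid when $U$ meets the lateral surface, with $p:=m_0$ and $s:=\sigma_2=m_0+b+n/2$ (legitimate since $\sigma_2>\sigma_0$ by hypothesis), we get joint continuity of $D^\alpha_x\partial^\beta_t u$ on $\Omega_0\cup S_0$ for $|\alpha|+2b\beta\leq m_0$; combined with~(a) this gives joint continuity of those derivatives on $\Omega\cup S$, hence on $S_\varepsilon\cup S$ for every $\varepsilon>0$. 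Property~(c) is obtained the same way, now covering the open bottom face $G\times\{0\}$ by small $U$ about its points — each of which has positive distance to $\Gamma$, so $U$ may be chosen with $U\cap S=\varnothing$, whence $U\cap\partial\overline{\Omega}=G_0:=U\cap(G\times\{0\})$ and $S_0=S'=\varnothing$: then~\eqref{16f14} is vacuous, \eqref{16f15} comes from~\eqref{18f9}, \eqref{16f13} from the third requirement in~\eqref{24f12}, and Theorem~\ref{16th4.4} with $s:=\sigma_3=2m-b+n/2$ (again $\sigma_3>\sigma_0$ is assumed) and $p:=2b(\varkappa-1)=2m-2b$ gives, since $\varkappa=m/b$, continuity of $\partial^k_t u$ for $0\leq k\leq\varkappa-1$ on $\Omega\cup(G\times\{0\})$, hence on $G_\varepsilon\cup G$ for every $\varepsilon>0$.

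The routine part is the reduction, in each of the three cases, of the data hypotheses of Theorem~\ref{16th4.3} to a subset of~\eqref{24f12}--\eqref{18f9}, which succeeds precisely because the surface components $S_0$ or $G_0$ are empty for the chosen neighbourhoods. Two points deserve attention. First, the near-$S$ step uses Theorem~\ref{16th4.3} (hence Theorem~\ref{16th4.4}) without the auxiliary assumption $U\cap\Gamma=\varnothing$ built into the definition preceding that theorem, so one must first invoke the remark that this assumption is inessential. Second — and this is the quantitative core — one extracts continuity of exactly $p$, not merely $p-1$, derivatives from Theorem~\ref{9lem8.1}~(i); this is the single place where the refined calibration of the generalized anisotropic Sobolev scale, through the integral condition~\eqref{9f4.7} on the function parameter $\varphi$, is genuinely exploited, and it is what makes the values $\sigma_1,\sigma_2,\sigma_3$ (rather than larger exponents) suffice.
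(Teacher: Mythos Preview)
Your three-step plan matches the paper's proof exactly: apply Theorem~\ref{16th4.4} once for the interior with $p=2m$, once near $S$ with $p=m_0$, and once near the bottom with $p=2m-2b$. Two points in your write-up are confused, however.

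First, in step~(a) you split cases on whether $\sigma_1>\sigma_0$. This is unnecessary: since $\sigma_1=\sigma_3+2b$ and the hypothesis gives $\sigma_3>\sigma_0$, you always have $\sigma_1>\sigma_0$. The paper simply notes this; your ``remaining situation $\sigma_0>\sigma_1$'' never occurs.

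Second, and more seriously, your handling of the condition $U\cap\Gamma=\varnothing$ is backwards. In the setup of Theorem~\ref{16th4.3}, $\Gamma$ denotes the bottom edge $\Gamma\times\{0\}$ (just as $G_0:=U\cap G$ means $U\cap(G\times\{0\})$). Your open sets $U$ centred at points of $S=\Gamma\times(0,\tau)$, chosen with $U\cap\partial\overline{\Omega}=U\cap S$, automatically satisfy $U\cap(\Gamma\times\{0\})=\varnothing$, so there is nothing to invoke. Moreover, the paper's remark following Theorem~\ref{16th4.3} says that this assumption is \emph{essential} (without it the conclusion can fail), not inessential --- you have inverted it. The paper's own proof for~(b) takes a single $U$ with $\Omega_0=S_\varepsilon$, $\Omega'=S_0=S$, $S'=G_0=\varnothing$; this $U$ avoids both the top and bottom faces of $\overline{\Omega}$ and hence certainly avoids $\Gamma\times\{0\}$.

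With these two corrections your argument is identical to the paper's.
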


\begin{proof}[\indent Доведення.]
Покажемо послідовно, що $u$ задовольняє умови (a), (b) і (c) наведеного вище означення класичного розв'язку.

Стосовно (a) скористаємося включенням
$$
f\in H_{\mathrm{loc}}^{\sigma_1-2m,(\sigma_1-2m)/(2b);\varphi}
(\Omega,\varnothing),
$$
яке є частиною умови~\eqref{24f12} теореми, і застосуємо теорему~\ref{16th4.4} у випадку, коли $\Omega_0=\Omega$, $\Omega'=S_0=S'=G_0=\varnothing$, $p=2m$ і $\varphi=\varphi$.
Тоді $\sigma_1:=2m+b+n/2=s$ і нерівність~\eqref{p-condition} виконується, оскільки $\sigma_3:=2m-b+n/2>\sigma_0$ за припущенням. Тому на підставі теореми~\ref{16th4.4} робимо висновок, що $u$ задовольняє умову~(a).

Щодо (b) скористаємося умовами
$$
f\in H_{\mathrm{loc}}^{\sigma_2-2m,(\sigma_2-2m)/(2b);\varphi}(S_{\varepsilon},S)
$$
і \eqref{24f12bis} теореми~\ref{24th4.1} та застосуємо теорему~\ref{16th4.4} у випадку, коли $\Omega_0=S_\varepsilon$, $\Omega'=S_0=S$, $S'=G_0=\varnothing$, $p=m_0$ і $\varphi=\varphi$.
Тоді $\nobreak{\sigma_2:=m_0+b+n/2=s}$ і нерівність~\eqref{p-condition} виконується, оскільки $\nobreak{\sigma_2>\sigma_0}$ за припущенням. Отже, на підставі теореми~\ref{16th4.4} робимо висновок, що $u$ задовольняє умову~(b).

Нарешті, щодо (c) скористаємося умовами
$$
f\in H_{\mathrm{loc}}^{\sigma_3-2m,(\sigma_3-2m)/(2b);\varphi}(G_{\varepsilon},G)
$$
і \eqref{18f9} теореми~\ref{24th4.1} та застосуємо теорему~\ref{16th4.4} у випадку, коли $\Omega_0=G_{\varepsilon}$, $\Omega'=G_0=G$, $S_0=S'=\varnothing$, $p=2m-2b$ і $\varphi=\varphi$. Тоді $\sigma_3:=2m-b+n/2=s$ і нерівність~\eqref{p-condition} виконується, оскільки $\sigma_3>\sigma_0$ за припущенням. Тому згідно з теоремою~\ref{16th4.4} узагальнена похідна $D^\alpha_x\partial^\beta_t u$ неперервна на $G_{\varepsilon}\cup G$, якщо $|\alpha|+2b\beta\leq 2m-2b$. У випадку $|\alpha|=0$ це означає, що $u$ задовольняє умову~(c).
\end{proof}

\newpage

\markright{\emph \ref{rem-ch2-3}. Бібліографічні коментарі до розд.~2 і 3}

\section[Бібліографічні коментарі до розд.~2 і 3]{Бібліографічні коментарі до розд.~2 і 3}\label{rem-ch2-3}

\markright{\emph \ref{rem-ch2-3}. Бібліографічні коментарі до розд.~2 і 3}

\small

Теорія розв'язності загальних параболічних початково-крайових задач була заснована в 60-х роках XX ст. у фундаментальних працях М.~С.~Аграновіча і М.~І.~Вішика \cite{AgranovichVishik64}, В.~О.~Солоннікова \cite{Solonnikov65, Solonnikov67}, С.~Д.~Ейдельмана \cite{Eidelman64}, А.~Фрідмана \cite{Fridman68}, Ж.-Л.~Ліонса і Е.~Мадженеса \cite{LionsMagenes72ii} (див. також монографії О.~О.~Ладиженської, В.~О.~Солоннікова і Н.~М.~Уральцевої \cite{LadyzhenskajaSolonnikovUraltzeva67}, С.~Д.~Івасишена \cite{Ivasyshen87, Ivasyshen90}, огляд С.~Д.~Ейдельмана \cite[\S~2]{Eidelman94} та наведену там літературу). Для параболічних рівнянь другого порядку ця теорія викладена, наприклад, у книгах Г.~Аманна \cite{Amann95}, М.~В.~Крилова \cite{Krylov96, Krylov08}, Г.~М.~Лібермана (G.~M.~Lieberman) \cite{Lieberman96}, А.~Лунарді \cite{Lunardi95}. У ній важливу роль відіграють анізотропні простори Соболєва і Гельдера, в термінах яких формулюються теореми про коректну розв'язність параболічних задач. У~монографії Ж.-Л.~Ліонса і Е.~Мадженеса \cite{LionsMagenes72ii} ці теореми доведені також для анізотропних просторів Соболєва від'ємного порядку. У~праці С.~Д.~Івасишена \cite{Ivasyshen77}~--- для негативних просторів Гельдера. У~статтях М.~В.~Житарашу \cite{Zhitarashu85, Zhitarashu87, Zhitarashu90} такі теореми встановлені для модифікованих анізотропних просторів Соболєва довільного дійсного порядку (ці результати викладені у монографії С.~Д.~Ейдельмана і М.~В.~Житарашу \cite{EidelmanZhitarashu98} та огляді С.~Д.~Ейдельмана \cite[\S~2]{Eidelman94}). Для параболічних псевдодиференціальних задач теореми про коректну розв'язність доведені у статтях Г.~Груб і В.~О.~Солоннікова \cite{GrubbSolonnikov90, Grubb95}.

В останні роки у теорії параболічних задач активно застосовуються інші класи функціональних просторів: вагові простори, простори з мішаною нормою, простори Бєсова і Лізоркіна--Трібеля. З цього приводу вкажемо праці R.~Denk, M.~Hieber і J.~Pr\"uss \cite{DenkHieberPruss03, DenkHieberPruss07}, R.~Denk і M.~Kaip \cite{DenkKaip13}, H.~Dong і D.~Kim \cite{DongKim15}, M.~Hieber і J.~Pr\"uss \cite{HieberPruss97}, F.~Hummel \cite{Hummel21}, F.~Hummel і N.~Lindemulder \cite{HummelLindemulder19}, K.-H.~Kim \cite{Kim08}, M.~Kohne, J.~Pr\"uss і M.~Wilke \cite{KohnePrussWilke10}, М.~В.~Крилова \cite{Krylov99, Krylov01}, P.~C.~Kunstmann і L.~Weis  \cite{KunstmannWeis04}, J.~LeCrone, J.~Pr\"uss і M.~Wilke \cite{LeCronePrussWilke14}, N.~Lindemulder \cite{Lindemulder18arxiv, Lindemulder20}, N.~Lindemulder і M.~Veraar \cite{LindemulderVeraar20}, П.~Вайдемайера \cite{Weidemaier05} та наведені там посилання.

До вказаного тренду належать і дослідження параболічних задач у просторах узагальненої гладкості, які проводили в останнє десятиліття автори цієї монографії \cite{LosMurach13Collection2, LosMurach14Dop6, LosMurach14Collection2, Los14Dop10, Los15Collection2, Los16Collection1, Los16Collection2, Los17Dop8, Los18Dop6, LosMurach13MFAT2, Los15UMJ5, Los16UMJ6, Los16UMJ9, Los16UMJ11, LosMikhailetsMurach17CPAA1, LosMurach17OM15, Los17UMJ3, Los17MFAT2, Los20MFAT2, LosMikhailetsMurach21arXiv}. Істотну частину їх результатів викладено  у другому і третьому її розділах.

Теорема~\ref{th3.4} (про ізоморфізми, породжені напіводнорідною параболічною задачею в прямокутнику) доведена в \cite[пп.~2 і~5]{LosMurach13MFAT2}, а теореми~\ref{th3.5} і~\ref{th3.6} (про регулярність її розв'язків)~--- у \cite[пп.~4 і~5]{LosMurach13Collection2}. Теореми~\ref{9th4.1}, \ref{9th4.3} і~\ref{9th4.4}  (про аналогічні властивості напіводнорідної параболічної задачі у багатовимірному циліндрі) доведені в \cite[пп.~4 і~8]{LosMikhailetsMurach17CPAA1}. Останні дві є новими і у соболєвському випадку.

Теореми~\ref{6th1} і~\ref{6th2} (про ізоморфізми породжені неоднорідними крайовими задачами Діріхле і Неймана для рівняння теплопровідності)
доведені в \cite[пп.~3 і~5]{Los15UMJ5}. Теорема~\ref{th4.1a} (про ізоморфізми породжені неоднорідною параболічною задачею в прямокутнику) доведена в \cite{Los17MFAT2}. Її аналог для багатовимірного циліндра~--- теорема~\ref{16th4.1}~--- встановлена в \cite[пп.~4 і~6]{LosMikhailetsMurach21arXiv}. Там само доведені теореми \ref{16th4.3} і \ref{16th4.4} (про регулярність розв'язків неоднорідної параболічної задачі), причому перша з них анонсована в \cite{Los17Dop8}. Вона є новою і у соболєвському випадку. Теорема \ref{24th4.1} (про умови класичності узагальненого розв'язку параболічної задачі) встановлена
в~\cite{Los16UMJ11}. Подібні умови отримані у працях А.~М.~Ільїна \cite{Ilin60}, А.~М.~Ільїна, А.~С.~Калашникова і О.~А.~Олейнік \cite{IlinKalashOleinik62}, М.~І.~Матійчука і C.~Д.~Ейдельмана \cite{MatiichukEidelman74}, В.~О.~Солоннікова \cite{Solonnikov65},  А.~Фрідмана \cite[розд.~X, \S~7]{Fridman68} у термінах приналежності правих частин задачі деяким просторам Гельдера або Соболєва. Вони характеризують гладкість функцій більш грубо, ніж простори, використані в теоремі~\ref{24th4.1}.

Параболічні задачі для диференціальних рівнянь другого порядку досліджені окремо в \cite{Los15Collection2, Los16UMJ6, Los16Collection1, Los16Collection2, Los16UMJ9, LosMurach17OM15}, а для диференціальних систем~--- в \cite{Los14Dop10, Los17UMJ3, Los20MFAT2} і, крім того, у  праці О.~Дяченко і В.~Лося \cite{DyachenkoLos21arXiv}.

Зазначимо також, що задача Коші для параболічних інтегро-диференціальних рівнянь вивчалася у деяких просторах узагальненої гладкості у новітніх працях R.~Mikulevi\v{c}ius і C.~Phonsom \cite{MikuleviciusPhonsom18arxiv, MikuleviciusPhonsom19PotAnal}. Регулярність у цих просторах задається за допомогою міри Леві (див. статтю W.~Farkas, N.~Jacob і R.~L.~Schilling \cite{FarkasJacobScilling01b}).

\normalsize

\newpage

\addcontentsline{toc}{chapter}{\textbf{Список літератури}}

\chaptermark{Список літератури}

\pagestyle{myheadings}

\markright{Список літератури}

\renewcommand\bibname{\Large\textbf{Список літератури}}

\chaptermark{\emph Список літератури}

\footnotesize

\newpage

\normalsize
\renewcommand{\contentsname}{\centerline{\large\textbf{ЗМІСТ}}\vspace{-5mm}}
\thispagestyle{empty}
\tableofcontents

\newpage

\thispagestyle{empty}

\vspace*{2.5cm}

\begin{center}

\large

{\bf CONTENTS}

\vspace{0.5cm}

\normalsize

\end{center}

{\bf Introduction} \dotfill 3

\medskip

\noindent {\bf 1\;\;\;Generalized Sobolev spaces and their interpolation
\hfill 9}

1.1\;\;\; Interpolation with function parameter\dotfill 10

1.2\;\;\; Generalized Sobolev spaces\dotfill 13

1.3\;\;\; Specific function spaces\dotfill 24

1.4\;\;\; Interpolation of Sobolev spaces\dotfill 27

1.5\;\;\; The Cauchy data operator\dotfill 43

1.6\;\;\; An embedding theorem\dotfill 53

1.7\;\;\; Bibliographical comments to Chapter 1\dotfill 57

\medskip

\noindent {\bf 2\;\;\;Semihomogeneous parabolic problems
\hfill 65}

2.1\;\;\; The problem in a rectangle\dotfill 65

2.2\;\;\; The problem in a cylinder\dotfill 72

2.3\;\;\; Regularity of solutions\dotfill 79

\medskip

\noindent {\bf 3\;\;\;Nonhomogeneous parabolic problems
\hfill 87}

3.1\;\;\; Problems for the heat equation\dotfill 88

3.2\;\;\; The inhomogeneous problem in a rectangle\dotfill 101

3.3\;\;\; The inhomogeneous problem in a cylinder\dotfill 113

3.4\;\;\; Regularity of generalized solutions\dotfill 129

3.5\;\;\; Conditions for the solutions to be classical\dotfill 137

3.6\;\;\; Bibliographical comments to Chapters 2 and 3\dotfill 141

\medskip

\noindent {\bf References\hfill 143}

\normalsize

\end{document}